\newcommand{\vertiii}[1]{{\left\vert\kern-0.25ex\left\vert\kern-0.25ex\left\vert #1 
		\right\vert\kern-0.25ex\right\vert\kern-0.25ex\right\vert}}
\numberwithin{equation}{section}
\newcommand{\margnote}[1]{
	\ifthenelse{\boolean{shownotes}}%
	{\marginpar{\raggedright\tiny\texttt{#1}}}%
	{}%
}
\newcommand{\hole}[1]{
	\ifthenelse{\boolean{shownotes}}%
	{\begin{center} \fbox{ \rule {.25cm}{0cm}
				\rule[-.1cm]{0cm}{.4cm} \parbox{.85\textwidth}{\begin{center}
						\texttt{#1}\end{center}} \rule {.25cm}{0cm}}\end{center}}
	{}
}
\theoremstyle{plain}
\newtheorem{lemma}{Lemma}[section]
\newtheorem{theorem}[lemma]{Theorem}
\newtheorem{proposition}[lemma]{Proposition}
\newtheorem{corollary}[lemma]{Corollary}
\theoremstyle{definition}
\newtheorem{remark}[lemma]{Remark}
\newtheorem{definition}[lemma]{Definition}
\theoremstyle{remark}
\newcommand{\bfv}{\boldsymbol{v}}
\newcommand{\bfq}{\boldsymbol{q}}
\newcommand{\bfu}{\boldsymbol{u}}
\newcommand{\bfw}{\boldsymbol{w}}
 \newcommand{\bSigma}{\mathbf{\Sigma}}
\newcommand{\R}{\mathbb{R}}
\newcommand{\C}{\mathbb{C}}
\newcommand{\Z}{\mathbb{Z}}
\newcommand{\N}{\mathbb{N}}
\newcommand{\bbS}{\mathbb{S}}
\newcommand{\bbO}{0}
\newcommand{\bbI}{I}
\newcommand{\tirho}{\widetilde{\rho}}
\newcommand{\tithe}{\widetilde{\theta}}
\newcommand{\tibu}{\widetilde{\bm{u}}}
\newcommand{\tiK}{\widetilde{K}}
\newcommand{\tiA}{\widetilde{A}}
\newcommand{\tiB}{\widetilde{B}}
\newcommand{\cN}{{\mathcal{N}}}
\newcommand{\cD}{{\mathcal{D}}}
\newcommand{\cR}{{\mathcal{R}}}
\newcommand{\cE}{{\mathcal{E}}}
\newcommand{\vep}{\varepsilon}
\renewcommand{\Re}{\mathrm{Re}\,} 
\renewcommand{\Im}{\mathrm{Im}\,}
\newcommand{\btau}{\overline{\tau}}
\newcommand{\bbu}{\overline{\bm{u}}}
\newcommand{\bu}{\overline{u}}
\newcommand{\bv}{\overline{v}}
\newcommand{\bU}{\overline{U}}
\newcommand{\bmu}{\overline{\mu}}
\newcommand{\bk}{\overline{k}}
\newcommand{\bnu}{\overline{\nu}}
\newcommand{\bla}{\overline{\lambda}}
\newcommand{\bK}{\overline{K}}
\newcommand{\bB}{\overline{B}}
\newcommand{\bA}{\overline{A}}
\newcommand{\brho}{\overline{\rho}}
\newcommand{\bthe}{\overline{\theta}}
\newcommand{\balp}{\overline{\alpha}}
\newcommand{\bp}{\overline{p}}
\newcommand{\be}{\overline{e}}
\newcommand{\bsi}{\overline{\sigma}}
\newcommand{\hW}{\widehat{W}}
\newcommand{\hU}{\widehat{U}}
\newcommand{\hV}{\widehat{V}}
\newcommand{\<}{\langle}
\renewcommand{\>}{\rangle}
\begin{document}
	
\title[Dissipative structure of higher order systems]{Dissipative structure of higher order regularizations of hyperbolic systems of conservation laws in several space dimensions}

	
\author[F. Angeles]{Felipe Angeles}
	
\address{{\rm (F. Angeles)} Instituto de 
Investigaciones en Matem\'aticas Aplicadas y en Sistemas\\Universidad Nacional Aut\'onoma de 
M\'exico\\ Circuito Escolar s/n, Ciudad Universitaria, C.P. 04510\\Cd. de M\'{e}xico (Mexico)}
	
\email{felipe.angeles@iimas.unam.mx}
	
\author[R. G. Plaza]{Ram\'on G. Plaza}
	
\address{{\rm (R. G. Plaza)} Instituto de 
Investigaciones en Matem\'aticas Aplicadas y en Sistemas\\Universidad Nacional Aut\'onoma de 
M\'exico\\ Circuito Escolar s/n, Ciudad Universitaria, C.P. 04510\\Cd. de M\'{e}xico (Mexico)}
	
\email{plaza@aries.iimas.unam.mx}
	
\author[J. M. Valdovinos]{Jos\'{e} M. Valdovinos}
	
\address{{\rm (J. M. Valdovinos)} Instituto de 
Investigaciones en Matem\'aticas Aplicadas y en Sistemas\\Universidad Nacional Aut\'onoma de 
M\'exico\\ Circuito Escolar s/n, Ciudad Universitaria, C.P. 04510\\Cd. de M\'{e}xico (Mexico)}
	
	
\email{valdovinos94@comunidad.unam.mx}
	
\begin{abstract}
This work studies the dissipative structure of regularizations of any order of hyperbolic systems of conservation laws in several space dimensions. It is proved that the seminal equivalence theorem by Kawashima and Shizuta \cite{ShKa85}, which relates the strictly dissipative structure of second-order (viscous) systems to a genuine coupling condition of algebraic type, can be extended to higher-order multidimensional systems. For that purpose, the symbolic formulation of the genuine coupling condition by Humpherys \cite{Hu05} for linear operators of any order in one dimension, is adopted and extrapolated. Therefore, the concepts of symbol symmetrizability and genuine coupling are extended to the most general setting of differential operators of any order in several space dimensions. Applications to many viscous-dispersive systems of physical origin, such as compressible viscous-capillar fluids of Korteweg type, the dispersive Navier-Stokes-Fourier system and the equations of quantum hydrodynamics, illustrate the relevance of this extension.
%
\end{abstract}
	
\keywords{dissipative structure, higher order equations, fluid mechanics, quantum hydrodynamics.}
	
\subjclass[2020]{35G35 (primary), 35Q35, 35Q40, 76Y05, 76N06 (secondary)}	
	
\maketitle
	
\setcounter{tocdepth}{1}
	
	
	
\section{Introduction}
	
In this paper, we consider higher-order linear systems of partial differential equations of evolution in several space dimensions of the form,
\begin{equation}
\label{linsyst}
U_t + \sum_{0 \leq |\alpha| \leq m} L^\alpha D^\alpha U = 0,
\end{equation}
where $t \geq 0$ and $x = (x_1, \ldots, x_d)^\top \in \R^d$, $d \in \N$, $d \geq 2$, denote time and space variables, respectively; $ U = U(x,t) \in \R^n$, $ n \geq 1$, is a vector of $n$ unknowns and $m \in \N$ is the (arbitrary) highest order of differentiation. For any multi-index $\alpha = (\alpha_1, \ldots, \alpha_d)^\top$, $\alpha_j \in \Z$, $\alpha_j \geq 0$, $L^\alpha$ denotes a constant $n \times n$ real matrix. 
According to custom, $D^\alpha$ denotes the standard differential operator with respect to the space variables.
	
Systems of the form \eqref{linsyst} arise, for example, as linearizations around equilibrium states, $\bU \in \R^n$, of higher order regularizations of hyperbolic systems of conservation laws. The latter include viscous systems of conservation laws of the form,
\begin{equation}
\label{vscl}
U_t + \sum_{j=1}^d f^j(U)_{x_j} = \sum_{i,k = 1}^d \big( B^{ik}(U) U_{x_i} \big)_{x_k},
\end{equation}
viscous-dispersive systems,
\begin{equation}
\label{visccap}
U_t + \sum_{j=1}^d f^j(U)_{x_j} = \sum_{i,k = 1}^d \big( B^{ik}(U) U_{x_i} \big)_{x_k} + \sum_{i,j,k = 1}^d \big( C^{ijk}(U) U_{x_i x_j} \big)_{x_k},
\end{equation}
or relaxation systems,
\begin{equation}
\label{relax}
U_t + \sum_{j=1}^d f^j(U)_{x_j} = Q(U)
\end{equation}
(here it is assumed that the equilibrium state $\bU \in \R^n$ solves $Q(\bU)=0$, known as the Maxwellian or equilibrium manifold; see, e.g., \cite{MaN10}), as well as other higher order regularizations and their combinations.

For these systems of equations, it is of special interest to understand how the interaction between hyperbolic-convective effects and dissipative terms of zero or higher orders affects the behavior of solutions as time goes on. For example, the presence of diffusion terms in hyperbolic-parabolic systems and the dissipation due to the source term in a hyperbolic system with relaxation can compensate the loss of regularity -by the development of singularities- derived from the quasilinear hyperbolic structure of the system (see \cite{ShKa85, KaTh83, KaSh88, LiuZ97, HaNa03, KY09} and the references cited therein).

In the literature, there exist two fundamental contributions that established the theoretical foundations to determine whether a linearized system around an arbitrary constant state exhibits some abstract symmetrizability and dissipative properties, which can be extrapolated to the nonlinear problem: first, the seminal work by Kawashima and Shizuta \cite{KaSh88,ShKa85}, which studied the strict dissipativity of a large class of second order systems which include viscous and/or relaxation effects; and, second, the work by Humpherys \cite{Hu05}, who extended Kawashima and Shizuta’s notions of strict dissipativity, genuine coupling and symmetrizability to viscous-dispersive one-dimensional systems of any order. As a motivation for the reader, in the remainder of the Introduction we examine both contributions and define the purpose of this work. 


\subsection{Motivation: the viscous case}

In physical systems, some components of the unknown variables obey hyperbolic conservation laws without relaxation or diffusion terms and therefore, only partial dissipation is present. For instance, many systems describing the dynamics of compressible viscous heat conducting fluids in space are of the form \eqref{vscl}, where the matrices $B^{ik}$ have the following block structure,
\[
B^{ik}(U)= \begin{pmatrix}
\bbO_{n-p} & 0\\
0 &\widetilde{B}^{ik}(U)
\end{pmatrix},
\]
where $\bbO_{n-p}$ denotes the zero $(n-p) \times (n-p)$ block matrix and the $p \times p$ block $\widetilde{B}^{ik}(U)$ is assumed to be positive definite for all $U$. Thus, the full viscosity matrix symbol, $B(U;\xi):=\sum_{ik}B^{ik}(U)\xi_{i}\xi_{k}$, has rank $p$ for every $\xi=(\xi_{1},..,\xi_{d})^\top\in\mathbb{R}^{d}\setminus\{0\}$. The prototype for this kind of systems is the compressible Navier-Stokes-Fourier (NSF) system of equations, describing compressible viscous heat-exchanging fluids (see, e.g., Serre \cite{Ser10b}). In the analysis of the large time behavior of solutions that are perturbations of a constant state, the assumption that,
\begin{itemize}
	\item [(C$_1$)] \emph{no inviscid characteristic direction is contained in the null space of the viscosity matrix,}
\end{itemize}
guarantees the dissipativity of the system (see, e.g., Liu and Zumbrun \cite{LiuZ97}). On the other hand, many models fit into the framework of equations \eqref{relax}, as it is the case of the compressible Euler system for isentropic flows with damping (cf. \cite{HsLiu92,SiTW03}). For this kind of systems,
\begin{itemize}
	\item [(C$_2$)] \emph{ the presence of an equilibrium manifold made up by nothing but constant equilibrium solutions,}
\end{itemize}
plays a key role in preventing the existence of persistent signals, a necessary condition for the asymptotic stability of constant states (see Mascia and Natalini \cite[Section 2]{MaN10}). In both the viscous and relaxation cases, we refer to (C$_1$) and (C$_2$) as \emph{coupling conditions} and, once imposed in the linearizations around constant states, they ensure the total dissipation of the solutions. 

Linear systems of the form \eqref{linsyst} can also come from quasilinear systems of the form
\begin{equation}\label{quas-hyp-par-sys}
\begin{aligned}
&A_{1}^{0}(u,v)u_{t} + \sum_{j=1}^{d}A^{j}_{11}(u,v)u_{x_{j}} = f_{1}(u,v,D_{x}u), \\
&A_{2}^{0}(u,v)v_{t} - \sum_{j,k=1}^{d}B^{jk}(u,v)v_{x_{j}x_{k}}= f_{2}(u,v,D_{x}u,D_{x}v),
\end{aligned}
\end{equation}
after we linearize them around a constant equilibrium state $\bU=(\bu, \bv)^\top$, that is, $\bu \in \R^{n_{1}}$ and $\bv \in \R^{n_{2}}$ such that $f_{1}(\bu, \bv, 0)=0$ and $f_{2}(\bu, \bv, 0, 0)=0$. General systems of this form were studied in detail by Kawashima \cite{KaTh83}. The author proved  that the Cauchy problem for initial data close to a constant state is globally well-posed in Sobolev spaces and the solutions approach the equilibrium solution as time goes to infinity, provided that the system satisfies a \textit{dissipation condition}. This condition guarantees that the (second order) linearized system around the constant state possess  a \textit{dissipative structure}, and it is precisely this structure what enables the author to get the global existence and the asymptotic behaviour of solutions. It is worth mentioning that viscous systems of conservations laws \eqref{vscl} can be put into the form \eqref{quas-hyp-par-sys} if the system has a convex entropy and satisfies a condition called the \emph{$N$ condition} (see Kawashima and Shizuta \cite{KaSh88} for further details). 

Let us briefly explain the meaning of the dissipation condition for systems of the form \eqref{quas-hyp-par-sys}. Consider a constant state $\bU=(\bu, \bv)^\top$ and linearize the system around it to obtain

\begin{align}
\label{eq:2order}
A^{0}U_{t}+\sum_{j=1}^{d}A^{j}U_{x_{j}}-\sum_{j,k=1}^{d}B^{jk}U_{x_{j}x_{k}}+LU=0,
\end{align}
for which we assume the matrix coefficients $A^{0}$, $L$, $A^{j}$, $B^{j,k}$, $j,k=1,\ldots,d$, are symmetric with $A^{0}>0$, $L\geq 0$, and $\sum_{j,k=1}^{d}B^{jk}\omega_{j}\omega_{k}\geq 0$ for all $\omega \in \bbS^{d-1}$. Thus the dissipation condition reads: 
\begin{itemize}
\item[(K$_1$)] \emph{Dissipation condition:} There exist square matrices $K^{j}$, $j=1,\ldots,d$, of order $n$ such that
\begin{itemize}
\item[(i)] $K^{j}A^{0}$, $j=1,\ldots,d$, is a real skew-symmetric matrix. 
\item[(ii)] The matrix
\[
\sum_{j,k=1}^{d}\left( [K^{j}A^{k}]^{s} + B^{jk} \right)\omega_{j}\omega_{k} + L
\]
is positive definite for all $\omega \in \bbS^{d-1}$. Here $[M]^{s}$ denotes the symmetric part of the matrix $M$.  
\end{itemize}  
\end{itemize}

The matrices $K^{j}$, $j=1,\ldots,d$, serve as a computational tool, allowing us to obtain energy decay rates for the solutions of \eqref{eq:2order} (see, for example, \cite{AnMP20, KaTh83, UKS84}). Indeed, this condition is motivated by the work of Umeda, Kawashima and Shizuta \cite{UKS84}, where they showed that it is satisfied for the linearized equations of electro-magneto-fluid dynamics in three space dimensions ($d = 3$), and that it implies that solutions of this system decay at a rate of $(1+t)^{-3/4}$ in $L^{2}(\R^{3})$ for initial data in $L^{2}(\R^{3}) \cap L^{1}(\R^{3})$.

In a subsequent work, Shizuta and Kawashima \cite{ShKa85} proved that the dissipation condition (K$_1$) is equivalent to each of the following conditions:

\begin{itemize}
	\item [(K$_2$)] \emph{Strict dissipativity}: If $\lambda=\lambda(\xi)$ is an eigenvalue of system \eqref{eq:2order} in the Fourier space, that is, if $\lambda$ satisfies	
\begin{equation}\label{eig-prob-2ord}
\left( \lambda A^{0} + i \vert \xi \vert A(\omega) + \vert \xi \vert^2 B(\omega) + L\right) \phi=0, \quad \phi
\in \C^{n}, \quad \phi \neq 0,
\end{equation}
for  $\xi \in \R^{d} \setminus \{ 0 \}$, $\omega = \xi/ \vert \xi \vert$, then $\operatorname{Re}\lambda(\xi)<0$. Here $A(\omega)$ and $B(\omega)$ are given by
\[
A(\omega)=\sum_{j=1}^{d}A^{j}\omega_{j}, \quad 
B(\omega)=\sum_{j,k=1}^{d}B^{jk}\omega_{j}\omega_{k},\quad \omega \in \bbS^{d-1}. 
\]
\item [(K$_3$)] \emph{Genuine coupling condition}: If for some $\omega \in \bbS^{d-1}$ there holds and $\sum_{j,k}B^{jk}\omega_{j}\omega_{k}v=0$ for $v\in\mathbb{R}^{n}$, $v\neq 0$, then $\mu A^{0}v+\sum_{j}A^{j}\xi_{j}v\neq 0$ for any $\mu\in\mathbb{R}$.

\item[(K$_4$)] There exists a positive constant $c$ such that for every $\xi \in \R^{d}$, $\xi \neq 0 $, we have 
\[
\operatorname{Re} \lambda(\xi) \leq - c \frac{ \vert \xi \vert^{2}}{1 + \vert \xi \vert^{2}},
\]
for eigenvalues $\lambda = \lambda(\xi)$ for \eqref{eig-prob-2ord}.  

\end{itemize}
The authors also pointed out that the dissipation condition can be replaced by the following one.

\begin{itemize}
\item [(K$_1'$)] \emph{Existence of a compensating function}: There exists a matrix valued function $K(\omega)$, $\omega \in \bbS^{d-1}$, such that
\begin{itemize}
\item[(i)] $K(\omega)A^{0}$ is skew-symmetric for all $\omega \in \bbS^{d-1}$, and
\item[(ii)] the matrix
\[
[K(\omega)A(\omega)]^{s} + B(\omega) + L 
\]
is positive definite for all $\omega \in \bbS^{d-1}$. 
\end{itemize}
The matrix $K(\omega)$ is called a \emph{compensating function} for system \eqref{eq:2order}. 
\end{itemize}


Regarding the compensating function, it should be noted that it has been used not only to study the asymptotic behaviour of constant equilibrium solutions of the systems \eqref{vscl}, \eqref{relax}, and \eqref{quas-hyp-par-sys}, but also in the stability theory of small-amplitude shock profiles for systems of viscous, relaxation and/or radiating conservations laws (see, e.g., \cite{HuZ02, MZ09, NPZ10}). 

\subsection{Extension to viscous-dispersive systems}

Another sort of models whose linearization leads to systems of the form \eqref{linsyst} are viscous-capillar systems of conservation laws, which have the general form of a third order system \eqref{visccap}.
Examples of the latter (modulo some nonlinear terms) come from compressible fluid dynamic models that go beyond the Navier-Stokes equations: the model describing a compressible, heat conducting, and viscous fluid with capillarity effects which was proposed by Korteweg \cite{Kortw1901} and later reformulated by Dunn and Serrin \cite{DS85}, known as \emph{compressible fluids of Korteweg type}; the dispersive correction to the NSF system proposed by Levermore \emph{et al.} \cite{Lev03rep, LeSu11}; quantum hydrodynamics (QHD) models \cite{Mdlng27} with the Bohm potential \cite{Boh52a}; and the ``modified'' Navier-Stokes equations proposed by Brenner \cite{Brnnr05a, Brnnr05b} (see also \cite{FeiVass10}), among many others.


Just like in the viscous case, for systems of the form  \eqref{visccap} it is important to understand the interaction between the convective/dispersive mechanisms and the ones associated to dissipation. In this direction we have to mention the seminal work by Humpherys \cite{Hu05}, where the equivalence theorem by Kawashima and Shizuta was extended to higher order \emph{one-dimensional} linear systems of the form  
\begin{align}
\label{eq:highorder}
V_{t}=-\sum_{k=0}^{m}D_{k}\partial_{x}^{k}V,\quad x\in\mathbb{R},\quad t>0,\quad V\in\mathbb{R}^{n}.
\end{align}

In order to establish appropriate coupling and dissipation conditions for this case, Humpherys groups together all the terms involving space derivatives of even order and associates a \emph{generalized viscosity}  symbol matrix of the form,
\begin{align*}
B(\xi)=\sum_{k~\text{even}}(-1)^{k/2}D_{k}\xi^{k},
\end{align*}
and regarding the remaining odd terms, the \emph{generalized transport symbol} is given by
\begin{align*}
A(\xi)=\sum_{k~\text{odd}}D_{k}(i\xi)^{k-1}.
\end{align*}

In this fashion, we say that \eqref{eq:highorder} is genuinely coupled \emph{in the sense of Humpherys} if no eigenvector of $A(\xi)$ belongs to the kernel of $B(\xi)$, whereas an smooth matrix valued function $K(\xi)$ is said to be a \emph{compensating matrix symbol} for system \eqref{eq:highorder} if it is skew-symmetric for all $\xi \in \R$ and satisfies
\[
[K(\xi) A(\xi)]^{s} + B(\xi) >0,\quad \forall \xi \in \R.
\] 
Note that the characteristic equation can be written as
\begin{align*}
\det\left(\lambda+i\xi A(\xi)+B(\xi)\right)=0.
\end{align*}
Then, the equivalence between the strict dissipativity, the existence of a compensating matrix and Humpherys' coupling condition is proved under the assumptions:
\begin{itemize}
	\item [(H$_1$)] $A(\xi)$ is symmetric and with eigenvalues of constant multiplicities.
	\item [(H$_2$)] $B(\xi)$ is symmetric and positive semi-definite.
\end{itemize}

In contrast with the result of Kawashima and Shizuta, assumption (H$_1$) allows the explicit computation of the compensating matrix as the Drazin inverse of the commutator operator $[A(\xi),\cdot]$, which is given by the formula
\begin{align}
K(\xi)=\sum_{i\neq j}\frac{\pi_{i}(\xi)B(\xi)\pi_{j}(\xi)}{\mu_{i}(\xi)-\mu_{j}(\xi)},\label{eq:humpcompensation}
\end{align}
where $\{\mu_{j}(\xi)\}_{j=1}^{r}$ is the set of distinct eigenvalues of $A(\xi)$ with corresponding eigenprojections $\{\pi_{j}(\xi)\}_{j=1}^{r}$. In particular, $K(\xi)$ is real analytic. Also observe that the constant multiplicity is needed to keep the denominator in \eqref{eq:humpcompensation} away from zero.

Regarding the extendibility of this equivalence result to symmetrizable systems, Humpherys argues against the limitation of considering \emph{term wise symmetrizers} (also known as \emph{Friedrichs' symmetrizers}; cf. \cite{Frd54,FLa67,Godu61a}) for higher order systems, since, in some cases, the conditions for simultaneous symmetrizability of each constant coefficient matrix make the existence of classical Friedrichs' symmetrizers impossible. Consequently, Humpherys introduces the notion of a \emph{symbolic symmetrizer} for both symbols $A(\xi)$ and $B(\xi)$. Symbol symmetrizability is, therefore, an important hypothesis to prove the equivalence theorem. Examples of physical, one-dimensional systems which are symbol symmetrizable but not symmetrizable in the sense of Friedrichs include isothermal compressible fluids with viscosity and capillarity (cf. Humpherys \cite{Hu05}, Plaza and Valdovinos \cite{PlV22}), heat conducting viscous capillar compressible fluids (cf. Plaza and Valdovinos \cite{PlV2}) and isentropic quantum hydrodynamics systems with artificial viscosity (cf. Plaza and Zhelyazov \cite{PlZ24}). 


At this point, we recall the recent work by Kawashima \emph{et al.} \cite{KSX22} on the dissipative structure of viscous-capillar systems of the form \eqref{visccap}, where the authors apply their results to some versions of the compressible viscous-capillar fluids of Korteweg type. It is to be observed that, in \cite{KSX22}, no equivalence theorem for those systems is presented. Instead, their analysis is based on some \emph{ad hoc} ``craftsmanship conditions'' which allow the authors to handle the non-symmetric dispersive terms. Under such conditions, they were able to show that the linear systems for the isothermal and heat conducting compressible viscous-capillar fluid of Korteweg type in several space dimensions are uniformly dissipative, satisfying
\[
\operatorname{Re} \lambda(\xi) \leq -c \vert \xi \vert^{2}, \quad \xi \in \R^{d} \setminus \{ 0 \},
\] 
and which they call dissipative of the \emph{regularity-gain type}, as it corresponds to the case $p > q$ in \eqref{diss-(k,l)} ($p = 1$ and $q = 0$; see Remark \ref{Rmr-reg-typ} below). Similar results have been also obtained in the case of one space dimension by two of the authors of the present paper (cf. \cite{PlV22,PlV2}) under the framework of Humpherys' equivalence theorem and symbol symmetrizability. Moreover, the results of \cite{PlV22,PlV2} also include the nonlinear stability analysis of  equilibrium constant states under small perturbations. (The results of \cite{KSX22} hold at a linear level, only.) It is to be noticed that another model considered in \cite{KSX22} is the system that describes a compressible, heat conducting, \emph{non-viscous}, capillar fluid of Korteweg type, also referred to as the Euler-Fourier-Korteweg (EFK) system, but only in one space dimension. The authors showed that this system is uniformly dissipative of the \emph{standard type} (see Remark \ref{Rmr-reg-typ} below). The reason why the multidimensional version of this system was not included in the analysis of \cite{KSX22} was not explained in that reference. In this paper we prove that, in several space dimensions, the strict dissipativity of the EFK system actually \emph{does not hold} (see Section \ref{secEFKmultid} below).

The potential applications of Humpherys' extension are numerous, not only because it pertains to linear operators of any order, but also due to the flexibility and generality of the symbol formulation. Even though Humpherys did not pursue his studies at the nonlinear level, we regard Humpherys' work as a fundamental contribution to the theory. 


\subsection{Contributions of this work}

In this paper, we extend the main result of Humpherys \cite{Hu05} to several space dimensions, a formulation that demands attention due to its potential applications to physical systems and which, as far as we know, has not been reported before in the literature. Albeit such extension may seem quite natural, in this paper we make some technical improvements to the one-dimensional proof; for example, we circumvent the constant multiplicity assumption (H$_1$) for the ``hyperbolic'' symbol $A(\xi)$. In this fashion, we consider our extension not merely incremental but a technical contribution as well (for a discussion of the relevance of avoiding the constant multiplicity assumption in the context of shock propagation see \cite{Ng09,MeZ05}). We study higher order linear systems of the form \eqref{linsyst} and apply Fourier's transform to obtain the equivalent system,
\begin{align*}
\widehat{U}_{t}=P(i\xi)\widehat{U},\quad\mbox{with}\quad P(i\xi)=-i|\xi|A(\xi)-B(\xi).
\end{align*}

Similarly to the one dimensional case, the symbol $|\xi|A(\xi)$ involves all the odd powers of $|\xi|$, with $\xi\in\mathbb{R}^{d}\setminus\{0\}$, and the remaining terms, with even powers, are contained in $B(\xi)$ (see equations \eqref{gen-flux-visc}). Then, with these \emph{generalized transport} and \emph{generalized viscosity} matrices, a genuinely coupling condition is established (see Definition \ref{gencoupling} below). Nonetheless, we \emph{do not make assumption} (H$_1$) in order to prove the equivalence between the genuinely coupling condition, the strict dissipativity and the existence of a compensating function. Instead, we proceed as in \cite{ShKa85}, that is, we begin by associating the corresponding symmetrized matrices $A_S(\xi)$ and $B_{S}(\xi)$ to $A(\xi)$ and $B(\xi)$, respectively. In some neighborhood of each $\xi_{0}\in\mathbb{R}^{d}\setminus\{0\}$, $A_S(\xi)$ can be approximated by a matrix symbol $\widetilde{A}_{S}(\xi)$ with constant multiplicities and, for all $\xi\in\mathbb{R}^{d}\setminus\{0\}$, $B_{S}(\xi)$ is similar to $B(\xi)$. Moreover, locally, the coupling condition holds true for the triplet $(I,\widetilde{A}_{S}(\xi),B_{S}(\xi))$ provided that it holds true for the triplet $(I, A_S(\xi), B_S(\xi))$. At this point, we construct the compensating matrix in a small neighborhood of $\xi_{0}$ by using Humpherys' formula \eqref{eq:humpcompensation}. Then we obtain the global form of $K(\xi)$ by means of a partition of unity argument. In this fashion, we combine the ideas presented in \cite{Hu05} and \cite{ShKa85}, in order to provide the most general formulation of the equivalence theorem (i.e., without the constant multiplicity assumption) for linear systems involving high order terms that underlie symbolic symmetrizers.

To exhibit the relevance of the extension, we examine many physical examples in several space dimensions, ranging from compressible viscous fluids with capillarity to models in quantum hydrodynamics. In each case, we look for a symmetrizer and verify the genuinely coupling condition. Whenever the genuine coupling condition is satisfied, we can apply Theorem \ref{equivalencetheo} to conclude the existence of a compensating symbol function. Even though the systems that satisfy the constant multiplicity assumption have a global compensating function given by formula \eqref{eq:humpcompensation}, in some cases, the computation of the eigenprojectors $\{\pi_{j}(\xi)\}_{j=1}^{r}$ is not possible, due to the non-availability of analytic expressions for the eigenvectors of $A(\xi)$, as functions of $\xi\in\mathbb{R}^{d}\setminus\{0\}$. Still, a compensating function can be found (perhaps more conveniently) by the method of inspection.

Let us summarize the contributions of this paper:
\begin{itemize}
\item[-] We extend the notions of symbol symmetrizability, genuine coupling, strict dissipativity and of the compensating matrix symbol to linear systems of any order of differentiation in several space dimensions.

\item[-] We combine the explicit formula for the compensating matrix symbol, say $K(\xi)$, and the constant multiplicity of the symmetrized symbols $\widetilde{A}_S(\xi)$ (see \eqref{hA} below) to verify the equivalence of the existence of $K(\xi)$ and the genuine coupling condition, circumventing in this fashion the constant multiplicity assumption (H$_1$) for the original symbol.

\item[-] We complete the proof of the equivalence theorem by showing the equivalence in several space dimensions of genuine coupling and the strict dissipativity of the system.

\item[-] Based on suitable assumptions on the compensating matrix symbol we deduce pointwise estimates in the Fourier space for the solutions to the linearized system. These pointwise estimates will eventually lead to decaying estimates on the physical space. The latter will depend, on a case by case basis, on the symmetrizer and on the compensating matrix symbols, which determine the appropriate energy spaces.

\item[-] We examine several examples from physical theories which satisfy the assumptions of the equivalence Theorem \ref{equivalencetheo}. These examples include: isothermal compressible fluids exhibiting viscosity and capillarity \cite{DS85,PlV22}; heat-conducting, viscous compressible fluids of Korteweg type \cite{DS85,PlV2,Kot10}; one-dimensional inviscid heat conducting fluids with capillarity \cite{KSX22}; the dispersive Navier-Stokes system \cite{Le96,Lev03rep,LeSu11}; and, hydrodynamics systems from quantum semiconductor theory \cite{GarC94}.

\item[-] Finally, we present a couple of physical systems in several space dimensions which do not satisfy the hypotheses of the equivalence Theorem \ref{equivalencetheo}: the full quantum hydrodynamics systems with energy exchanges \cite{GarC94} and the Euler-Korteweg-Fourier system in several space dimensions (the inviscid version of the Kortweg system derived by Dunn and Serrin \cite{DS85}).
\end{itemize}

\subsection*{Plan of the paper} This paper is structured as follows. The preliminary Section \ref{secprel} contains the extensions to multi-dimensions of the definitions of genuine coupling, symbol symmetrizability and strict dissipativity for systems of arbitrary order of differentiation. Section \ref{secequiv} is devoted to the proof of the equivalence theorem (see Theorem \ref{equivalencetheo} below). Section \ref{secenergy} describes how to obtain pointwise energy estimates in the Fourier space for strictly dissipative systems. In Section \ref{secappl} we describe in detail some systems of physical origin and verify the hypotheses of the equivalence theorem. These examples include compressible fluids of Korteweg type, systems in quantum hydrodynamics and the dispersive Navier-Stokes equations. Moreover, we employ the estimates in Fourier space to provide decaying energy estimates for the linearized operator in appropriate physical energy spaces. Finally, in Section \ref{secnoway} we exhibit a couple of examples of models in continuum mechanics which \emph{do not} satisfy the hypotheses of the equivalence theorem.
	
\subsection*{Notation}
	
The dimension of the physical space is $d \geq 2$. The Euclidean inner product in $\C^n$ is denoted as $\langle v, u \rangle = \sum_{j=1}^n v_j^* u_j$ where $\lambda^*$ denotes complex conjugation of any $\lambda \in \C$. All vectors $u \in \R^d$ are \emph{column} vectors, so that $u = (u_1, \ldots, u_d)^\top$. We employ the following (standard) notation: for any multi-index $\alpha = (\alpha_1, \ldots, \alpha_d)^\top$, $\alpha_j \in \Z$, $\alpha_j \geq 0$ we define $|\alpha| = \sum_{j=1}^d \alpha_j$, $\alpha ! = \alpha_1 ! \cdots \alpha_d!$ and $\zeta^\alpha = \zeta_1^{\alpha_1} \zeta_2^{\alpha_2} \cdots \zeta_d^{\alpha_d}$ for any $\zeta \in \C^d$. Higher-order derivatives are expressed by multi-indices as
\[
D^\alpha = \prod_{j=1}^d \Big( \frac{\partial}{\partial x_j}\Big)^{\alpha_j} = \frac{\partial^{}|\alpha|}{\partial x_1^{\alpha_1} \cdots \partial x_d^{\alpha_d}}.
\]
In particular, if $|\alpha| = 0$ then $D^\alpha$ is the identity operator. The set of linear isomorphisms from $\R^n$ to $\R^n$ is denoted as $GL_{n}$. $\bbO_{p \times q}$ will denote the zero $p \times q$ block matrix, for any $p, q \in \N$. The square zero and identity $p \times p$ matrices, with $p \in \N$, will be written as $\bbO_{p}$ and $\bbI_{p}$, respectively. For any real matrix $M \in \R^{n \times n}$, $[M]^{s} := \frac{1}{2}(M+M^\top)$ denotes its symmetric part. 
	
\section{Preliminaries}
\label{secprel}
	
In this section we extend to several space dimensions the notions of strict dissipativity, symmetrizability and genuine coupling of higher order systems introduced by Kawashima and Shizuta for constant coefficient second order systems \cite{KaSh88} and by Humpherys in the context of one space dimensional higher order systems (cf. \cite{Hu05}). Take the Fourier transform to system \eqref{linsyst}. The result is
\begin{equation}
\label{Fouriersyst}
\hU_t + \sum_{0 \leq |\alpha| \leq m} (i\xi)^\alpha L^\alpha \hU = 0, \quad \xi \in \R^d,
\end{equation}
where $\hU = \hU(\xi,t)$ denotes the Fourier transform of $U = U(x,t)$. For each $\xi\in \R^d$, $\xi \neq 0$, let us denote
\[
\omega = \omega(\xi) := \frac{\xi}{|\xi|} \in \bbS^{d-1}.
\]
Following Humpherys \cite{Hu05}, we split the symbol into odd and even terms. For that purpose, we define
\begin{equation}
\label{evenodd}
\left\{
\begin{aligned}
A^{\alpha}(\omega) &:= (-1)^{\tfrac{1}{2}(|\alpha|-1)} \omega^\alpha L^\alpha, & \quad |\alpha| \; \text{odd, } 1 \leq |\alpha| \leq m, \\
B^{\alpha}(\omega) &:= (-1)^{|\alpha|/2} \omega^\alpha L^\alpha, & \quad  |\alpha| \; \text{even, } 0 \leq |\alpha| \leq m,
\end{aligned}
\right.
\end{equation}
for all $\omega \in \bbS^{d-1}$, so that system \eqref{Fouriersyst} can be recast as
\begin{equation}
\label{Fourieroe}
\hU_t + i |\xi| A(\xi) \hU + B(\xi) \hU = 0,
\end{equation}
where
\begin{equation}\label{gen-flux-visc}
\begin{aligned}
A(\xi) &:= \sum_{\substack{1 \leq |\alpha| \leq m \\ |\alpha| \, \text{odd}}} |\xi|^{\vert \alpha \vert - 1} A^\alpha(\omega(\xi)), \\
B(\xi) &:= \sum_{\substack{0 \leq |\alpha| \leq m \\ |\alpha| \, \text{even}}} |\xi|^{\vert \alpha \vert} B^\alpha(\omega(\xi)).
\end{aligned}
\end{equation}
	
The solutions to system \eqref{Fouriersyst} and their evolution are determined by the family of eigenvalue problems,
\begin{equation}
\label{spectralprob}
\big( \lambda \bbI_n + i |\xi|A(\xi) + B(\xi) \big) \hU = 0,
\end{equation}
parametrized by the wave numbers $\xi\in \R^d$ in Fourier space and the frequency $\lambda \in \C$, the latter playing the role of the spectral parameter.
	
\begin{definition}
The system \eqref{linsyst} is \emph{strictly dissipative} if for all $\xi \in \R^d$, $\xi \neq 0$, the solutions to the spectral problem \eqref{spectralprob} satisfy
\[
\Re \lambda(\xi) < 0.
\]
\end{definition}
	
\begin{remark}
\label{Rmr-reg-typ}
Following Ueda \emph{et al.} \cite{UDK12, UDK18} it is said that the linear system \eqref{linsyst} is strictly dissipative of type $(p,q)$ (for non-negative integers $p$ and $q$) if the eigenvalues of the spectral problem \eqref{spectralprob} satisfy
\begin{equation}
\label{diss-(k,l)}
\Re \lambda(\xi) \leq - \, \frac{C |\xi|^{2p}}{(1 + |\xi|^2)^{q}}, 
\end{equation}
for some uniform constant $C > 0$ and all $\xi \in \R^d$, $\xi\neq 0$. The system is of standard type when $p = q$ \cite{UDK12}, and of regularity-loss type when $p < q$ \cite{UDK18}. Notice that strict dissipativity of type $(1,0)$ is precisely that of the heat kernel. Hence, the third case when $p > q$ is called dissipativity of regularity-gain type \cite{KSX22}. The type of dissipativity determines the decay rate of the solutions to the linearized system and the energy spaces where the solution belongs to (for examples in one dimension, see \cite{PlV22,PlV2}). 
\end{remark}
\begin{remark}
It is to be observed that strict dissipativity is equivalent to the stability of the essential spectrum of the linearized operator around the equilibrium state when computed with respect to the space $L^2(\R^d)$ of finite energy perturbations. Indeed, if the constant matrices $L^{\alpha}$ result from linearizing coefficients around a constant state $\bU$ then equation \eqref{linsyst} defines a linear operator, $\mathcal{L} = - \sum_{0 \leq |\alpha| \leq m} L^\alpha D^\alpha$, $\mathcal{L} : L^2 \to L^2$, with dense domain $D(\mathcal{L}) = H^m(\R^d)$ and whose essential spectrum is determined by the dispersion relation \eqref{spectralprob}.
\end{remark}

%
%
	
	Let us now recall that the constant coefficients linear system \eqref{linsyst} is said to be \emph{symmetrizable in the sense of Friedrichs} (see, e.g., Friedrichs \cite{Frd54}, Lax and Friedrichs \cite{FLa67} and Godunov \cite{Godu61a}) if there exists a real symmetric positive-definite constant matrix $S \in \R^{n \times n}$ such that the matrices $SL^\alpha$ are all real symmetric, for any multi-index $\alpha$. This notion, however, is of restricted applicability for higher order systems (see Remark \ref{remnoFsymm} below). Humpherys \cite{Hu05} then introduces the following definition, which generalizes the standard notion of symmetrizability in the sense of Friedrichs.
	
\begin{definition}
\label{defsymbolsymm}
System \eqref{linsyst} is said to be \emph{symbol symmetrizable} (or \emph{symmetrizable in the sense of Humpherys}) if there exists a smooth $n \times n$ symbol $S = S(\xi)$, $S \in C^\infty(\R^d\setminus \{ 0 \};\R^{n \times n})$, symmetric and positive definite for all $\xi\neq 0$, such that both $S(\xi) A(\xi)$ and $S(\xi) B(\xi)$ are symmetric for each $\xi \in \R^d$, and $S(\xi) B(\xi)$ is positive semi-definite, $S(\xi) B(\xi) \geq 0$, for all $\xi \in \R^d \setminus \{ 0 \}$.
\end{definition}

\begin{remark}
\label{remnoFsymm}
There exist examples of physical systems in one space dimension which are not symmetrizable in the sense of Friedrichs, but symbol symmetrizable in the sense of Humpherys (see, e.g., \cite{Hu05,PlV22,PlV2,PlZ24}). In this paper, in Section \ref{secappl}, we present their multidimensional counterparts: symbol symmetrizable systems which are not symmetrizable in the classical sense. This fact, and the importance of symmetrization of differential operators in relation to the existence of associated convex entropy functions, make difficult to underestimate the importance of Definition \ref{defsymbolsymm}.
\end{remark}


Once a certain system is symmetrized in the sense of Humpherys, one may ask whether there exists a compensating matrix symbol for it.

\begin{definition}
\label{Matrixsymbol}
Assume that system \eqref{linsyst} is symbol symmetrizable. Then a smooth, real matrix valued function,  $K\in C^{\infty} \left( \R^d \setminus \{ 0 \} ; \R^{n \times n} \right)$, is said to be a \emph{compensating matrix symbol} for the triplet $(S, A, B)$ provided that
\begin{itemize}
\item[(a)] $K(\xi)S(\xi)$ is skew-symmetric for all $\xi\in \R^d$, $\xi \neq 0$; and,
\item[(b)] $\big[K(\xi) S(\xi) A(\xi)\big]^{s}+ S(\xi)B(\xi) \geq \theta(\xi) \bbI_n > 0$ for all $\xi \in \R^d \setminus \{ 0 \}$, and some $\theta = \theta(\xi) > 0$.
\end{itemize}
\end{definition}
	
	
Finally, for any system of the form \eqref{Fouriersyst}, whether it is symbol symmetrizable or not, we have the following fundamental property, known as the \emph{genuine coupling condition}.
\begin{definition}
\label{gencoupling}
It is said that system \eqref{linsyst} is \emph{genuinely coupled} if for all fixed $\xi \in \R^d$ with $\xi \neq 0$, $(\mu \bbI_n + A(\xi)) \psi \neq 0$ for every $\mu \in \R$ and every $\psi \in \ker B(\xi)$, $\psi \neq 0$.
\end{definition}
	
\begin{remark}
Notice that the genuine coupling condition underlies a simple algebraic property for system \eqref{Fouriersyst}, inasmuch as it reduces to verifying that no eigenvector of the generalized transport symbol, $A(\xi)$, lies in the kernel of the generalized viscosity (or dissipation) symbol, $B(\xi)$.
\end{remark}
	
\section{The equivalence theorem}
\label{secequiv}
	
In this section we prove the equivalence between strict dissipativity, genuine coupling and the existence of a compensating matrix symbol in several space dimensions. 

\subsection{Preliminary results}

Before we proceed to enunciate and to prove the equivalence theorem, we are going to state some technical results which will be applied later on. Let us start with the following lemma.

\begin{lemma}
\label{expendable1}
Let $\mathcal{V}$ be the vector space of square symmetric matrices of order $n$ and let $\mathcal{V}_{+}\subset \mathcal{V}$ be the open set of symmetric positive definite matrices. Define the function $F:\mathcal{V} \rightarrow \mathcal{V}$ by $F(M)=M^{2}$ for any $M\in \mathcal{V}$. Then $F^{-1}$ exists and $F$ is a smooth diffeomorphism of $\mathcal{V}_{+}$ onto $F(\mathcal{V}_{+})$.
\end{lemma}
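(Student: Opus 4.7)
The plan is to apply the inverse function theorem. First, I would observe that $F$ is a polynomial map in the entries of $M$, hence smooth on all of $\mathcal{V}$. The Fréchet derivative at $M \in \mathcal{V}$ is the linear operator $DF(M): \mathcal{V} \to \mathcal{V}$ given by
\[
DF(M)[H] = MH + HM, \qquad H \in \mathcal{V}.
\]
The central computation is to verify that $DF(M)$ is a linear isomorphism of $\mathcal{V}$ whenever $M \in \mathcal{V}_+$. For this, suppose $MH + HM = 0$ and diagonalize $M = P\Lambda P^\top$ with $P$ orthogonal and $\Lambda = \operatorname{diag}(\lambda_1,\ldots,\lambda_n)$, $\lambda_i > 0$. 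Setting $\widetilde{H} := P^\top H P \in \mathcal{V}$ yields $\Lambda \widetilde{H} + \widetilde{H}\Lambda = 0$, i.e.\ $(\lambda_i + \lambda_j)\widetilde{H}_{ij}=0$ for all $i,j$. Since $\lambda_i + \lambda_j > 0$, this forces $\widetilde{H}=0$ and hence $H = 0$, so $DF(M)$ is injective on the finite-dimensional space $\mathcal{V}$ and therefore bijective.

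Next, I would establish that $F$ restricted to $\mathcal{V}_+$ is a global bijection onto its image (in fact onto $\mathcal{V}_+$). Injectivity: if $M_1,M_2 \in \mathcal{V}_+$ satisfy $M_1^2 = M_2^2$, then by uniqueness of the positive definite square root of a symmetric positive definite matrix (which follows, for example, by simultaneously diagonalizing and comparing eigenvalues) $M_1 = M_2$. Surjectivity onto $\mathcal{V}_+$: given $N \in \mathcal{V}_+$, the positive definite square root provides a preimage $M \in \mathcal{V}_+$ with $M^2 = N$; hence $F(\mathcal{V}_+) = \mathcal{V}_+$.

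Combining the two ingredients, the inverse function theorem applied at every $M \in \mathcal{V}_+$ (which is open in $\mathcal{V}$) yields that $F$ is a local $C^\infty$ diffeomorphism at each such $M$, and the global bijectivity established above promotes this to a global smooth diffeomorphism $F: \mathcal{V}_+ \to F(\mathcal{V}_+) = \mathcal{V}_+$, with $F^{-1}$ smooth on its image.

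The only genuinely delicate step is the invertibility of $DF(M)$ on $\mathcal{V}_+$; once this eigenvalue argument is carried out, the rest of the proof is a direct assembly of the inverse function theorem together with the standard uniqueness of the positive definite square root. No other obstacles are expected.
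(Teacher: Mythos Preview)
Your proposal is correct and follows essentially the same route as the paper: compute $DF(M)[H]=MH+HM$, prove injectivity by an eigenvalue argument (the paper phrases it as ``$Ve_i$ would be an eigenvector of $M$ with negative eigenvalue''), and invoke the inverse function theorem. Your version is in fact slightly more complete, since you supply the global injectivity step via uniqueness of the positive square root and identify $F(\mathcal{V}_+)=\mathcal{V}_+$, whereas the paper leaves these points implicit.
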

\begin{proof}
Let $M\in \mathcal{V}_{+}$ be arbitrary. It holds that 
\begin{align*}
DF(M)V=VM+MV, \quad\text{for any} \; V\in\mathcal{V}.
\end{align*}
Now, assume that $VM+MV = 0_n$, the null square matrix of order $n$. Let $\{e_{1},..,e_{n}\}$ be a basis of eigenvectors of $M$ with corresponding eigenvalues $\{\lambda_{1},..,\lambda_{n}\}$. Then,
\begin{align*}
MVe_{i}=-VMe_{i}=-\lambda_{i}Ve_{i}\quad\mbox{for all}\quad i=1,..,n.
\end{align*}
It follows that $Ve_{i}$ is an eigenvector of $M$ with negative eigenvalue, a contradiction. Therefore $DF(M)$ in injective. The result now follows by the inverse function theorem (see, for instance,  Theorem 2.5.2 in Abraham \emph{et al.} \cite{AMR88}).
\end{proof}
\begin{lemma}
\label{hypequiv}
Let us assume that \eqref{linsyst} is symbol symmetrizable and set $\widehat{V}:=S^{1/2}(\xi)\widehat{U}$. Then $\widehat{V}$ satisfies the following symmetric system
\begin{align}
\widehat{V}_{t}+i|\xi|A_{S}(\xi)\widehat{V}+B_{S}(\xi)\widehat{V}=0,\label{Fouriersyst2}
\end{align}
where $A_{S}, B_{S}\in C^{\infty}(\mathbb{R}^{d}\setminus\{0\};\mathbb{R}^{n\times n})$ are given by
\begin{align*}
A_{S}(\xi):=S^{1/2}(\xi)A(\xi)S^{-1/2}(\xi)\quad\mbox{and}\quad B_{S}(\xi):=S^{1/2}(\xi)B(\xi)S^{-1/2}(\xi).
\end{align*}
Moreover, the conditions of strict dissipativiy and genuinely coupling hold for \eqref{Fouriersyst} if and only if they hold for \eqref{Fouriersyst2}. Assume that $K_{S}\in C^{\infty}(\mathbb{R}^{d}:\mathbb{R}^{n\times n})$ is a compensating matrix symbol for the triplet $(I,A_{S},B_{S})$. Then,
\begin{align}
K(\xi):=S^{1/2}(\xi)K_{S}(\xi)S^{-1/2}(\xi)\label{compmatrix}
\end{align}
is a compensating matrix symbol for the triplet $(S,A,B)$.
\end{lemma}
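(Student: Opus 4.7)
The key analytic ingredient is that $S^{1/2}(\xi)$ and $S^{-1/2}(\xi)$ are smooth on $\R^d \setminus \{0\}$, which follows from Lemma \ref{expendable1}: the squaring map is a smooth diffeomorphism on symmetric positive-definite matrices, hence its inverse (the principal square root) is smooth, and composing with the smooth symbol $S(\xi)$ yields a smooth positive-definite square root. The symbol-symmetrizability hypothesis, together with transposition of the symmetric matrices $SA$ and $SB$, yields the algebraic identities $A^\top = S A S^{-1}$ and $B^\top = S B S^{-1}$. To derive system \eqref{Fouriersyst2}, the plan is to left-multiply \eqref{Fourieroe} by $S^{1/2}(\xi)$ and insert $S^{-1/2} S^{1/2} = \bbI_n$ between each coefficient and $\hU$; since $S$ is $t$-independent and $\hV = S^{1/2}\hU$, this yields exactly the target equation. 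Symmetry of $A_S$ and $B_S$ then follows by direct transposition using the identities above, e.g.\ $A_S^\top = S^{-1/2} A^\top S^{1/2} = S^{1/2} A S^{-1/2} = A_S$, while positive semi-definiteness of $B_S$ is inherited from that of $SB$ because conjugation by the symmetric positive-definite matrix $S^{1/2}$ preserves the sign of a quadratic form.

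Next I would verify equivalence of strict dissipativity and genuine coupling. The spectral pencils $\lambda \bbI_n + i|\xi| A(\xi) + B(\xi)$ and $\lambda \bbI_n + i|\xi| A_S(\xi) + B_S(\xi)$ are similar via $S^{1/2}(\xi)$, hence share the same eigenvalues, so the strict dissipativity conditions coincide. For genuine coupling, $S^{1/2}(\xi)$ maps $\ker B(\xi)$ isomorphically onto $\ker B_S(\xi)$ since $B_S \phi = S^{1/2} B S^{-1/2}\phi$ and $S^{1/2}$ is invertible. Combined with the intertwining identity $S^{1/2}(\mu \bbI_n + A)\psi = (\mu \bbI_n + A_S) S^{1/2}\psi$, this shows that $(\mu \bbI_n + A)\psi \neq 0$ for every $\mu \in \R$ if and only if $(\mu \bbI_n + A_S)\phi \neq 0$ for every $\mu \in \R$, where $\phi := S^{1/2}\psi$ ranges over $\ker B_S(\xi)\setminus\{0\}$ as $\psi$ ranges over $\ker B(\xi)\setminus\{0\}$.

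Finally, for the compensating-matrix claim I would set $K := S^{1/2} K_S S^{-1/2}$ and check the two conditions in Definition \ref{Matrixsymbol}. Using $S^{1/2} A = A_S S^{1/2}$ and $S^{1/2} B = B_S S^{1/2}$, one obtains
\[
K S = S^{1/2} K_S S^{1/2}, \quad K S A = S^{1/2} (K_S A_S) S^{1/2}, \quad S B = S^{1/2} B_S S^{1/2}.
\]
Skew-symmetry of $KS$ is immediate from skew-symmetry of $K_S$. For the positivity bound, the sandwich structure delivers
\[
[K S A]^{s} + S B = S^{1/2}\bigl([K_S A_S]^{s} + B_S\bigr) S^{1/2} \geq \theta(\xi)\, S(\xi) \geq \theta(\xi)\, \lambda_{\mathrm{min}}\bigl(S(\xi)\bigr)\, \bbI_n > 0,
\]
so $K$ is a compensating symbol for $(S, A, B)$ with threshold $\theta(\xi)\,\lambda_{\mathrm{min}}(S(\xi)) > 0$. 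The only non-routine step is the smoothness of $S^{1/2}(\xi)$, which Lemma \ref{expendable1} was designed for; the remaining subtlety is carefully tracking the conjugations in the last display, so that the uniform lower bound $\theta\bbI_n$ valid for the triplet $(\bbI_n, A_S, B_S)$ descends to a strictly positive (possibly smaller) multiple of $\bbI_n$ for the triplet $(S, A, B)$.
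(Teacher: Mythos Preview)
Your proposal is correct and follows essentially the same approach as the paper: derive \eqref{Fouriersyst2} by conjugating with $S^{1/2}$, invoke Lemma \ref{expendable1} for smoothness, use similarity of the spectral pencils for strict dissipativity, use the intertwining $S^{1/2}(\mu\bbI_n+A)=(\mu\bbI_n+A_S)S^{1/2}$ together with the bijection $\ker B\to\ker B_S$ for genuine coupling, and verify $[KSA]^s+SB=S^{1/2}\bigl([K_SA_S]^s+B_S\bigr)S^{1/2}$ for the compensating-symbol claim. Your explicit threshold $\theta(\xi)\lambda_{\min}(S(\xi))$ is a welcome refinement that the paper leaves implicit; the only cosmetic slip is that the positive semi-definiteness of $B_S$ comes from conjugating $SB$ by $S^{-1/2}$ rather than $S^{1/2}$, but the conclusion is unaffected.
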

\begin{proof}
Assume that $S=S(\xi)$ is a symbolic symmetrizer for  \eqref{linsyst}. Multiply \eqref{Fouriersyst} from the left by $S(\xi)$ and use the definition of $\widehat{V}$ to obtain
\begin{align*}
S(\xi)^{1/2}\widehat{V}_{t}+i|\xi|S(\xi)A(\xi)S^{-1/2}(\xi)\widehat{V}+S(\xi)B(\xi)S^{-1/2}(\xi)\widehat{V}=0,
\end{align*}
where the positive definiteness of $S(\xi)$ was used. After multiplying the last equation by $S^{-1/2}(\xi)$, \eqref{Fouriersyst2} is obtained. Since we can write
\begin{align*}
A_{S}(\xi)=S^{-1/2}(\xi)S(\xi)A(\xi)S^{-1/2}(\xi),\quad\text{and}\quad B_{S}(\xi)=S^{-1/2}(\xi)S(\xi)B(\xi)S^{-1/2}(\xi),
\end{align*}
it follows that both $A_{S}(\xi)$ and $B_{S}(\xi)$ are symmetric for every $\xi\in\mathbb{R}^{d}$. By Lemma \ref{expendable1} and the fact that the map 
\[
\left\{
\begin{aligned}
\mathcal{J}&:GL_{n}\rightarrow GL_{n}\\
&\quad\varphi~\mapsto~\varphi^{-1},
\end{aligned}
\right.
\]
is of class $C^{\infty}$ (see Lemma 2.5.5 in \cite{AMR88}), it follows that $A_{S}$ and $B_{S}$ are smooth functions of $\xi\in\mathbb{R}^{d}\setminus\{0\}$.

Now, assume that \eqref{linsyst} is genuinely coupled and the existence of $\xi_{0}\in\mathbb{R}^{d}\setminus\{0\}$,  $\mu_{0}\in\mathbb{R}$ and $\psi_{0}\in\mathbb{R}^{n}$ such that 
\begin{align*}
\left(\mu_{0} \bbI_n +A_{S}(\xi_{0})\right)\psi_{0}=0\quad\mbox{and}\quad \psi_{0}\in\ker B_{S}(\xi_{0}).
\end{align*}
Then, $S^{-1/2}(\xi_{0})\psi_{0}\in\ker B(\xi_{0})$ and is an eigenvector of $A(\xi_{0})$ with eigenvalue $-\mu_{0}$, a contradiction. The converse assertion follows by using the same argument.

Next consider a non-trivial solution $\widehat{V}$ to the problem
\begin{align}
\label{spectralprob2}
\left(\lambda \bbI_n +i|\xi|A_{S}(\xi)+B_{S}(\xi)\right)\widehat{V}=0,
\end{align}
and notice that $\widehat{V}$ is an eigenvector of $i|\xi|A_{S}(\xi)+B_{S}(\xi)$ with corresponding eigenvalue $-\lambda$. Then, $\widehat{U}=S^{-1/2}(\xi)\widehat{V}$ is a non-trivial solution to \eqref{spectralprob} with the same eigenvalue. Therefore, the strict dissipativity condition is equivalent for systems \eqref{spectralprob} and \eqref{spectralprob2}.

Finally, assume that $K_{S}\in C^{\infty}(\mathbb{R}^{d};\mathbb{R}^{n\times n})$ satisfies conditions (a) and (b) in Definition \ref{Matrixsymbol} for the symmetric symbols $A_{S}(\xi)$ and $B_{S}(\xi)$ in \eqref{Fouriersyst2} with $S(\xi) = \bbI_n$ for all $\xi\in\mathbb{R}^{d}$. In particular,
\begin{align*}
\left[K_{S}(\xi)A_{S}(\xi)\right]^{s}+B_{S}(\xi)\geq\theta(\xi) \bbI_n \quad\mbox{for all}~\xi\in\mathbb{R}^{d}\setminus\{0\},~\mbox{and some}~\theta=\theta(\xi)>0.
\end{align*} 
By \eqref{compmatrix}, we have that 
\begin{align*}
K(\xi)S(\xi)A(\xi)=S^{1/2}(\xi)K_{S}(\xi)A_{S}(\xi)S^{1/2}(\xi),
\end{align*}
and thus, 
\begin{align*}
\left[K(\xi)S(\xi)A(\xi)\right]^{s}=S^{1/2}(\xi)\left[K_{S}(\xi)A_{S}(\xi)\right]^{s}S^{1/2}(\xi).
\end{align*}
Therefore, $K(\xi)$ is a compensating matrix symbol for the triplet $(S,A,B)$.
\end{proof}
\subsection{The equivalence theorem}

The main result of the paper is the following equivalence theorem which relates the strict dissipativity, the genuine coupling and the existence of a compensating matrix symbol for linear systems of the form \eqref{linsyst} in several space dimensions. The equivalence theorem reads as follows.

\begin{theorem}[equivalence theorem]
\label{equivalencetheo}
Let us assume that system \eqref{linsyst} is symbol symmetrizable. The following conditions are equivalent:
\begin{itemize}
	\item [(i)] \eqref{linsyst} is stricly dissipative.
	\item [(ii)] \eqref{linsyst} is genuinely coupled.
	\item [(iii)] There exists a compensating matrix symbol for the triplet $(S,A,B)$.
\end{itemize}
\end{theorem}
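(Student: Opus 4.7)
The plan is to first apply Lemma \ref{hypequiv} to reduce to the symmetric normalization $S \equiv \bbI_n$, so that $A(\xi)$ and $B(\xi)$ may be assumed real symmetric with $B(\xi)\geq 0$; the lemma guarantees that strict dissipativity, genuine coupling, and the existence of a compensating symbol all transfer between $(S,A,B)$ and $(\bbI_n,A_S,B_S)$. I then close the cycle (iii)$\Rightarrow$(i)$\Rightarrow$(ii)$\Rightarrow$(iii).

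For (iii)$\Rightarrow$(i), I would take a nontrivial eigenpair $(\lambda,\hat U)$ of \eqref{spectralprob} and pair the equation in the Hermitian inner product successively with $\hat U$ and with $K(\xi)\hat U$. Real symmetry of $A$ makes $i|\xi|\langle \hat U, A\hat U\rangle$ purely imaginary, so the first pairing gives the basic identity $\Re\lambda\,|\hat U|^2=-\langle \hat U,B\hat U\rangle\leq 0$; skew-symmetry of $K$ makes $\langle K\hat U,\hat U\rangle$ purely imaginary and, after sorting real and imaginary contributions, the second pairing produces a controlled expression for $|\xi|\langle \hat U,[KA]^{s}\hat U\rangle$. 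Combining the two with a small parameter and using $[KA]^{s}+B\geq\theta(\xi)\bbI_n$ yields the strict bound $\Re\lambda\leq -c|\xi|^2/(1+|\xi|^2)$ for a uniform $c>0$, and in particular $\Re\lambda<0$. The implication (i)$\Rightarrow$(ii) is a direct contrapositive: if genuine coupling fails at some $\xi_0\neq 0$, there exist $\mu_0\in\R$ and $\psi\neq 0$ with $A(\xi_0)\psi=-\mu_0\psi$ and $B(\xi_0)\psi=0$; then $\lambda=i|\xi_0|\mu_0$ is a purely imaginary eigenvalue of \eqref{spectralprob} on $\psi$, contradicting (i).

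The substantive step is (ii)$\Rightarrow$(iii), where I would combine Humpherys' explicit formula with the local-patching strategy of Shizuta--Kawashima to bypass assumption (H$_1$). Humpherys' construction
\[
K(\xi)=\sum_{j\neq k}\frac{\pi_j(\xi)B(\xi)\pi_k(\xi)}{\mu_j(\xi)-\mu_k(\xi)},
\]
with $\mu_j,\pi_j$ the distinct eigenvalues and spectral projections of $A(\xi)$, is available only where $A(\xi)$ has constant multiplicity, which fails in general for $d\geq 2$ due to eigenvalue crossings. I would therefore fix an arbitrary $\xi_0\in\R^d\setminus\{0\}$, let $\mu_1,\ldots,\mu_r$ be the distinct eigenvalues of $A(\xi_0)$ with orthogonal projections $\Pi_1,\ldots,\Pi_r$ (which extend smoothly to projections $\pi_j(\xi)$ on a neighborhood $U_{\xi_0}$ since each eigenvalue of the real symmetric matrix $A(\xi_0)$ is isolated), and introduce the frozen symbol
\[
\widetilde A(\xi):=\sum_{j=1}^{r}\mu_j\,\pi_j(\xi),\qquad \xi\in U_{\xi_0},
\]
which is smooth, symmetric, has constant-multiplicity eigenvalues by construction, and agrees with $A(\xi)$ at $\xi=\xi_0$. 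Humpherys' formula then delivers a smooth skew-symmetric local $K_{\xi_0}(\xi)$ on $U_{\xi_0}$, and the genuine coupling of $(A,B)$ at $\xi_0$ transfers to $(\widetilde A,B)$ there because both symbols share spectral data at $\xi_0$; this gives the pointwise lower bound $[K_{\xi_0}\widetilde A]^{s}+B>0$ at $\xi_0$, which extends by continuity (after shrinking $U_{\xi_0}$) to $[K_{\xi_0}(\xi)A(\xi)]^{s}+B(\xi)\geq\theta_0\bbI_n$ on $U_{\xi_0}$, since $\widetilde A-A$ vanishes at $\xi_0$. I would then cover $\bbS^{d-1}$ by finitely many such radial neighborhoods, pull back a smooth partition of unity $\{\chi_i\}$ to $\R^d\setminus\{0\}$ via $\xi\mapsto\xi/|\xi|$, and set
\[
K(\xi):=\sum_i\chi_i(\xi/|\xi|)\,K_{\xi_i}(\xi).
\]
Skew-symmetry is preserved under the convex combination, and the local lower bounds aggregate to a global $\theta(\xi)>0$ as demanded by Definition \ref{Matrixsymbol}.

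The hardest step I expect is this local construction: verifying that the frozen approximation $\widetilde A$ carries the genuine coupling hypothesis into the range where Humpherys' formula applies, and that the resulting positive-definite lower bound at $\xi_0$ extends to a full neighborhood with uniform constants which survive the partition-of-unity gluing. The remaining ingredients -- smoothness of the spectral projections $\pi_j(\xi)$ via analytic perturbation theory, the fact that the frozen denominators $\mu_j-\mu_k$ are automatically bounded away from zero, and the convex-combination argument that preserves the compensating-symbol properties globally -- are technical but routine once the local step is secured.
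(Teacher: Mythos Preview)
Your overall architecture matches the paper's exactly: the reduction via Lemma \ref{hypequiv} to the symmetric triplet $(\bbI_n,A_S,B_S)$, the same cycle of implications, the energy-pairing argument for (iii)$\Rightarrow$(i), the contrapositive for (i)$\Rightarrow$(ii), and for (ii)$\Rightarrow$(iii) the frozen-multiplicity symbol $\widetilde A_S(\xi)$ built from the spectral projections of $A_S(\xi_0)$, followed by Humpherys' formula and a partition-of-unity gluing. So the ideas are all correct.

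There is, however, a genuine gap in your gluing step. You propose to cover $\bbS^{d-1}$ by finitely many patches and pull back a partition of unity via $\xi\mapsto\xi/|\xi|$, setting $K(\xi)=\sum_i\chi_i(\xi/|\xi|)K_{\xi_i}(\xi)$. This works in the classical second-order setting of \cite{ShKa85}, where $A(\omega)$ and $B(\omega)$ are homogeneous of degree zero, but it fails here: for higher-order systems the symbols $A_S(\xi)$ and $B_S(\xi)$ depend on $|\xi|$ as well as on $\omega$ (see \eqref{gen-flux-visc}), so the spectral structure at $\xi_0$ and at $t\xi_0$ for $t\neq 1$ are unrelated. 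Your local matrices $K_{\xi_i}(\xi)$ live only on genuine open neighborhoods $U_{\xi_i}\subset\R^d\setminus\{0\}$, not on cones; the support of $\chi_i(\xi/|\xi|)$ is a full cone, and on that cone $K_{\xi_i}$ is mostly undefined and the positivity estimate is unavailable. The paper handles this by taking a \emph{locally finite} partition of unity $\{\phi_w\}$ on all of $\R^d\setminus\{0\}$ subordinate to the cover $\{\mathcal N(w)\}_{w\in\R^d\setminus\{0\}}$, defining $K_S(\xi)=\sum_w\phi_w(\xi)K_w(\xi)$ and $\theta(\xi)=\sum_w\phi_w(\xi)\theta_w(\xi)$ (equations \eqref{globalK}--\eqref{global-posit}). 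With that correction your sketch goes through; a minor consequence is that the quantitative bound $\Re\lambda\leq -c|\xi|^2/(1+|\xi|^2)$ with a \emph{uniform} $c>0$ that you claim in (iii)$\Rightarrow$(i) is not automatic---the paper obtains only \eqref{est-Re(lamb)} with $\xi$-dependent constants---but for the bare implication $\Re\lambda<0$ this does not matter.
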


\begin{remark}
Lemma \ref{hypequiv} tells us that in the case where system \eqref{Fouriersyst} (or, equivalently, system \eqref{linsyst}) is symbol symmetrizable, we can go back and forth by a smooth change of variable between the symmetric system 
\[
S(\xi)\hU_{t} + i \vert \xi \vert S(\xi)A(\xi)\hU + S(\xi)B(\xi)\hU = 0,
\]
and the one given by \eqref{Fouriersyst2}. In addition, verifying the genuine coupling condition or constructing a compensating matrix symbol for either of these systems is equivalent to doing so for the other one. Therefore we prove the equivalence Theorem \ref{equivalencetheo} for the symmetric system \eqref{Fouriersyst2} only, without loss of generality.
\end{remark}

Prior to proving the equivalence theorem we need some technical results. In what follows we enunciate some basic results from Linear Algebra, as well as some technical lemmata, which are used in the proof of the equivalence theorem. We gloss over most of the proofs and refer the reader to Humpherys \cite{Hu05} for more details (see also Ellis and Pinsky \cite{EllPin75a,EllPin75b}). Let $M_n$ denote the set of $n \times n$ matrices over $\C$. Let us take $A \in M_{n}$ and define the commutator operator $\mbox{Ad}_{A}$ on $M_{n}$ as
\[
\mbox{Ad}_{A}(X) = [A,X]:=AX-XA.
\]

\begin{lemma}\label{LemConsK}
Let $A\in M_{n}$ be semi-simple, that is, $A$ in diagonalizable. Suppose that the spectral resolution of $A$ is given by $A= \lambda_{1} P_{1} + \cdots + \lambda_{r} P_{r}$, where $\lambda_{1},\ldots, \lambda_{r}$ are the distinct eigenvalues of $A$ with corresponding eigenprojections $P_{1},\ldots,P_{r}$, respectively. Let us define the following linear operator on $M_{n}$:
\begin{equation}\label{PiA}
\Pi_{A}(X) := \sum_{j=1}^{r}P_{j}XP_{j}.
\end{equation}
Then the following hold:
\begin{itemize}
\item[(i)] $\Pi_{A}$ is the projection onto $\cN(\mbox{Ad}_{A})$ along $\cR(\mathrm{Ad}_{A})$.
\item[(ii)] For each $B \in M_{n}$ there exists $K\in M_{n}$ such that
\[
B = \Pi_{A}(B) + \mathrm{Ad}_{A}(K).
\]
Furthermore, the canonical solution $K$, that we call the compensating matrix, is given by the Drazin inverse or reduced resolvent of the commutator operator: 
\begin{equation}\label{ExK}
K = \sum_{i \neq j} \frac{P_{i}BP_{j}}{\lambda_{i}-\lambda_{j}}.
\end{equation}
\end{itemize}
\end{lemma}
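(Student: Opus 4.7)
The plan is to prove part (ii) first by direct computation with the spectral resolution of $A$, and then deduce the structural statement (i) from (ii).

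For (ii), I would start from the completeness identity $\sum_{j=1}^{r} P_j = \bbI_n$ to obtain the block decomposition $B = \bbI_n B \bbI_n = \sum_{i,j} P_i B P_j = \Pi_A(B) + \sum_{i \neq j} P_i B P_j$. With $K$ defined by formula \eqref{ExK}, I would then compute $\mathrm{Ad}_A(K) = AK - KA$ using the identities $A P_i = P_i A = \lambda_i P_i$ and the orthogonality $P_i P_j = \delta_{ij} P_j$. The denominators cancel telescopically, giving $\mathrm{Ad}_A(K) = \sum_{i \neq j} \tfrac{\lambda_i - \lambda_j}{\lambda_i - \lambda_j}\, P_i B P_j = \sum_{i \neq j} P_i B P_j$, so that $B = \Pi_A(B) + \mathrm{Ad}_A(K)$, as claimed.

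For (i), I would first check that $\Pi_A$ is idempotent; this follows immediately from $P_i P_j = \delta_{ij} P_j$, since $\Pi_A^2(X) = \sum_{j,k} P_j P_k X P_k P_j = \sum_j P_j X P_j = \Pi_A(X)$. Next, the inclusion $\mathrm{Range}(\Pi_A) \subseteq \cN(\mathrm{Ad}_A)$ is direct from $A \Pi_A(X) = \sum_j \lambda_j P_j X P_j = \Pi_A(X) A$, and $\cR(\mathrm{Ad}_A) \subseteq \ker \Pi_A$ is equally direct from $\Pi_A([A,Y]) = \sum_j (\lambda_j - \lambda_j) P_j Y P_j = 0$. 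To upgrade these inclusions to the equalities asserted in (i), I would invoke (ii): every $X \in M_n$ decomposes as $X = \Pi_A(X) + \mathrm{Ad}_A(K_X)$ with the first summand in $\cN(\mathrm{Ad}_A)$ and the second in $\cR(\mathrm{Ad}_A)$. Uniqueness of this decomposition, and thus $M_n = \cN(\mathrm{Ad}_A) \oplus \cR(\mathrm{Ad}_A)$, follows because any $C$ in the intersection satisfies both $\Pi_A(C) = C$ (commuting with $A$ forces $C$ to commute with each spectral projector, so $\sum_j P_j C P_j = \sum_j P_j C = C$) and $\Pi_A(C) = 0$, forcing $C = 0$.

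The main obstacle is essentially bookkeeping rather than conceptual: one must rely only on the algebraic identities $P_i P_j = \delta_{ij} P_j$, $\sum_j P_j = \bbI_n$ and $A P_j = P_j A = \lambda_j P_j$, without appealing to orthogonality of the projectors in any Hermitian sense, since $A$ is assumed only to be semi-simple and need not be normal. Together with the non-degeneracy $\lambda_i - \lambda_j \neq 0$ for $i \neq j$, these identities are precisely what formula \eqref{ExK} and the telescoping computation require, and they are guaranteed by semi-simplicity alone.
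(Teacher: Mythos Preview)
Your argument is correct and complete. The paper does not supply its own proof of this lemma at all; it simply cites Humpherys \cite{Hu05}, Ellis and Pinsky \cite{EllPin75b}, and Shizuta and Kawashima \cite{ShKa85}, so your direct verification via the spectral resolution identities $P_i P_j = \delta_{ij} P_j$, $\sum_j P_j = \bbI_n$, $A P_j = \lambda_j P_j$ goes beyond what the paper itself contains. One small point worth making explicit in your write-up of (i) is \emph{why} commuting with $A$ forces $C$ to commute with each $P_j$: this holds because, for a semi-simple matrix with distinct eigenvalues $\lambda_1,\dots,\lambda_r$, each spectral projector is the Lagrange interpolation polynomial $P_j = \prod_{k\neq j}(A-\lambda_k \bbI_n)/(\lambda_j-\lambda_k)$ in $A$, so no Hermitian structure is needed.
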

\begin{proof}
See, e.g., Humpherys \cite{Hu05}, Lemma 4.3 (see also Ellis and Pinsky \cite{EllPin75b}, Propositions 4.3 and 4.4, and Lemma 2.2 in Shizuta and Kawashima \cite{ShKa85}).
\end{proof}
\begin{lemma}\label{LemProK}
Let us assume that $A,B \in M_{n}$ are Hermitian and $B\geq 0$. Then
\begin{itemize}
\item[(i)] $\Pi_{A}(B)$ is Hermitian.
\item[(ii)] $\Pi_{A}(B) \geq 0$.
\item[(iii)] The compensating matrix $K$ given in \eqref{ExK} is skew-Hermitian. 
\end{itemize}
\end{lemma}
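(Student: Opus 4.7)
The plan is to exploit the orthogonality of the spectral projections of the Hermitian matrix $A$: since $A^{*}=A$, its eigenvalues $\lambda_{1},\ldots,\lambda_{r}$ are real and its eigenprojections satisfy $P_{j}^{*}=P_{j}$, $P_{j}^{2}=P_{j}$, and $P_{i}P_{j}=0$ for $i\neq j$. These three properties, together with $B^{*}=B$ and $B\geq 0$, will reduce each assertion to a short symmetry calculation. I would first record these properties explicitly as the starting point of the proof.

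For part (i), I would compute $\Pi_{A}(B)^{*}=\sum_{j}(P_{j}BP_{j})^{*}=\sum_{j}P_{j}^{*}B^{*}P_{j}^{*}=\sum_{j}P_{j}BP_{j}=\Pi_{A}(B)$, which uses only $P_{j}^{*}=P_{j}$ and $B^{*}=B$. For part (ii), I would observe that for any $x\in\C^{n}$,
\[
\langle \Pi_{A}(B)x,x\rangle=\sum_{j=1}^{r}\langle P_{j}BP_{j}x,x\rangle=\sum_{j=1}^{r}\langle B(P_{j}x),P_{j}x\rangle\geq 0,
\]
where I pass $P_{j}$ to the other side of the inner product via $P_{j}^{*}=P_{j}$ and apply $B\geq 0$ term by term. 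So (i) and (ii) will be one-line identities.

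For part (iii), the key move is to conjugate the expression for $K$ and then relabel the summation index. Taking adjoints gives
\[
K^{*}=\sum_{i\neq j}\frac{1}{(\lambda_{i}-\lambda_{j})^{*}}(P_{i}BP_{j})^{*}=\sum_{i\neq j}\frac{P_{j}BP_{i}}{\lambda_{i}-\lambda_{j}},
\]
where I use that $\lambda_{i},\lambda_{j}\in\R$ and that $P_{i},P_{j},B$ are Hermitian. Swapping the dummy indices $i\leftrightarrow j$ in the last sum yields $K^{*}=\sum_{i\neq j}(\lambda_{j}-\lambda_{i})^{-1}P_{i}BP_{j}=-K$, so $K$ is skew-Hermitian.

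I do not anticipate any real obstacle: the entire argument rests on the orthogonality of the spectral projections of a Hermitian matrix and on the reality of its spectrum, both of which are classical. The only subtle point worth flagging is that the denominators $\lambda_{i}-\lambda_{j}$ in \eqref{ExK} are nonzero and real, which is essential for the index-swap to produce the minus sign in part (iii); I would mention this explicitly before performing the relabeling.
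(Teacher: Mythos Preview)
Your proposal is correct and is precisely the standard argument one finds in the cited references (Humpherys \cite{Hu05}, Lemma 4.4, and Ellis--Pinsky \cite{EllPin75b}); the paper itself does not write out a proof but defers to those sources, so your direct computation using $P_j^*=P_j$, the reality of the $\lambda_j$, and the index swap in (iii) matches the intended approach.
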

\begin{proof}
See, e.g., Humpherys \cite{Hu05}, Lemma 4.4 or Ellis and Pinsky \cite{EllPin75b}, \S4.
\end{proof}

Now, let us take $\xi_{0} \in \R^{d}\setminus \{ 0\}$, and let $\lambda_{1}=\lambda_{1}(\xi_{0}),\ldots,\lambda_{r}=\lambda_{r}(\xi_{0})$ be the distinct eigenvalues of $A_{S}(\xi_{0})$ with multiplicity $m_{1},\ldots,m_{r}$, respectively. Let $\mathcal{N}_{0}(\xi_{0})$ be a neighborhood of $\xi_{0}$ such that for each $j=1,\ldots,r$ the eigenvalues associated to the eigenvalue $\lambda_{j}$, which we call the $\lambda_{j}-$group, remain in a neighbourhood of it, so that all of them can be enclosed with a positively oriented circle $\Gamma_{j}$ that excludes the eigenvalues of the $\lambda_{k}-$group for $k\neq j$.  Let us define
\begin{equation}\label{Pj}
P_{S}^{j}(\xi) := \frac{1}{2 \pi i} \int_{\Gamma_{j}} (z - A_{S}(\xi) )^{-1}dz,
\end{equation}
for all $\xi\in \mathcal{N}_{0}(\xi_0)$ and $j=1,\ldots,r$, which is the total projection onto the eigenspace generated by the eigenvalues belonging to the $\lambda_{j}-$group. 

We define $\widetilde{A}_{S}(\xi)$ by
\begin{equation}\label{hA}
\widetilde{A}_{S}(\xi) := \sum_{j=1}^{r} \lambda_{j} P_{S}^{j}(\xi),\quad \xi\in \mathcal{N}_{0}(\xi_{0}).
\end{equation}
Let us observe that in general $A_{S}(\xi)\neq \widetilde{A}_{S}(\xi)$ and $A_{S}(\xi_{0})=\widetilde{A}_{S}(\xi_0)$. Also, the matrix $\widetilde{A}_{S}(\xi)$ is symmetric and of constant multiplicity in $\xi \in\mathcal{N}_{0}(\xi_0)$. Then we have the following result.
\begin{lemma}
\label{GenCNewTr}
Suppose the genuinely coupling condition holds for system \eqref{Fouriersyst2} and let $\xi_{0}\in\mathbb{R}^{d}\setminus\{0\}$. Then there exists a neighborhood $\mathcal{N}_{1}(\xi_{0})\subset\mathcal{N}_{0}(\xi_{0})$ with the following properties: for each  $\psi\in\mathbb{R}^{n}\setminus\{0\}$ such that $\psi\in\ker B_{S}(\xi)$ for some $\xi\in\mathcal{N}_{1}(\xi_{0})$, then $\mu\psi+\widetilde{A}_{S}(\xi)\psi\neq 0$ for all $\mu\in\mathbb{R}$. 
\end{lemma}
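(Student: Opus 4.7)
My plan is to prove this by contradiction, leveraging the continuity of the symbols $\widetilde{A}_S$ and $B_S$ on $\mathcal{N}_0(\xi_0)$ together with the key identity $\widetilde{A}_S(\xi_0) = A_S(\xi_0)$. The argument reduces the question to an application of genuine coupling for $(A_S,B_S)$ at the single point $\xi_0$, which is assumed by hypothesis.

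In concrete terms, suppose for contradiction that no such neighborhood $\mathcal{N}_1(\xi_0)$ exists. Then I can extract a sequence $\{\xi_n\} \subset \mathcal{N}_0(\xi_0)$ with $\xi_n \to \xi_0$, together with unit vectors $\psi_n \in \ker B_S(\xi_n)$ and real numbers $\mu_n \in \mathbb{R}$ satisfying
\[
\mu_n \psi_n + \widetilde{A}_S(\xi_n)\psi_n = 0, \qquad |\psi_n| = 1.
\]
First, I would verify that $\mu_n$ is bounded: indeed $|\mu_n| = |\widetilde{A}_S(\xi_n)\psi_n| \leq \|\widetilde{A}_S(\xi_n)\|$, and the right-hand side is bounded because $\widetilde{A}_S$ is continuous (in fact smooth) on $\mathcal{N}_0(\xi_0)$ by the resolvent definition \eqref{Pj}--\eqref{hA}. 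By Bolzano--Weierstrass, I extract subsequences (not relabeled) with $\psi_n \to \psi_0$ and $\mu_n \to \mu_0$, where $|\psi_0|=1$ and $\mu_0 \in \mathbb{R}$.

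Next I would pass to the limit. Continuity of $B_S$ at $\xi_0$ gives $B_S(\xi_n)\psi_n \to B_S(\xi_0)\psi_0$, so $\psi_0 \in \ker B_S(\xi_0)$ and $\psi_0 \neq 0$. Continuity of $\widetilde{A}_S$ at $\xi_0$ yields $\widetilde{A}_S(\xi_n)\psi_n \to \widetilde{A}_S(\xi_0)\psi_0$. Combined with $\mu_n\psi_n \to \mu_0\psi_0$, the defining relation passes to the limit as
\[
\mu_0 \psi_0 + \widetilde{A}_S(\xi_0)\psi_0 = 0.
\]
The crucial step is now to use $\widetilde{A}_S(\xi_0) = A_S(\xi_0)$, which follows directly from \eqref{hA} and the fact that at $\xi=\xi_0$ the projections $P_S^j(\xi_0)$ coincide with the spectral projections of $A_S(\xi_0)$ onto the eigenspaces of $\lambda_j$. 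Therefore $\mu_0\psi_0 + A_S(\xi_0)\psi_0 = 0$ with $\psi_0 \in \ker B_S(\xi_0)\setminus\{0\}$, contradicting the genuine coupling of system \eqref{Fouriersyst2} at $\xi_0$.

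I do not anticipate any serious obstacle in carrying out this plan: the argument is a standard compactness/continuity reduction, and both ingredients needed (smoothness of $\widetilde{A}_S$ and $B_S$ near $\xi_0$, and $\widetilde{A}_S(\xi_0)=A_S(\xi_0)$) have already been recorded in the paragraph preceding the statement. The only place requiring a small measure of care is justifying the smoothness of $\widetilde{A}_S$ on $\mathcal{N}_0(\xi_0)$ from the Dunford integral representation \eqref{Pj}, but this is routine provided $\mathcal{N}_0(\xi_0)$ has been chosen small enough that each contour $\Gamma_j$ separates the $\lambda_j$-group from the remaining groups uniformly in $\xi$, which is exactly the standing hypothesis on $\mathcal{N}_0(\xi_0)$.
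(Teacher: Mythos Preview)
Your proof is correct and follows essentially the same contradiction-plus-compactness route as the paper: extract a sequence $(\xi_n,\psi_n,\mu_n)$ violating the conclusion, pass to the limit using continuity of $B_S$ and $\widetilde{A}_S$, and invoke $\widetilde{A}_S(\xi_0)=A_S(\xi_0)$ to contradict genuine coupling at $\xi_0$. The only cosmetic difference is that the paper observes $-\mu_n$ must lie in the finite set $\{\lambda_1,\dots,\lambda_r\}$ (since these are the only eigenvalues of $\widetilde{A}_S(\xi)$ by construction), whereas you bound $|\mu_n|$ via $\|\widetilde{A}_S(\xi_n)\|$ and apply Bolzano--Weierstrass; both variants yield a convergent subsequence and lead to the same contradiction.
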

\begin{proof}
We proceed by contradiction. If the conclusion is false then we can choose a sequence of neighborhoods $\{\cN_{j}(\xi_{0})\}_{j\in\mathbb{N}}$ which converges to $\xi_{0}$ and such that, there are $\xi_{j}\in\mathcal{N}_{j}(\xi_{0})$, $\psi_{j}\in\mathbb{S}^{n-1}$ and $\mu_{j}\in\mathbb{R}$ satisfying that, 
\begin{align}
B_{S}(\xi_{j})\psi_{j}=0\quad\mbox{and}\quad\mu_{j}\psi_{j}+\widetilde{A}_{S}(\xi_{j})\psi_{j}=0\quad\mbox{for all}\quad j\in\mathbb{N}.\label{impossible}
\end{align}
By \eqref{hA}, $-\mu_{j}$ belongs to the finite set $\{\lambda_{1},..,\lambda_{r}\}$. Hence, up to a subsequence, we can assume that $-\mu_{j}=\lambda_{k}$ for some fixed $k\in\{1,..,r\}$. Moreover, since $\{\psi_{j}\}\subset\mathbb{S}^{n-1}$ we can assure the existence of $\psi_{0}\in\mathbb{S}^{n-1}$ such that $\psi_{j}\rightarrow\psi_{0}$ when $j\rightarrow\infty$.
By Lemma \ref{hypequiv} and the smoothness of $P_{S}^{j}(\xi)$ on $\mathcal{N}_{0}(\xi_{0})$, it follows that 
\begin{align*}
B_{S}(\xi_{0})\psi_{0}=0,\quad-\lambda_{k}\psi_{0}+A_{S}(\xi_{0})\psi_{0}=0,
\end{align*}
where we used that $\widetilde{A}_{S}(\xi_{0})=A_{S}(\xi_{0})$. Since $|\psi_{0}|=1$, this contradicts the genuinely coupling of \eqref{Fouriersyst2}. This concludes the proof.
\end{proof}

\begin{lemma}\label{PosPi}
Let $\mathcal{N}_{1}(\xi_0)$ be the neighborhood of Lemma \ref{GenCNewTr}. Then the triplet $(I, \widetilde{A}_{S}(\xi), B_{S}(\xi))$ satisfies the genuine coupling condition for $\xi \in \mathcal{N}_{1}(\xi_0)$ if and only if 
\begin{equation}\label{hThe}
\widetilde{\theta}(\xi):= \inf_{\Vert x \Vert=1} \sum_{j=1}^{r} \langle P_{S}^{j}(\xi)x, B(\xi) P_{S}^{j}(\xi)x \rangle
\end{equation}
is positive for all $\xi \in\mathcal{N}_{1}(\xi_0)$.
\end{lemma}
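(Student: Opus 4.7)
The plan is to prove both directions pointwise by contraposition, exploiting three structural ingredients: first, $B_S(\xi)\geq 0$ (from symbol symmetrizability), so every summand in the definition of $\widetilde{\theta}(\xi)$ is non-negative; second, the completeness relation $\sum_{j=1}^{r} P_S^{j}(\xi) = \bbI_n$ for the total projections defined in \eqref{Pj}; and third, the spectral identity $\widetilde{A}_S(\xi) = \sum_{j=1}^{r} \lambda_j P_S^{j}(\xi)$ from \eqref{hA}, which guarantees that whenever $P_S^{j}(\xi)x \neq 0$ this vector is an eigenvector of $\widetilde{A}_S(\xi)$ for the eigenvalue $\lambda_j$. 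With these in hand, both implications follow from an attainment-plus-spectral-decomposition argument rather than any new analysis.

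For the forward implication, I fix $\xi \in \mathcal{N}_1(\xi_0)$ and assume that the triplet $(\bbI_n, \widetilde{A}_S(\xi), B_S(\xi))$ is genuinely coupled. Since $B_S(\xi)\geq 0$, we have $\widetilde{\theta}(\xi) \geq 0$, and by compactness of $\mathbb{S}^{n-1}$ the infimum is attained at some $x_0$ with $\|x_0\|=1$. If we suppose $\widetilde{\theta}(\xi)=0$, then each term $\langle P_S^{j}(\xi)x_0, B_S(\xi) P_S^{j}(\xi)x_0\rangle$ vanishes and consequently $P_S^{j}(\xi)x_0 \in \ker B_S(\xi)$ for every $j=1,\ldots,r$. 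The completeness relation gives $x_0 = \sum_{j}P_S^{j}(\xi)x_0$, and since $x_0\neq 0$ there is an index $k$ for which $P_S^{k}(\xi)x_0 \neq 0$. That vector is a non-zero element of $\ker B_S(\xi)$ satisfying $(\widetilde{A}_S(\xi) - \lambda_k \bbI_n) P_S^{k}(\xi) x_0 = 0$, which contradicts genuine coupling with $\mu = -\lambda_k$.

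For the converse, assume $\widetilde{\theta}(\xi) > 0$ for all $\xi \in \mathcal{N}_1(\xi_0)$, and suppose, for contradiction, that genuine coupling fails at some $\xi$: there exist $\psi \neq 0$ in $\ker B_S(\xi)$ and $\mu \in \R$ with $\widetilde{A}_S(\xi)\psi = -\mu\psi$. Because the spectrum of $\widetilde{A}_S(\xi)$ is precisely $\{\lambda_1,\ldots,\lambda_r\}$, we must have $-\mu = \lambda_k$ for some $k$, and the orthogonality of the total projections forces $P_S^{k}(\xi)\psi = \psi$ and $P_S^{j}(\xi)\psi = 0$ for $j \neq k$. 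Normalizing to $x = \psi/\|\psi\|$ yields
\[
\sum_{j=1}^{r} \langle P_S^{j}(\xi)x, B_S(\xi) P_S^{j}(\xi)x \rangle = \langle x, B_S(\xi) x \rangle = 0,
\]
since $\psi \in \ker B_S(\xi)$, contradicting $\widetilde{\theta}(\xi) > 0$.

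No serious obstacle is anticipated; the argument is purely algebraic and local, leaning only on the spectral apparatus already assembled. The one technical point worth emphasizing is that, because the $\widetilde{A}_S(\xi)$ constructed in \eqref{hA} has \emph{constant multiplicities} throughout $\mathcal{N}_1(\xi_0)$, the projections $P_S^{j}(\xi)$ are genuine eigenprojections and the completeness and spectral identities used above hold throughout the neighborhood. Attainment of the infimum via compactness of $\mathbb{S}^{n-1}$ also avoids any need for a limiting argument in the forward direction.
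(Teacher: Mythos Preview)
Your proof is correct and is essentially the standard argument that the paper defers to (the paper simply cites Humpherys \cite{Hu05}, Lemma 3.2, without writing out the details). The spectral-decomposition-plus-compactness strategy you use---attainment of the infimum on $\mathbb{S}^{n-1}$, the implication $\langle v,B_S v\rangle=0\Rightarrow B_S v=0$ from positive semi-definiteness, and the identification of each nonzero $P_S^{j}(\xi)x$ as an eigenvector of $\widetilde A_S(\xi)$---is exactly the content of that reference.
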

\begin{proof}
The proof follows that of Lemma 3.2 in Humpherys \cite{Hu05}, word by word. 
\end{proof}

\subsection{Proof of Theorem \ref{equivalencetheo}}
We show the following implications
\begin{align*}
\mbox{(i)} \Rightarrow \mbox{(ii)} \Rightarrow \mbox{(iii)} \Rightarrow \mbox{(i)}.
\end{align*}
$\mathbf{(i)\Rightarrow(ii)}$. Let us assume that the system \eqref{linsyst} is strictly dissipative and that there exists $\xi \in \R^{d} \setminus \{0 \}$ such that for some $\mu \in \R$ and $\psi \in \ker (B_{S}\xi))$, there holds $A_{S}(\xi) \psi = -\mu \psi$. Then we have
\[
\big( -i \vert \xi \vert \mu + i \vert \xi \vert A_{S}(\xi) + B_{S}(\xi) \big)\psi = 0.
\]
The above formula tells us that $\psi\neq 0$ is a solution of the eigenvalue problem \eqref{spectralprob} with $\lambda(\xi) = -i \vert \xi \vert \mu$, from which we obtain that $\Re \lambda (\xi) = \Re (-i \vert \xi \vert \mu)  =0$. This contradicts, in turn, the strict dissipativity condition (i).
\vspace{0.3cm}\\
$\mathbf{(ii)\Rightarrow(iii)}$ Take $\xi_{0} \in \R^{d}\setminus \{ 0 \}$, and let $\widetilde{A}(\xi)$ given by \eqref{hA} for $\xi \in\mathcal{N}_{1}(\xi_{0})$, the neighbourhood as in Lemma \ref{GenCNewTr}. Lemmata \ref{LemConsK} and \ref{LemProK} imply the existence of a skew-symmetric Hermitian matrix $K_{\xi_0}(\xi)$  defined on $\mathcal{N}_{1}(\xi_0)$ satisfying
\[
B_{S}(\xi)= \Pi_{\widetilde{A}_{S}(\xi)}(B_{S}(\xi)) + \mbox{Ad}_{\widetilde{A}_{S}(\xi)}(K_{\xi_0}(\xi)), 
\]
with $\Pi_{\widetilde{A}_{S}(\xi)}(B_{S}(\xi))$ Hermitian and positive semi-definite. We now invoke Lemmata \ref{GenCNewTr} and \ref{PosPi} to conclude that $\Pi_{\widetilde{A}_{S}(\xi)}(B_{S}(\xi)) \geq \widetilde{\theta}(\xi) I_n$, with $\widetilde{\theta}(\xi)>0$ for $\xi \in \mathcal{N}_{1}(\xi_{0})$. Observe that for $\xi \in \mathcal{N}_{1}(\xi)$ we have
\[
\begin{aligned}
\big[ K_{\xi_{0}}(\xi) \tiA_{S}(\xi) \big]^{s} + B_{S}(\xi) & \geq \big[ K_{\xi_{0}}(\xi)\tiA_{S}(\xi) \big]^{s} + \frac{1}{2}B_{S}(\xi) \\ & = \frac{1}{2}\left( K_{\xi_{0}}(\xi)\tiA_{S}(\xi) + \left(K_{\xi_{0}}(\xi)\tiA_{S}(\xi) \right)^{\top} \right)+ \frac{1}{2}B_{S}(\xi) \\ &= \frac{1}{2}\left(K_{\xi_{0}}(\xi)\tiA_{S}(\xi) -\tiA_{S}(\xi)K_{\xi_{0}}(\xi) \right)+ \frac{1}{2}B_{S}(\xi) \\ &=  \frac{1}{2}\left(-\text{Ad}_{\tiA_{S}(\xi)}(K_{\xi_{0}}(\xi))+ B_{S}(\xi)  \right) \\&= \frac{1}{2}\Pi_{\tiA_{S}(\xi)}(B_{S}(\xi)).
\end{aligned}
\]
As each $P_{S}^{j}$ is a $C^{\infty}$ function in $\mathcal{N}_{1}(\xi_{0})$, the compensating matrix symbol given by \eqref{ExK} is a $C^{\infty}(\mathcal{N}_{1}(\xi_{0});\mathbb{R}^{n\times n})$ function.

Now, let us remember that $\widetilde{A}_{S}(\xi_{0})=A_{S}(\xi_{0})$, so that 
\[
[ K_{\xi_0}(\xi_0), A_{S}(\xi_0) ] + B_{S}(\xi_0) \geq \widetilde{\theta}(\xi_0)I_n,
\]  
with $\widetilde{\theta}(\xi_0)>0$. As $K_{\xi_0}(\xi)$, $A_{S}(\xi)$, and $B_{S}(\xi)$ are $C^{\infty}(\mathcal{N}_{1}(\xi_{0});\mathbb{R}^{n\times n})$ functions, we can take a neighborhood $\mathcal{N}_{2}(\xi_{0})\subset\mathcal{N}_{1}(\xi_{0})$ such that positive-definiteness of the matrix $[ K_{\xi_0}(\xi), A_{S}(\xi) ] + B_{S}(\xi)$ still holds for $\xi \in \mathcal{N}_{2}(\xi_{0})$. Moreover, if $\mu_{i}(\xi)$ for $1 \leq i \leq n$, denotes the eigenvalues of $[ K_{\xi_0}(\xi), A_{S}(\xi) ] + B_{S}(\xi)$, we can write
\begin{equation}\label{LocPosit}
	[K_{\xi_0}(\xi), A_{S}(\xi) ] + B_{S}(\xi) \geq \min_{1 \leq i \leq n} \mu_{i}(\xi) I_n =: \theta_{\xi_0}(\xi) I_n,
\end{equation}
with $\theta_{\xi_0}\in C(\mathcal{N}_{2}(\xi_{0}))$ (see Texier \cite{Tex18}, Proposition 1.1 and Remark 4.3) and $\theta_{\xi_0}(\xi) >0$ for $\xi \in \mathcal{N}_{2}(\xi_{0})$.

Therefore we have shown that for each $w \in \R^{d}\setminus \{ 0  \}$ there exists a neighborhood $\mathcal{N}(w)$ of it, such that a  real skew-Hermitian matrix valued  $C^{\infty}(\mathcal{N}(w))$ function $K_{w}(\xi)$ exists, which satisfies \eqref{LocPosit} for some continuous function $\theta_{w}(\xi) >0$ for all $\xi \in\mathcal{N}(w)$.
We consider the open cover $ \left\lbrace \mathcal{N}(w) \right\rbrace_{w \in \R^{d}\setminus \{ 0 \}}$ of $\R^{d} \setminus \{0 \}$, and take $\left\lbrace \phi_{w} \right\rbrace_{w \in \R^{d}\setminus \{ 0 \}}$ a locally finite smooth partition of unity on $\R^{d} \setminus \{ 0 \}$ subordinated to this cover. We define 
\begin{equation}\label{globalK}
	K_{S}(\xi) := \sum_{w \in \R^{d}\setminus \{ 0 \}} \phi_{w}(\xi)K_{w}(\xi),
\end{equation} 
and
\begin{equation}\label{global-thet}
	\theta(\xi) := \sum_{w \in \R^{d} \setminus \{ 0 \}} \phi_{w}(\xi)\theta_{w}(\xi),
\end{equation}
which are $C^{\infty}(\R^{d}\setminus \{ 0 \};\mathbb{R}^{n\times n})$ and $C(\R^{d} \setminus \{0 \})$ functions, respectively, satisfying
\begin{equation}\label{global-posit}
[K(\xi)A_{S}(\xi) ]^{s} + B_{S}(\xi) \geq \frac{1}{2}\big( [K(\xi),A_{S}(\xi)] + B_{S}(\xi) \big) \geq  \frac{\theta(\xi)}{2} I_n, 
\end{equation}
with $\theta(\xi) >0$ for all $\xi \in \R^{d} \setminus \{ 0 \}$.
\vspace{0.3cm}\\
$\mathbf{(iii)\Rightarrow(i)}$. 
Let $\lambda \in \C$ and $v\in \C^{n}$, $v \neq 0$, satisfying 
\begin{equation}
\label{spectralprob2}
\big(\lambda I_n + i |\xi| A_S(\xi) + B_S(\xi) \big) v = 0.
\end{equation}
Taking the inner product in $\C^{n}$ of \eqref{spectralprob2} with $v$ and taking taking the real part of the resulting expression, we obtain the following standard Friedrichs-type estimate: 
\begin{equation}\label{spec-est-1}
	(\Re \lambda) \vert v \vert^{2} + \langle v, B_{S}(\xi) v \rangle =0.
\end{equation}
As $B(\xi) \geq 0$, the last estimate implies
\begin{equation}\label{spec-est-2}
	(\Re \lambda) \vert v \vert^{2} + \frac{1}{\vert B_{S}(\xi) \vert}\vert B_{S}(\xi)v \vert^{2} \leq 0. 
\end{equation}
Observe that both \eqref{spec-est-1} and \eqref{spec-est-2} imply that $\Re \lambda \leq 0$. For the second estimate, multiply \eqref{spectralprob2} by $i \vert \xi \vert K_{S}(\xi)$ and take the inner product with to get
\[
\vert \xi \vert^{2} \langle v, K_{S}(\xi)A_{S}(\xi) v \rangle = \lambda \vert \xi \vert \langle v, iK_{S}(\xi) v \rangle + \vert \xi \vert \langle v, iK_{S}(\xi) B_{S}(\xi) v \rangle. 
\]
Next, using the fact that $iK_{S}(\xi)$ is Hermitian, after taking the real part we obtain
\[
\begin{aligned}
	\vert \xi \vert^{2} \langle v, [K_{S}(\xi), A_{S}(\xi)]v \rangle &= 2 (\Re \lambda) \vert \xi \vert  \langle v, iK_{S}(\xi)v \rangle + \vert \xi \vert \langle v, (iK_{S}(\xi)B_{S}(\xi) + iB_{S}(\xi)K_{S}(\xi))v \rangle \\ &\leq 2 \vert \Re \lambda \vert \vert \xi \vert \vert K_{S}(\xi) \vert \vert v \vert^{2} + \frac{\theta(\xi)}{2}\vert \xi \vert^{2} \vert v \vert^{2} + 2 \frac{\vert K_{S}(\xi) \vert^{2}}{\theta(\xi)} \vert B_{S}(\xi) v \vert^{2}.
\end{aligned}.
\]
Here we have used the inequality $\vert \langle a, b \rangle \vert \leq \tfrac{1}{2}(\epsilon^{2}\vert a \vert^{2} +\tfrac{1}{\epsilon^{2}} \vert b \vert^{2} )$, for any $\epsilon > 0$. Now, multiply \eqref{spec-est-1} by $\vert \xi\vert^{2}  $ and add it to the above inequality to get
\[
(\Re \lambda) \vert \xi \vert^{2} \vert v \vert^{2} + \frac{3 \theta(\xi)}{4} \vert \xi \vert^{2}\vert v \vert^{2} + (\Re \lambda) \vert \xi \vert \vert K_{S}(\xi) \vert \vert v \vert^2 \leq \frac{\vert K_{S}(\xi) \vert^{2}}{\theta(\xi)} \vert B_{S}(\xi)v \vert^{2},
\] 
where the property (b) of the compensating matrix symbol matrix has been used. Using \eqref{spec-est-2} to estimate the left-hand side, we obtain
\[
(\Re \lambda)\vert \xi \vert^{2} \vert v \vert^{2} + \frac{3 \theta(\xi)}{4}\vert \xi \vert^{2}\vert v \vert^{2} + (\Re \lambda) \vert \xi \vert \vert K_{S}(\xi) \vert \vert v \vert^2 +  (\Re \lambda) \frac{\vert K_{S}(\xi) \vert^{2}}{\theta(\xi)}\vert B_{S}(\xi) \vert \vert v \vert^{2} \leq 0,
\]
which yields
\begin{equation}\label{est-Re(lamb)}
	\Re \lambda \leq  \frac{-3\vert \xi \vert^{2} \theta(\xi)^{2}}{4 \vert \xi \vert \vert K_{S}(\xi) \vert \theta(\xi) + 4 \vert \xi \vert^{2}\theta(\xi) + 4 \vert K_{S}(\xi)\vert^{2} \vert B_{S}(\xi) \vert}.
\end{equation}
Therefore, as $\theta(\xi) >0$ for $\xi \neq 0$, we obtain $\Re \lambda < 0$ for $\xi\neq 0$. 
\qed

\begin{remark}\label{comp-mat-for-rmk}
Let us point out that, in order to drop out the constant multiplicity hypothesis on the generalized flux matrix $A_{S}(\xi)$ during the proof of the implication (ii) $\Rightarrow$ (iii), we have conveniently merged the proofs of Shizuta and Kawashima \cite{ShKa85} and Humpherys \cite{Hu05}. In the case when the matrix $A_{S}(\xi) $ is of constant multiplicity and its eigenvalues and eigenprojections are smooth functions of $\xi$, the proof of Humpherys still works. In such a case there is an explicit representation for the eigenvalues and for the eigenprojections and, consequently, one can try to find the compensating matrix symbol using the formula provided by Humpherys (see equation \eqref{ExK}). 

This method for computing the compensating matrix function, however, may not be the most efficient as it involves computing the eigenvalues and eigenprojections of the symbol $A(\xi)$, which is not a trivial task. Another method for computing the compensating matrix symbol is by inspection, which does not require $A_{S}(\xi)$ to be of constant multiplicity. In one space dimension, the property of $A_{S}(\xi)$ being symmetric in $\xi \in \R$ implies that its eigenvalues and eigenprojections are smooth (indeed, analytic) in $\xi$, which is not the case when the family $A_{S}(\xi)$ depends on two or more variables (see Theorem II.6.1 and Remark II.6.3 in Kato \cite{Kat80}, p. 120). If $A_{S}(\xi)$ is of constant multiplicity in $\xi \in \R^{d}$, $d\geq 2$, by using the smooth representation (see \cite{Tex18}, Proposition 2.1) of its eigenprojection with the Dunford-Taylor integral \eqref{Pj} and a partition of unity argument, we can conclude that its eigenprojections are smooth with respect to $\xi \in \R^{d}$, $\xi \neq 0$. However, this is not the same for its eigenvalues as we can only expect them to be locally Lipschitz continuous in $\xi$ (see Texier \cite{Tex18}, Remark 4.3).  
\end{remark}

\begin{remark}
\label{rembarB}
When the system \eqref{linsyst} does not have a relaxation term (that is, a zero order derivative term) we can write the generalized viscosity matrix as $B_{S}(\xi)= \vert \xi \vert^{2} \bB_{S}(\xi)$ for some matrix $\bB_{S}(\xi)$. In this case, computing a compensating matrix symbol $K_{S}(\xi)$ for the triplet $(I, A_{S}(\xi), B_{S}(\xi))$ is equivalent to do it for the triplet $(I, A_{S}(\xi),\bB_{S}(\xi))$. Indeed, if $\bK_{S}(\xi)$ is a compensating matrix for the triplet $(I, A_{S}(\xi), \bB_{S}(\xi))$ then there holds
\[
[ \bK_{S}(\xi)A_{S}(\xi) ]^{s} + \bB_{S}(\xi) \geq \bthe (\xi) I_n,
\]
for all $\xi \in \R^{d}\setminus \{ 0 \}$, with $\bthe(\xi) >0$. Then the matrix $K_{S}(\xi) := \vert \xi  \vert^{2} \bK_{S}(\xi)$ is a compensating matrix symbol for the triplet $(I, A_{S}(\xi), B_{S}(\xi))$. This can be seen by just multiplying the expression above by $\vert \xi \vert^{2}$ to get
\[
\big[ \vert \xi \vert^{2} \bK_{S}(\xi)A_{S}(\xi) \big]^{s} + \vert \xi \vert^{2}\bB_{S}(\xi) = [ K_{S}(\xi)A_{S}(\xi) ]^{s} + B_{S}(\xi)  \geq \vert \xi \vert^{2} \bthe(\xi) I_n =: \theta(\xi) I_n.
\]
\end{remark}

\section{Pointwise energy estimates in Fourier space}
\label{secenergy}

In this section we perform pointwise energy estimates in the Fourier space for the solutions of linear systems of the form \eqref{linsyst} which are symbol symmetrizable and which satisfy the hypotheses of the equivalence theorem. In contrast to the case of second order systems \cite{KaSh88,ShKa85}, in order to close these estimates in this more general setting, we need to assume some conditions on the generalized flux, the generalized viscous matrices and on the compensating matrix symbols. These conditions may seem somehow chosen \emph{ad hoc}; but they are, however, satisfied by the examples we consider next. Furthermore, they are satisfied by second order (viscous and relaxation) systems to which the classical Shizuta and Kawashima's equivalence theorem applies. 

Let us start by considering symbol symmetrizable systems written in the variable $V$ as in Lemmas \ref{hypequiv}, that is, in the form
\begin{equation}\label{sys-V}
\hV_{t}+\big( i \vert \xi \vert A_{S}(\xi) + B_{S}(\xi) \big)\hV = 0.
\end{equation}
Then we have the following result.
\begin{lemma}[pointwise estimate, general case]
\label{pnt-ee-full-lem}
Let us consider the symmetric linear system \eqref{sys-V}. Assume that there exists a compensating matrix symbol $K_{S}(\xi)$ for the triplet $(I,A_{S}(\xi),B_{S}(\xi))$, that is, for some positive smooth function $f(\xi)$, $\xi \in \R^{d}$, $\xi \neq 0$, there holds
\begin{equation}\label{pos-1}
[ K_{S}(\xi)A_{S}(\xi) ]^{s} + B_{S}(\xi) \geq f(\xi) I_n.
\end{equation}
Also, suppose that the matrices $K_{S}(\xi)$ and $B_{S}(\xi)$ satisfy
\begin{equation}
\label{hyp-enr-est}
\frac{\vert  \xi K_{S}(\xi) \vert}{1+\vert \xi \vert^{2}},\quad \frac{\vert K_{S}(\xi)B_{S}(\xi)^{1/2} \vert}{(1+\vert \xi \vert^{2})^{1/2} f(\xi)^{1/2}} \leq C,
\end{equation}
for all $\xi \in \R^{d}$, $\xi \neq 0$, and for some uniform positive constant $C$. Then, the solutions $\hV= \hV(\xi,t)$ to system \eqref{sys-V} satisfy the following estimate
\begin{equation}\label{dsp-str-i}
\vert \hV(\xi,t) \vert \leq C \exp \left( -\frac{k \vert \xi \vert^{2}f(\xi)}{1+\vert \xi \vert^{2}}t \right) \vert \hV(\xi,0)\vert, 
\end{equation} 
for all $\xi \in \R^{d}\setminus \{ 0 \}$, $t\geq 0$, and some uniform constants $C$, $k>0$. 
\end{lemma}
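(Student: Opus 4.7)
The plan is to build a modified Lyapunov energy in Fourier space that combines the natural Friedrichs identity with an auxiliary identity exploiting the compensating matrix symbol $K_{S}(\xi)$, in the Kawashima--Shizuta spirit. First I would derive two identities. Taking the Hermitian inner product of \eqref{sys-V} with $\hV$ and then real parts (the hyperbolic term $i|\xi|\langle \hV,A_{S}\hV\rangle$ is purely imaginary since $A_{S}$ is real symmetric) yields the basic dissipation identity $\tfrac{1}{2}\tfrac{d}{dt}|\hV|^{2}+\langle \hV,B_{S}(\xi)\hV\rangle=0$. Since $iK_{S}(\xi)$ is Hermitian ($K_{S}$ real and skew-symmetric), the quantity $E_{1}(\xi,t):=\langle \hV,iK_{S}(\xi)\hV\rangle$ is real; differentiating it, substituting \eqref{sys-V} and using the elementary identity $[K_{S},A_{S}]=2[K_{S}A_{S}]^{s}$ (a consequence of the symmetry of $A_{S}$ and skew-symmetry of $K_{S}$) gives the auxiliary identity
\[
\tfrac{1}{2}\tfrac{d}{dt}E_{1}(\xi,t) \;=\; |\xi|\langle \hV,[K_{S}A_{S}]^{s}\hV\rangle + \mathrm{Im}\langle \hV,K_{S}B_{S}\hV\rangle.
\]

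Next I would introduce the Lyapunov functional
\[
\mathcal{E}(\xi,t) := |\hV|^{2} - \kappa\,\tfrac{|\xi|}{1+|\xi|^{2}}\,E_{1}(\xi,t),
\]
with $\kappa>0$ to be chosen small. Hypothesis (1) gives $\kappa|\xi|\,|K_{S}(\xi)|/(1+|\xi|^{2})\leq\kappa C$, so for $\kappa\leq 1/(2C)$ one has the equivalence $\tfrac{1}{2}|\hV|^{2}\leq\mathcal{E}(\xi,t)\leq\tfrac{3}{2}|\hV|^{2}$. Combining the two identities above and using \eqref{pos-1} rewritten as $\langle \hV,[K_{S}A_{S}]^{s}\hV\rangle\geq f(\xi)|\hV|^{2}-\langle \hV,B_{S}\hV\rangle$ (the factor $\kappa|\xi|/(1+|\xi|^{2})>0$ preserves inequalities) leads to
\[
\tfrac{d}{dt}\mathcal{E} \leq -2\Big(1-\tfrac{\kappa|\xi|^{2}}{1+|\xi|^{2}}\Big)\langle \hV,B_{S}\hV\rangle \;-\; \tfrac{2\kappa|\xi|^{2}f(\xi)}{1+|\xi|^{2}}|\hV|^{2} \;+\; 2\kappa\tfrac{|\xi|}{1+|\xi|^{2}}\bigl|\mathrm{Im}\langle \hV,K_{S}B_{S}\hV\rangle\bigr|.
\]

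The decisive step is absorbing the cross term via hypothesis (2). Writing $B_{S}=B_{S}^{1/2}B_{S}^{1/2}$ and applying Cauchy--Schwarz together with hypothesis (2) yields $|\mathrm{Im}\langle \hV,K_{S}B_{S}\hV\rangle|\leq |K_{S}B_{S}^{1/2}|\,|\hV|\,|B_{S}^{1/2}\hV|\leq C(1+|\xi|^{2})^{1/2}f(\xi)^{1/2}|\hV|\,|B_{S}^{1/2}\hV|$. I would then invoke Young's inequality with the $\xi$-calibrated weight $\alpha(\xi):=|\xi|f(\xi)^{1/2}/\bigl(2C(1+|\xi|^{2})^{1/2}\bigr)$, chosen precisely so that the two byproducts scale as $\kappa\,|\xi|^{2}f(\xi)/(1+|\xi|^{2})\,|\hV|^{2}$ and $\kappa C^{2}\langle \hV,B_{S}\hV\rangle$. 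Both can be absorbed, for $\kappa$ further small (so that $2(1-\kappa|\xi|^{2}/(1+|\xi|^{2}))-\kappa C^{2}\geq 0$ uniformly in $\xi$), by the leading negative terms, giving $\tfrac{d}{dt}\mathcal{E}\leq -c_{0}\,|\xi|^{2}f(\xi)/(1+|\xi|^{2})\,|\hV|^{2}\leq -c_{1}\,|\xi|^{2}f(\xi)/(1+|\xi|^{2})\,\mathcal{E}$. Grönwall's inequality (observing that $f$ does not depend on $t$) together with the norm equivalence $\mathcal{E}\sim|\hV|^{2}$ then yields \eqref{dsp-str-i}.

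The principal obstacle is exactly this $\xi$-dependent calibration of Young's parameter: a $\xi$-independent choice of $\alpha$ cannot control the cross term by the heat-type rate $|\xi|^{2}f/(1+|\xi|^{2})$ simultaneously in the low- and high-frequency regimes, since the two competing scales $|\xi|$ and $1+|\xi|^{2}$ would produce uncontrollable byproducts on one side or the other. Hypothesis (2) is structured so that the weight $\alpha(\xi)$ above produces byproducts at exactly the right orders, closing the estimate uniformly for $\xi\in\R^{d}\setminus\{0\}$.
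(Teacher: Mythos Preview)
Your proof is correct and follows essentially the same Kawashima--Shizuta energy method as the paper: both build the Lyapunov functional $\mathcal{E}=|\hV|^{2}-\delta\,\tfrac{|\xi|}{1+|\xi|^{2}}\langle \hV,iK_{S}\hV\rangle$, combine the Friedrichs identity with the $K_{S}$-weighted identity, and absorb the cross term $K_{S}B_{S}\hV$ via Cauchy--Schwarz plus Young using hypothesis \eqref{hyp-enr-est}. The only difference is presentational: the paper multiplies the basic identity by $1+|\xi|^{2}$ and applies Young with a fixed parameter $\varepsilon$ after factoring the $\xi$-weights into the two product terms, whereas you make the $\xi$-calibration of the Young weight explicit---mathematically these are the same splitting.
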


\begin{proof}
Let us take the inner product in $\C^{n}$ of \eqref{sys-V} with $\hV$, and then take the real part to obtain
\begin{equation}
\label{la8}
\tfrac{1}{2} \partial_t |\hV|^2 +  \< \hV, B_{S} \hV \> = 0.
\end{equation}
Next, multiplying \eqref{sys-V} by $-i\vert \xi \vert K_{S}(\xi)$ and then taking the inner product of the result with $\hV$, leads to 
\begin{equation}
\label{la9}
- \< \hV, i \vert \xi \vert K_{S} \hV_t \> +\vert \xi \vert^2 \< \hV, K_{S} A_{S} \hV \> = i \vert \xi \vert  \< \hV,   K_{S} B_{S} \hV \> = 0.
\end{equation}
Since $K_{S}(\xi)$ is skew-symmetric we have
\[
\Re \< \hV, i \vert \xi \vert K_{S} \hV_t \> = \tfrac{1}{2} \vert \xi \vert \partial_t \< \hV, i K_{S} \hV \>.
\]
Then taking the real part of \eqref{la9} we obtain
\[
- \tfrac{1}{2} \vert  \xi \vert \partial_t \< \hV, i K_{S} \hV \> + \vert \xi \vert^2 \< \hV, [K_{S} A_{S}]^s \hV \> = \Re \big( i \vert \xi \vert \< \hV, K_{S} B_{S} \hV \> \big).
\]
In view that $B_{S}(\xi) \geq 0$, and by the bounds imposed on the norm of the matrix product $K_{S}(\xi)B_{S}(\xi)$ described on the assumptions, we arrive at the estimate
\begin{equation}
\label{la10}
- \tfrac{1}{2} \vert \xi \vert \partial_t \< \hV, i K_{S} \hV \> + \vert \xi \vert^2 \< \hV, [K_{S} A_{S}]^{s} \hV \> \leq \vep  \vert \xi \vert^2 f(\xi) |\hV|^2 + C_\vep \big( \< \hV, B_{S} \hV \>+   \vert \xi \vert^2 \< \hV, B_{S} \hV \> \big),
\end{equation}
for any $\vep>0$ and where $C_{\vep}>0$ is an uniform constant depending on $\vep$ and on the bound for the norm of $K_{S}(\xi)B_{S}(\xi)^{1/2}/((1+\vert \xi \vert^{2})^{1/2} f(\xi)^{1/2})$. 

Now multiply \eqref{la8} by $1+\vert \xi \vert^{2}$ and  \eqref{la10} by $\delta$, and add them up in order to obtain
\[
\begin{aligned}
\tfrac{1}{2} \partial_t [&(1+\vert \xi \vert^{2}) |\hV|^2 - \delta \vert \xi \vert \< \hV, i K_{S} \hV \> ] + (1-\delta C_{\vep}) \< \hV, B_{S} \hV \> + \\ & +\vert \xi \vert^2 [ \delta \< \hV, [K_{S} A_{S}]^{s} \hV \> + (1-\delta C_\vep)  \< \hV, B_{S} \hV \> ] \\  &\leq \vep \delta \vert\xi \vert^2 f(\xi) |\hV|^2.
\end{aligned}
\]
Let us define the energy 
\[
\cE := |\hV|^2 - \frac{\delta \vert \xi \vert}{1+\vert \xi \vert^{2}} \< \hV, i K_{S} \hV \>.
\]
It is easy to verify that the quantity $\cE$ is real in view that $K_S(\xi)$ is skew-symmetric. Using that $\vert \vert \xi \vert K_{S}(\xi)/(1+\vert \xi \vert^{2}) \vert$ is uniformly bounded in $\xi \in \R^{d}\setminus \{ 0 \}$, we can find $\delta_{0}>0$ such that for $0 <\delta < \delta_{0}$ there holds
\[
\frac{1}{C_{1}}\vert \hV \vert^{2} \leq \cE \leq C_{1}\vert \hV \vert^{2},
\]	
for some uniform constant $C_{1}>0$. 

Let us choose $\vep=1/2$, so that $C_{\vep}$ is fixed. Next, take $0 < \delta < \delta_{0}$ sufficiently small such that $\delta= \min \{ \delta, 1-\delta C_{\vep} \}$. Therefore, we have
\[
\delta \< \hV, [K_{S} A_{S}]^{s} \hV \> + (1-\delta C_\vep)\<\hV, B_{S} \hV \> \geq \delta f(\xi) |\hV|^2.
\]
Thus we arrive at
\[
\tfrac{1}{2} \partial_t \cE + \tfrac{1}{2} \frac{\delta  \vert \xi \vert^2 f(\xi) }{1+\vert \xi \vert^{2}}  |\hV|^2 +\frac{(1-\delta C_{\vep})}{1+\vert \xi \vert^{2}} \< \hV, B_{S} \hV \> \leq 0,
\]
which yields
\[
\partial_t \cE + k \frac{\vert \xi \vert^2 f(\xi)}{1+\vert \xi \vert^{2}} \cE \leq 0,
\]
where $k=\delta /C_{1}$. This implies the result. 
\end{proof}

\begin{lemma}[pointwise estimate, relaxation-free case]
\label{pnt-ee-rel-fre-lem}
In the case that the linear system \eqref{linsyst} does not contain a relaxation term, so that the system \eqref{sys-V} can be written as
\begin{equation}\label{sys-V-no-rel}
\hV_{t}+\big( i \vert \xi \vert A_{S}(\xi) + \vert \xi \vert^{2} \bB_{S}(\xi) \big)\hV = 0
\end{equation}
(see Remark \ref{rembarB} above), let us assume that there exist a compensating matrix function $\bK_{S}(\xi)$ for the triplet $(I, A_{S}(\xi), \bB_{S}(\xi))$, that is, there holds
\begin{equation}\label{pos-2}
[ \bK_{S}(\xi), A_{S}(\xi) ]^{s} + \bB_{S}(\xi) \geq g (\xi) I_n,
\end{equation}
for all $\xi \in \R^{d}$, $\xi \neq 0$, and some smooth positive function $g(\xi)$. In addition, suppose that 
\begin{equation}
\label{hyp-enr-est-2}
\vert \xi\bK_{S}(\xi) \vert,\quad \frac{\vert \xi  \bK_{S}(\xi)\bB_{S}(\xi)^{1/2}\vert}{g(\xi)^{1/2}} \leq C,
\end{equation} 
for all $\xi \in \R^{d}$, $\xi \neq 0$, and for some uniform positive constant $C$. Then, the solutions $\hV(\xi,t)$ to the system \eqref{sys-V-no-rel} satisfy the following estimate
\begin{equation}\label{dsp-str-ii}
\vert \hV(\xi,t) \vert \leq C \exp ( -k \vert \xi \vert^{2}g(\xi)t ) \vert \hV(\xi,0)\vert,
\end{equation}
for all $\xi \in \R^{d}$, $\xi \neq 0$, and $t \geq 0$, and $C$ and $k$ some uniform constants. 
\end{lemma}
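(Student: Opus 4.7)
The plan is to parallel the argument of Lemma \ref{pnt-ee-full-lem}, but to exploit the fact that the dissipative symbol here is $|\xi|^2 \bB_S(\xi)$ rather than $B_S(\xi)$. This built-in factor $|\xi|^2$ eliminates the need for the weight $(1+|\xi|^2)$ used in the general case, and as a result it delivers the sharper decay rate $e^{-k|\xi|^2 g(\xi)t}$ of \eqref{dsp-str-ii}.

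First, I would take the $\C^n$-inner product of \eqref{sys-V-no-rel} with $\hV$ and pass to real parts to obtain the basic energy identity
\[
\tfrac{1}{2}\partial_t |\hV|^2 + |\xi|^2 \<\hV, \bB_S \hV\> = 0.
\]
Second, multiplying \eqref{sys-V-no-rel} on the left by $-i|\xi|\bK_S(\xi)$, testing against $\hV$, and using the skew-symmetry of $\bK_S$ (so that $i\bK_S$ is Hermitian and $\Re\<\hV, i\bK_S \hV_t\> = \tfrac{1}{2}\partial_t\<\hV, i\bK_S \hV\>$) to rewrite the time-derivative contribution, I obtain, upon taking real parts,
\[
-\tfrac{|\xi|}{2}\,\partial_t \<\hV, i\bK_S \hV\> + |\xi|^2 \<\hV, [\bK_S A_S]^s \hV\> = \Re\bigl( i|\xi|^3 \<\hV, \bK_S \bB_S \hV\> \bigr).
\]
The right-hand side is controlled by Cauchy-Schwarz combined with Young's inequality; noting that the second bound in \eqref{hyp-enr-est-2} gives $|\xi|^3 |\bK_S \bB_S^{1/2}| = |\xi|^2 \cdot |\xi \bK_S \bB_S^{1/2}| \leq C|\xi|^2 g(\xi)^{1/2}$ and splitting the product as $(|\xi| g^{1/2}|\hV|)(|\xi|\,|\bB_S^{1/2}\hV|)$ one arrives at
\[
\bigl| \Re\bigl( i|\xi|^3 \<\hV, \bK_S \bB_S \hV\> \bigr) \bigr| \leq \vep |\xi|^2 g(\xi) |\hV|^2 + C_\vep |\xi|^2 \<\hV, \bB_S \hV\>,
\]
for any $\vep > 0$.

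Third, I would add the basic identity (with coefficient one) to $\delta$ times the multiplier identity. Invoking the compensating inequality \eqref{pos-2} multiplied by $\delta|\xi|^2$, namely
\[
\delta|\xi|^2 \<\hV, [\bK_S A_S]^s \hV\> + \delta|\xi|^2 \<\hV, \bB_S \hV\> \geq \delta|\xi|^2 g(\xi)|\hV|^2,
\]
and fixing $\vep = 1/2$ and then $\delta > 0$ small enough that $1 - \delta(C_\vep + 1) > 0$, I arrive at
\[
\tfrac{1}{2}\partial_t \cE + \tfrac{\delta}{2}|\xi|^2 g(\xi)|\hV|^2 \leq 0, \qquad \cE := |\hV|^2 - \delta|\xi|\<\hV, i\bK_S \hV\>,
\]
where $\cE$ is real-valued since $i\bK_S$ is Hermitian. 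The first bound in \eqref{hyp-enr-est-2}, $|\xi\bK_S|\leq C$, then secures the equivalence $C_1^{-1}|\hV|^2 \leq \cE \leq C_1 |\hV|^2$ for some $C_1 > 0$ after possibly further shrinking $\delta$. A Gr\"onwall argument applied to $\partial_t \cE + k|\xi|^2 g(\xi)\cE \leq 0$ with $k = \delta/C_1$ finally yields \eqref{dsp-str-ii}.

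The principal technical point is matching $|\xi|$-weights: the viscous cross term generated by the multiplier $-i|\xi|\bK_S$ carries a factor $|\xi|^3$, and the normalization $|\xi\bK_S\bB_S^{1/2}|/g^{1/2} \leq C$ in \eqref{hyp-enr-est-2} is precisely what is needed to absorb the excess $|\xi|$ into the target rate $|\xi|^2 g(\xi)$. Everything else is a routine adaptation of the argument in Lemma \ref{pnt-ee-full-lem}, with the weight $(1+|\xi|^2)$ replaced by $1$.
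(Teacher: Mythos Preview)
Your proposal is correct and follows essentially the same approach as the paper, which only sketches the proof by recording the analogues of \eqref{la8}, \eqref{la9}, \eqref{la10} with $B_S$ replaced by $|\xi|^2\bB_S$ and then defining the energy $\cE_1 := |\hV|^2 - \delta|\xi|\<\hV, i\bK_S\hV\>$. Your write-up actually supplies more of the details (the Young-inequality splitting, the precise choice $1-\delta(C_\vep+1)>0$) than the paper does.
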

\begin{proof}
The proof is very similar to that of Lemma \ref{pnt-ee-full-lem}. Hence, we only sketch it, paying attention to the steps where hypothesis \eqref{hyp-enr-est-2} plays a role. The details are left to the reader. If we consider system \eqref{sys-V-no-rel}, equations \eqref{la8} and \eqref{la9} become
\begin{equation}
\label{la8-2}
\tfrac{1}{2} \partial_t |\hV|^2 + \vert \xi \vert^{2} \< \hV, \bB_{S} \hV \> = 0,
\end{equation}
and 
\begin{equation}
\label{la9-2}
- \< \hV, i \vert \xi \vert \bK_{S} \hV_t \> +\vert \xi \vert^2 \< \hV, \bK_{S} A_{S} \hV \> = i \vert \xi \vert^{3} \< \hV,   \bK_{S}\bB_{S} \hV \> = 0,
\end{equation}
respectively. Then, inequality \eqref{la10} now reads
\begin{equation}
\label{la10-2}
- \tfrac{1}{2} \vert \xi \vert \partial_t \< \hV, i \bK_{S} \hV \> + \vert \xi \vert^2 \< \hV, [\bK_{S} A_{S}]^{s} \hV \> \leq \vep  \vert \xi \vert^2 g(\xi) |\hV|^2 + C_\vep \vert \xi \vert^2 \< \hV, \tiB_{S} \hV \> ,
\end{equation}
for every $\vep$, and $C_{\vep}$ is a positive constant depending on $\vep$ and the uniform bound for the matrix $\vert \xi \vert \bK_{S}(\xi)\bB_{S}(\xi)^{1/2}/g(\xi)^{1/2}$. 

Next, we multiply \eqref{la10-2} by $\delta>0$ and add the result to \eqref{la8-2}, and then define the energy 
\[
\cE_{1} := |\hV|^2 - \delta \vert \xi \vert \< \hV, i \bK_{S} \hV \>, 
\]
which is indeed an energy in view that $\vert \xi \bK_{S}(\xi) \vert $ is uniformly bounded in $\xi \in \R^{d}\setminus \{ 0 \}$.  The proof is then completed in the same fashion as that of Lemma \ref{pnt-ee-full-lem}. 
\end{proof}
\begin{remark}
\label{remkawasi}
Next we show that second order systems with viscosity and relaxation considered under the classical framework of Shizuta and Kawashima \cite{ShKa85}, satisfy the conditions \eqref{hyp-enr-est}. For that, let us consider a symmetric second order system in the Fourier space of the form
\[
\hU_{t} + \left( i \vert \xi \vert A(\omega) + L + \vert \xi \vert^{2} B(\omega) \right) \hU=0,
\]
where $L\geq 0$ and $B(\omega) \geq 0$ for all $\omega \in \bbS^{d-1}$. Assume that there exists a compensating matrix  $K(\omega) \in C^{\infty}\big(\bbS^{d-1};\R^{n\times n} \big)$ for the system, that is, there holds
\begin{equation}\label{cls-K}
[K(\omega) A(\omega) ]^{s} + L + B(\omega) \geq \bsi I,\quad \forall \omega \in \bbS^{d-1},  
\end{equation}
for some uniform positive constant $\bsi$. 
Consider the open cover $(0,1+\epsilon)$, $(1,\infty)$ of the set $(0,\infty)$, for some $0 < \epsilon \ll 1$ fixed. Let $\phi_{1}$ and $\phi_{2}$ be a smooth partition of unity of the set $(0,\infty)$ subordinated to this cover, that is, $\phi_{1}$ and $\phi_{2}$ are smooth functions such that $0 \leq \phi_{1},\phi_{2} \leq 1$, $\text{supp}\,\phi_{1} \subset(0, 1+\epsilon)$, $\text{supp}\, \phi_{2} \subset (1, \infty)$ and $\phi_{1}(x) + \phi_{2}(x) = 1$ for all $x \in (0, \infty)$. We define 
\begin{equation}\label{new-K}
\tiK(\xi) := c \big( \vert \xi \vert^{2}\phi_{1}(\vert \xi \vert) + \phi_{2}(\vert \xi \vert) \big) K(\omega(\xi)), \quad \forall \xi \in \R^{d}\setminus \{ 0 \},
\end{equation} 
for some positive constant $0 < c \leq 1$ such that
\begin{equation}\label{c-def}
1 \geq  c \big( \vert \xi \vert^{2}\phi_{1}(\vert \xi \vert) + \phi_{2}(\vert \xi \vert) \big), \quad \text{for} \, \vert \xi \vert \geq 1.
\end{equation}
Then for $\vert \xi \vert \geq 1$ we have 
\[
\begin{aligned}
\big[\tiK(\xi)A(\omega) \big]^{s} +L + \vert \xi \vert^{2} B(\omega) &= c \big( \vert \xi \vert^{2}\phi_{1}(\vert \xi \vert) + \phi_{2}(\vert \xi \vert) \big) [ K(\omega)A(\omega)]^{s} + L + \vert \xi \vert^{2} B(\omega) \\ & \geq c \big( \vert \xi \vert^{2}\phi_{1}(\vert \xi \vert) + \phi_{2}(\vert \xi \vert) \big)[K(\omega)A(\omega)]^{s} + L + B(\omega) \\ & \geq c \big( \vert \xi \vert^{2}\phi_{1}(\vert \xi \vert) + \phi_{2}(\vert \xi \vert) \big) \left( [K(\omega)A(\omega)]^{s} + L + B(\omega) \right) \\ &\geq \bsi c \big( \vert \xi \vert^{2}\phi_{1}(\vert \xi \vert) + \phi_{2}(\vert \xi \vert) \big)I_n.
\end{aligned}
\]
Now, given that $\text{supp}\, \phi_{2} \subset (1, \infty)$, we have for $0 < \vert \xi \vert \leq 1$ that
\[
\begin{aligned}
\big[\tiK(\xi)A(\omega) \big]^{s} +L + \vert \xi \vert^{2} B(\omega) &= c \vert \xi \vert^{2}\phi_{1}(\vert \xi \vert)[ K(\omega)A(\omega)]^{s} + L + \vert \xi \vert^{2} B(\omega) \\ & \geq c \vert \xi \vert^{2}\phi_{1}(\vert \xi \vert)[ K(\omega)A(\omega)]^{s} + \vert \xi \vert^{2} L + \vert \xi \vert^{2} B(\omega) \\ &\geq c \vert \xi \vert^{2} \phi_{1}(\vert \xi \vert) \left( [K(\omega)A(\omega)]^{s} + L + B(\omega) \right) \\
&\geq \bsi c |\xi|^2 \phi_{1}(\vert \xi \vert)I_n,
\end{aligned}   
\]
where we have used that $c \leq 1$. 

Thus we have shown that the compensating matrix (symbol) defined by \eqref{new-K} satisfies
\[
\big[ \tiK(\xi)A(\omega) \big]^{s} + L + \vert \xi \vert^{2}B(\omega) \geq f(\xi)I_n,
\]
for $f(\xi)$ a continuous, positive and uniformly bounded function on $\R^{d-1}$ given by
\begin{equation}\label{pos-tiK}
f(\xi) = \begin{cases} \bsi c  \big( \vert \xi \vert^{2}\phi_{1}(\vert \xi \vert) + \phi_{2}(\vert \xi \vert) \big), & \text{for}\quad \vert \xi \vert \geq 1, \\
\bsi c \vert \xi \vert^{2} \phi_{1}(\vert \xi \vert), & \text{for}\quad 0< \vert \xi \vert < 1.
\end{cases}
\end{equation}
Observe that the function $\vert \xi \vert^{2} \phi_{1}(\vert \xi \vert) + \phi_{2}(\vert \xi \vert)$ is uniformly bounded in $\xi\in \R^{d}$, $\xi \neq 0$, so that 
\[
\frac{\vert \xi \vert \tiK(\xi)}{1+\vert \xi \vert^{2}} \leq C,
\]
for some uniform positive constant $C$.
In this setting the generalized viscous matrix is $\tiB(\xi) := L + \vert \xi \vert^{2}B(\omega)$, from which one easily obtains
\[
\vert \tiB(\xi)^{1/2} \vert = O((1+ \vert \xi \vert^{2})^{1/2}).
\]
Thus it is easy to see that for $\vert \xi \vert \geq 1$ there holds
\[
\frac{\vert \tiK(\xi)\tiB(\xi)^{1/2}\vert}{(1+\vert \xi \vert^{2})f(\xi)^{1/2}} \leq \frac{C \vert K(\omega) \vert (1+\vert \xi \vert^{2})^{1/2}}{(1+\vert \xi \vert^{2})^{1/2}} = O(1),
\]
whereas for $0 < \vert \xi \vert < 1$ we have
\[
\frac{\vert \tiK(\xi)\tiB(\xi)\vert}{(1+\vert \xi \vert^{2})^{1/2}f(\xi)^{1/2}} = \frac{c \vert \xi \vert^{2}\phi_{1}(\vert \xi \vert)K(\omega) \tiB(\xi)^{1/2}\vert}{(1+\vert \xi \vert^{2})^{1/2}f(\xi)^{1/2}} \leq C\frac{\vert \xi \vert^{2}\vert K(\omega)\vert (1+\vert \xi \vert^{2})^{1/2}}{(1+\vert \xi \vert^{2})^{1/2}\vert \xi \vert} = O(\vert \xi \vert). 
\]
Thus the estimate
\[
\frac{\vert \tiK(\xi) \bB(\xi)^{1/2}\vert}{(1+\vert \xi \vert)^{1/2}f(\xi)^{1/2}} \leq C
\]
holds uniformly in $\xi \in \R^{d}\setminus \{ 0 \}$.  This proves the assertion.
\end{remark}
%
%
	
\section{Applications}
\label{secappl}

In this Section, we examine (under the framework and hypotheses of Theorem \ref{equivalencetheo}) several multi-dimensional systems arising in Physics. After each system is linearized around a constant state, we investigate its symmetrizability (in the classical or in the symbol sense). For the resulting symmetric system the genuinely coupling condition (see Definition \ref{gencoupling}) is verified and a compensating matrix symbol is provided. For the latter, we can proceed either by using the formula provided by Humpherys (see equation \eqref{eq:humpcompensation}), or by inspection. 

Regarding this last point, a few observations are in order.  As we have already pointed out in Remark \ref{comp-mat-for-rmk}, in order to use the formula for the compensating matrix symbol we need the generalized flux matrix $A_{S}(\xi)$ to be of constant multiplicity and with explicit representations of its eigenvalues and eigenprojections. These last representations, however, may not be available.  In all of the examples treated here, $A_{S}(\xi)$ is of constant multiplicity and, thus (see M\'etivier and Zumbrun \cite[Definition 2.2, Remark 2.3]{MeZ05}), for each $\bar{\xi} \in \R^{d}\setminus \{ 0 \}$ we can find an open neighborhood  $\Omega$ of it and a smooth family of linearly independent functions $\{e_{j}(\xi) \}_{j=1}^{m}$ defined on $\Omega$ and such  that,
\begin{align*}
A_{S}(\xi)e_{j}(\xi)=\mu(\xi)e_{j}(\xi)\quad\mbox{for all}\quad\xi\in\Omega,
\end{align*}
where $\mu=\mu(\xi)$ is a smooth eigenvalue of $A_{S}(\xi)$ on $\Omega$ with algebraic multiplicity equal to $m$. Thus, by a partition of unity argument we obtain that the eigenvalues of $A_{S}(\xi)$ are smooth on $\xi \in \R^{d}\setminus \{ 0 \}$. The same can be done for the eigenprojections (see Remark \ref{comp-mat-for-rmk}). However, an \emph{explicit global expression} for them may not be possible as the example in Section \ref{isoQHD} shows (such a global representation would contradict the Hairy Ball Theorem; see Remark \ref{remhairyball} for details).

\subsection{Isothermal compressible viscous fluids of Korteweg type}
\label{sec-NSK}

As our first example, we consider the following system describing the dynamics of a compressible isothermal fluid exhibiting viscosity and capillarity in several space dimensions,
\begin{equation}
\label{Iso-KModel}
\begin{aligned}
\partial_{t}\rho + \nabla \cdot (\rho \bfu) &= 0, \\
\partial_{t}(\rho \bfu) + \nabla \cdot (\rho \bfu \otimes \bfu) + \nabla p &= \nabla \cdot \big(\mathbf{S+K} \big), 
\end{aligned}
\end{equation}	
where $x \in \R^d$, $d \geq 1$, $t>0$; $\rho$ and $\bfu \in \R^d$ represent the mass density and the velocity field, respectively. According to custom, $p$ is the thermodynamical  pressure, a smooth positive function of the mass density, $p = p(\rho)$. The viscous stress tensor $\mathbf{S}$ and the Korteweg tensor $\mathbf{K}$ are given by
\begin{equation}
\label{S,K}
\begin{aligned}
\mathbf{S} &=  \lambda (\nabla \cdot \bfu) I_d + 2 \nu \mathbf{D}(\bfu) , \\
\mathbf{K} &= \big( \rho \nabla \cdot (k \nabla \rho) + \tfrac{1}{2}k\vert \nabla \rho \vert^{2} - \tfrac{1}{2}\rho k_{\rho}\vert \nabla \rho \vert^{2} \big) I_d -k ( \nabla \rho \otimes \nabla \rho ). 
\end{aligned}
\end{equation} 
Here $\mathbf{D}(\bfu)= \tfrac{1}{2}\big(\nabla \bfu+(\nabla \bfu)^{\top} \big)$ is the strain tensor and $I_d$ is the identity $d \times d$ matrix. The bulk and shear viscosity coefficients ($\lambda$ and $\nu$, respectively) are smooth positive functions of $\rho >0$, satisfying $2\nu + \lambda >0$. The capillarity coefficient $k$ (which appears in $\mathbf{K}$) is a smooth positive function of $\rho$ as well. 

System \eqref{Iso-KModel}, also known as the \emph{Navier-Stokes-Korteweg (NSK) system} \cite{CLS19,TaGa16,PlV22,KSX22,GeLeF16}, is the isothermal version of the model derived by Dunn and Serrin \cite{DS85} to describe fluid capillarity effects in diffuse interfaces for liquid vapor flows (and originally suggested in an early contribution by Korteweg \cite{Kortw1901}; see also Section \ref{sec-NSFK} below). Here we are assuming the general case in which the nonlinear viscosity and capillarity coefficients are positive functions of $\rho > 0$. 
Hence, we consider the mass density $\rho > 0$ as the sole independent thermodynamic variable. It belongs to the set
\[
\mathcal{D}:=\{ \rho \in \R:\: \rho>0\}.
\]
In what follows we assume that the pressure satisfies 
\begin{equation}
p>0,\quad p_{\rho}>0.
\end{equation}

We rewrite system \eqref{ModelSev} in quasilinear form. Without loss of generality and for the sake of simplicity, we consider the case of two space dimensions ($d = 2$). The results hold for (and can be easily extrapolated to) any dimension $d \geq 2$ at the expense of extra bookkeeping. To that end, we proceed in the same fashion as for the Navier-Stokes system: use the continuity equation to simplify the momentum balance equation. 
Thus, after some straightforward computations, the quasi-linear form of system \eqref{Iso-KModel} can be written as 
\begin{equation}
\label{quas-mat-form-NSK}
\begin{aligned}
A^{0}(U)\partial_{t}U + \sum_{i=1}^2 A^{i}(U)\partial_{x_i}U &= \sum_{i,j=1}^2 B^{ij}(U)\partial_{x_i}\partial_{x_j}U + \sum_{i,j,k=1}^2 C^{ijk}(U)\partial_{x_i}\partial_{x_j}\partial_{x_k}U + \\ &+  \mathcal{N}(U,DU,D^{2}U), 
\end{aligned}
\end{equation}
where 
\[
U = \begin{pmatrix}
\rho \\ \bfu
\end{pmatrix} \in \R^3, \quad
A^{0}(U)= \begin{pmatrix}
1 & 0_{1 \times 2} \\ 0_{2 \times 1} & \rho \: I_2
\end{pmatrix} \in \R^{3 \times 3}
\]
(here all vectors are column vectors, $\xi = (\xi_1, \xi_2)^\top$ and $\boldsymbol{u} = (u_1, u_2)^\top$). The remaining matrix coefficients can be given in terms of their symbols: 
\[
\sum_{i=1}^{2}A^{i}(U)\xi_{i}= \begin{pmatrix}
\bfu \cdot \xi & \rho \:\xi^\top \\ p_{\rho}\:\xi & \rho\:(\bfu \cdot\xi) \: I_2
\end{pmatrix} \in \R^{3 \times 3},
\]
\[
\sum_{i,j=1}^{2}B^{ij}\xi_{i}\xi_{j}=\begin{pmatrix}
0 & 0_{1 \times 2} \\ 0_{2 \times 1} & \nu \vert \xi \vert^{2}I_2 + (\nu+\lambda)\xi \otimes\xi
\end{pmatrix} \in \R^{3 \times 3},
\]
\[
\sum_{i,j,k=1}^{2}C^{ijk}\xi_{i}\xi_{j}\xi_{k}= \begin{pmatrix}
0 & 0_{1 \times 2} \\ k\:\rho \vert \xi \vert^{2}\xi & 0_2 
\end{pmatrix} \in \R^{3 \times 3},
\]
for all $\xi = (\xi_1,\xi_2)^\top \in \R^2$. The non-linear terms are contained in $\mathcal{N}(U,DU,D^{2}U)$. We are not going to write their explicit form as we are going to consider system \eqref{quas-mat-form-NSK} linearized around a constant equilibrium state. 
Let $\bU=(\brho,\bu_{1}, \bu_{2})^\top$ be a constant equilibrium state. Linearizing around $\bU$ the system takes the form
\begin{equation}
\label{NSK-2D}
\begin{aligned}
A^{0}\partial_{t} U+A^{1}\partial_{x} U+A^{2}\partial_{y} U&= B^{11}\partial_{x}^{2} U+B^{12}\partial_{y}\partial_{x} U+B^{21}\partial_{x}\partial_{y}U+B^{22}\partial_{y}^{2}U \\ &+C^{111}\partial_{x}^{3} U + C^{221}\partial_{x}\partial_{y}^{2} U + C^{112}\partial_{y}\partial_{x}^{2} U+ C^{222}\partial_{y}^{3} U,
\end{aligned}
\end{equation}
where $U=(\rho, u_{1}, u_{2})^{\top}$. The matrix coefficients are evaluated at the constant state $\bU$, that is, $A^{0}=A^{0}(\bU)$, $A^{1}=A^{1}(\bU)$, etc. The form of the matrix coefficients is given next:  
\begin{equation}
\label{f-A0}
A^{0}=\begin{pmatrix}
1 & 0 & 0 \\ 0 & \brho & 0 \\ 0 & 0 & \brho
\end{pmatrix}, 
\end{equation}
\[
\begin{aligned}
A^{1} &= \begin{pmatrix}
\bu_{1} & \brho & 0 \\ \bp_{\rho} & \brho\: \bu_{1} & 0 \\ 0 & 0 & \brho \: \bu_{1}
\end{pmatrix}, & A^{2} &= \begin{pmatrix}
\bu_{2} & 0 & \brho \\ 0 & \brho\: \bu_{2} & 0 \\ \bp_{\rho} & 0 & \brho\: \bu_{2}
\end{pmatrix},  \\
B^{11} &= \begin{pmatrix}
0 & 0 & 0 \\ 0 & 2\bnu + \bla & 0 \\ 0 & 0 & \bnu
\end{pmatrix}, & B^{12}&= \begin{pmatrix}
0 & 0 & 0 \\ 0 & 0 & \bnu \\ 0 & \bla & 0 
\end{pmatrix},\\
B^{21} &= \begin{pmatrix}
0 & 0 & 0 \\ 0 & 0 & \bla \\ 0 & \bnu & 0
\end{pmatrix}, 
& B^{22} &= \begin{pmatrix}
0 & 0 & 0 \\ 0 & \bnu & 0 \\ 0 & 0 & 2 \bnu + \bla
\end{pmatrix}, \\
C^{111} = C^{221} &=\begin{pmatrix}
0 & 0 & 0 \\ \bk \: \brho & 0 & 0 \\ 0 & 0 & 0 
\end{pmatrix}, & C^{112} =C^{222} &=\begin{pmatrix}
0 & 0 & 0 \\ 0 & 0 & 0 \\ \bk \: \brho & 0 & 0
\end{pmatrix},
\end{aligned}
\]
%
%
%
where all the overlined variables (for example, $\bp_{\rho}=p_{\rho}(\brho)$, $\bnu=\nu(\brho)$ and $\bla = \lambda(\brho)$) denote the physical quantities evaluated at the constant state $\bU$. We keep this notation for the rest of the paper.

By rewriting \eqref{NSK-2D} in the form \eqref{linsyst} we obtain
\begin{equation}
\label{NSK-2D-comp-form}
A^{0}U_{t}+ \sum_{0\leq \vert \alpha \vert \leq 3}L^{\alpha}D^{\alpha}U=0,
\end{equation}
where $L^{(0,0)}=0$, $L^{(1,0)}=A^{1}$, $L^{(0,1)}=A^{2}$, $L^{(2,0)}=-B^{11}$, $L^{(1,1)}=-(B^{12}+B^{21})$, $L^{(0,2)}=-B^{22}$, $L^{(3,0)}=-C^{111}$, $L^{(2,1)}=-C^{112}$, $L^{(1,2)}=-C^{221}$, $L^{(0,3)}=-C^{222}$. Therefore after taking the Fourier transform to \eqref{NSK-2D-comp-form} and splitting the symbol into odd- and even-order derivatives we are led to the system
\begin{equation}\label{NSK-2D-Four}
A^{0}\hU_{t} + \big( i\vert \xi \vert A(\xi) + B(\xi) \big)\hU=0,
\end{equation}
where the generalized transport and generalized viscosity symbols, $A(\xi)$ and $B(\xi)$, respectively are, according to \eqref{gen-flux-visc}, given by
\[
A(\xi) = \sum_{\substack{1 \leq |\alpha| \leq m \\ |\alpha| \, \text{odd}}} |\xi|^{ \vert \alpha \vert - 1} A^ \alpha (\omega(\xi)) = \begin{pmatrix}
\omega_{1}\bu_{1} + \omega_{2}\bu_{2} & \omega_{1}\brho & \omega_{2}\brho \\ \omega_{1}\beta(\xi) & \omega_{1}\bu_{1} + \omega_{2}\bu_{2} & 0 \\ \omega_{2}\beta(\xi) & 0 & \omega_{1}\bu_{1} + \omega_{2}\bu_{2}
\end{pmatrix},
\]
with
\[
\beta(\xi):= \bp_{\rho}+\bk\:\brho \vert \xi \vert^{2} > 0, \qquad \forall \xi \in \R^2,
\]
and by
\[
B(\xi) = \sum_{\substack{0 \leq |\alpha| \leq m \\ |\alpha| \, \text{even}}} |\xi|^{\vert \alpha \vert} B^\alpha(\omega(\xi)) = \begin{pmatrix}
0 & 0 & 0 \\ 0 & (2 \bnu + \bla)\omega_{1}^{2} + \bnu \omega_{2}^{2} & (\bla + \bnu)\omega_{1}\omega_{2} \\ 0 & (\bla +\bnu)\omega_{1}\omega_{2} & \bnu \omega_{1}^{2} + (2\bnu + \bla)\omega_{2}^{2}
\end{pmatrix}
\]
(let us recall that $\omega = \omega(\xi) = \xi/|\xi| \in \bbS^1$).
Thanks to the conditions on the viscosity coefficients $\bnu$ and $\bla$, one can easily check that $B(\xi) \geq 0 $ for all $\xi \in \R^{2}\setminus \{0 \}$.

\begin{proposition}
\label{propNSK}
The linearized NSK system \eqref{NSK-2D-comp-form} satisfies the hypotheses of the equivalence Theorem \ref{equivalencetheo} (in particular, it is genuinely coupled).
\end{proposition}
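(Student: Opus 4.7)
The plan is to reduce the system to the canonical form \eqref{Fouriersyst} by left-multiplying \eqref{NSK-2D-Four} by $(A^{0})^{-1}$, producing the generalized flux and viscosity symbols $\widetilde{A}(\xi):=(A^{0})^{-1}A(\xi)$ and $\widetilde{B}(\xi):=(A^{0})^{-1}B(\xi)$, and then to exhibit a concrete symbol symmetrizer together with a direct verification of the genuine coupling condition. Once these are in place, Theorem \ref{equivalencetheo} immediately delivers strict dissipativity and a compensating matrix symbol. The symmetrizer I propose is
\[
S(\xi) := \mathrm{diag}\!\left(\frac{\beta(\xi)}{\brho},\;\brho,\;\brho\right),\qquad \beta(\xi)=\bp_{\rho}+\bk\brho|\xi|^{2},
\]
which is smooth, diagonal and positive definite on $\R^{2}\setminus\{0\}$ because $\bp_{\rho}>0$, $\brho>0$ and $\bk>0$. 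The $\xi$-dependence is unavoidable: the hyperbolic part of $A(\xi)$ carries the coupling $\bp_{\rho}$, while the third-order capillarity part carries $\bk\brho|\xi|^{2}$, and no constant matrix can symmetrize both simultaneously. This is precisely the motivation for Definition \ref{defsymbolsymm}.

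The verification of symmetry is then routine arithmetic. One checks that $S(\xi)\widetilde{A}(\xi)=\mathrm{diag}(\beta/\brho,1,1)A(\xi)$ has $(1,j+1)$-entry equal to $(\beta/\brho)(\brho\,\omega_{j})=\beta\,\omega_{j}$, which matches the $(j+1,1)$-entry $\omega_{j}\beta$; the lower $2\times 2$ block of $A(\xi)$ is already $(\bbu\cdot\omega)\mathbb{I}_{2}$. Likewise $S(\xi)\widetilde{B}(\xi)=\mathrm{diag}(\beta/\brho,1,1)B(\xi)=B(\xi)$, since the first row and column of $B(\xi)$ vanish. Positive semi-definiteness of $B(\xi)$ reduces to the classical Lam\'e block
\[
\widetilde{B}_{\mathrm{low}}(\omega)=\bnu\,\mathbb{I}_{2}+(\bnu+\bla)\,\omega\otimes\omega,
\]
for which a trace--determinant computation gives $\mathrm{tr}\,\widetilde{B}_{\mathrm{low}}=3\bnu+\bla>0$ and $\det\widetilde{B}_{\mathrm{low}}=\bnu(2\bnu+\bla)>0$ under the standing assumptions on the viscosities. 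Hence $S(\xi)\widetilde{B}(\xi)\geq 0$, and in fact $\widetilde{B}_{\mathrm{low}}$ is strictly positive definite for every $\omega\in\bbS^{1}$.

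For the genuine coupling condition, the strict positivity of $\widetilde{B}_{\mathrm{low}}$ together with the invertibility of $A^{0}$ gives $\ker\widetilde{B}(\xi)=\ker B(\xi)=\mathrm{span}\{e_{1}\}$, with $e_{1}=(1,0,0)^{\top}$. Suppose, for contradiction, that $(\mu\mathbb{I}_{3}+\widetilde{A}(\xi))e_{1}=0$ for some $\mu\in\R$ and $\xi\neq 0$. Multiplying by $A^{0}$ and using $A^{0}e_{1}=e_{1}$ converts this to $A(\xi)e_{1}=-\mu e_{1}$, whereas a direct computation yields
\[
A(\xi)\,e_{1}=\bigl(\,\bbu\cdot\omega,\;\omega_{1}\beta(\xi),\;\omega_{2}\beta(\xi)\,\bigr)^{\top}.
\]
Matching second and third components forces $\omega_{1}\beta(\xi)=\omega_{2}\beta(\xi)=0$, which is impossible since $\beta(\xi)>0$ and $\omega_{1}^{2}+\omega_{2}^{2}=1$. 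This contradiction establishes genuine coupling, and by Theorem \ref{equivalencetheo} the proposition follows. The main technical point is really just the identification of the correct $\xi$-dependent symmetrizer; every remaining step is an algebraic verification on $3\times 3$ matrices, and the same argument transports verbatim to dimension $d\geq 2$ after replacing $(\omega_{1},\omega_{2})$ by $(\omega_{1},\ldots,\omega_{d})$.
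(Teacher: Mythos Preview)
Your proof is correct and follows essentially the same route as the paper's. The only cosmetic difference is that you first absorb the prefactor by passing to $\widetilde{A}=(A^{0})^{-1}A$, $\widetilde{B}=(A^{0})^{-1}B$ and then symmetrize with $S(\xi)=\mathrm{diag}(\beta/\brho,\brho,\brho)$, whereas the paper keeps $A^{0}$ in front and uses $S_{\mathrm{paper}}(\xi)=\mathrm{diag}(\beta/\brho,1,1)$; since $S(\xi)=S_{\mathrm{paper}}(\xi)A^{0}$, the two symmetrizations coincide, and the genuine-coupling check via $\ker B(\xi)=\mathrm{span}\{e_{1}\}$ together with $\beta(\xi)>0$ is identical in both arguments.
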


\begin{remark}
\label{remNSKnotF}
Before we proceed, let us make a comment on the symmetrizability in the sense of Friedrichs of system \eqref{NSK-2D}. Let us assume that there exists such a classical symmetrizer having the form
\[
S=\begin{pmatrix}
s_{11} & s_{12} & s_{13}  \\ s_{12} & s_{22} & s_{23} \\ s_{13} & s_{23} & s_{33}
\end{pmatrix}.
\]
Then, by requiring the matrices $SC^{111}$ and $SC^{222}$ to be symmetric one gets $s_{22}=s_{23}=s_{33}=0$. Thus the second and third columns of $S$ are linearly dependent, so that $S$ can not be positive definite. Therefore no such symmetrizer exists, implying that \emph{system \eqref{NSK-2D} is not symmetrizable in the sense of Friedrichs}. 
\end{remark}
\begin{proof}[Proof of Proposition \ref{propNSK}]
If we look at the system in the Fourier space, namely, \eqref{NSK-2D}, we note that it is not in symmetric form as $A(\xi)$ is not symmetric. However, by inspection it is easy to verify that 
\begin{equation}\label{form-S(xi)-NSK}
S(\xi):= \begin{pmatrix}
\beta(\xi)/\brho & 0 & 0 \\ 0 & 1 & 0 \\ 0 & 0 & 1 
\end{pmatrix}
\end{equation}
is a symbol symmetrizer for system \eqref{NSK-2D-Four}. Let us recast system \eqref{NSK-2D-Four} in the variable $\hV:=(S(\xi)A^{0})^{1/2}\hU$. This yields,
\begin{equation}
\label{NSK-2D-V}
\hV_{t}+ \big( i\vert \xi \vert A_{S}(\xi) + \vert \xi \vert^{2} \bB_{S}(\xi) \big)\hV =0,
\end{equation}
where 
\begin{equation}\label{NSK-2D-tiA}
\begin{aligned}
A_{S}(\xi) &:= \big(S(\xi)A^{0} \big)^{-1/2}S(\xi)A(\xi)\big(S(\xi)A^{0} \big)^{-1/2} \\ &= \begin{pmatrix}
\omega_{1}\bu_{1}+ \omega_{2}\bu_{2} & \omega_{1}\beta^{1/2} & \omega_{2}\beta^{1/2} \\ \omega_{1}\beta^{1/2} & \omega_{1}\bu_{1} + \omega_{2}\bu_{2} & 0 \\ \omega_{2}\beta^{1/2} & 0 & \omega_{1}\bu_{1} + \omega_{2}\bu_{2}
\end{pmatrix},
\end{aligned}
\end{equation}
and
\begin{equation}\label{NSK-2D-tiB}
\begin{aligned}
\vert \xi \vert^{2} \bB_{S}(\xi) &:= \big(S(\xi)A^{0} \big)^{-1/2}S(\xi)B(\xi)\big(S(\xi)A^{0} \big)^{-1/2} \\ &= 
\frac{\vert \xi \vert^{2}}{\brho}\begin{pmatrix}
0 & 0 & 0 \\ 0 & (2 \bnu + \bla)\omega_{1}^{2} + \bnu \omega_{2}^{2} & (\bla + \bnu)\omega_{1}\omega_{2} \\ 0 & (\bla +\bnu)\omega_{1}\omega_{2} & \bnu \omega_{1}^{2} + (2\bnu + \bla)\omega_{2}^{2}
\end{pmatrix}.
\end{aligned}
\end{equation}
Clearly $\bB_{S}(\xi)\geq 0$ and $\mbox{ker}\big( \bB(\xi) \big)=\mbox{span}\left\lbrace (1,0,0) \right\rbrace$ for all $\xi \in \R^{2}\setminus \{0 \}$. With this fact and the property that $\beta(\xi) >0$ for all $\xi \in \R^{2}\setminus \{0 \}$, it easily follows that $A_{S}(\xi)$ and $\bB_{S}(\xi)$ satisfy the genuine coupling condition. 
Therefore, by the equivalence Theorem \ref{equivalencetheo}, a compensating matrix symbol exists for the triplet $\big( I, A_{S}(\xi), \bB_{S}(\xi) \big)$. Let us try to find one of such matrices by means of formula \eqref{ExK}.  

Computing the eigenvalues of the generalized flux matrix $A_{S}(\xi)$ yields,
\[
\mu_{1}(\xi) = \omega_{1}\bu_{1} + \omega_{2}\bu_{2}-\beta(\xi)^{1/2} < \mu_{2}(\xi) = \omega_{1}\bu_{1} + \omega_{2}\bu_{2} < \mu_{3}(\xi)= \omega_{1}\bu_{1} + \omega_{2}\bu_{2} + \beta(\xi)^{1/2},
\]
so that $A_{S}(\xi)$ is of constant multiplicity in $\xi \in \R^{2}\setminus \{0\}$. The respective eigenvectors of the eigenvalues of $A_{S}(\xi)$ are 
\[
v_{1}(\xi)= (-1, \omega_{1}, \omega_{2})^{\top},\quad v_{2}(\xi)=(0,-\omega_{2},\omega_{1})^{\top},\quad v_{3}(\xi)= (1, \omega_{1}, \omega_{2})^{\top}, 
\]
which are orthogonal. Therefore, the eigenprojections onto the eigenspaces associated to $\mu_{j}(\xi)$, $j =1,2,3$, are given by
\[
P_{1}(\xi) = \frac{1}{|v_1|^2}\begin{pmatrix}
-v_{1}^\top \\  \omega_{1} v_{1}^\top \\ \omega_{2}v_{1}^\top
\end{pmatrix} = \frac{1}{2}\begin{pmatrix} 1 & - \omega_1 & - \omega_2 \\ - \omega_1 & \omega_1^2 & \omega_1 \omega_2 \\ - \omega_2 & \omega_1 \omega_2 & \omega_2^2\end{pmatrix},
\]
\[
P_2(\xi) = \frac{1}{|v_2|^2}\begin{pmatrix}
0_{1 \times 3} \\ -\omega_{2}v_{2}^\top \\ \omega_{1}v_{2}^\top
\end{pmatrix} = \begin{pmatrix} 0 & 0 & 0 \\ 0 & \omega_2^2 & - \omega_1 \omega_2 \\ 0 & - \omega_1 \omega_2 & \omega_2^2 \end{pmatrix},
\]
\[
P_3(\xi) = \frac{1}{|v_3|^2}\begin{pmatrix}
v_{3}^\top \\  \omega_{1} v_{3}^\top \\ \omega_{2}v_{3}^\top
\end{pmatrix} = \frac{1}{2}\begin{pmatrix} 1 & \omega_1 & \omega_2 \\ \omega_1 & \omega_1^2 & \omega_1 \omega_2 \\ \omega_2 & \omega_1 \omega_2 & \omega_2^2\end{pmatrix},
\]
respectively, where we have used that $\vert v_{1} \vert^{2}=\vert v_{3} \vert^{2}=2$ and $\vert v_{2}\vert^{2}=1$.
%
Notice that the eigenprojections are smooth functions of $\xi\in \R^{2}\setminus \{0\}$. This observation and the fact that $A_{S}(\xi)$ is of constant multiplicity enable us to use formula \eqref{ExK} to construct the compensating matrix function, that is, 
\[
\begin{aligned}
\bK_{S}(\xi)&= \sum_{i\neq j}\frac{P_{i}(\xi)\bB_{S}(\xi)P_{j}(\xi)}{\mu_{i}(\xi)-\mu_{j}(\xi)} \\ &= \frac{1}{4 \beta(\xi)^{1/2}\brho}\begin{pmatrix}
0 & (2 \bnu + \bla)\omega_{1} & (2 \bnu + \bla)\omega_{2} \\ -(2 \bnu + \bla)\omega_{1} & 0 & 0 \\ -(2 \bnu + \bla)\omega_{2} & 0 & 0 
\end{pmatrix}.
\end{aligned}
\]
Let us recall that, by definition, $\beta(\xi)=O(\vert \xi \vert^{2})$ as $\vert \xi \vert \to \infty$, so that $\bK_{S}(\xi)$ as well as $\vert \xi \vert \bK_{S}(\xi)$ are uniformly bounded in $\xi \in \R^{2}$, $\xi \neq 0$. This compensating matrix function satisfies 
\[
\begin{aligned}
\big[\bK_{S}(\xi)A_{S}(\xi) \big]^{s} &+ \bB_{S}(\xi) \geq \frac{1}{2}\left( \big[\bK_{S}(\xi),A_{S}(\xi) \big] + \bB_{S}(\xi) \right) \\ &= \frac{1}{4 \brho} \begin{pmatrix}
(2 \bnu + \bla)(\omega_{1}^{2}+\omega_{2}^{2}) & 0 & 0 \\ 0 & (2 \bnu + \bla)\omega_{1}^{2} + 2\bnu \omega_{2}^{2} & \bla \omega_{1}\omega_{2} \\ 0 & \bla \omega_{1}\omega_{2} & 2\bnu \omega_{1}^{2}+ (2 \bnu + \bla)\omega_{2}^{2} 
\end{pmatrix} \\ &\geq \bthe I_3,
\end{aligned}
\]
for some uniform constant $\bthe>0$. The proposition is now proved.
\end{proof}
We apply Lemma \ref{pnt-ee-rel-fre-lem} to obtain the pointwise energy estimate for the two-dimensional linear NSK system \eqref{NSK-2D}.

\begin{lemma}[basic energy estimate for the NSK system]
\label{lembee-NSK-2D}
The solutions $\hV = \hV(\xi,t)$ to system \eqref{NSK-2D-V} satisfy the estimate
\begin{equation}
\label{bestV}
|\hV(\xi,t)| \leq C \exp (-c \vert \xi \vert^2 t) |\hV(\xi,0)|,
\end{equation}
for all $\xi \in \R^{2}$, $\xi \neq 0$, $t \geq 0$ and some uniform constants $C,c > 0$.
\end{lemma}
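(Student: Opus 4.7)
The plan is to reduce the statement directly to Lemma \ref{pnt-ee-rel-fre-lem}. Since the NSK system contains no relaxation (zero-order) term, the Fourier formulation \eqref{NSK-2D-V} is exactly of the form $\hV_t + (i|\xi|A_S(\xi)+|\xi|^2 \bB_S(\xi))\hV = 0$, which is the hypothesis of that lemma. The compensating matrix $\bK_S(\xi)$ for the triplet $(I, A_S(\xi), \bB_S(\xi))$ has already been produced in the proof of Proposition \ref{propNSK}, where we derived
\[
[\bK_S(\xi) A_S(\xi)]^s + \bB_S(\xi) \geq \bthe\, I_3,
\]
for some uniform constant $\bthe > 0$. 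Thus condition \eqref{pos-2} of Lemma \ref{pnt-ee-rel-fre-lem} holds with the \emph{constant} function $g(\xi) \equiv \bthe$.

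What remains is to verify the two uniform bounds \eqref{hyp-enr-est-2} for $\bK_S(\xi)$ and $\bK_S(\xi)\bB_S(\xi)^{1/2}$. For the first one, I would use the explicit formula obtained in Proposition \ref{propNSK},
\[
\bK_S(\xi) = \frac{1}{4\beta(\xi)^{1/2}\brho} \begin{pmatrix} 0 & (2\bnu+\bla)\omega_1 & (2\bnu+\bla)\omega_2 \\ -(2\bnu+\bla)\omega_1 & 0 & 0 \\ -(2\bnu+\bla)\omega_2 & 0 & 0 \end{pmatrix},
\]
together with the key fact that $\beta(\xi) = \bp_\rho + \bk\brho|\xi|^2$, which yields $\beta(\xi)^{1/2}\geq \sqrt{\bp_\rho}$ for all $\xi$ and $\beta(\xi)^{1/2} \geq \sqrt{\bk\brho}\,|\xi|$ as well. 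Consequently $|\xi|/\beta(\xi)^{1/2}$ is uniformly bounded on $\R^{2}\setminus\{0\}$, and so is $|\xi\,\bK_S(\xi)|$. For the second bound, since $\bB_S(\xi)$ depends only on $\omega \in \bbS^1$, its square root is uniformly bounded in $\xi$; combined with $g(\xi)\equiv \bthe>0$ and the bound on $|\xi\,\bK_S(\xi)|$, this gives the uniform control of $|\xi\,\bK_S(\xi)\bB_S(\xi)^{1/2}|/g(\xi)^{1/2}$.

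With both bounds in hand, Lemma \ref{pnt-ee-rel-fre-lem} applies directly and produces
\[
|\hV(\xi,t)| \leq C \exp(-k\,\bthe\,|\xi|^2 t)\,|\hV(\xi,0)|,
\]
for uniform constants $C,k>0$, which is precisely \eqref{bestV} with $c := k\bthe$. The only part requiring genuine attention is the verification of the uniform bounds in \eqref{hyp-enr-est-2}; this is straightforward here because $\beta(\xi)^{1/2}$ gives exactly the $|\xi|$-growth needed to cancel the $|\xi|$ factor, and because the positivity constant $g(\xi)$ is a positive constant (not decaying with $\xi$). Notice also that strict dissipativity is of \emph{standard type} (in the sense of Remark \ref{Rmr-reg-typ}, with $p=q$ effectively equal to $1$ on bounded frequencies and purely $|\xi|^2$ for large frequencies), in agreement with the one-dimensional analysis in \cite{PlV22}.
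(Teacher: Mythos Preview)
Your proof is correct and follows essentially the same approach as the paper: reduce to Lemma~\ref{pnt-ee-rel-fre-lem} with $g(\xi)\equiv\bthe$, use the explicit $\bK_S(\xi)$ from Proposition~\ref{propNSK}, and verify the bounds \eqref{hyp-enr-est-2} via $\beta(\xi)^{1/2}\gtrsim |\xi|$ and the fact that $\bB_S$ depends only on $\omega\in\bbS^1$.

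One correction to your closing remark: the decay $\Re\lambda(\xi)\leq -c|\xi|^2$ is type $(1,0)$ in the classification of Remark~\ref{Rmr-reg-typ}, hence \emph{regularity-gain} type (since $p=1>q=0$), not standard type. Standard type would require a factor $(1+|\xi|^2)^{-1}$ in the exponent, which is absent here precisely because the Korteweg dispersion improves the high-frequency behavior. This does not affect the validity of your proof of \eqref{bestV}, but the parenthetical comment should be removed or corrected.
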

\begin{proof}
The compensating matrix symbol $\bK_{S}(\xi)$ computed above satisfies condition \eqref{pos-2} in Lemma \ref{pnt-ee-rel-fre-lem} with $g(\xi) \equiv \bthe > 0$, constant. Let us observe that in \eqref{NSK-2D-V}, $\bB_{S}$ is a matrix depending on $\omega\in \bbS^{1}$, thus its norm is bounded. Thus using that $\vert \xi \bK_{S}(\xi) \vert$ is uniformly bounded in $\xi \in \R^{2}$, $\xi \neq 0$, one can easily verify that the conditions \eqref{hyp-enr-est-2} of Lemmas \ref{pnt-ee-rel-fre-lem} are satisfied, namely, 
\[
\vert \xi \bK_{S}(\xi) \vert,\, \frac{\vert \xi  \bK_{S}(\xi) \bB_{S}(\xi)^{1/2} \vert}{g(\xi)^{1/2}} \leq C,
\]
for some positive constant $C$ uniform in $\xi \in \R^{2} \setminus \{ 0 \}$. Hence the result follows. 
\end{proof}

An immediate result is the following Corollary.
\begin{corollary}
The Navier-Stokes-Korteweg system \eqref{Iso-KModel} is strictly dissipative of regularity gain type.
\end{corollary}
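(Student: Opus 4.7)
The plan is to extract a quantitative eigenvalue bound from the pointwise semigroup estimate of Lemma \ref{lembee-NSK-2D} and then classify it according to Remark \ref{Rmr-reg-typ}. First, Proposition \ref{propNSK} has already established that the linearized NSK system is symbol symmetrizable and genuinely coupled, so by the equivalence Theorem \ref{equivalencetheo} it is strictly dissipative: $\Re \lambda(\xi)<0$ for every $\xi\in\R^{2}\setminus\{0\}$. What remains is to pin down the quantitative rate.

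Second, by Lemma \ref{lembee-NSK-2D}, solutions of the symmetrized system \eqref{NSK-2D-V} satisfy
\[
|\hV(\xi,t)|\le C\exp(-c|\xi|^{2}t)|\hV(\xi,0)|,
\]
with constants $C,c>0$ uniform in $\xi \neq 0$. Since $\hV$ satisfies the autonomous linear ODE $\hV_{t}=-\mathcal{M}(\xi)\hV$ with $\mathcal{M}(\xi):=i|\xi|A_{S}(\xi)+|\xi|^{2}\bB_{S}(\xi)$, this is equivalent to the operator-norm bound $\|e^{-t\mathcal{M}(\xi)}\|\le Ce^{-c|\xi|^{2}t}$. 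Each eigenvalue $\mu$ of $\mathcal{M}(\xi)$ yields an eigenvalue $e^{-t\mu}$ of $e^{-t\mathcal{M}(\xi)}$, and the spectral radius is bounded by the operator norm, so $e^{-t\Re\mu}\le Ce^{-c|\xi|^{2}t}$. Taking logarithms, dividing by $t$, and letting $t\to\infty$ yields $\Re \mu \ge c|\xi|^{2}$. By Lemma \ref{hypequiv}, the eigenvalues $\lambda(\xi)=-\mu$ of the original spectral problem \eqref{spectralprob} are in bijection with those of $-\mathcal{M}(\xi)$ (modulo the similarity induced by $(S(\xi)A^{0})^{1/2}$), so we conclude that $\Re\lambda(\xi)\le -c|\xi|^{2}$, uniformly in $\xi\in\R^{2}\setminus\{0\}$.

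Third, the bound $\Re\lambda(\xi)\le -c|\xi|^{2}=-c|\xi|^{2p}/(1+|\xi|^{2})^{q}$ with $p=1$ and $q=0$ satisfies $p>q$, which by Remark \ref{Rmr-reg-typ} is precisely the definition of strict dissipativity of regularity-gain type. The only nontrivial point is ensuring that the constants $C,c$ in Lemma \ref{lembee-NSK-2D} are genuinely independent of $\xi$; this uniformity is provided by the bounds on $|\xi \bK_{S}(\xi)|$ and $|\bB_{S}(\xi)|$ already established in the proof of Proposition \ref{propNSK}, which rely on the key observation that $\beta(\xi)=O(|\xi|^{2})$ as $|\xi|\to\infty$, so that the division by $\beta(\xi)^{1/2}$ in the formula for $\bK_{S}(\xi)$ does not harm the large-frequency behavior.
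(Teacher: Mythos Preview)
Your proof is correct and follows the same route as the paper: the paper's own proof simply reads ``Follows from the decay rate in \eqref{bestV} and Remark \ref{Rmr-reg-typ},'' whereas you make explicit the spectral-radius argument that converts the semigroup bound $\|e^{-t\mathcal{M}(\xi)}\|\le Ce^{-c|\xi|^{2}t}$ into the eigenvalue estimate $\Re\lambda(\xi)\le -c|\xi|^{2}$ required by the definition in Remark \ref{Rmr-reg-typ}. This extra step is a genuine clarification of what the paper leaves implicit, and the identification $(p,q)=(1,0)$ with $p>q$ is exactly the regularity-gain classification.
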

\begin{proof}
Follows from the decay rate in \eqref{bestV} and Remark \ref{Rmr-reg-typ}.
\end{proof}

Another consequence of the basic energy estimate \eqref{bestV} and the definition of $\hV$ is the following appropriate decay for the original variables, yielding the right regularity for the density and for the velocity field.
\begin{lemma}
\label{lemprelimdecay}
The solutions $\hU(\xi,t) = \big(\hU_1(\xi,t), \hU_2(\xi,t), \hU_{3}(\xi, t) \big)^\top$ to system \eqref{NSK-2D-Four} satisfy the estimate
\begin{equation}
\label{bestU}
\begin{aligned}
& \left( (1+ \vert \xi \vert^2) |\hU_1(\xi,t)|^2 + |\hU_2(\xi,t)|^2 +| \hU_{3}(\xi, t) |^{2} \right)^{1/2}  \leq \\ & \qquad \qquad \leq C \exp ( -c \, \xi^2 t ) \left( (1+\vert \xi\vert^2) |\hU_1(\xi,0)|^2 + |\hU_2(\xi,0)|^2 +| \hU_{3}(\xi,0 ) |^{2} \right)^{1/2},
\end{aligned}
\end{equation}
for all $\xi \in \R^{2} \setminus \{ 0 \}$, $t \geq 0$ and some uniform constants $C$, $c > 0$.
\end{lemma}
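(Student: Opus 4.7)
The plan is to transfer the estimate \eqref{bestV} for $\hV$ back to the original variable $\hU$ by exploiting the explicit diagonal structure of the change of variables $\hV = (S(\xi)A^{0})^{1/2}\hU$. First, recall from \eqref{f-A0} and \eqref{form-S(xi)-NSK} that $A^{0}$ and $S(\xi)$ are both diagonal, so
\[
(S(\xi)A^{0})^{1/2} = \mathrm{diag}\!\left( \sqrt{\beta(\xi)/\brho},\, \sqrt{\brho},\, \sqrt{\brho}\,\right),
\]
where $\beta(\xi) = \bp_{\rho} + \bk\,\brho\,|\xi|^{2}$. Componentwise this gives $\hV_{1} = \sqrt{\beta(\xi)/\brho}\,\hU_{1}$, $\hV_{2} = \sqrt{\brho}\,\hU_{2}$, $\hV_{3} = \sqrt{\brho}\,\hU_{3}$, and hence
\[
|\hV(\xi,t)|^{2} = \frac{\beta(\xi)}{\brho}|\hU_{1}(\xi,t)|^{2} + \brho\,|\hU_{2}(\xi,t)|^{2} + \brho\,|\hU_{3}(\xi,t)|^{2}.
\]

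The key observation is that, since $\bp_{\rho}>0$, $\bk>0$ and $\brho>0$, the quantity $\beta(\xi)/\brho$ is pointwise equivalent to $1+|\xi|^{2}$: there exist uniform constants $c_{1}, c_{2}>0$ with
\[
c_{1}\,(1+|\xi|^{2}) \leq \frac{\beta(\xi)}{\brho} \leq c_{2}\,(1+|\xi|^{2}), \qquad \forall\, \xi \in \R^{2}.
\]
Consequently there are uniform constants $C_{1}, C_{2}>0$ such that
\[
C_{1}\bigl((1+|\xi|^{2})|\hU_{1}|^{2} + |\hU_{2}|^{2} + |\hU_{3}|^{2}\bigr) \leq |\hV|^{2} \leq C_{2}\bigl((1+|\xi|^{2})|\hU_{1}|^{2} + |\hU_{2}|^{2} + |\hU_{3}|^{2}\bigr),
\]
with analogous bounds at $t=0$.

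Applying Lemma \ref{lembee-NSK-2D} to $\hV(\xi,t)$, then using the lower bound for $|\hV(\xi,t)|^{2}$ on the left-hand side and the upper bound for $|\hV(\xi,0)|^{2}$ on the right-hand side, immediately yields \eqref{bestU} with suitably readjusted constants $C, c>0$. There is no significant obstacle: the entire argument rests on the diagonal form of $S(\xi)A^{0}$ and on the fact that $\beta(\xi)$ is polynomial of degree two in $|\xi|$ with strictly positive constant term. Note in particular that because only $\hU_{1}$ (the density) is multiplied by the $|\xi|$-dependent factor, the density gains one derivative of regularity relative to the velocity components, which is consistent with the regularity-gain character of the dissipative structure of the NSK system.
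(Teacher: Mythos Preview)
Your proof is correct and follows essentially the same approach as the paper's own proof: both use the explicit diagonal form of $(S(\xi)A^{0})^{1/2}$ together with the equivalence $\beta(\xi)\sim 1+|\xi|^{2}$ to translate the estimate \eqref{bestV} for $\hV$ into the weighted estimate \eqref{bestU} for $\hU$. Your version simply writes out the componentwise details more explicitly.
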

\begin{proof}
Remember that $\hV(\xi,t)=\big( S(\xi)A^{0}\big)^{1/2}\hU$. Next, by the expressions for $A^{0}$ and $S(\xi)$ given, respectively, by \eqref{f-A0} and \eqref{form-S(xi)-NSK}, it is easy to see that $\big( S(\xi) A^{0} \big)^{1/2}$ is a diagonal matrix whose first element is the only one depending on $\xi$ and it is of order $\beta^{1/2}$. As $\beta( \xi ) = \bp_{\rho} +  \bk \: \brho \: \vert \xi \vert^{2} > 0$, one can find $C_{1}$, $c_{1}>0$ such that $c_{1}(1+\vert \xi \vert^{2})\leq \beta(\xi) \leq C_{1} (1+\vert \xi \vert^{2}) $. The result follows from these observations.
\end{proof}

The pointwise energy estimate \eqref{bestU} in the Fourier space implies the decay structure of the solutions of the linear NSK system \eqref{NSK-2D}. This is the content of the next result. 
\begin{lemma}
\label{lin-dec-U}
Let us assume that $U=(U_{1},U_{2},U_{3})^{\top}$ is a solution of the linear NSK system \eqref{NSK-2D} with initial condition $U(x,0)\in \big( H^{s+1}(\R^{2})\times \big( H^{s}(\R^{2}) \big)^{2} \big)\cap \big( L^{1}(\R^{2}) \big)^{3}$ for $s\geq 0$. Then for each $0\leq \ell \leq s $ the estimate
\begin{equation}
\label{ln-dec-U-est}
\begin{aligned}
\big( \Vert \partial_{x}^{\ell} U_{1}(t) \Vert_{1}^{2} &+ \Vert \partial_{x}^{\ell} U_{2}(t) \Vert_{0}^{2} + \Vert \partial_{x}^{\ell} U_{3}(t) \Vert_{0}^{2}\big)^{1/2} \leq \\ &\leq  Ce^{-c_{1}t} \big( \Vert \partial_{x}^{\ell}U_{1}(0) \Vert_{1}^{2}+ \Vert \partial_{x}^{\ell}U_{2}(0) \Vert_{0}^{2} + \Vert \partial_{x}^{\ell}U_{3}(0) \Vert_{0}^{2} \big)^{1/2}+ \\ & \quad + C(1+t )^{-(1/2+\ell/2)} \Vert U(0)\Vert_{L^{1}},
\end{aligned}
\end{equation}
holds for all $t\geq 0$ and some uniform positive constants $C$, $c_{1}$. 
\end{lemma}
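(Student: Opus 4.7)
The plan is to transfer the pointwise Fourier estimate \eqref{bestU} of Lemma \ref{lemprelimdecay} into a physical-space decay bound by Plancherel's identity combined with the standard low/high frequency decomposition used for systems of Kawashima-Shizuta type. This argument works here exactly as in the second-order case because the linearized NSK system is strictly dissipative of regularity-gain type, i.e.\ its Fourier decay factor $e^{-c|\xi|^2 t}$ mimics that of the heat kernel.

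First I would set
\begin{align*}
E_\ell(t)^2 := \Vert \partial_{x}^{\ell} U_{1}(t) \Vert_{1}^{2} + \Vert \partial_{x}^{\ell} U_{2}(t) \Vert_{0}^{2} + \Vert \partial_{x}^{\ell} U_{3}(t) \Vert_{0}^{2}
\end{align*}
and, by Plancherel applied component-wise, rewrite it as
\begin{align*}
E_\ell(t)^2 = \int_{\R^2}|\xi|^{2\ell}\Big[(1+|\xi|^2)|\hU_1(\xi,t)|^2 + |\hU_2(\xi,t)|^2 + |\hU_3(\xi,t)|^2\Big]\,d\xi.
\end{align*}
Squaring \eqref{bestU} and inserting the result into the integrand majorises this quantity by $C$ times the same expression evaluated at $t=0$, multiplied by $e^{-2c|\xi|^2 t}$.

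Next I would split the integration at $|\xi|=1$. On the high-frequency region $|\xi|\geq 1$ the trivial bound $e^{-2c|\xi|^2 t}\leq e^{-2ct}$ combined with Plancherel in reverse produces a term bounded by $Ce^{-2ct}E_\ell(0)^2$, which accounts for the first contribution on the right-hand side of \eqref{ln-dec-U-est}. On the low-frequency region $|\xi|\leq 1$, the Hausdorff-Young inequality gives $\Vert \hU(\cdot,0)\Vert_{L^\infty}\leq \Vert U(0)\Vert_{L^1}$, so pulling this $L^\infty$ norm out of the integrand reduces matters to the heat-kernel-type moment
\begin{align*}
\int_{|\xi|\leq 1}|\xi|^{2\ell}e^{-2c|\xi|^2 t}\,d\xi,
\end{align*}
which by the substitution $\eta=\sqrt{t}\,\xi$ (in dimension $d=2$) is bounded by $C(1+t)^{-(1+\ell)}$ uniformly for $t\geq 0$. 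Taking square roots reproduces the polynomial factor $C(1+t)^{-(1/2+\ell/2)}\Vert U(0)\Vert_{L^1}$.

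There is no serious obstacle at this stage: the real work was done in establishing the pointwise bound \eqref{bestU}, and the transfer to physical space is routine once the regularity-gain type of the dissipativity has been identified. The only technical point meriting care is to preserve the $(1+|\xi|^2)$ weight attached to $\hU_1$ throughout the splitting, so that the exponentially decaying contribution retains $H^1$ (and not merely $L^2$) regularity on the density component. This is automatic since the weight appears identically on both sides of \eqref{bestU}, which is precisely why the statement of Lemma \ref{lemprelimdecay} was crafted with that asymmetric weight in the first place.
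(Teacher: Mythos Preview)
Your proposal is correct and follows essentially the same route as the paper's proof: both apply Plancherel to the weighted quantity, invoke the pointwise bound \eqref{bestU}, split at $|\xi|=1$, and handle the high-frequency part by the trivial bound $e^{-2c|\xi|^2 t}\le e^{-2ct}$ and the low-frequency part via $\|\hU(\cdot,0)\|_{L^\infty}\le\|U(0)\|_{L^1}$ together with the heat-kernel moment estimate. The only cosmetic difference is that you evaluate the moment integral directly by the substitution $\eta=\sqrt{t}\,\xi$, whereas the paper simply quotes the bound $\int_{|\xi|\le1}|\xi|^{2\ell}e^{-c_0|\xi|^2 t}\,d\xi\le C(1+t)^{-(d/2+\ell)}$ with $d=2$.
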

\begin{proof}
Fix $l$ satisfying $0 \leq \ell \leq s$, multiply \eqref{bestU} by $\vert \xi \vert^{2 \ell}$ and integrate in $\xi \in \R^{2}$ to obtain
\begin{equation}
\label{est-J1J2}
\int_{\R^{2}} \vert \xi \vert^{2\ell} \big[ (1+\vert \xi \vert^{2})\hU_{1}(\xi,t)\vert^{2} + \vert \hU_{2}(\xi,t) \vert^{2} + \vert \hU_{3}(\xi,t) \vert^{2} \big]\: d\xi \leq C\big( J_{1}(t) +J_{2}(t) \big),
\end{equation}
where 
\[
\begin{aligned}
J_{1}(t):&=\int_{\vert \xi \vert\leq 1}\vert \xi \vert^{2\ell} \big[ (1+\vert \xi \vert^{2})\hU_{1}(\xi,0)\vert^{2} + \vert \hU_{2}(\xi,0) \vert^{2} + \vert \hU_{3}(\xi,0) \vert^{2}\big]\exp(-2c \vert \xi \vert^{2}t) \: d\xi, \\
J_{2}(t):&=\int_{\vert \xi \vert \geq 1}\vert \xi \vert^{2\ell} \big[ (1+\vert \xi \vert^{2})\hU_{1}(\xi,0)\vert^{2} + \vert \hU_{2}(\xi,0) \vert^{2} + \vert \hU_{3}(\xi,0) \vert^{2} \big]\exp(-2c\vert \xi \vert^{2}t) \: d\xi.
\end{aligned}
\]
Let us estimate $J_{1}(t)$. Since $\vert \xi \vert \leq 1$, we have
\begin{equation}\label{J1}
\begin{aligned}
J_{1}(t)\leq 2 \int_{\vert \xi \vert \leq 1} \vert \xi \vert^{2 \ell}  \vert \hU(\xi,0)\vert^{2} e^{-c_{1}\vert \xi \vert^{2}t}\: d\xi &\leq 2 \sup_{\vert \xi \vert \leq 1}\vert \hU(\xi,0) \vert^{2} \int_{\vert \xi \vert \leq 1} \vert \xi \vert^{2 \ell} e^{-c_{1}\vert \xi \vert^{2}t}\: d\xi \\ &\leq C (1+t)^{-(1+\ell)}\Vert \hW(0) \Vert_{L^{1}}^{2}.
\end{aligned}
\end{equation}
Here we have used that for $\xi \in \R^{n}$ there holds 
\[
\int_{\vert \xi \vert\leq 1}\vert \xi \vert^{2\ell}e^{-c_{0}\vert \xi \vert^{2}t}\: d\xi \leq C(1+t)^{-(n/2+\ell)}
\] 
for some uniform constant $C>0$.

To estimate $J_{2}(t)$, observe that for $\vert \xi \vert\geq 1$ there holds $\exp(-2c\vert \xi \vert^{2}t) \leq \exp(-c_{0}t)$; therefore by Plancherel's theorem we have
\begin{equation}
\label{J2}
\begin{aligned}
J_{2}(t) &\leq e^{-c_{0}t}\int_{\vert \xi \vert \geq 1} \vert \xi \vert^{2\ell} \big[ (1+\vert \xi \vert^{2})\hU_{1}(\xi,0)\vert^{2} + \vert \hU_{2}(\xi,0) \vert^{2}+ \vert \hU_{3}(\xi,0) \vert^{2}\big] \: d\xi \\ &= e^{-c_{0}t}\int_{\vert \xi \vert \geq 1} \big[\big( \vert \xi \vert^{2\ell}  +\vert \xi \vert^{2(\ell+1)} \big)\hU_{1}(\xi,0)\vert^{2} + \vert \xi \vert^{2\ell} \vert \hU_{2}(\xi,0) \vert^{2} + \vert \xi \vert^{2\ell} \vert \hU_{3}(\xi,0) \vert^{2}\big] \: d\xi \\ &\leq e^{-c_{0}t}\int_{\R^{2}} \big[ \big( \vert \xi \vert^{2\ell}  +\vert \xi \vert^{2(\ell+1)} \big)\hU_{1}(\xi,0)\vert^{2} + \vert \xi \vert^{2\ell} \vert \hU_{2}(\xi,0) \vert^{2} + \vert \xi \vert^{2\ell} \vert \hU_{3}(\xi,0) \vert^{2}\big] \: d\xi\\ &= e^{-c_{0}t}\left(  \Vert \partial_{x}^{\ell}U_{1}(0) \Vert_{1}^{2}+ \Vert \partial_{x}^{\ell}U_{2}(0) \Vert_{0}^{2} + \Vert \partial_{x}^{\ell}U_{3}(0) \Vert_{0}^{2}\right).
\end{aligned}
\end{equation}
The result follows by combining \eqref{est-J1J2}, \eqref{J1} and \eqref{J2}.
\end{proof}

\subsection{Heat conducting compressible fluids with viscosity and capillarity}
\label{sec-NSFK}

In order to describe capillarity effects in diffuse interfaces for liquid vapor flows, Korteweg \cite{Kortw1901} (based on an idea by van der Waals \cite{vdW1894}) proposed a constitutive equation for the Cauchy stress that takes in account density gradients. Korteweg's formulation was, however, incompatible with the usual continuum theory of thermodynamics. Later, Dunn and Serrin \cite{DS85} circumvented this problem with the introduction of the interstitial work flux into the energy equation, which accounts for an additional supply of mechanical energy. The resulting model, also known as the \emph{Navier-Stokes-Fourier-Korteweg (NSFK) system}, has the following form,
%
%
\begin{equation}
\label{ModelSev}
\begin{aligned}
\partial_{t}\rho + \nabla \cdot (\rho \bfu) &= 0, \\
\partial_{t}(\rho \bfu) + \nabla \cdot (\rho \bfu \otimes \bfu) + \nabla p &= \nabla \cdot \big(\mathbf{S+K} \big), \\
\partial_{t}\big( \rho \varepsilon + \tfrac{1}{2} \rho \vert \bfu \vert^{2} \big) + \nabla \cdot \big( \rho \bfu \big( \varepsilon + \tfrac{1}{2}\vert \bfu \vert^{2} \big) \big) + \nabla \cdot (p \bfu) &= \nabla \cdot \big( \alpha \nabla \theta + \big( \mathbf{S+K} \big)  \bfu + \bfw \big),
\end{aligned}
\end{equation}	
for $x \in \R^d$, $t > 0$. As before, $\rho$ and $\bfu \in \R^d$ represent the mass density and the velocity field, respectively, $\theta > 0$ is the absolute temperature, $p$ is the thermodynamical  pressure and $\varepsilon$ denotes the internal energy density (per unit mass) of the fluid. The viscous stress $\mathbf{S}$ and the Korteweg tensor $\mathbf{K}$ have the same expressions as the ones given in \eqref{S,K}. In addition, now the viscosity coefficients $\lambda$ and $\nu$ and the capillarity coefficient $k$ are smooth positive functions of $\rho$ and $\theta$. The thermal conductivity coefficient $\alpha > 0$ also depends on $(\rho,\theta)$. The interstitial work flux $\bfw$ is given by
\begin{equation}
\label{w}
\bfw = -k \rho (\nabla \cdot \bfu)\nabla \rho.
\end{equation}

\begin{remark} 
\label{remnoneq}
It is important to observe that the NSFK model \eqref{ModelSev} is based on an extended version of \textit{non-equilibrium} thermodynamics that assumes that the energy of the fluid not only depends on the standard thermodynamic variables but also on density gradients. In system \eqref{ModelSev} this fact is reflected in the form of the non-standard internal energy, given by
\begin{equation}
\label{form-E}
\varepsilon(\rho, \theta, \nabla \rho) = e(\rho, \theta) + \frac{1}{2}(k(\rho,\theta)-\theta k_{\theta}(\rho, \theta) )\vert \nabla \rho \vert^{2}/\rho, 
\end{equation}  
where the function $e=e(\rho,\theta)$ is a standard internal energy from equilibrium thermodynamics and $k=k(\rho, \theta)$ is the capillarity coefficient. We call the term in $\varepsilon$ involving density gradients the non-standard part of the internal energy, so that $\varepsilon$ is decomposed into a standard and a non-standard part. Similar decompositions hold for the specific entropy (per unit mass) $s$ and for the Helmholtz free energy $\Psi$ of the fluid, and we denote their standard parts by $\eta$ and $\psi$, respectively. For more information see Dunn and Serrin \cite{DS85} (see also a quick review in Plaza and Valdovinos \cite{PlV2}, \S 2.1). The major consequence of this thermodynamic description of the fluid is that the conserved quantities depend on the state variables $\rho$, $\theta$ and $\bfu$, \emph{but also on the density gradient, $\nabla \rho$}. This is important at the nonlinear level to close energy estimates (see, e.g., \cite{PlV2}). In this paper, however, we are interested in the \emph{linearization} around constant states for which the resulting constant coefficients do not involve density gradients.
\end{remark}

We consider the mass density $\rho$ and the absolute temperature $\theta$ as the independent thermodynamic variables. They belong to the set
\[
\mathcal{D}:=\{ (\rho,\theta)\in \R^{2}:\: \rho>0,\: \theta>0  \}.
\]
In what follows we assume that the standard part of the free energy, $\psi\in C^{\infty}(\mathcal{D})$, 
is such that the pressure, $p=\rho^{2}\psi_{\rho}(\rho, \theta)>0$, $p\in C^{\infty}(\cD)$, the standard energy , $e=\psi-\theta \psi_{\theta}\in C^{\infty}(\cD)$, and the standard entropy, $\eta=-\psi_{\theta}\in C^{\infty}(\cD)$, satisfy the classical conditions of equilibrium thermodynamics 
\begin{equation}\label{therm-cond}
p>0,\quad p_{\rho}>0,\quad p_{\theta}>0,\quad e_{\theta}>0,
\end{equation}
as well as the relations
\begin{equation}\label{therm-rel}
e_{\rho}=\frac{1}{\rho^{2}}(p-\theta p_{\theta}),\quad \eta_{\theta}=\frac{e_{\theta}}{\theta},\quad \eta_{\rho}=-\frac{p_{\theta}}{\rho^{2}}.
\end{equation}

Next, we recast system \eqref{ModelSev} in quasilinear form. For concreteness and without loss of generality let us make the computations for $d = 3$. We proceed in the standard way: use the quasilinear form of the isothermal model to simplify the energy equation in \eqref{ModelSev}, take the inner product of the momentum equation with the velocity field $\bfu$, and use the result together with the mass balance equation to reduce the energy equation. One has to be careful and to notice that, by the form of the internal energy $\varepsilon$ given by \eqref{form-E}, the time derivative and the gradient of its non-standard part, namely of $\tfrac{1}{2}(k-\theta k_{\theta})\vert \nabla \rho \vert^{2}/\rho$, will result into nonlinear terms. 
Thus, after some lengthy but straightforward calculations, the quasi-linear form of \eqref{ModelSev} can be written as 
\begin{equation}
\label{quas-mat-form-NSFK}
\begin{aligned}
A^{0}(U)\partial_{t}U + \sum_{i=1}^3 A^{i}(U)\partial_{x_i}U &= \sum_{i,j=1}^3 B^{ij}(U)\partial_{x_i}\partial_{x_j}U + \sum_{i,j,k=1}^3C^{ijk}(U)\partial_{x_i}\partial_{x_j}\partial_{x_k}U + \\ &+  \mathcal{N}(U,DU,D^{2}U, \partial_{t} \nabla \rho ), 
\end{aligned}
\end{equation}
where $U=(\rho, \bfu, \theta)^\top \in \R^5$ and 
\[
A^{0}(U)= \begin{pmatrix}
1 & 0 & 0 \\ 0 & \rho \: I_3 & 0 \\ 0 & 0 & \rho\:e_{\theta}
\end{pmatrix} \in \R^{5 \times 5}.
\]
The remaining matrix coefficients are given in terms of their symbols: 
\begin{equation}\label{sym-Ai}
\sum_{i=1}^{3}A^{i}(U)\xi_{i}= \begin{pmatrix}
\bfu \cdot \xi & \rho \:\xi^\top & 0 \\ p_{\rho}\:\xi & \rho\:(\bfu \cdot\xi) \: I_3 & p_{\theta}\:\xi\\ 0 & \theta \: p_{\theta}\:\xi^\top & \rho\: e_{\theta}\: (\bfu \cdot \xi)  
\end{pmatrix} \in \R^{5 \times 5},
\end{equation}
\begin{equation}\label{sym-Bij}
\sum_{i,j=1}^{3}B^{ij}\xi_{i}\xi_{j}=\begin{pmatrix}
0 & 0_{1 \times 3} & 0 \\ 0_{3 \times 1} & \nu \vert \xi \vert^{2}I_3 + (\nu+\lambda)\xi \otimes \xi & 0_{3 \times 1} \\ 0 & 0_{1 \times 3} & \alpha \:\vert \xi \vert^{2}
\end{pmatrix} \in \R^{5 \times 5},
\end{equation}
\begin{equation}\label{sym-Cijk}
\sum_{i,j,k=1}^{3}C^{ijk}\xi_{i}\xi_{j}\xi_{k}= \begin{pmatrix}
0 & 0_{1 \times 3} & 0 \\ k\:\rho \vert \xi \vert^{2}\xi & 0_3 & 0_{3 \times 1} \\ 0 & 0_{1 \times 3} & 0 
\end{pmatrix} \in \R^{5 \times 5}
\end{equation}
(recall that all vectors are column vectors, $\xi = (\xi_1, \xi_2,\xi_3)^\top \in \R^3$). The non-linear terms are contained in $\mathcal{N}(U,DU,D^{2}U, \partial_{t}\nabla \rho )$. 

Then, the linearized version of system \eqref{quas-mat-form-NSFK} around a constant equilibrium state $\bU=(\brho,\overline{\bfu},\bthe)^\top \in \R^5$ reads as
\begin{equation}
\label{lin-NSFK}
A^{0}\partial_{t}U + \sum_{j=1}^{3}A^{j}\partial_{x_j}U= \sum_{j,k=1}^{3}B^{jk}\partial_{x_j}\partial_{x_k} U + \sum_{j,k,l=1}^{3}C^{jkl}\partial_{x_{j}}\partial_{x_{k}}\partial_{x_{l}} U
\end{equation}
where the vector of state variables is $U=(\rho, u_{1}, u_{2}, u_{3}, \theta)^\top \in \R^5$ and the matrix coefficients are those in \eqref{quas-mat-form-NSFK} evaluated at $\bU$, that is, $A^{0}=A^{0}(\bU)$, $A^{i}=A^{i}(\bU)$, $i=1,2,3$, and so on. Thus, for $A^{i}$, $B^{ij}$, and $C^{ijk}$, $i,j,k=1,2,3$, their symbols are obtained by evaluating, respectively, the symbols in \eqref{sym-Ai}, \eqref{sym-Bij}, and \eqref{sym-Cijk} at $\bU$. 

By applying the Fourier transform to \eqref{NSK-2D}, and then splitting the symbol into odd and even orders of derivation, we are led to the system
\begin{equation}
\label{lin-NSFK-Four}
A^{0}\hU_{t}+ \big( i \vert \xi \vert A(\xi) + B(\xi) \big)\hU=0,
\end{equation}
where the generalized transport matrix symbol is given by
\[
\begin{aligned}
A(\xi) = 
\begin{pmatrix}
\overline{\bfu}\cdot\omega & \brho\:\omega^\top  & 0 \\ \beta(\xi)\:\omega & \brho\:(\overline{\bfu}\cdot\omega) I_3 & \bp_{\theta}\:\omega \\ 0 & \bthe\:\bp_{\theta}\:\omega^\top & \brho\:\be_{\theta}\:(\overline{\bfu}\cdot\omega)
\end{pmatrix} =: \sum_{j=1}^{3} \omega_{j} \bA^{j} ,
\end{aligned}
\]
with $\beta(\xi):=\bp_{\rho}+\bk\:\brho\:\vert \xi \vert^{2}$, and the generalized viscous matrix symbol is given by 
\[
\begin{aligned}
B(\xi) =  \sum_{j,k=1}^{3}\vert \xi \vert^{2} B^{jk}\omega_{j}\omega_{k}   = \vert \xi \vert^{2}
  \begin{pmatrix}
0 & 0_{1 \times 3} & 0 \\ 0_{3 \times 1} & \bnu\:\vert \omega \vert^{2}I_3 + (\bnu+\bla)\omega\otimes\omega & 0_{3 \times 1} \\ 0 & 0_{1 \times 3} & \balp\:\vert \omega \vert^{2}
\end{pmatrix},
\end{aligned}
\]
which is (symmetric) positive semi-definite for all $\xi\in \R^{3}\setminus \{0 \}$ because of the conditions on the viscosity coefficients $\nu$ and $\lambda$ and the thermal conductivity $\alpha$. 


Let us examine the symmetrizability of system \eqref{lin-NSFK}. Like in the case of the two dimensional NSK system \eqref{NSK-2D}, it can be shown that system \eqref{lin-NSFK} is not symmetrizable in the sense of Friedrichs (details are left to the reader). Now if we look at system \eqref{lin-NSFK-Four} in the Fourier space, we see that it is not symmetric, given the form of $A(\xi)$ . However, it happens that 
\begin{equation}\label{form-S(xi)-NSFK}
S(\xi)= \begin{pmatrix}
\beta(\xi)/\brho & 0_{1 \times 3} & 0 \\ 0_{3 \times 1} & I_3 & 0_{3 \times 1} \\ 0 & 0_{1 \times 3} & 1/\bthe
\end{pmatrix} \in \R^{5 \times 5},
\end{equation}
is a symbol symmetrizer of system \eqref{lin-NSFK} as the reader can easily verify. Next, we consider a symmetric version of \eqref{lin-NSFK-Four}, that is, 
\begin{equation}\label{NSFK-V}
\hV_{t}+ \big(i\vert \xi \vert A_{S}(\xi) + \vert \xi \vert^{2} \bB_{S}(\xi) \big)\hV=0,
\end{equation}
where we have set $\hV=\big( S(\xi)A^{0}\big)^{1/2}\hU$, and the matrices $A_{S}(\xi)$ and $\bB_{S}(\xi)$ are given by:
\begin{equation}\label{NSFK-tiA}
\begin{aligned}
A_{S}(\xi):&= \big(S(\xi)A^{0}\big)^{-1/2}S(\xi)A(\xi)\big(S(\xi)A^{0}\big)^{-1/2} \\ &=  \big(S(\xi)A^{0}\big)^{-1/2}S(\xi)\Big( \sum_{j=1}^{3}\omega_{j}\bA^{j}(\xi) \Big) \big(S(\xi)A^{0}\big)^{-1/2} \\ & =: \sum_{j=1}^{3}\omega_{j} A^{j}_{S}(\xi) \\ &= \sum_{j=1}^{3} \omega_{j} \begin{pmatrix}
\overline{\bfu}\cdot e_{j} & \beta(\xi)\: e_{j}^\top & 0 \\ \beta\: e_{j} & (\overline{\bfu}\cdot e_{j}) I_3 & \gamma\: e_{j} \\ 0 & \gamma\: e_{j}^\top & \overline{\bfu}\cdot e_{j}
\end{pmatrix} 
\end{aligned}
\end{equation}
where 
\begin{equation}
\label{defgamma}
\gamma:=\frac{\bthe^{1/2}\bp_{\theta}}{\be_{\theta}^{1/2}\brho},
\end{equation}
$\{ e_j \}_{j=1}^3$ is the canonical basis of $\R^3$, and
\begin{equation}\label{NSFK-tiB}
\begin{aligned}
\vert \xi \vert^{2}\bB_{S}(\xi):&= \big(S(\xi)A^{0}\big)^{-1/2}S(\xi)B(\xi)\big(S(\xi)A^{0}\big)^{-1/2} \\ &= \big(S(\xi)A^{0}\big)^{-1/2}S(\xi)\Big(\sum_{j,k=1}^{3}\vert \xi \vert^{2} B^{jk}\omega_{j}\omega_{k}\Big) \big(S(\xi)A^{0}\big)^{-1/2} \\ & =: \vert \xi \vert^{2} \sum_{j,k=1}^{3}\bB^{jk}_{S}\omega_{j}\omega_{k}
\\ &= \frac{\vert \xi \vert^{2}}{\brho}\begin{pmatrix}
0 & 0_{1 \times 3} & 0 \\ 0_{3 \times 1} & \bnu\vert \omega \vert^{2}I_3 + (\bnu+\bla)\omega\otimes\omega & 0_{3 \times 1} \\ 0 & 0_{1 \times 3} & \balp\vert \omega \vert^{2}/\be_{\theta}
\end{pmatrix}.
\end{aligned}
\end{equation}
Direct computations show that 
\begin{equation}\label{unif-sem-pos}
\bB_S(\xi) \geq \sigma \begin{pmatrix}
0 & 0_{1 \times 3} & 0 \\ 0_{3 \times 1} & I_3 & 0_{3 \times 1} \\ 0 & 0_{1 \times 3} & 1
\end{pmatrix},
\end{equation}
for some uniform constant $\sigma>0$ independent of $\omega=\xi/\vert \xi \vert$ for all $\xi\in \R^{3}$, $\xi \neq 0$. Also, in view that $\mbox{ker}\big(\bB_{S}(\xi)\big)=\mbox{span}\left\lbrace (1,0,0,0,0) \right\rbrace$ for all $\xi \in \R^{3}\setminus \{0 \}$, one can easily verify that $A_{S}(\xi)$ and $\bB_{S}(\xi)$ satisfy the genuine coupling condition. To sum up, we have proved the following result.
\begin{proposition}
\label{propNSFK}
The linearized Navier-Stokes-Fourier-Korteweg system \eqref{lin-NSFK} satisfies the hypotheses of the equivalence Theorem \ref{equivalencetheo} (in particular, it is genuinely coupled).
\end{proposition}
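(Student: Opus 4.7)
The plan is to verify the two hypotheses needed to invoke Theorem \ref{equivalencetheo}: symbol symmetrizability of \eqref{lin-NSFK} and, thanks to Lemma \ref{hypequiv}, the genuine coupling condition for the symmetrized triplet $(I, A_{S}(\xi), \bB_{S}(\xi))$.

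First, I would verify that the matrix $S(\xi)$ in \eqref{form-S(xi)-NSFK} is a symbol symmetrizer in the sense of Definition \ref{defsymbolsymm}. Smoothness and positive-definiteness on $\R^3 \setminus \{0\}$ are immediate from its block-diagonal form together with $\beta(\xi) > 0$ and $\bthe > 0$. The symmetry of $S(\xi) A(\xi)$ can be checked by direct block multiplication: conjugation by $S(\xi)$ inserts the factor $\beta(\xi)$ into the mass-momentum coupling and the factor $\bp_{\theta}$ (absorbing the temperature $\bthe$) into the energy-momentum coupling, yielding a symmetric matrix. The symmetry and positive semi-definiteness of $S(\xi) B(\xi)$ follow from $\bnu, \bla, \balp > 0$ and $2\bnu + \bla > 0$, together with the diagonal structure of $S(\xi)$.

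Second, by Lemma \ref{hypequiv} it suffices to establish genuine coupling for $(I, A_{S}(\xi), \bB_{S}(\xi))$. From \eqref{NSFK-tiB} one reads off $\ker \bB_{S}(\xi) = \operatorname{span}\{e_{1}\}$ for every $\xi \in \R^{3}\setminus\{0\}$, where $e_{1} = (1,0,0,0,0)^{\top}$. Summing \eqref{NSFK-tiA} over $j = 1,2,3$ and extracting the first column gives
\[
A_{S}(\xi)\, e_{1} = \bigl(\overline{\bfu}\cdot\omega,\; \beta(\xi)\omega_{1},\; \beta(\xi)\omega_{2},\; \beta(\xi)\omega_{3},\; 0\bigr)^{\top}.
\]
For any $\mu \in \R$, the middle three components of $(\mu I + A_{S}(\xi))\, e_{1}$ form the vector $\beta(\xi)\,\omega$, which cannot vanish because $\beta(\xi) = \bp_{\rho} + \bk\, \brho\, |\xi|^{2} > 0$ and $\omega \in \bbS^{2}$. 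Hence $(\mu I + A_{S}(\xi))\, e_{1} \neq 0$ for every $\mu \in \R$, which is exactly the genuine coupling condition of Definition \ref{gencoupling}.

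I do not anticipate any real obstacle: the five-dimensional calculation is a direct extrapolation of the three-dimensional argument already carried out for the isothermal NSK system in the proof of Proposition \ref{propNSK}. The one point worth emphasizing is that no Friedrichs symmetrizer exists for \eqref{lin-NSFK} (by the same obstruction as in Remark \ref{remNSKnotF}, applied to the third-order matrices arising from \eqref{sym-Cijk}), which is why the $\xi$-dependent symbolic symmetrizer $S(\xi)$ with the capillarity-enhanced entry $\beta(\xi)/\brho$ is unavoidable; it is also precisely this entry that makes the genuine coupling argument above robust for all $|\xi|$, including the high-frequency regime $|\xi| \to \infty$.
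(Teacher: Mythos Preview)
Your proposal is correct and follows essentially the same route as the paper: verify that $S(\xi)$ in \eqref{form-S(xi)-NSFK} is a symbol symmetrizer, pass to the symmetric triplet $(I,A_S(\xi),\bB_S(\xi))$ via Lemma \ref{hypequiv}, identify $\ker \bB_S(\xi)=\operatorname{span}\{e_1\}$, and check that $e_1$ is not an eigenvector of $A_S(\xi)$ because the momentum components of $A_S(\xi)e_1$ are a nonzero multiple of $\omega$. One minor correction: the $(2,1)$ block of $A_S(\xi)$ carries the factor $\beta(\xi)^{1/2}$ rather than $\beta(\xi)$ (compare the analogous computation \eqref{NSK-2D-tiA}), but since $\beta(\xi)^{1/2}>0$ this does not affect your argument.
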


Consequently, the equivalence theorem implies the existence of a compensating matrix symbol for the triplet $\big(I, A_{S}(\xi), \bB_{S}(\xi) \big)$. Let us try to find one of such matrices by the method of inspection. Indeed, it is enough to find skew-symmetric matrices $\bK_{S}^{j}(\xi)$, $j=1,2,3$, such that
\[
\sum_{j,k=1}^{3}\left( \big[\bK_{S}^{j}(\xi)A_{S}^{k}(\xi)\big]^{s}+ \bB_{S}^{jk} \right)\omega_{j}\omega_{k} = \sum_{j,k=1}^{3}\big[\bK_{S}^{j}(\xi)A_{S}^{k}(\xi)\big]^{s}\omega_{j}\omega_{k} + \bB_{S}(\xi) >0,
\]
$\forall \xi \in \R^{3}$, $\xi \neq 0$. Then $\bK_{S}(\xi)=\sum_{j=1}^{3}\omega_{j}\bK_{S}^{j}(\xi)$ turns out to be a compensating matrix symbol. 
Let us consider 
\[
\bK_{S}^{1}(\xi)=\frac{\delta_{1}}{\beta(\xi)^{1/2}}\begin{pmatrix}
0 & a & 0 & 0 & 0 \\ -a & 0 & 0 & 0 & b \\ 0 & 0 & 0 & 0 & 0 \\ 0 & 0 & 0 & 0 & 0 \\ 0 & -b & 0 & 0 & 0 
\end{pmatrix},\quad \bK_{S}^{2}(\xi)=\frac{\delta_{2}}{\beta^{1/2}}\begin{pmatrix}
0 & 0 & c & 0 & 0 \\ 0 & 0 & 0 & 0 & 0 \\ -c & 0 & 0 & 0 & d \\ 0 & 0 & 0 & 0 & 0 \\ 0 & 0 & -d & 0 & 0
\end{pmatrix},
\]
\[
\bK_{S}^{3}(\xi)=\frac{\delta_{3}}{\beta(\xi)^{1/2}}\begin{pmatrix}
0 & 0 & 0 & e & 0 \\ 0 & 0 & 0 & 0 & 0 \\ 0 & 0 & 0 & 0 & 0 \\ -e & 0 & 0 & 0 & f \\ 0 & 0 & 0 & -f & 0
\end{pmatrix}.
\]
for some unknowns $a,b,c,d,e,f$ and some small positive constants, $1 \gg \delta_j > 0$, to be chosen later. We compute $\bK_{S}^{1}(\xi)A_{S}^{1}(\xi)$, $\bK_{S}^{1}(\xi)A_{S}^{2}(\xi)$, $\bK_{S}^{1}(\xi)A_{S}^{3}(\xi)$, yielding 
\[
\bK_{S}^{1}(\xi)A_{S}^{1}(\xi)=\frac{\delta_{1}}{\beta(\xi)^{1/2}}\begin{pmatrix}
a\beta(\xi)^{1/2} & a\bu_{1} & 0 & 0 & a\gamma \\ -a\bu_{1} & b\gamma-a\beta(\xi)^{1/2} & 0 & 0 & b\bu_{1} \\ 0 & 0 & 0 & 0 & 0 \\ 0 & 0 & 0 & 0 & 0 \\ -b\beta(\xi)^{1/2} & -b\bu_{1} & 0 & 0 & -b\gamma
\end{pmatrix},
\]
\[
\bK_{S}^{1}(\xi)A_{S}^{2}(\xi)=\frac{\delta_{1}}{\beta(\xi)^{1/2}}\begin{pmatrix}
0 & -a\bu_{2} & 0 & 0 & 0 \\ -a\bu_{2} & 0 & b\gamma-a\beta(\xi)^{1/2} & 0 & b\bu_{2} \\ 0 & 0 & 0 & 0 & 0 \\ 0 & 0 & 0 & 0 & 0 \\ 0 & -b\bu_{2} & 0 & 0 & 0
\end{pmatrix},
\]
\[
\bK_{S}^{1}(\xi)A_{S}^{3}(\xi)= \frac{\delta_{1}}{\beta(\xi)^{1/2}}\begin{pmatrix}
0 & a\bu_{3} & 0 & 0 & 0 \\ -a\bu_{3} & 0 & 0 & b\gamma-a\beta(\xi)^{1/2} & b\bu_{3} \\ 0 & 0 & 0 & 0 & 0 \\ 0 & 0 & 0 & 0  & 0 \\ 0 & -b\bu_{3} & 0 & 0 & 0 
\end{pmatrix},
\]
and similar representations are obtained for $\bK_{S}^{j}(\xi)A_{S}^{k}(\xi)$ for $j=2,3$ and $k=1,2,3$ (which we omit for shortness). Next, by taking $a=c=e=1$, $b=d=f=\gamma/\beta(\xi)^{1/2}$, and $\delta_{i}=\delta/\brho$, $i=1,2,3$, with $0 <\delta \ll 1$, we are led to
\[
\begin{aligned}
\sum_{j,k=1}^{3}\big[\bK_{S}^{j}(\xi)A_{S}^{k}(\xi) \big]^{s}\omega_{j}\omega_{k}&= \frac{\delta}{\brho} \begin{pmatrix}
\vert \omega \vert^{2} & 0_{1 \times 3} & 0 \\ 0_{3 \times 1} & (b\gamma-\beta(\xi)^{1/2}) (\omega\otimes\omega)/\beta(\xi)^{1/2} & 0_{3 \times 1} \\ 0 & 0_{1 \times 3} & -b\gamma\vert \omega \vert^{2}/\beta(\xi)^{1/2}
\end{pmatrix} \\ &= \frac{\delta}{\brho} \begin{pmatrix}
1 & 0_{1 \times 3} & 0 \\ 0_{3 \times 1} & (b\gamma-\beta(\xi)^{1/2}) (\omega\otimes\omega)/\beta(\xi)^{1/2}  & 0_{3 \times 1} \\ 0 & 0_{1 \times 3} & -b\gamma/\beta(\xi)^{1/2}
\end{pmatrix},
\end{aligned}  
\]
inasmuch as $|\omega| = 1$. Since $b\:\gamma/\beta(\xi)^{1/2}=\gamma^{2}/\beta(\xi)$, and $0 \leq 1/\beta(\xi) \leq c$ for some uniform constant $c$ independent of $\xi\in \R^{3}$, the last computation and the property \eqref{unif-sem-pos} of $\bB_{S}(\xi)$ show that there holds
\[	
\sum_{j,k=1}^{3}\left( \big[\bK_{S}^{j}(\xi)A_{S}^{k}(\xi)\big]^{s}+ \bB_{S}^{jk} \right)\omega_{j}\omega_{k} \geq \bsi I_5,
\]
for all $\xi \in \R^{3}$, $\xi \neq 0$, for some uniform $\bsi >0$ as long as we take $\delta$ sufficiently small.
Finally, by the choice of the constants $a,\ldots,f$, and that $\beta(\xi)=O(\vert \xi \vert^{2})$ as $\vert \xi \vert \to \infty$, we conclude $\vert \bK_{S}(\xi) \vert$, $\vert \xi \bK_{S}(\xi) \vert \leq C$ for some uniform positive constant $C$ for all $\xi \in \R^{3}\setminus \{ 0 \}$. 

Thus, along the same lines of the proof of Lemma \ref{lembee-NSK-2D} we can show the next result for the solutions of system \eqref{NSFK-V}. Details are omitted.

\begin{lemma}[basic energy estimate for the NSFK system] Let $\hV=\hV(\xi,t)$ be the solution to the linear system \eqref{NSFK-V}, then there holds 
\begin{equation}
\label{ptw-ee-NSFK}
\vert \hV(\xi,t) \vert \leq C \exp (-c\vert \xi \vert^{2}t) \vert \hV(\xi,0) \vert, 
\end{equation}
for all $\xi \in \R^{3}\setminus \{0 \}$, for some uniform positive constants $C$, $c$. 
\end{lemma}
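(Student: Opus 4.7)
The plan is to apply Lemma \ref{pnt-ee-rel-fre-lem} to the symmetric system \eqref{NSFK-V} with the compensating matrix symbol $\bK_S(\xi) = \sum_{j=1}^{3} \omega_j \bK_S^j(\xi)$ constructed by inspection above, much in the same way as in the proof of Lemma \ref{lembee-NSK-2D} for the NSK case. First, I would observe that system \eqref{NSFK-V} has no relaxation term, so that it fits precisely the framework of Lemma \ref{pnt-ee-rel-fre-lem}, with generalized transport $A_S(\xi)$ given by \eqref{NSFK-tiA} and generalized viscosity $|\xi|^2 \bB_S(\xi)$ given by \eqref{NSFK-tiB}.

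Next, I would recall the two key properties of the compensating matrix symbol established in the construction preceding the lemma: the positivity estimate
\[
\big[\bK_S(\xi) A_S(\xi)\big]^{s} + \bB_S(\xi) \geq \bsi\, I_5, \qquad \forall\, \xi \in \R^{3}\setminus\{0\},
\]
with $\bsi > 0$ a uniform constant (so that the function $g(\xi)$ in hypothesis \eqref{pos-2} of Lemma \ref{pnt-ee-rel-fre-lem} can be taken to be the constant $\bsi$), together with the uniform bounds $|\bK_S(\xi)|, |\xi\, \bK_S(\xi)| \leq C$ on $\R^3\setminus\{0\}$. These bounds follow from the choice $a=c=e=1$, $b=d=f=\gamma/\beta(\xi)^{1/2}$ made above and the asymptotic $\beta(\xi) = O(|\xi|^2)$ as $|\xi|\to\infty$, together with the boundedness of $\omega \mapsto \bK_S^j(\xi)$ near the origin provided by $\beta(\xi) \geq \bp_\rho > 0$.

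Third, I would check the two hypotheses \eqref{hyp-enr-est-2} of Lemma \ref{pnt-ee-rel-fre-lem}. The first condition $|\xi\, \bK_S(\xi)| \leq C$ has just been verified. For the second, note that $\bB_S(\xi)$ depends only on the direction $\omega = \xi/|\xi|\in \bbS^{2}$, so that $|\bB_S(\xi)^{1/2}|$ is uniformly bounded on $\R^{3}\setminus\{0\}$. Consequently, with $g(\xi) \equiv \bsi$ constant,
\[
\frac{|\xi\, \bK_S(\xi) \bB_S(\xi)^{1/2}|}{g(\xi)^{1/2}} \leq \frac{|\xi\, \bK_S(\xi)|\cdot |\bB_S(\xi)^{1/2}|}{\bsi^{1/2}} \leq C',
\]
for some uniform constant $C'>0$. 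Thus both conditions in \eqref{hyp-enr-est-2} hold.

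Finally, applying Lemma \ref{pnt-ee-rel-fre-lem} directly with $g(\xi) \equiv \bsi$ yields the pointwise bound
\[
|\hV(\xi,t)| \leq C \exp\!\big(-k\,\bsi\,|\xi|^{2}\, t\big)\, |\hV(\xi,0)|,
\]
for all $\xi\in\R^{3}\setminus\{0\}$ and $t \geq 0$, which is precisely \eqref{ptw-ee-NSFK} upon relabeling $c := k\bsi$. No serious obstacle is expected since every ingredient has been established above; the only point requiring care is verifying that the compensating symbol $\bK_S(\xi)$ constructed by inspection satisfies the uniform bounds on all of $\R^{3}\setminus\{0\}$, which is precisely why the factor $\beta(\xi)^{1/2}$ was placed in the denominator of each $\bK_S^j(\xi)$.
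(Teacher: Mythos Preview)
Your proposal is correct and follows exactly the approach indicated by the paper, which simply states that the proof proceeds ``along the same lines of the proof of Lemma \ref{lembee-NSK-2D}'' and omits the details. You have faithfully reproduced those details: apply Lemma \ref{pnt-ee-rel-fre-lem} with $g(\xi)\equiv\bsi$ constant, verify \eqref{hyp-enr-est-2} using the uniform bound on $|\xi\,\bK_S(\xi)|$ and the fact that $\bB_S(\xi)$ depends only on $\omega\in\bbS^2$, and conclude.
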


\begin{corollary}
The Navier-Stokes-Fourier-Korteweg system \eqref{ModelSev} is strictly dissipative of regularity gain type.
\end{corollary}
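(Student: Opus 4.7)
The plan is to mirror the one-line argument given for the analogous corollary of the NSK system: combine the pointwise Fourier-space estimate \eqref{ptw-ee-NSFK} with the classification in Remark \ref{Rmr-reg-typ}. All of the nontrivial work has already been done in Proposition \ref{propNSFK} (symbol symmetrizability, genuine coupling, existence of a compensating matrix symbol) and in the preceding lemma producing \eqref{ptw-ee-NSFK}.

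First, I would observe that the eigenvalues $\lambda=\lambda(\xi)$ of the spectral problem \eqref{spectralprob} for the linearized NSFK system \eqref{lin-NSFK} coincide with those of the symmetrized Fourier symbol $i|\xi|A_{S}(\xi)+|\xi|^{2}\bar B_{S}(\xi)$ governing \eqref{NSFK-V}. This is because the change of variable $\hV=(S(\xi)A^{0})^{1/2}\hU$ is, for each fixed $\xi\neq 0$, a similarity between the two matrices (note that $S(\xi)A^{0}$ is symmetric positive-definite in view of \eqref{form-S(xi)-NSFK} and $\beta(\xi)>0$).

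Next, given any such eigenvalue $\lambda(\xi)$ with eigenvector $v\neq 0$, the function $\hV(\xi,t)=e^{-\lambda(\xi) t}v$ solves \eqref{NSFK-V} at wave number $\xi$. Inserting this into the basic energy estimate \eqref{ptw-ee-NSFK} gives $e^{-(\Re\lambda(\xi))t}|v|\leq C e^{-c|\xi|^{2}t}|v|$ for every $t\geq 0$. Taking logarithms, dividing by $t$ and letting $t\to\infty$ yields
\[
\Re\lambda(\xi)\leq -c|\xi|^{2},\qquad \forall\,\xi\in\R^{3}\setminus\{0\}.
\]

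Finally, I would invoke Remark \ref{Rmr-reg-typ} with $p=1$, $q=0$: the bound just obtained is precisely \eqref{diss-(k,l)} with $p>q$, so the NSFK system \eqref{ModelSev} is strictly dissipative of regularity-gain type, as claimed. There is essentially no obstacle here; the only point that deserves a line of justification is the spectral equivalence under the similarity $(S(\xi)A^{0})^{1/2}$, which is immediate from the positive-definiteness (hence invertibility) of $S(\xi)A^{0}$ at each $\xi\neq 0$.
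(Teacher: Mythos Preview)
Your approach is essentially the paper's: the paper's proof is the one-liner ``Follows from the decay rate in \eqref{ptw-ee-NSFK} and Remark \ref{Rmr-reg-typ}'', and you have simply unpacked why the pointwise decay forces the eigenvalue bound \eqref{diss-(k,l)} with $p=1$, $q=0$.

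One sign slip to fix: from the spectral problem \eqref{spectralprob} (equivalently $(\lambda I+i|\xi|A_{S}+|\xi|^{2}\bB_{S})v=0$) the exponential solution of \eqref{NSFK-V} is $\hV(\xi,t)=e^{\lambda(\xi)t}v$, not $e^{-\lambda(\xi)t}v$. With the correct sign, \eqref{ptw-ee-NSFK} gives $e^{(\Re\lambda)t}|v|\le C e^{-c|\xi|^{2}t}|v|$, and your limiting argument then yields $\Re\lambda(\xi)\le -c|\xi|^{2}$ as intended. As written, your inequality $e^{-(\Re\lambda)t}\le C e^{-c|\xi|^{2}t}$ would force $\Re\lambda\ge c|\xi|^{2}$, the opposite of what you want.
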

\begin{proof}
Follows from the decay rate in \eqref{ptw-ee-NSFK} and Remark \ref{Rmr-reg-typ}.
\end{proof}

As we did for the NSK model in the previous section, we need to write the estimate \eqref{ptw-ee-NSFK} in terms of the original variable $\hU$. In view that in this case the matrices $A^{0}$ and $S(\xi)$ are very similar to those in Section \ref{sec-NSK}, we readily obtain the next result. 

\begin{lemma}
The solutions $\hU(\xi,t) = \big(\hU_1, \hU_2, \hU_{3}, \hU_{4}, \hU_{5} \big)(\xi,t)^\top$ to system \eqref{lin-NSFK-Four} satisfy the estimate
\begin{equation}
\label{pwt-ee-U}
\begin{aligned}
& \left( (1+ \vert \xi \vert^2) |\hU_1(\xi,t)|^2 + |\hU_2(\xi,t)|^2 +| \hU_{3}(\xi, t) |^{2} +| \hU_{4}(\xi, t) |^{2} +| \hU_{5}(\xi, t) |^{2} \right)^{1/2}  \leq \\ &\leq C e^{-c \vert \xi \vert^2 t } \left( (1+\vert \xi\vert^2) |\hU_1(\xi,0)|^2 + |\hU_2(\xi,0)|^2 +| \hU_{3}(\xi,0 ) |^{2}+| \hU_{4}(\xi,0 ) |^{2} +| \hU_{5}(\xi,0 ) |^{2} \right)^{1/2},
\end{aligned}
\end{equation}
for all $\xi \in \R^{3} \setminus \{ 0 \}$, $t \geq 0$ and some uniform constants $C$, $c > 0$.
\end{lemma}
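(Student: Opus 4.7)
The plan is a direct analog of the proof of Lemma \ref{lemprelimdecay} for the NSK system: I would unpack the change of variables $\hV = (S(\xi)A^{0})^{1/2}\hU$ and exploit the fact that both matrices are simultaneously diagonal. Since $A^{0}$ (given in Section \ref{sec-NSFK}) and $S(\xi)$ from \eqref{form-S(xi)-NSFK} are both diagonal, so is their product, namely $S(\xi)A^{0} = \operatorname{diag}\big(\beta(\xi)/\brho,\,\brho,\,\brho,\,\brho,\,\brho\be_{\theta}/\bthe\big)$, and its symmetric positive-definite square root is the diagonal matrix whose entries are the square roots of the above quantities.

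From this explicit form one reads off
\[
|\hV(\xi,t)|^{2} = \frac{\beta(\xi)}{\brho}|\hU_{1}|^{2} + \brho\big(|\hU_{2}|^{2} + |\hU_{3}|^{2} + |\hU_{4}|^{2}\big) + \frac{\brho\be_{\theta}}{\bthe}|\hU_{5}|^{2}.
\]
Only the first coefficient depends on $\xi$. Using that $\beta(\xi) = \bp_{\rho} + \bk\brho|\xi|^{2}$ with $\bp_{\rho},\bk,\brho > 0$, one obtains uniform constants $0 < c_{1} \leq C_{1}$ such that $c_{1}(1+|\xi|^{2}) \leq \beta(\xi) \leq C_{1}(1+|\xi|^{2})$ for all $\xi\in\R^{3}$. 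Therefore $|\hV(\xi,t)|^{2}$ is equivalent, uniformly in $\xi \in \R^{3}\setminus\{0\}$, to the weighted norm $(1+|\xi|^{2})|\hU_{1}|^{2}+|\hU_{2}|^{2}+|\hU_{3}|^{2}+|\hU_{4}|^{2}+|\hU_{5}|^{2}$ that appears on the right-hand side of \eqref{pwt-ee-U}.

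Applying this two-sided equivalence (at time $t$ on the left and at $t=0$ on the right) to the pointwise estimate \eqref{ptw-ee-NSFK} and absorbing the resulting multiplicative constant into $C$ yields \eqref{pwt-ee-U}. No real obstacle is expected: the entire argument is a change-of-variable bookkeeping step, and the only substantive input beyond the preceding pointwise estimate is the uniform equivalence $\beta(\xi) \sim 1+|\xi|^{2}$, which is immediate from the strict positivity of $\bp_{\rho}$ and $\bk$.
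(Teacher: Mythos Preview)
Your proposal is correct and follows exactly the approach the paper indicates: the paper does not give a detailed proof here but simply remarks that, since $A^{0}$ and $S(\xi)$ have the same diagonal structure as in the NSK case (Section~\ref{sec-NSK}), the result follows as in Lemma~\ref{lemprelimdecay}. Your explicit computation of $S(\xi)A^{0}=\operatorname{diag}(\beta(\xi)/\brho,\brho,\brho,\brho,\brho\be_{\theta}/\bthe)$ and the two-sided bound $\beta(\xi)\sim 1+|\xi|^{2}$ are precisely the bookkeeping the paper omits.
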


We finish the section by stating the result describing the decay structure of the solutions to system \eqref{lin-NSFK}. Its proof is very similar to that of Lemma \ref{lin-dec-U} and we omit it.
\begin{lemma}
\label{lin-dec-NSFK-lm}
Let $U (x,t)=(U_{1}, U_{2}, U_{3}, U_{4}, U_{5})(x,t)^{\top}$ be the solution of the linear system \eqref{lin-NSFK} with initial condition $U(x,0)\in \big( H^{s+1}(\R^{3})\times \big( H^{s}(\R^{3}) \big)^{4} \big)\cap \big( L^{1}(\R^{3}) \big)^{5}$ for $s\geq 0$. Then for each $0\leq \ell \leq s $, the estimate
\begin{equation}
\label{ln-dec-U-NSFK}
\begin{aligned}
&\big(\Vert \partial_{x}^{\ell} U_{1}(t) \Vert_{1}^{2}+ \Vert \partial_{x}^{\ell} U_{2}(t) \Vert_{0}^{2} + \Vert \partial_{x}^{\ell} U_{3}(t) \Vert_{0}^{2} + \Vert \partial_{x}^{\ell} U_{4}(t) \Vert_{0}^{2} +\Vert \partial_{x}^{\ell} U_{5}(t) \Vert_{0}^{2} \big)^{1/2} \leq \\ & \leq  Ce^{-c_{1}t} \big( \Vert \partial_{x}^{\ell}U_{1}(0) \Vert_{1}^{2}+ \Vert \partial_{x}^{\ell}U_{2}(0) \Vert_{0}^{2} + \Vert \partial_{x}^{\ell}U_{3}(0) \Vert_{0}^{2} + \Vert \partial_{x}^{\ell}U_{4}(0) \Vert_{0}^{2} + \Vert \partial_{x}^{\ell}U_{5}(0) \Vert_{0}^{2} \big)^{1/2} \\ & \quad + C(1+t )^{-(3/4+\ell)} \Vert U(0)\Vert_{L^{1}},
\end{aligned}
\end{equation}
holds for all $t\geq 0$ and some uniform positive constants $C$, $c_{1}$.
\end{lemma}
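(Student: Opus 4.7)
The plan is to transcribe essentially the argument of Lemma \ref{lin-dec-U} into three space dimensions, using the pointwise estimate \eqref{pwt-ee-U} as the single analytic input and Plancherel's theorem as the bridge between Fourier integrals and Sobolev norms. Because \eqref{pwt-ee-U} already bundles together the density (with a weight $(1+|\xi|^2)$) and the velocity/temperature components in the form that matches the left-hand side of \eqref{ln-dec-U-NSFK}, the decay lemma reduces to a standard low/high-frequency split of a Gaussian-weighted integral.

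I would fix $\ell$ with $0 \le \ell \le s$, square the estimate \eqref{pwt-ee-U}, multiply both sides by $|\xi|^{2\ell}$, and integrate over $\R^{3}$ to obtain
\[
\int_{\R^{3}} |\xi|^{2\ell}\bigl[(1+|\xi|^{2})|\hU_{1}(t)|^{2}+\textstyle\sum_{j=2}^{5}|\hU_{j}(t)|^{2}\bigr]\,d\xi \;\le\; C\bigl(J_{1}(t)+J_{2}(t)\bigr),
\]
where $J_{1}$ and $J_{2}$ are the corresponding integrals over $\{|\xi|\le 1\}$ and $\{|\xi|\ge 1\}$ respectively, with $\hU(\xi,0)$ replacing $\hU(\xi,t)$ and the Gaussian factor $\exp(-2c|\xi|^{2}t)$ retained from \eqref{pwt-ee-U}.

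For $J_{1}$, I would bound $\sup_{|\xi|\le 1}|\hU(\xi,0)|^{2} \le \|\hU(\cdot,0)\|_{L^{\infty}}^{2} \le \|U(0)\|_{L^{1}}^{2}$ via the Hausdorff--Young inequality, and then invoke the classical Gaussian estimate
\[
\int_{|\xi|\le 1}|\xi|^{2\ell}e^{-c_{0}|\xi|^{2}t}\,d\xi \;\le\; C(1+t)^{-(3/2+\ell)},
\]
valid in dimension $d=3$ (this is where the spatial dimension enters and where the exponent in the algebraic-decay term of \eqref{ln-dec-U-NSFK} is produced). For $J_{2}$, the key observation is that $e^{-2c|\xi|^{2}t}\le e^{-c_{0}t}$ on $\{|\xi|\ge 1\}$, so $J_{2}$ is majorized by $e^{-c_{0}t}$ times a Fourier integral which, after extending the domain back to all of $\R^{3}$ and applying Plancherel, is exactly the squared Sobolev norm appearing on the right-hand side of \eqref{ln-dec-U-NSFK}. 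On the high-frequency piece I would use the algebraic identity $|\xi|^{2\ell}(1+|\xi|^{2})=|\xi|^{2\ell}+|\xi|^{2(\ell+1)}$ to distribute the weight between the $H^{0}$ and $H^{1}$ norms of the density component, mirroring the computation in \eqref{J2}.

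Combining the two bounds and taking square roots yields \eqref{ln-dec-U-NSFK}. There is essentially no obstacle: the symmetrizer machinery and the compensating symbol were used only to prove the pointwise bound \eqref{pwt-ee-U}, and from there the argument is a routine Fourier-space decay estimate. The only point where care is required is book-keeping the weight $(1+|\xi|^{2})$ attached to the density in both \eqref{pwt-ee-U} and in \eqref{ln-dec-U-NSFK}, ensuring that the $H^{1}$ norm of $\partial_{x}^{\ell}U_{1}$ on the right-hand side absorbs the extra factor of $|\xi|^{2(\ell+1)}$ that arises from the high-frequency estimate.
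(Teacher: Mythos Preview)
Your proposal is correct and follows exactly the approach the paper indicates: the paper omits the proof, stating only that it is very similar to that of Lemma~\ref{lin-dec-U}, and you have faithfully transcribed that argument to three space dimensions with the pointwise estimate~\eqref{pwt-ee-U} as input. The low/high-frequency split, the Gaussian integral bound $\int_{|\xi|\le 1}|\xi|^{2\ell}e^{-c_0|\xi|^2 t}\,d\xi \le C(1+t)^{-(3/2+\ell)}$, the Plancherel step on $\{|\xi|\ge 1\}$, and the handling of the $(1+|\xi|^2)$ weight via $|\xi|^{2\ell}(1+|\xi|^2)=|\xi|^{2\ell}+|\xi|^{2(\ell+1)}$ are precisely the ingredients of the paper's template proof.
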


\subsection{Inviscid heat-conducting compressible fluids exhibiting capillarity}
\label{secEFK1d}

A particular case of the NSFK system \eqref{ModelSev} occurs when one considers an \emph{inviscid} heat-exchanging fluid exhibiting some internal capillarity. The model, known as the \emph{Euler-Fourier-Korteweg (EFK) system}, results from taking the viscosity coefficients equal to zero ($\nu = \lambda \equiv 0$) in \eqref{ModelSev} and has the form,
\begin{equation}
\label{EFK-system}
\begin{aligned}
\partial_{t}\rho + \nabla \cdot (\rho \bfu) &= 0, \\
\partial_{t}(\rho \bfu) + \nabla \cdot (\rho \bfu \otimes \bfu) + \nabla p &= \nabla \cdot \mathbf{K}, \\
\partial_{t}\big( \rho \varepsilon + \tfrac{1}{2} \rho \vert \bfu \vert^{2} \big) + \nabla \cdot \big( \rho \bfu \big( \varepsilon + \tfrac{1}{2}\vert \bfu \vert^{2} \big) \big) + \nabla \cdot (p \bfu) &= \nabla \cdot \big( \alpha \nabla \theta + \mathbf{K} \bfu + \bfw \big),
\end{aligned}
\end{equation}
for $x \in \R^d$, $d \geq 1$, $t > 0$. The Korteweg tensor $\mathbf{K}$ is given in \eqref{S,K} and the capillarity and thermal conductivity coefficients ($k$ and $\alpha$, respectively) are positive functions of $(\rho,\theta)$. Notice the presence of second order dissipative terms thanks to the Fourier heat flux term.

Because of reasons that will become clear later (see Proposition \ref{propEFKmdnorgc} below), let us specialize system \eqref{EFK-system} to one space dimension, $d = 1$. The resulting system reads,
\begin{equation}
\label{EFK1d}
\begin{aligned}
        \rho_t + (\rho u)_x &= 0,   \\
        (\rho u)_t + \big(\rho u^{2}+ p \big)_x &= K_x, \\
        \big(\rho  \varepsilon + \tfrac{1}{2}\rho u^{2} \big)_t + \big( \rho u \big( \varepsilon + \tfrac{1}{2}u^2 \big) + pu \big)_x &= \big( \alpha \theta_{x} + uK + w \big)_x,      
\end{aligned} 
\end{equation}
where now $x \in \R$ and $t > 0$ denote space and time coordinates, respectively. The unknown functions are the mass density $\rho > 0$, the velocity $u \in \R$ and the absolute temperature $\theta > 0$. As before, $p$ denotes the thermodynamic pressure function, and $\varepsilon$ is the internal energy density (per unit mass) of the fluid. The Korteweg stress tensor, $K$, and the interstitial work flux, $w$, are given by
\begin{equation}
\label{defKw}
\begin{aligned}
K &= k \rho \rho_{xx} + \rho k_{x}\rho_{x} - \tfrac{1}{2}k_{\rho}\rho \rho_{x}^2 - \tfrac{1}{2} k\rho_{x}^2, \quad \text{and,}\\
w &= -k \rho \rho_{x} u_{x},
\end{aligned}
\end{equation}
respectively. Recall that here the thermal conductivity, $\alpha$, and the capillarity coefficient, $k$, are strictly positive smooth functions of the state variables $\rho$ and $\theta$.

The EFK system \eqref{EFK1d} is the inviscid, one-dimensional version of the model \eqref{ModelSev}. Proceeding as before, if one linearizes around a constant state $\bU=(\brho, \bu, \bthe)^\top \in \R^3$ then one obtains a constant coefficient system of the form, 
\begin{equation}
\label{EFK-1D}
A^{0}U_{t}+A^{1}U_{x}= BU_{xx}+ C U_{xxx},
\end{equation}
where $U=(\rho, u, \theta)^{\top} \in \R^3$ and 
\[
A^{0}=\begin{pmatrix}
1 & 0 & 0  \\ 0 & \brho & 0 \\ 0 & 0 & \brho\: \be_{\theta} \end{pmatrix},\quad A^{1}= \begin{pmatrix}
\bu & \brho & 0 \\ \bp_{\rho} & \brho\:\bu & \bp_{\theta} \\ 0 & \bthe\: \bp_{\theta} & \brho\:\bu\:\be_{\theta}
\end{pmatrix},
\]
\[
B=\begin{pmatrix}
 0 & 0 & 0 \\   0 & 0 & 0 \\  0 & 0 & \balp
\end{pmatrix},\quad C=\begin{pmatrix}
0 & 0 & 0 \\ \bk\:\brho & 0 & 0  \\ 0 & 0 & 0 
\end{pmatrix}.
\]

We apply the Fourier transform to \eqref{EFK-1D}, and after splitting the symbol into odd and even order derivatives, we obtain
\begin{equation}\label{EFK-1D-Four}
\hU_{t}+\big( i  \xi  A(\xi) + B(\xi) \big)\hU=0,
\end{equation}
where 
\[
A(\xi) = A^{1}+\xi^{2}C = \begin{pmatrix}
\bu & \brho & 0 \\ \beta(\xi) & \brho\:\bu & \bp_{\theta} \\ 0 & \bthe\:\bp_{\theta} & \brho\:\bu\:\be_{\theta}
\end{pmatrix}
\]
with $\beta(\xi):=\bp_{\rho}+\bk\:\brho \vert \xi \vert^{2}$ and
\[
B(\xi)= \xi^{2}B := \xi^{2} \begin{pmatrix}
0 & 0 & 0 \\ 0 & 0 & 0 \\ 0 & 0 & \balp
\end{pmatrix},
\]
which is positive semi-definite. As one can easily check, $B(\xi)$ is symmetric, but $A(\xi)$ is not, thus \eqref{EFK-1D-Four} is not in symmetric form. However, it turns out that
\[
S(\xi)= \begin{pmatrix}
\beta/\brho & 0 & 0  \\ 0 & 1 & 0 \\  0 & 0 & 1/\bthe
\end{pmatrix},
\]
is a symbol symmetrizer of \eqref{EFK-1D-Four} as the reader may verify. 

\begin{remark}
By analogous arguments to those employed for the two dimensional NSK system \eqref{NSK-2D} and the three dimensional NSFK system \eqref{lin-NSFK}, it is easy to verify that the one-dimensional EFK system \ref{EFK1d} \emph{is not symmetrizable in the sense of Friedrichs}. We leave the details to the dedicated reader.
\end{remark}

By writing system \eqref{EFK-1D-Four} in the variable $\hV:=\big(S(\xi)A^{0}\big)^{1/2}\hU$ we arrive at the system
\begin{equation}
\label{EFK-1D-Four-i}
\hV_{t}+ \big( i \vert \xi \vert A_{S}(\xi)+ \xi^{2} \bB_{S}(\xi) \big)\hV=0,
\end{equation}
where
\[
A_{S}(\xi) := \big(S(\xi)A^{0} \big)^{-1/2}S(\xi)A(\xi)\big(S(\xi)A^{0} \big)^{-1/2} = \begin{pmatrix}
\bu & \beta(\xi)^{1/2}  & 0 \\ \beta(\xi)^{1/2} & \bu & \gamma \\ 0 & \gamma & \bu 
\end{pmatrix},
\]
$\gamma > 0$ is the same constant defined in \eqref{defgamma}, and
\[
\bB_{S}(\xi) := \big(S(\xi)A^{0} \big)^{-1/2}S(\xi)B\big(S(\xi)A^{0} \big)^{-1/2} =  \begin{pmatrix}
0 & 0 & 0 \\ 0 & 0 & 0 \\ 0 & 0 & \balp/\brho\: \be_{\theta} 
\end{pmatrix}.
\]
The eigenvalues of $A_{S}(\xi)$ can be easily computed:
\[
\mu_{1}(\xi) = \bu-\sqrt{\beta(\xi)+\gamma^{2}} < \mu_{2}(\xi)= \bu < \mu_{3}(\xi)= \bu + \sqrt{\beta(\xi)+ \gamma^{2}}, 
\]
with respective eigenvectors
\[
\begin{aligned}
v_{1} &= \big(\beta(\xi)^{1/2},-\sqrt{\beta(\xi)+\gamma^{2}}, \gamma \big)^{\top},\\
v_{2} &= \big(-\gamma, 0, \beta(\xi)^{1/2} \big)^{\top},\\
v_{3} &= \big( \beta(\xi)^{1/2}, \sqrt{\beta(\xi)+\gamma^{2}}, \gamma \big)^{\top}.
\end{aligned}
\]
From these expressions, it is straightforward to verify that no eigenvector of $A_{S}(\xi)$ belongs to $\ker \bB_{S}(\xi)$ as $\gamma >0$ and $\beta(\xi) > 0$ for all $\xi \in \R$. Moreover, the projections onto the eigenspaces generated by the eigenvalues are determined explicitly by
\[
\begin{aligned}
P_1(\xi) &= \frac{1}{|v_1|^2} \begin{pmatrix} \beta(\xi)^{1/2} v_1^\top \\ -\sqrt{\beta(\xi)+\gamma^{2}} \; v_1^\top \\ \gamma v_1^\top \end{pmatrix} \\
&= \frac{1}{2(\beta(\xi)+\gamma^{2})}\begin{pmatrix} \beta(\xi) & - \beta(\xi)^{1/2} \sqrt{\beta(\xi)+\gamma^{2}}  & \gamma\beta(\xi)^{1/2}  \\ - \beta(\xi)^{1/2}  \sqrt{\beta(\xi)+\gamma^{2}}  & \beta(\xi)+\gamma^{2} & -\gamma \sqrt{\beta(\xi)+\gamma^{2}} \\ \gamma\beta(\xi)^{1/2}  & - \gamma \sqrt{\beta(\xi)+\gamma^{2}}  & \gamma^2 \end{pmatrix},\\
P_2(\xi) &= \frac{1}{|v_2|^2} \begin{pmatrix} -\gamma v_2^\top \\ 0_{1 \times 3} \\ \beta(\xi)^{1/2} v_2^\top \end{pmatrix} = \frac{1}{\beta(\xi)+\gamma^{2}}\begin{pmatrix} \gamma^2 & 0 & - \gamma \beta(\xi)^{1/2} \\ 0 & 0 & 0 \\ - \gamma \beta(\xi)^{1/2} & 0 & \beta(\xi)\end{pmatrix},\\
P_3(\xi) &= \frac{1}{|v_3|^2} \begin{pmatrix} \beta(\xi)^{1/2} v_3^\top \\ \sqrt{\beta(\xi)+\gamma^{2}} \; v_3^\top \\ \gamma v_3^\top \end{pmatrix} \\
&= \frac{1}{2(\beta(\xi)+\gamma^{2})}\begin{pmatrix} \beta(\xi) & \beta(\xi)^{1/2} \sqrt{\beta(\xi)+\gamma^{2}}  & \gamma\beta(\xi)^{1/2}  \\ \beta(\xi)^{1/2}  \sqrt{\beta(\xi)+\gamma^{2}}  & \beta(\xi)+\gamma^{2} & \gamma \sqrt{\beta(\xi)+\gamma^{2}} \\ \gamma\beta(\xi)^{1/2}  & \gamma \sqrt{\beta(\xi)+\gamma^{2}}  & \gamma^2 \end{pmatrix}.
\end{aligned}
\]
%
%
%
These projections are clearly smooth functions on $\xi \in \R\setminus \{0\}$. Therefore, since $A_{S}(\xi)$ is of constant multiplicity, we can use formula \eqref{ExK} to compute the compensating matrix symbol as
\[
\begin{aligned}
\bK_{S}(\xi) &= \sum_{i\neq j}\frac{P_{i}(\xi)\bB_{S}(\xi)P_{j}(\xi)}{\mu_{i}(\xi)-\mu_{j}(\xi)} \\ &= \frac{1}{4(\beta + \gamma^{2})^{2}\brho\:\be_{\theta}} \begin{pmatrix}
0 & 3 \balp \beta(\xi)^{1/2}\gamma^{2} & 0 \\ -3 \balp \beta(\xi)^{1/2}\gamma^{2} & 0 & \balp \gamma(4\beta(\xi)+\gamma^2) \\ 0 & -\balp \gamma(4\beta(\xi)+\gamma^2) & 0 
\end{pmatrix}.
\end{aligned}
\]
Using the form of the coefficients in $\bK_{S}(\xi)$, and that $\beta(\xi)=O(\vert \xi \vert^{2})$ as $\vert \xi \vert \to \infty$, one can conclude that $\vert \bK_{S}(\xi) \vert$, $\vert \xi \bK_{S}(\xi) \vert$, and $\vert (1+\xi^{2})^{1/2}\xi \bK_{S}(\xi)\vert$ are uniformly bounded in $\xi\in \R$, $\xi \neq 0$. Furthermore, this compensating function satisfies
\[
\big[ \bK_{S}(\xi),A_{S}(\xi) \big]+ \bB_{S}(\xi) = \frac{1}{2(\beta(\xi) + \gamma^{2})^{2}\brho\:\be_{\theta}}\begin{pmatrix}
a_{11} & 0 & a_{13} \\ 0 & a_{22} & 0 \\ a_{31} & 0 & a_{33}
\end{pmatrix}, 
\]
where the coefficients are given by
\[
a_{11}=3 \balp \gamma^{2} \beta(\xi) ,\quad a_{13}=a_{31}=\balp \gamma \beta(\xi)^{1/2}(\gamma^{2}-2\beta),
\]
\[
a_{22}=\frac{2\balp\gamma^{2}(\beta(\xi)+\gamma^{2})^{2}\brho\:\be_{\theta}}{2\beta(\xi) \brho\:\be_{\theta}+2\gamma^{2}\brho\:\be_{\theta}},\quad a_{33}=\balp(2\beta(\xi)^{2}+\gamma^{4}).
\]
Using the form of $\beta(\xi)$ and that of the coefficients $a_{11}$, $a_{13}$, $a_{22}$, and $a_{33}$ one can easily see that 
\begin{equation}\label{pos-K-EFK}
\big[ \bK_{S}(\xi)A_{S}(\xi) \big]^{s}+ \bB(\xi) \geq \frac{1}{2}\left( \big[ \bK_{S}(\xi),A_{S}(\xi) \big]+ \bB(\xi) \right) \geq \frac{\bsi}{1+\xi^{2}}I_3,
\end{equation}
for some uniform constant $\bsi >0$. Hence, we have proved the following result.
\begin{proposition}
\label{propEFK1d}
The linearized one-dimensional Euler-Fourier-Korteweg system \eqref{EFK-1D} is genuinely coupled and satisfies the hypotheses of the equivalence Theorem \ref{equivalencetheo}.
\end{proposition}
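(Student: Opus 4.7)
The plan is to verify the two hypotheses required by Theorem \ref{equivalencetheo}: that the linearized system \eqref{EFK-1D} is symbol symmetrizable and that it is genuinely coupled. Once both are established, the existence of a compensating matrix symbol and strict dissipativity follow at once from the equivalence theorem.

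First, I would take the Fourier transform of \eqref{EFK-1D} and split the symbol into odd- and even-order parts according to \eqref{evenodd}--\eqref{gen-flux-visc}. The third-order dispersive term from the Korteweg stress is odd and is therefore absorbed into the generalized transport matrix, producing the $\xi$-dependent entry $\beta(\xi)=\bp_{\rho}+\bk\brho\,\xi^{2}$ in the $(2,1)$ slot of $A(\xi)$. The only second-order contribution, from Fourier heat conduction, gives the sparse $B(\xi)=\xi^{2}\,\mathrm{diag}(0,0,\balp)$. I would then propose the diagonal candidate $S(\xi)=\mathrm{diag}(\beta(\xi)/\brho,\,1,\,1/\bthe)$, which is smooth, symmetric and positive definite on $\R\setminus\{0\}$ since $\beta(\xi)>0$, $\brho>0$ and $\bthe>0$. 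A routine matrix multiplication, analogous to the one carried out in Sections \ref{sec-NSK} and \ref{sec-NSFK}, confirms that $S(\xi)A(\xi)$ is symmetric and that $S(\xi)B(\xi)$ is symmetric and positive semi-definite, establishing symbol symmetrizability.

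For the genuine coupling condition, I exploit the sparseness of $B(\xi)$: for $\xi\neq 0$, $\ker B(\xi)=\{(\psi_{1},\psi_{2},0)^{\top}:\psi_{1},\psi_{2}\in\R\}$. Given any nonzero $\psi=(\psi_{1},\psi_{2},0)^{\top}$ in this kernel and any $\mu\in\R$, the relation $(\mu I_{3}+A(\xi))\psi=0$ forces, via its third row, $\bthe\bp_{\theta}\,\psi_{2}=0$, whence $\psi_{2}=0$ (using $\bp_{\theta}>0$); then the second row reads $\beta(\xi)\psi_{1}=0$, whence $\psi_{1}=0$ (using $\beta(\xi)>0$), contradicting $\psi\neq 0$. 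Hence no eigenvector of $A(\xi)$ lies in $\ker B(\xi)$.

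There is no real obstacle beyond careful bookkeeping: the algebraic coupling is transparent because $\bp_{\theta}>0$ prevents any vector with vanishing third component from being an eigenvector of $A(\xi)$ unless trivial, and symmetrizability follows the template already used for the viscous Korteweg systems. Combining both verifications, Theorem \ref{equivalencetheo} applies and delivers the conclusion. One may optionally go further and write down the compensating symbol explicitly via Humpherys' formula \eqref{ExK}, since the eigenvalues of $A_{S}(\xi)$ are simple (because $\gamma>0$ and $\beta(\xi)>0$) with smooth eigenprojections on $\R\setminus\{0\}$, but this is not required for the proposition itself.
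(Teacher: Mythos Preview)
Your proposal is correct and follows essentially the same approach as the paper: you propose the same diagonal symbol symmetrizer $S(\xi)=\mathrm{diag}(\beta(\xi)/\brho,1,1/\bthe)$ and then verify genuine coupling. The only minor difference is that the paper verifies genuine coupling by explicitly computing the eigenvalues and eigenvectors of the symmetrized symbol $A_S(\xi)$ and checking that none of the eigenvectors lies in $\ker\bB_S(\xi)$, whereas your kernel argument (third row forces $\psi_2=0$, then second row forces $\psi_1=0$) reaches the same conclusion more directly without the full spectral decomposition; the paper then goes beyond the proposition and uses that eigenstructure to write down the compensating matrix via \eqref{ExK}, which, as you correctly note, is optional for the statement at hand.
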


The compensating matrix symbol that we just computed and its properties allows us to obtain the dissipative structure of the system \eqref{EFK-1D} as it is stated next.

\begin{lemma}[basic energy estimate for the one-dimensional EFK system]
\label{lem-pntw-ee-EFK}
The so\-lu\-tions $\hV(\xi,t)$ to system \eqref{EFK-1D-Four-i} satisfy the estimate 
\begin{equation}\label{pntw-ee-EFK}
\vert \hV(\xi, t) \vert \leq C \exp \left( -\frac{c \xi^{2}}{1+\xi^{2}}t \right) \vert \hV(\xi, 0) \vert,
\end{equation}
for all $\xi \in \R$ and $t\geq 0$, with $C$, $c$ positive uniform constants. 
\end{lemma}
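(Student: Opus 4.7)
The plan is to recognize that system \eqref{EFK-1D-Four-i} fits the framework of the relaxation-free Lemma \ref{pnt-ee-rel-fre-lem} and to verify that the compensating matrix symbol $\bK_{S}(\xi)$ constructed just above (together with the bounds already displayed) satisfies its hypotheses. Indeed, \eqref{EFK-1D-Four-i} has precisely the form
\[
\hV_{t}+\big( i|\xi|A_{S}(\xi)+|\xi|^{2}\bB_{S}(\xi)\big)\hV=0,
\]
with $\bB_{S}(\xi)$ constant in $\xi$ (hence in particular uniformly bounded and positive semi-definite), so the lemma is available once we identify the correct lower bound $g(\xi)$ and verify the two conditions in \eqref{hyp-enr-est-2}.

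From estimate \eqref{pos-K-EFK} we read off
\[
\big[ \bK_{S}(\xi)A_{S}(\xi)\big]^{s}+\bB_{S}(\xi)\geq \frac{\bsi}{1+\xi^{2}}\,I_{3},
\]
so the role of $g(\xi)$ in \eqref{pos-2} is played by $g(\xi):=\bsi/(1+\xi^{2})$, with $\bsi>0$ the uniform constant found in the construction. The first condition in \eqref{hyp-enr-est-2}, namely $|\xi \bK_{S}(\xi)|\leq C$, was established explicitly in the paragraph preceding the lemma. For the second condition, I would estimate
\[
\frac{|\xi \bK_{S}(\xi)\bB_{S}(\xi)^{1/2}|}{g(\xi)^{1/2}} \leq \frac{|\bB_{S}(\xi)^{1/2}|}{\bsi^{1/2}}\,(1+\xi^{2})^{1/2}\,|\xi \bK_{S}(\xi)|,
\]
and use that (i) $\bB_{S}(\xi)$ is a constant matrix, so $|\bB_{S}(\xi)^{1/2}|$ is bounded by a constant independent of $\xi$, and (ii) $(1+\xi^{2})^{1/2}|\xi \bK_{S}(\xi)|$ is uniformly bounded in $\xi\in\R\setminus\{0\}$, which is exactly the third bound already proved for $\bK_{S}(\xi)$. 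Combining these yields the required uniform bound.

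With both hypotheses of Lemma \ref{pnt-ee-rel-fre-lem} in force, its conclusion \eqref{dsp-str-ii} gives
\[
|\hV(\xi,t)|\leq C\exp\!\big(-k\,\xi^{2}g(\xi)\,t\big)|\hV(\xi,0)|=C\exp\!\left(-\frac{k\bsi\,\xi^{2}}{1+\xi^{2}}\,t\right)|\hV(\xi,0)|,
\]
for all $\xi\in\R\setminus\{0\}$ and $t\geq 0$, which is precisely \eqref{pntw-ee-EFK} upon setting $c:=k\bsi$. The case $\xi=0$ is trivial since then $\hV(0,t)=\hV(0,0)$. The only non-routine part is the matching between the rate $\xi^{2}/(1+\xi^{2})$ in \eqref{pntw-ee-EFK} and the generic rate $\xi^{2}g(\xi)$ from Lemma \ref{pnt-ee-rel-fre-lem}; this decay of standard type (rather than regularity-gain type as in the NSK/NSFK cases) is an immediate consequence of the fact that, here, the positivity of the compensating form degrades like $1/(1+\xi^{2})$ for large $|\xi|$ because $\beta(\xi)\sim|\xi|^{2}$ appears in the denominator of the entries of $\bK_{S}(\xi)$.
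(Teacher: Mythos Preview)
Your proof is correct and follows essentially the same approach as the paper: you apply Lemma \ref{pnt-ee-rel-fre-lem} with $g(\xi)=\bsi/(1+\xi^{2})$ read off from \eqref{pos-K-EFK}, and verify the two bounds in \eqref{hyp-enr-est-2} using that $\bB_{S}$ is a constant matrix together with the previously established bound on $(1+\xi^{2})^{1/2}|\xi\bK_{S}(\xi)|$. Your treatment of the $\xi=0$ case and the closing remark on why the decay is of standard type are additional but welcome details.
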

\begin{proof}
We are going to apply Lemma \ref{pnt-ee-rel-fre-lem} once more. To that end, observe that condition \eqref{pos-2} (see \eqref{pos-K-EFK}) is satisfied with $g(\xi) = \bsi /(1+\xi^{2})$, $\bsi$ a constant. To verify the condition \eqref{hyp-enr-est-2} we notice that $\bB_{S}(\xi)$ in \eqref{EFK-1D-Four-i} is a constant matrix. Thus, the bounds
\[
\vert  \xi  \bK_{S}(\xi) \vert,\, \frac{\vert \xi \bK_{S}(\xi) \bB(\xi)^{1/2}\vert}{g(\xi)^{1/2}} \leq C,
\]
are a direct consequence of the bound conditions on $\bK_{S}(\xi)$ as
\[
\frac{\vert \xi \vert \bK_{S}(\xi) \bB_{S}(\xi)^{1/2}}{g(\xi)^{1/2}}= \frac{\vert \xi \vert \bK_{S}(\xi) \bB_{S}}{\bsi (1+\xi^{2})^{-1/2}}= \frac{(1+\xi^{2})^{1/2}\vert \xi \vert \bK_{S}(\xi)\bB_{S}}{\bsi}.
\]
Hence we obtain estimate \eqref{pntw-ee-EFK} taking $g(\xi) = \bsi/(1+\xi^{2})$ in \eqref{dsp-str-ii} of Lemma \ref{pnt-ee-rel-fre-lem}. 
\end{proof}

\begin{corollary}
The one-dimensional Euler-Fourier-Korteweg system \eqref{EFK1d} is strictly dissipative of the standard type.
\end{corollary}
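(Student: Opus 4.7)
The proof is essentially immediate from the material already developed, so the plan is simply to connect the pointwise energy estimate in Fourier space with the characterization of dissipativity type from Remark \ref{Rmr-reg-typ}.

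First, I would recall the definition: by Remark \ref{Rmr-reg-typ}, a linear system is strictly dissipative of standard type when the eigenvalues $\lambda(\xi)$ of the spectral problem \eqref{spectralprob} satisfy $\Re \lambda(\xi) \leq -C|\xi|^{2p}/(1+|\xi|^2)^q$ with $p = q$. Hence the goal is to produce the bound
\[
\Re \lambda(\xi) \leq -\frac{c \, \xi^2}{1+\xi^2}
\]
for the EFK system in one space dimension.

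Next, I would argue that the pointwise energy estimate \eqref{pntw-ee-EFK} in Lemma \ref{lem-pntw-ee-EFK} already encodes this bound. Indeed, for each fixed $\xi \in \R \setminus \{0\}$, the Fourier-transformed system \eqref{EFK-1D-Four-i} is a linear ODE in $t$, whose solutions are linear combinations of exponentials $e^{\lambda(\xi) t}$ with $\lambda(\xi)$ the eigenvalues of $-(i\xi A_S(\xi) + \xi^2 \bar{B}_S(\xi))$. Since the similarity transformation $\hV = (S(\xi)A^0)^{1/2}\hU$ does not alter the spectrum of the symbol, these eigenvalues coincide with the solutions of the original spectral problem \eqref{spectralprob} associated to \eqref{EFK-1D}. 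The uniform decay
\[
|\hV(\xi,t)| \leq C \exp\!\left(-\frac{c \xi^2}{1+\xi^2} t\right) |\hV(\xi,0)|
\]
therefore forces $\Re \lambda(\xi) \leq -c\xi^2/(1+\xi^2)$ for every $\xi \neq 0$ (otherwise one could select initial data aligned with an eigenvector to contradict the bound for large $t$).

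Comparing this with \eqref{diss-(k,l)}, the estimate corresponds exactly to type $(p,q) = (1,1)$, which is the standard case. Hence the one-dimensional Euler-Fourier-Korteweg system is strictly dissipative of standard type. There is no real obstacle here: the nontrivial work was already carried out in Proposition \ref{propEFK1d} (verification of symbol symmetrizability and genuine coupling, yielding the explicit compensating matrix $\bK_S(\xi)$ with $\theta(\xi) = \bar{\sigma}/(1+\xi^2)$) and in Lemma \ref{lem-pntw-ee-EFK} (where the bound $|\xi| |\bK_S(\xi)| \leq C$ and the $(1+\xi^2)^{-1}$ behavior of $\theta$ combine to give precisely the standard decay rate, as opposed to the $\xi$-independent $\theta$ obtained for the NSK and NSFK systems, which yielded regularity-gain type). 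The corollary is thus a one-line reading of Remark \ref{Rmr-reg-typ} against Lemma \ref{lem-pntw-ee-EFK}.
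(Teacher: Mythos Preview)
Your proposal is correct and follows essentially the same approach as the paper: invoke the pointwise decay estimate \eqref{pntw-ee-EFK} from Lemma \ref{lem-pntw-ee-EFK} and read off the type $(p,q)=(1,1)$ from Remark \ref{Rmr-reg-typ}. Your added justification that the semigroup bound forces the eigenvalue estimate is a useful elaboration, but the paper simply records the one-line implication.
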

\begin{proof}
Follows from the decay rate in \eqref{ptw-ee-NSFK} and Remark \ref{Rmr-reg-typ} with $p = q =1$.
\end{proof}

The pointwise estimate \eqref{pntw-ee-EFK} in Fourier space is, once again, the key ingredient to obtain decay estimates for the solutions to the evolution problem in physical space. Similarly to the proofs of Lemmata \ref{lemprelimdecay} and \ref{lin-dec-U}, one can obtain the following result which we enunciate without proof (we leave the details to the dedicated reader; see also the proof of Theorem 5.2 in \cite{AnMP20}).
\begin{lemma}
\label{lemdecayEFK1d}
Let us assume that $U=(U_{1},U_{2},U_{3})^{\top}$ is a solution of the linear EFK system \eqref{EFK-1D} with initial condition $U(x,0)\in \big( H^{s+1}(\R)\times \big( H^{s}(\R) \big)^{2} \big)\cap \big( L^{1}(\R) \big)^{3}$ for $s\geq 0$. Then for each $0\leq \ell \leq s $ the estimate
\begin{equation}
\label{ln-dec-U-EFK1d}
\begin{aligned}
\big( \Vert \partial_{x}^{\ell} U_{1}(t) \Vert_{1}^{2} + \Vert \partial_{x}^{\ell} U_{2}(t) \Vert_{0}^{2} + \Vert \partial_{x}^{\ell} U_{3}(t) \Vert_{0}^{2}\big)^{1/2} &\leq  Ce^{-c_{1}t} \big( \Vert \partial_{x}^{\ell}U_{1}(0) \Vert_{1}^{2}+ \Vert \partial_{x}^{\ell}U_{2}(0) \Vert_{0}^{2} + \Vert \partial_{x}^{\ell}U_{3}(0) \Vert_{0}^{2} \big)^{1/2}+ \\ & \quad + C(1+t )^{-(1/4+\ell)} \Vert U(0)\Vert_{L^{1}},
\end{aligned}
\end{equation}
holds for all $t\geq 0$ and some uniform positive constants $C$, $c_{1}$.
\end{lemma}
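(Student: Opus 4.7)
The plan is to follow the same methodology used in the proof of Lemma \ref{lin-dec-U}, suitably adapted to one space dimension and to the standard-type pointwise estimate \eqref{pntw-ee-EFK} rather than the regularity-gain type estimate \eqref{bestV}. First, I would translate the bound on $\hV$ back to the physical variable $\hU = (\hU_1, \hU_2, \hU_3)^\top$. Since $\hV = (S(\xi)A^0)^{1/2}\hU$ with $A^0$ and $S(\xi)$ diagonal, and the only $\xi$-dependent diagonal entry is $\beta(\xi)/\brho = (\bp_\rho + \bk\brho\,\xi^2)/\brho$, one obtains the two-sided equivalence
\begin{equation*}
c_1\bigl((1+\xi^2)|\hU_1|^2 + |\hU_2|^2 + |\hU_3|^2\bigr) \leq |\hV|^2 \leq C_1\bigl((1+\xi^2)|\hU_1|^2 + |\hU_2|^2 + |\hU_3|^2\bigr),
\end{equation*}
uniformly in $\xi\in\R$. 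Combining this with \eqref{pntw-ee-EFK} yields an estimate for $|\hU(\xi,t)|$ in terms of $|\hU(\xi,0)|$ with exponential weight $\exp(-c\xi^2 t/(1+\xi^2))$.

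Next, I would multiply the resulting pointwise inequality by $\xi^{2\ell}$ and integrate over $\xi\in\R$, in order to use Plancherel's theorem. This reduces the problem to controlling
\begin{equation*}
\int_{\R}\xi^{2\ell}\bigl((1+\xi^{2})|\hU_{1}(\xi,0)|^{2}+|\hU_{2}(\xi,0)|^{2}+|\hU_{3}(\xi,0)|^{2}\bigr)\exp\!\left(-\tfrac{2c\xi^{2}}{1+\xi^{2}}t\right)d\xi,
\end{equation*}
which I would split as $J_1(t)+J_2(t)$ over the regions $|\xi|\leq 1$ and $|\xi|\geq 1$ respectively, mirroring the decomposition in the proof of Lemma \ref{lin-dec-U}.

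On the low-frequency region $|\xi|\leq 1$ the key observation is $\xi^2/(1+\xi^2) \geq \xi^2/2$, so the exponential decays like the heat-kernel factor $\exp(-c\xi^2 t)$. Using the crude $L^\infty$-bound $|\hU(\xi,0)|\leq \|U(0)\|_{L^1}$ together with the standard 1D integral estimate
\begin{equation*}
\int_{|\xi|\leq 1}\xi^{2\ell}\exp(-c\xi^{2}t)\,d\xi \leq C(1+t)^{-(\ell+1/2)},
\end{equation*}
one obtains $J_1(t)\leq C(1+t)^{-(\ell+1/2)}\|U(0)\|_{L^1}^2$, which after taking a square root matches the algebraic rate claimed in \eqref{ln-dec-U-EFK1d}. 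On the high-frequency region $|\xi|\geq 1$ one has $\xi^2/(1+\xi^2)\geq 1/2$, so the exponential is uniformly bounded by $e^{-ct}$; estimating the remaining $\xi$-integral by Plancherel yields the Sobolev bound $J_2(t)\leq Ce^{-ct}(\|\partial_{x}^{\ell}U_{1}(0)\|_{1}^{2}+\|\partial_{x}^{\ell}U_{2}(0)\|_{0}^{2}+\|\partial_{x}^{\ell}U_{3}(0)\|_{0}^{2})$. Combining both contributions and taking square roots gives the claimed estimate.

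The argument is structurally routine once Lemma \ref{lem-pntw-ee-EFK} and the equivalence between $|\hV|$ and the weighted $\hU$-norm are in hand. The only point requiring slight care is the low-frequency decay rate: because the exponent $\xi^2/(1+\xi^2)$ degenerates only at $\xi=0$ (standard-type dissipation, $p=q=1$), the algebraic decay exponent coincides with the one that would come from pure heat-kernel dissipation. The main bookkeeping subtlety is thus verifying that the extra factor $(1+\xi^2)$ attached to the density component on the low-frequency side does not worsen the rate, which it does not since $(1+\xi^2)\leq 2$ on $|\xi|\leq 1$; no other genuine obstacle arises.
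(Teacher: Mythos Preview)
Your proposal is correct and follows exactly the approach the paper indicates: it explicitly states the result without proof, referring the reader to the arguments of Lemmata \ref{lemprelimdecay} and \ref{lin-dec-U} (and to Theorem~5.2 in \cite{AnMP20}), which is precisely the route you take---translate \eqref{pntw-ee-EFK} to the weighted $\hU$-norm via the diagonal symmetrizer, multiply by $\xi^{2\ell}$, split at $|\xi|=1$, and use $\xi^2/(1+\xi^2)\geq \xi^2/2$ on low frequencies and $\xi^2/(1+\xi^2)\geq 1/2$ on high frequencies. One minor point: your low-frequency bound $J_1(t)\leq C(1+t)^{-(\ell+1/2)}\|U(0)\|_{L^1}^2$ yields, after the square root, the rate $(1+t)^{-(1/4+\ell/2)}$ rather than the $(1+t)^{-(1/4+\ell)}$ printed in \eqref{ln-dec-U-EFK1d}; comparing with the carefully proved Lemma \ref{lin-dec-U} (which has the $\ell/2$ exponent), this appears to be a typo in the paper's statement, not an error in your argument.
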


\subsection{Dispersive Navier-Stokes-Fourier equations}
\label{secDNS}

Let us consider the following \emph{dispersive Navier-Stokes-Fourier (DNSF)} system of equations proposed by Levermore and Sun \cite{LeSu11},
\begin{equation}
\label{nl-Lev-mod}
\begin{aligned}
\partial_{t}\rho + \nabla \cdot (\rho \bfu)&=0, \\ 
\partial_{t}(\rho \bfu) + \nabla \cdot (\rho \bfu \otimes \bfu) + \nabla p&= \nabla \cdot \big( \bSigma + \widetilde{\bSigma} \big), \\
\partial_{t}(\rho e)+ \nabla \cdot (\rho e \bfu  + p \bfu) &= \nabla \cdot \big( \bSigma \bfu + \bfq \big) + \nabla \cdot \big( \widetilde{\bSigma}\bfu + \widetilde{\bfq} \big),
\end{aligned}
\end{equation}
for $x\in \R^{d}$, $d\geq 2$, $t \geq 0$ and where, as before, $\rho$ denotes the mass density, $\bfu \in \R^d$ is the velocity field, $\theta$ denotes the absolute temperature and $p$ is the pressure. The total energy density $e$ is of the form 
\[
e = \frac{1}{2}\rho \vert \bfu \vert^{2} + \frac{d}{2}\rho \theta,
\]
and the classical Navier-Stokes stress tensor $\bSigma$, the pressure and heat flux vector $\bfq$ are given by
\begin{equation}\label{Sigma-q}
\bSigma = \mu(\theta) \mathbf{D}(\bfu), \quad p = \rho \theta, \quad \bfq = \alpha(\theta) \nabla \theta,
\end{equation}
respectively, with $\mathbf{D}(\bfu)=\nabla \bfu + (\nabla \bfu)^{\top}-\tfrac{2}{d}(\nabla \cdot \bfu) I_d$ being the strain and $I_d$ the identity tensor of order $d$. Here the shear viscosity coefficient $\mu$ and the heat conductivity $\alpha$ are assumed to be smooth positive functions of the temperature $\theta$. The dispersive correction to the stress, namely $\widetilde{\bSigma}$, and the correction to the heat flux, $\widetilde{\bfq}$, are given by 
\begin{equation}
\label{disp-corr}
\begin{aligned}
\tilde{\Sigma}&= \tau_{1}(\rho, \theta) \big( \nabla^{2} \theta - \tfrac{1}{d} \Delta \theta I_d \big) + \tau_{2}(\rho, \theta) \big( \nabla \theta \otimes \nabla \theta - \tfrac{1}{d}\vert \nabla \theta \vert^{2} I_d \big)+ \\ &\quad + \tau_{3}(\rho, \theta) \big( \nabla \rho \otimes \nabla \theta + \nabla \theta \otimes \nabla \rho - \tfrac{2}{d}(\nabla \rho \cdot \nabla \theta) I_d \big), \\ 
\widetilde{\bfq} &= \tau_{4}(\rho, \theta) \big( \Delta \bfu + \tfrac{d-2}{d}\nabla \nabla\cdot \bfu \big)+ \tau_{5}(\rho, \theta)\nabla \bfu \nabla \theta + \tau_{6}(\rho, \theta) \nabla \bfu \nabla \rho + \\ & \quad + \tau_{7}(\rho, \theta) \big( \nabla \bfu -(\nabla \bfu )^{\top}\big) \nabla \theta,  
\end{aligned} 
\end{equation}
with $\nabla^{2}\theta$ denoting the Hessian matrix of $\theta$, and $\tau_{i}(\rho, \theta)$, $i=1,2,\ldots,7$, are additional transport coefficients. 

The motivation for the introduction of this dispersive DNSF system \eqref{nl-Lev-mod} stems from the well known fact that the classical fluid dynamics equations are not accurate in certain regimes. For instance, in the phenomenon known as the \emph{ghost effect} for rarefied gases (see, e.g., Sone \cite{Sone02}), the flow moves along the direction from the low temperature domain to the high temperature domain, which is quite different from the usual heat flow. The dispersive corrections \eqref{disp-corr} are not derivable from the Navier-Stokes system of gas dynamics but are derivable from kinetic theory. The Boltzmann equation from kinetic theory provides more information on the microscopic level so that some interesting phenomena, such as the ghost effect, cannot be modeled by the classical Euler and Navier–Stokes-Fourier equations. Therefore, some higher-order corrections need to be added to the governing equations. These corrections can be formally derived from the Boltzmann equation in small mean-free-path regimes. Levermore and collaborators \cite{Lev03rep,LeSu11,LeSuT12} have proposed a balance argument to derive a family of gas dynamical systems in a systematic fashion, including the classical fluid equations and equations beyond Navier-Stokes. One feature of the models derived in this way is that they obey an entropy structure (see eq. (1.5) in \cite{LeSu11}) which shows their consistency with the second law of thermodynamics. This entropy structure also guarantees that the systems are at least linearly stable. Among them, system \eqref{nl-Lev-mod} is the one with lowest order that provides a first correction to the compressible Navier-Stokes-Fourier model.

In system \eqref{nl-Lev-mod} the form of the transport coefficients $\mu(\theta)$, $\alpha(\theta)$, $\tau_{i}(\rho, \theta)$, $i=1,\ldots,7$, depend on details of the underlying kinetic equations. In particular, the transport coefficients $\tau_{i}(\rho, \theta)$, $i=1,\ldots,6$, satisfy the relations 
\begin{equation}\label{taus-rel}
\tau_{4}=\frac{\theta}{2}\tau_{1},\quad \frac{\tau_{2}}{\theta}+ \frac{2 \tau_{5}}{\theta^{2}}= \partial_{\theta}\big( \frac{\tau_{4}}{\theta^{2}}\big),\quad \theta \tau_{3} + \tau_{6} = 2 \partial_{\rho}\tau_{4}. 
\end{equation} 

We are going to take the mass density $\rho$ and the absolute temperature $\theta$ as the independent thermodynamic variables which take values in the set
\[
\mathcal{D}= \left\lbrace (\rho, \theta)\in \R^{2}:\: \rho >0,\: \theta>0 \right\rbrace.
\]
Let us observe that from the first relation in \eqref{taus-rel}, we obtain that $\tau_{1}$ and $\tau_{4}$ have the same sign in $\mathcal{D}$. We assume that the transport coefficient being smooth functions on $\mathcal{D}$ satisfy
\begin{equation}\label{therd-cond}
\mu( \theta) >0,\quad \alpha(\theta) >0,\quad \tau_{1}(\rho, \theta),\quad \tau_{4}(\rho, \theta)>0.
\end{equation}

Next we write the linearized version of system \eqref{nl-Lev-mod} around a constant equilibrium state $\bU=(\rho, \overline{\bfu}, \bthe) \in \R^5$, $(\brho, \bthe)\in \mathcal{D}$, if we specialize the system to dimension $d=3$. We could proceed as we did in Section \ref{sec-NSFK} for the NSFK system. Instead of that, we use the quasilinear form of system \eqref{nl-Lev-mod} already obtained in \cite{LeSu11} to deduce its linear form. Specifically, by looking at equations (2.2), (2.2) in \cite{LeSu11} one can verify, after some calculations, that the linearized version of system \eqref{nl-Lev-mod} around the equilibrium state $\bU$ has the form 
\begin{equation}
\label{Lev-mod}
\partial_{t}U + \sum_{j=1}^{3}A^{j}\partial_{x_{j}}U= \sum_{j,k=1}^{3}B^{jk}\partial_{x_{j}}\partial_{x_{k}} U + \sum_{j,k,l=1}^{3}C^{jkl}\partial_{x_{j}} \partial_{x_{k}}\partial_{x_{l}} U,
\end{equation}
where $U=(\rho,u_{1},u_{2},u_{3},\theta)^\top$, and the matrix coefficients are given in terms of their symbols:
\[
\sum_{j=1}^{3}A^{j}\xi_{j}=\begin{pmatrix}
\overline{\bfu} \cdot \xi & \brho\xi^{\top} & 0 \\ \frac{\bthe}{\brho}\xi & (\overline{\bfu}\cdot \xi) I_3 & \xi \\ 0 & \frac{2}{3}\bthe\xi^{\top} & \overline{\bfu}\cdot \xi
\end{pmatrix},
\]
\[
\sum_{j,k=1}^{3}B^{jk}\omega_{j}\omega_{k}= \begin{pmatrix}
0 & 0_{1 \times 3} & 0 \\ 0_{3 \times 1} & \frac{\bmu}{\brho}\vert \xi \vert^{2}I_3 + \frac{1}{3}\frac{\bmu}{\brho}\xi\otimes\xi & 0_{3 \times 1} \\
0 & 0_{1 \times 3} & \frac{2}{3}\frac{\balp}{\brho}\vert \xi \vert^{2}
\end{pmatrix},
\]
\[
\sum_{j,k,l=1}^{3}C^{jkl}\xi_{j}\xi_{k}\xi_{l}= \begin{pmatrix}
0 & 0_{1 \times 3} & 0 \\ 0_{3 \times 1} & 0_3 & \frac{2}{3\brho}\btau_{1}\vert \xi \vert^{2}\xi \\ 0 & \frac{8}{9\brho}\btau_{4}\vert \xi \vert^{2}\xi^{\top} & 0 
\end{pmatrix}.
\]
Here the overlined variables represent the variables evaluated at the constant state $\bU$ (for example, $\btau_{1}=\tau_{1}(\brho,\bthe)$, $\btau_{4}=\tau_{4}(\brho,\bthe)$,  $\bmu=\mu(\bthe)>0$, etc.)
Apply the Fourier transform to \eqref{Lev-mod}, and split the symbol into odd and even orders of derivation to obtain
\begin{equation}
\label{Lev-mod-Four}
\hU_{t}+\big( i \vert \xi \vert A(\xi) + B(\xi) \big) \hU=0,
\end{equation}
where the generalized flux and the generalized viscosity matrices are, respectively, given by
\begin{equation}\label{A-Lev-mod}
A(\xi) = \sum_{j=1}^{3}\omega_{j}\bA^{j}(\xi)
:= \sum_{j=1}^{3}\omega_{j}\begin{pmatrix} \overline{\bfu} \cdot e_{j} & \brho e_{j}^{\top} & 0 \\ \frac{\bthe}{\brho}e_{j} & (\overline{\bfu}\cdot e_{j}) I_3 & \beta_{1}(\xi) e_{j} \\ 0 & \beta_{2}(\xi) e_{j} & \overline{\bfu}\cdot e_{j}
\end{pmatrix},
\end{equation}
where we have set $\beta_{1}(\xi):=1+\tfrac{2}{3\brho}\btau_{1}\vert \xi \vert^{2}$, $\beta_{2}(\xi):=\tfrac{2}{3}\bthe+\tfrac{8}{9\brho}\btau_{4}\vert \xi \vert^{2}$, $\{ e_j \}_{j=1}^3$ is the canonical basis in $\R^3$, and by	 
\begin{equation}\label{B-Lev-mod}
B(\xi)= \sum_{j,k=1}^{3}\vert \xi \vert^{2} B^{jk}\omega_{j}\omega_{k}= \vert \xi \vert^{2} \begin{pmatrix}
0 & 0_{1 \times 3} & 0 \\ 0_{3 \times 1} & \frac{\bmu}{\brho}\vert \omega \vert^{2}I_3 + \frac{1}{3}\frac{\bmu}{\brho}\omega\otimes\omega & 0_{3 \times 1} \\
0 & 0_{1 \times 3} & \frac{2}{3}\frac{\balp}{\brho}\vert \omega \vert^{2}
\end{pmatrix},
\end{equation}
which is symmetric and positive semi-definite. 
Using the first relation in \eqref{taus-rel}, we obtain
\[
\frac{2}{3}\bthe \left( \frac{2}{3\brho}\btau_{1} \right)= \frac{4\bthe}{9\brho}\tau_{1}=\frac{4}{9\brho}(2\tau_{4})= \frac{8}{9\brho}\tau_{4}.
\]
Then one can easily see that 
\begin{equation}\label{Fr-symm}
S=S(\bU)=\begin{pmatrix}
\frac{2}{3}\frac{\bthe^{2}}{\brho^{2}} & 0_{1 \times 3} & 0 \\ 0_{3 \times 1} & \frac{2}{3}\bthe I_3 & 0_{3 \times 1} \\ 0 & 0_{1 \times 3} & 1
\end{pmatrix}
\end{equation}	
symmetrizes the system \eqref{Lev-mod} in the sense of Friedrichs. As Friedrich symmetrizability implies symbol symmetrizability, the matrix $S$ in \eqref{Fr-symm} is a symbol symmetrizer of \eqref{Lev-mod-Four}. Observe that in this case the symbol symmetrizer does not depend on the Fourier wave number $\xi \in \R^{3}$.

Let us consider the system \eqref{Lev-mod-Four} in the variable $\hV:=S^{1/2}\hU$, so that we are led to 
\begin{equation}\label{Lev-Four-hV}
\hV_{t}+\big( i\vert \xi \vert A_{S}(\xi) + \vert \xi \vert^{2}\bB_{S}(\xi) \big)\hV=0.
\end{equation}	
Here the matrix coefficients have the form
\begin{equation}\label{tiA-Lev-mod}
\begin{aligned}
A_{S}(\xi) &= S^{-1/2}SA(\xi)S^{-1/2} \\ &=S^{-1/2}S\Big( \sum_{j=1}^{3}\omega_{j}\bA^{j}(\xi) \Big) S^{-1/2} =: \sum_{j=1}^{3}\omega_{j}A_{S}^{j}(\xi) \\ &= \sum_{j=1}^{3}\omega_{j} \begin{pmatrix}
\overline{\bfu} \cdot e_{j} & \bthe^{1/2} e_{j}^{\top} & 0 \\ \bthe^{1/2} e_{j} & (\overline{\bfu} \cdot e_{j}) I_3 & \beta(\xi) e_{j} \\ 0 & \beta(\xi) e_{j}^{\top} & \overline{\bfu}\cdot e_{j}
\end{pmatrix},
\end{aligned}
\end{equation}
with $\beta(\xi) :=\big( \tfrac{3}{2\bthe} \big)^{1/2}\beta_{2}(\xi)$, and 
\begin{equation}\label{tiB-Lev-mod}
\begin{aligned}
\bB_{S}(\xi)&= S^{-1/2}S\Big( \sum_{j,k=1}^{3} B^{jk}\omega_{j}\omega_{k}\Big) S^{-1/2} =:\sum_{j,k=1}^{3}\bB_{S}^{jk}\omega_{j}\omega_{k} \\ &= \begin{pmatrix}
0 & 0_{1 \times 3} & 0 \\ 0_{3 \times 1} & \frac{\bmu}{\brho}\vert \omega \vert^{2}I_3 + \frac{1}{3}\frac{\bmu}{\brho}\omega\otimes\omega & 0_{3 \times 1} \\
0 & 0_{1 \times 3} & \frac{2}{3}\frac{\balp}{\brho}\vert \omega \vert^{2}
\end{pmatrix}.
\end{aligned}
\end{equation}

Clearly $\ker \bB_{S}(\xi) =\text{span}\{(1,0,0,0,0)^\top \} \subset \R^5$. Then we have that for $a\neq 0$, 
\[
A_{S}(\xi)\begin{pmatrix}
a \\ 0 \\ 0 \\ 0 \\ 0
\end{pmatrix}= \begin{pmatrix}
a (\bfu\cdot\omega) \\ \bthe^{1/2}a\omega \\ 0
\end{pmatrix} \neq \zeta \begin{pmatrix}
a \\ 0 \\ 0 \\ 0 \\ 0
\end{pmatrix},\quad \forall \zeta \in \R,
\]
as $\bthe>0$. Then the triplet $\big(I_5, A_{S}(\xi), \bB_{S}(\xi) \big)$ satisfies the genuine coupling condition. Thus, we have proved the following result.
\begin{proposition}
\label{propDNSF}
The linearized dispersive Navier-Stokes-Fourier system \eqref{Lev-mod} is genuinely coupled and, therefore, it satisfies the hypotheses of the equivalence Theorem \ref{equivalencetheo}.
\end{proposition}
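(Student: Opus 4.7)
The plan is to verify the two hypotheses of Theorem \ref{equivalencetheo} for the linearized system, namely symbol symmetrizability and genuine coupling, drawing on the explicit representations \eqref{A-Lev-mod}--\eqref{tiB-Lev-mod} that have already been assembled in the preceding paragraphs. I would proceed in three steps.

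First I would check that the $\xi$-independent matrix $S$ defined in \eqref{Fr-symm} symmetrizes the linear system \eqref{Lev-mod} in the classical Friedrichs sense; by Definition \ref{defsymbolsymm}, this immediately gives symbol symmetrizability with $S(\xi)\equiv S$. The symmetry of $SA^{j}$ follows from the block structure of $A^{j}$ together with the consistency among the entries $s_{1}\brho=s_{2}(\bthe/\brho)$ and $s_{2}=\tfrac{2}{3}\bthe\, s_{3}$, while the symmetry of $SB^{jk}$ is inherited from the block structure in which the only nontrivial block is already symmetric. The delicate point, which I expect to be the main obstacle of the verification, is the symmetry of $SC^{jkl}$: direct inspection produces the required scalar identity $\tfrac{4\bthe}{9\brho}\btau_{1}=\tfrac{8}{9\brho}\btau_{4}$, which is precisely the first of the kinetic relations \eqref{taus-rel}. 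Without it no constant symmetrizer would exist, so this is genuinely the step where the underlying thermodynamic structure enters.

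Second, after passing to the variable $\hV=S^{1/2}\hU$ and arriving at the symmetric form \eqref{Lev-Four-hV}, I would examine the block structure of $\bB_{S}(\xi)$ in \eqref{tiB-Lev-mod}. Under the hypotheses \eqref{therd-cond}, the $3\times 3$ velocity block $(\bmu/\brho)(I_{3}+\tfrac{1}{3}\omega\otimes\omega)$ is uniformly positive definite on $\bbS^{2}$ and the scalar temperature block is strictly positive, whereas the density row and column vanish. Hence $\bB_{S}(\xi)\geq 0$ for all $\xi\neq 0$, with
\[
\ker \bB_{S}(\xi)=\mathrm{span}\{e_{1}\},\qquad e_{1}=(1,0,0,0,0)^{\top}.
\]

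Third, to verify the genuine coupling condition of Definition \ref{gencoupling}, I would take an arbitrary $\psi=a\,e_{1}$, $a\neq 0$, in $\ker \bB_{S}(\xi)$ and compute $A_{S}(\xi)\psi$ by reading off the first column of \eqref{tiA-Lev-mod}: this produces a velocity component equal to $a\,\bthe^{1/2}\omega$, which is nonzero since $\bthe>0$ and $\omega\in\bbS^{2}$. Consequently $A_{S}(\xi)\psi$ cannot be a scalar multiple of $e_{1}$, so $(\mu I_{5}+A_{S}(\xi))\psi\neq 0$ for every $\mu\in\R$. By Lemma \ref{hypequiv} the genuine coupling of the symmetric triplet $(I_{5},A_{S}(\xi),\bB_{S}(\xi))$ transfers to the original system, and the equivalence Theorem \ref{equivalencetheo} then yields the claim.
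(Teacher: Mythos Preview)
Your proposal is correct and follows essentially the same approach as the paper: verify Friedrichs symmetrizability via the constant $S$ of \eqref{Fr-symm} (using the first relation in \eqref{taus-rel}), identify $\ker\bB_{S}(\xi)=\mathrm{span}\{e_{1}\}$, and check genuine coupling by observing that $A_{S}(\xi)e_{1}$ has nonzero velocity component $\bthe^{1/2}\omega$.
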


By the equivalence theorem there exists a compensating matrix symbol for \eqref{Lev-mod}. Let us try to find one of such matrices. We proceed by inspection, as in the case of the NSFK system. We start by proposing matrices $\bK_{S}^{j}(\xi)$, $j=1,2,3$, of the form
\[
\bK_{S}^{1}(\xi)=\delta_{1}\begin{pmatrix}
0 & a & 0 & 0 & 0 \\ -a & 0 & 0 & 0 & b \\ 0 & 0 & 0 & 0 & 0 \\ 0 & 0 & 0 & 0 & 0 \\ 0 & -b & 0 & 0 & 0 
\end{pmatrix},\quad \bK_{S}^{2}(\xi)=\delta_{2}\begin{pmatrix}
0 & 0 & c & 0 & 0 \\ 0 & 0 & 0 & 0 & 0 \\ -c & 0 & 0 & 0 & d \\ 0 & 0 & 0 & 0 & 0 \\ 0 & 0 & -d & 0 & 0
\end{pmatrix},
\]
\[
\bK_{S}^{3}(\xi)=\delta_{3}\begin{pmatrix}
0 & 0 & 0 & e & 0 \\ 0 & 0 & 0 & 0 & 0 \\ 0 & 0 & 0 & 0 & 0 \\ -e & 0 & 0 & 0 & f \\ 0 & 0 & 0 & -f & 0
\end{pmatrix},
\]
for some coefficients $a,\ldots,f$, and $\delta_{i}$, $i=1,2,3$, to be determined later. 
Computations of $\bK_{S}^{1}(\xi)A_{S}^{1}(\xi)$, $\bK_{S}^{1}(\xi)A_{S}{2}(\xi)$, and $\bK_{S}^{1}(\xi)A_{S}^{3}(\xi)$ yield
\[
\bK_{S}^{1}(\xi)A_{S}^{1}(\xi)=\delta_{1} \begin{pmatrix}
a \bthe^{1/2} & a\bu_{1} & 0 & 0 & a\beta(\xi) \\ -a\bu_{1} & b\beta(\xi)-a\bthe^{1/2} & 0 & 0 & b\bu_{1} \\ 0 & 0 & 0 & 0 & 0 \\ 0 & 0 & 0 & 0 & 0 \\ -b\bthe^{1/2} & -b\bu_{1} & 0 & 0  & -b \beta(\xi)
\end{pmatrix}, 
\]
\[
\bK_{S}^{1}(\xi)A_{S}^{2}(\xi)=\delta_{1} \begin{pmatrix}
0 & a\bu_{2} & 0 & 0 & 0 \\ -a\bu_{2} & 0 & b\beta(\xi)-a\bthe^{1/2} & 0 & b\bu_{2} \\ 0 & 0 & 0 & 0 & 0 \\ 0 & 0 & 0 & 0 & 0 \\ 0 & -b\bu_{2} & 0 & 0 & 0 
\end{pmatrix},
\]
\[
\bK_{S}^{1}(\xi)A_{S}^{3}(\xi)=\delta_{1}\begin{pmatrix}
0 & a\bu_{3} & 0 & 0 & 0 \\ -a\bu_{3} & 0 & 0 & b\beta(\xi)-a\bthe^{1/2} & b\bu_{3} \\ 0 & 0 & 0 & 0 & 0 \\ 0 & 0 & 0 & 0 & 0 \\ 0 & -b\bu_{3} & 0 & 0 & 0 
\end{pmatrix}. 
\]
Similar expressions are obtained for $\bK_{S}^{j}(\xi)A_{S}^{k}(\xi)$, $j=2,3$, $k=1,2,3$. Next, we take $a=c=e=1/\beta(\xi)^{2}$, $b=d=f=1/\beta(\xi)^{3/2}$, and $\delta_{i}=\delta$, $i=1,2,3$, with $0 < \delta \ll 1$. 
Thus, we obtain
\[
\begin{aligned}
\sum_{j,k=1}^{3} \big[ \bK_{S}^{j}(\xi)A_{S}^{k}(\xi) \big]^{s}\omega_{j}\omega_{k} &= \delta \begin{pmatrix}
\frac{\bthe^{1/2}}{\beta^{2}}\vert \omega \vert^{2} & 0_{1 \times 3} & \frac{1}{2}\left( \frac{1}{\beta} - \frac{\bthe^{1/2}}{\beta^{3/2}} \right) \vert \omega \vert^{2} \\ 0_{3 \times 1} & \left( \frac{1}{\beta^{1/2}} -\frac{\bthe^{1/2}}{\beta^{2}} \right) \omega\otimes\omega & 0_{3 \times 1} \\ \frac{1}{2}\left( \frac{1}{\beta} - \frac{\bthe^{1/2}}{\beta^{3/2}} \right) \vert \omega \vert^{2} & 0_{1 \times 3} & -\frac{1}{\beta^{1/2}}\vert \omega \vert^{2}
\end{pmatrix} \\ &= \begin{pmatrix}
\delta\frac{\bthe^{1/2}}{\beta} & 0_{1 \times 3} &  \frac{\delta}{2 \beta}\left( 1 - \frac{\bthe^{1/2}}{\beta^{1/2}} \right) \\ 0_{3 \times 1} & \frac{\delta}{\beta^{1/2}}\left( 1 - \frac{\bthe^{1/2}}{\beta^{3/2}} \right) \omega\otimes\omega & 0_{3 \times 1} \\  \frac{\delta}{2\beta}\left( 1 - \frac{\bthe^{1/2}}{\beta^{1/2}} \right) & 0_{1 \times 3} & -\frac{\delta}{\beta^{1/2}} 
\end{pmatrix},
\end{aligned}
\]
which implies 
\begin{equation}\label{pos-Lev}
\begin{aligned}
\sum_{j,k=1}^{3}  \big( \big[ \bK_{S}^{j}(\xi) A_{S}^{k}(\xi) \big]^{s} &+ \bB_{S}^{jk}(\xi) \big)\omega_{j}\omega_{k} = \sum_{j,k=1}^{3}\big[ \bK_{S}^{j}(\xi)A_{S}^{k}(\xi) \big]^{s}\omega_{j}\omega_{k} + \bB_{S}(\xi) \\ &=\begin{pmatrix}
\delta \frac{\bthe^{1/2}}{\beta^2} & 0_{1 \times 3} & \frac{\delta}{2\beta}\left( 1 - \frac{\bthe^{1/2}}{\beta^{1/2}} \right)  \\ 0_{3 \times 1} & \frac{\bmu}{\brho} I_3 + \left( \frac{1}{3}\frac{\bmu}{\brho} + \frac{\delta}{\beta^{1/2}}\left( 1 - \frac{\bthe^{1/2}}{\beta^{3/2}} \right) \right) \omega\otimes\omega & 0_{3 \times 1} \\ \frac{\delta}{2\beta}\left( 1 - \frac{\bthe^{1/2}}{\beta^{1/2}} \right)  & 0_{1 \times 3} & \frac{2}{3}\frac{\balp}{\brho} - \frac{\delta}{\beta^{1/2}}
\end{pmatrix}
\end{aligned}
\end{equation}
Let us verify that the last matrix is positive definite. For the sake of simplicity, we define
\[
M(\xi):= \frac{\bmu}{\brho} I_3 + \left( \frac{1}{3}\frac{\bmu}{\brho} + \frac{\delta}{\beta^{1/2}}\left( 1 - \frac{\bthe^{1/2}}{\beta^{3/2}} \right) \right) \omega\otimes\omega,\quad \xi \in \R^{3}\setminus \{ 0 \}.
\]
As $(1-\bthe^{1/2}/\beta^{3/2})/\beta^{1/2} \to 0$, then there exist $0 < \delta_{1} \ll 1$ such that for $\delta \leq \delta_{1}$ there holds $\mbox{det}M(\xi) \geq c >0$ $\forall \xi \in \R^{3}$, $\xi \neq 0$, and for some uniform constant $c$ independent of $\xi$. With this choice for $\delta$ we have that the principal $4 \times 4$ minor of the matrix \eqref{pos-Lev} has positive determinant, namely 
\[
\mbox{det} \begin{pmatrix}
\delta \frac{\bthe^{1/2}}{\beta^2} & 0_{1 \times 3} \\ 0 & \frac{\bmu}{\brho} I_3 + \left( \frac{1}{3}\frac{\bmu}{\brho} + \frac{\delta}{\beta^{1/2}}\left( 1 - \frac{\bthe^{1/2}}{\beta^{3/2}} \right) \right) \omega\otimes\omega
\end{pmatrix} = \frac{\delta \bthe^{1/2}}{\beta^{2}}\text{det}M(\xi) >0.
\]
Thus the positive definiteness of \eqref{pos-Lev} will follow if we can show that its determinant is uniformly bounded below. Performing the computations we obtain
\[
\begin{aligned}
\det  &\begin{pmatrix}
\delta \frac{\bthe^{1/2}}{\beta^2} & 0_{1 \times 3} & \frac{\delta}{2\beta}\left( 1 - \frac{\bthe^{1/2}}{\beta^{1/2}} \right)  \\ 0_{3 \times 1} & \frac{\bmu}{\brho} I_3 + \left( \frac{1}{3}\frac{\bmu}{\brho} + \frac{\delta}{\beta^{1/2}}\left( 1 - \frac{\bthe^{1/2}}{\beta^{3/2}} \right) \right) \omega\otimes\omega & 0_{3 \times 1} \\ \frac{\delta}{2\beta}\left( 1 - \frac{\bthe^{1/2}}{\beta^{1/2}} \right)  & 0_{1 \times 3} & \frac{2}{3}\frac{\balp}{\brho} - \frac{\delta}{\beta^{1/2}}
\end{pmatrix} \\ &=  \frac{\delta \bthe^{1/2}}{\beta^{2}}\det M(\xi)\left( \frac{3}{2}\frac{\balp}{\brho} - \frac{\delta}{\beta^{1/2}} \right) + \frac{\delta}{2 \beta}\left( 1- \frac{\bthe^{1/2}}{\beta^{1/2}} \right) \left( -\frac{\delta}{2 \beta}\left( 1- \frac{\bthe^{1/2}}{\beta^{1/2}} \right)\det M(\xi) \right)  \\ &= \frac{\delta \bthe^{1/2}}{\beta^{2}}\det M(\xi)\left( \frac{3}{2}\frac{\balp}{\brho}- \frac{\delta}{\beta^{1/2}} \right)  -\frac{\delta^{2}}{4 \beta^{2}}\left( 1- \frac{\bthe^{1/2}}{\beta^{1/2}} \right)^{2}\det M(\xi) \\ &= \det M(\xi) \left( \frac{3}{2}\frac{\balp}{\brho}\frac{\delta \bthe^{1/2}}{\beta^{2}}-\frac{\delta^{2}\bthe^{1/2}}{\beta^{5/2}}- \frac{\delta^{2}}{4 \beta^{2}} \left( 1- \frac{\bthe^{1/2}}{\beta^{1/2}} \right)^{2}  \right) \\ &= \det M(\xi) \left( \frac{3}{2}\frac{\balp}{\brho}\frac{\delta \bthe^{1/2}}{\beta^{2}}-\frac{\delta^{2}\bthe^{1/2}}{2\beta^{5/2}}- \frac{\delta^{2}}{4 \beta^{2}} - \frac{\delta^{2} \bthe}{4 \beta^{2}}  \right)  \\ & \geq \det M(\xi) \left( \frac{ \delta \bthe^{1/2}}{\beta^{2}}- \frac{\delta^{2}}{\beta^{2}}\left( \frac{1}{4}+ \frac{\bthe^{1/2}}{2}+ \frac{\bthe}{4} \right) \right) \geq   \frac{C}{\beta^{2}},
\end{aligned}
\]
for $C$ a positive uniform constant independent of $\xi$, as long as we take $0 < \delta \leq \delta_{2}$, for some $0< \delta_{2} \ll 1$, sufficiently small.

From the above computations and the fact that $1/\beta^{2} \geq C/(1+\vert \xi \vert^{2})^{2}$, we arrive at
\[ 
\sum_{j,k=1}^{3}  \big( \big[ \bK_{S}^{j}(\xi) A_{S}^{k}(\xi) \big]^{s} + \bB_{S}^{jk}(\xi) \big)\omega_{j}\omega_{k} \geq \frac{C}{(1+\vert \xi \vert^{2})^{2}}I_5,
\]
for some uniform constant $C$, as long as we take $0 < \delta \leq \min \{ \delta_{1}, \delta_{2} \} $. Finally, we let $\bK_{S}(\xi):=\sum_{j=1}^{3}\bK_{S}^{j}(\xi)$. Thus, by the choice of the coefficients $a, \ldots, f$, we have that $\bK(\xi) = O(1/\beta^{3/2})=O(1/\vert \xi \vert^{3})$, to that the norms of $\vert \xi \vert \bK_{S}(\xi)$ and $(1+\vert \xi \vert^{2})\vert \xi \vert \bK_{S}(\xi)$ are uniformly bounded in $\xi \in \R^{3}\setminus \{ 0 \}$. The compensating matrix symbol just computed and its properties imply the next pointwise energy estimate of the solutions to system \eqref{Lev-Four-hV}. 

\begin{lemma}[basic energy estimate for the DNSF system] The solution $\hV= \hV(\xi, t)$ to the system \eqref{Lev-Four-hV} satisfies the estimates
\begin{equation}
\label{pnt-ee-Lev}
\vert \hV(\xi, t) \vert \leq C \exp \left( - k \frac{ \vert \xi \vert^{2}}{(1 + \vert \xi \vert^{2})^{2}}t \right) \vert \hV(\xi, 0 ) \vert, 
\end{equation}
for all $\xi \in \R^{3} \setminus \{ 0 \}$, $t\geq 0$, with positive uniform constant $C$ and $k$.
\end{lemma}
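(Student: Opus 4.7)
The plan is to apply Lemma \ref{pnt-ee-rel-fre-lem} directly. The system \eqref{Lev-Four-hV} has no zero-order term, and the authors have already done almost all of the preparatory work: a compensating matrix symbol $\bK_{S}(\xi)=\sum_{j=1}^{3}\omega_{j}\bK_{S}^{j}(\xi)$ has been produced by inspection, shown to satisfy
\[
[\bK_{S}(\xi)A_{S}(\xi)]^{s}+\bB_{S}(\xi) \geq \frac{C}{(1+|\xi|^{2})^{2}}\,I_{5},
\]
and the decay estimate $\bK_{S}(\xi)=O(|\xi|^{-3})$ as $|\xi|\to\infty$ has been established, together with the stronger bound that $(1+|\xi|^{2})|\xi|\,|\bK_{S}(\xi)|$ is uniformly bounded in $\xi\in\R^{3}\setminus\{0\}$.

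With this in hand, I would identify $g(\xi):=C/(1+|\xi|^{2})^{2}$ as the lower-bound function appearing in the hypothesis \eqref{pos-2} of Lemma \ref{pnt-ee-rel-fre-lem}, and then verify that the two bounds \eqref{hyp-enr-est-2} hold. The first, $|\xi\bK_{S}(\xi)|\leq C$, is immediate from the decay $\bK_{S}(\xi)=O(|\xi|^{-3})$ together with continuity of $\bK_{S}$ away from the origin. For the second, the key observation is that $\bB_{S}(\xi)$ given by \eqref{tiB-Lev-mod} depends only on $\omega(\xi)\in\bbS^{2}$, hence $|\bB_{S}(\xi)^{1/2}|$ is uniformly bounded in $\xi\in\R^{3}\setminus\{0\}$. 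Consequently
\[
\frac{|\xi\bK_{S}(\xi)\bB_{S}(\xi)^{1/2}|}{g(\xi)^{1/2}}\leq C\,(1+|\xi|^{2})\,|\xi|\,|\bK_{S}(\xi)|\cdot|\bB_{S}(\xi)^{1/2}|,
\]
which is uniformly bounded thanks precisely to the stronger decay property $(1+|\xi|^{2})|\xi|\,|\bK_{S}(\xi)|\leq C$ already noted.

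Inserting $g(\xi)=C/(1+|\xi|^{2})^{2}$ into the conclusion \eqref{dsp-str-ii} of Lemma \ref{pnt-ee-rel-fre-lem} yields
\[
|\hV(\xi,t)|\leq C\exp\!\left(-k\,\frac{|\xi|^{2}}{(1+|\xi|^{2})^{2}}\,t\right)|\hV(\xi,0)|,
\]
which is exactly the claim. The main difficulty, as one can already sense in the previous paragraph, is not conceptual but a matter of tight bookkeeping: the polynomial decay of $\bK_{S}(\xi)$ at high frequencies must just barely compensate for the polynomial decay of $g(\xi)^{1/2}$ at high frequencies, and the two match only because the authors chose the coefficients $a,\ldots,f$ in $\bK_{S}^{j}(\xi)$ with negative powers of $\beta(\xi)\sim|\xi|^{2}$ (namely $\beta^{-2}$ and $\beta^{-3/2}$), thereby gaining one extra power of $|\xi|^{-1}$ over what is strictly required by the compensating inequality. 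This matching is what ultimately forces the estimate to be of regularity-loss type in the sense of Remark \ref{Rmr-reg-typ} (with $p=1$, $q=2$): the weight $|\xi|^{2}/(1+|\xi|^{2})^{2}$ vanishes both at $\xi=0$ and as $|\xi|\to\infty$, reflecting the degeneration of the effective viscosity of the symmetrized system at high frequencies caused by the dispersive corrections of order $m=3$.
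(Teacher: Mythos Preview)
Your proposal is correct and follows essentially the same approach as the paper: the paper's proof is a one-line reference (``the same as that of Lemma \ref{lem-pntw-ee-EFK}, but with $g(\xi)=C/(1+|\xi|^{2})^{2}$''), and what you have written is precisely the unpacking of that reference, verifying the two hypotheses \eqref{hyp-enr-est-2} of Lemma \ref{pnt-ee-rel-fre-lem} from the bounds on $\bK_{S}$ and $\bB_{S}$ established just before the statement.
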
 
\begin{proof}
The proof is the same as that of Lemma \ref{lem-pntw-ee-EFK}, but with $g(\xi) = C/(1+ \vert \xi \vert^{2})^{2}$. 
\end{proof}

\begin{corollary}
\label{corregloss}
The dispersive Navier-Stokes-Fourier system \eqref{nl-Lev-mod} is strictly dissipative of regularity loss type.
\end{corollary}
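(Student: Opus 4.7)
The plan is to reduce the corollary directly to the pointwise energy estimate \eqref{pnt-ee-Lev} combined with the dissipativity classification in Remark \ref{Rmr-reg-typ}. First I would identify the exponent in the decay factor $\exp(-k|\xi|^{2}/(1+|\xi|^{2})^{2}\,t)$ with the template $|\xi|^{2p}/(1+|\xi|^{2})^{q}$ of \eqref{diss-(k,l)}, reading off $p=1$ and $q=2$, so that $p<q$. Hence matching with Remark \ref{Rmr-reg-typ} will place the DNSF system in the regularity-loss regime, which is exactly the statement to prove.

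The only point requiring a brief justification is the passage from the pointwise norm estimate on $\hV$ to the corresponding real-part bound on the eigenvalues $\lambda(\xi)$ of the spectral problem \eqref{spectralprob}. Since \eqref{Lev-Four-hV} is, for each fixed $\xi$, a linear ODE in $t$ with constant symbol matrix $M(\xi):=i|\xi|A_{S}(\xi)+|\xi|^{2}\bB_{S}(\xi)$, the decay of the matrix exponential $e^{-M(\xi)t}$ is controlled by and controls (up to multiplicative constants) the quantity $-\max_{\lambda\in\sigma(-M(\xi))}\Re\lambda$. In particular, taking initial data $\hV(\xi,0)$ to be a (generalized) eigenvector of $-M(\xi)$ with largest real part, the estimate \eqref{pnt-ee-Lev} forces every eigenvalue $\lambda=\lambda(\xi)$ of \eqref{spectralprob} to satisfy $\Re\lambda(\xi)\leq -k\,|\xi|^{2}/(1+|\xi|^{2})^{2}$ for all $\xi\in\R^{3}\setminus\{0\}$, possibly after absorbing the prefactor $C$ into $k$.

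I do not anticipate any significant obstacle since the substantive work has already been carried out: the symbol symmetrizer \eqref{Fr-symm}, the verification of the genuine coupling condition for the triplet $(I,A_{S}(\xi),\bB_{S}(\xi))$ in Proposition \ref{propDNSF}, the explicit construction by inspection of the compensating matrix symbol $\bK_{S}(\xi)=\sum_{j=1}^{3}\omega_{j}\bK_{S}^{j}(\xi)$, and the application of Lemma \ref{pnt-ee-rel-fre-lem} with $g(\xi)=C/(1+|\xi|^{2})^{2}$, together yield \eqref{pnt-ee-Lev}. The corollary then follows by the simple bookkeeping of the exponents described above.
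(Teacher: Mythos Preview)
Your proposal is correct and takes essentially the same approach as the paper: the paper's proof is a single line invoking the decay rate \eqref{pnt-ee-Lev} and Remark~\ref{Rmr-reg-typ} with $p=1<q=2$. Your added paragraph translating the norm estimate on $\hV$ into the eigenvalue bound $\Re\lambda(\xi)\leq -k|\xi|^{2}/(1+|\xi|^{2})^{2}$ via the matrix exponential is a reasonable clarification that the paper leaves implicit, but the core argument is identical.
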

\begin{proof}
Follows from the decay rate in \eqref{pnt-ee-Lev} and Remark \ref{Rmr-reg-typ} with $p =1 < q = 2$.
\end{proof}

By looking at the form of the (Friedrichs) symmetrizer $S$ given in \eqref{Fr-symm} (being a diagonal matrix), we can easily show that estimate \eqref{pnt-ee-Lev} also holds for the original variable $\hU$. This estimate implies the linear decay of solutions as it is stated next. 

\begin{lemma}
Let $U=U(x,t)$ be a solution of the linear system \eqref{Lev-mod} with initial data $U_{0}\in H^{s}(\R^{3}) \cap L^{1}(\R^{3})$, for $s\geq 0$. Then the solution satisfies the estimate
\[
\Vert \partial_{x}^{\ell}U(t) \Vert_{0} \leq C (1+ t)^{-k/2} \Vert \partial_{x}^{\ell+k}U_{0} \Vert_{0} + C(1+ t )^{-(3/4 + \ell/2)}\Vert U_{0} \Vert_{L^{1}},
\]
for all $0 \leq \ell, k \leq s$, $\ell + k \leq s$, and $t\geq 0$, and some positive constant $C$. 
\end{lemma}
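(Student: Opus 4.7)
The plan is to start from the pointwise Fourier-space estimate \eqref{pnt-ee-Lev} (which, since the Friedrichs symmetrizer \eqref{Fr-symm} is a positive diagonal \emph{constant} matrix, carries over verbatim from $\hV$ to $\hU$), apply Plancherel's theorem, and then split the frequency domain into low and high frequencies, $|\xi| \leq 1$ and $|\xi| \geq 1$. By Plancherel we have
\[
\Vert \partial_{x}^{\ell}U(t) \Vert_{0}^{2} = \int_{\R^{3}} |\xi|^{2\ell} |\hU(\xi,t)|^{2}\, d\xi \leq C \int_{\R^{3}} |\xi|^{2\ell} \exp\!\left( -\frac{2k|\xi|^{2}}{(1+|\xi|^{2})^{2}} t \right) |\hU_{0}(\xi)|^{2}\, d\xi =: J_{1}(t) + J_{2}(t),
\]
where $J_{1}(t)$ and $J_{2}(t)$ denote the integrals over $\{|\xi|\leq 1\}$ and $\{|\xi|\geq 1\}$, respectively.

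For $J_{1}(t)$ the exponent behaves like the heat-kernel one: for $|\xi|\leq 1$ we have $|\xi|^{2}/(1+|\xi|^{2})^{2} \geq \tfrac{1}{4}|\xi|^{2}$, so $\exp(-2k|\xi|^{2}/(1+|\xi|^{2})^{2} t) \leq \exp(-c_{0}|\xi|^{2}t)$. Combined with the standard bound $|\hU_{0}(\xi)| \leq \Vert U_{0}\Vert_{L^{1}}$ and the elementary inequality (valid in dimension $d=3$)
\[
\int_{|\xi|\leq 1} |\xi|^{2\ell} e^{-c_{0}|\xi|^{2}t}\, d\xi \leq C (1+t)^{-(3/2+\ell)},
\]
this yields $J_{1}(t) \leq C(1+t)^{-(3/2+\ell)} \Vert U_{0}\Vert_{L^{1}}^{2}$, from which the second term of the required estimate follows after taking square roots.

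The main obstacle lies in controlling $J_{2}(t)$, where the symbol degenerates like $\exp(-ct/|\xi|^{2})$ and causes the \emph{regularity loss}. The key trick is to trade temporal decay for spatial derivatives via the elementary inequality
\[
e^{-x} \leq C_{k}\, x^{-k/2}, \qquad x > 0,\quad k \geq 0,
\]
applied with $x = c|\xi|^{2}t/(1+|\xi|^{2})^{2}$. For $|\xi|\geq 1$, this gives
\[
\exp\!\left( -\frac{2k|\xi|^{2}}{(1+|\xi|^{2})^{2}}\, t\right) \leq C_{k}\, t^{-k/2}\, \frac{(1+|\xi|^{2})^{k}}{|\xi|^{k}} \leq C_{k}\, t^{-k/2}\, |\xi|^{k},
\]
so that, interpolating with the trivial bound $e^{-(\cdot)} \leq 1$ to replace $t^{-k/2}$ by $(1+t)^{-k/2}$ uniformly, we obtain
\[
J_{2}(t) \leq C(1+t)^{-k}\! \int_{|\xi|\geq 1} |\xi|^{2(\ell+k)} |\hU_{0}(\xi)|^{2}\, d\xi \leq C(1+t)^{-k}\, \Vert \partial_{x}^{\ell+k} U_{0}\Vert_{0}^{2},
\]
where the last step uses Plancherel again. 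Taking square roots and adding the $J_{1}$ and $J_{2}$ contributions yields the claimed estimate. The constraint $0\leq \ell, k$ with $\ell+k \leq s$ ensures that $\Vert \partial_{x}^{\ell+k}U_{0}\Vert_{0}$ is finite, as $U_{0} \in H^{s}(\R^{3})$.
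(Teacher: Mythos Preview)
Your approach is essentially identical to the paper's: transfer the pointwise bound from $\hV$ to $\hU$ via the constant diagonal symmetrizer, split at $|\xi|=1$, treat the low-frequency part as a heat kernel, and trade time decay for extra derivatives on the high-frequency part. The paper writes the high-frequency step as $\sup_{t,|\xi|} e^{-c_{0}t/|\xi|^{2}}/|\xi|^{2k} \leq C(1+t)^{-k}$, which is the same mechanism you use.

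There is, however, a small but genuine exponent slip in your $J_{2}$ estimate. Applying $e^{-x}\leq C_{k}\,x^{-k/2}$ with $x=c|\xi|^{2}t/(1+|\xi|^{2})^{2}$ gives, as you wrote, $\exp(\cdots)\leq C\,t^{-k/2}|\xi|^{k}$ for $|\xi|\geq 1$, and hence only
\[
J_{2}(t)\;\leq\; C\,t^{-k/2}\int_{|\xi|\geq 1}|\xi|^{2\ell+k}|\hU_{0}|^{2}\,d\xi,
\]
which does \emph{not} match your stated bound $C(1+t)^{-k}\int|\xi|^{2(\ell+k)}|\hU_{0}|^{2}\,d\xi$ (and after square roots would yield decay $(1+t)^{-k/4}$, not $(1+t)^{-k/2}$). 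The fix is to use the equally elementary inequality $e^{-x}\leq C_{k}\,x^{-k}$: then $\exp(\cdots)\leq C\,t^{-k}(1+|\xi|^{2})^{2k}/|\xi|^{2k}\leq C\,t^{-k}|\xi|^{2k}$ for $|\xi|\geq 1$, and your claimed bound on $J_{2}$ follows exactly. With this correction the proof is complete and coincides with the paper's.
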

\begin{proof}
The proof is very similar to that of Lemma \ref{lin-dec-U}, and the only difference comes when we estimate the high-frequency part $J_{2}$ in \eqref{est-J1J2}. We start by fixing $\ell$ satisfying $0 \leq \ell \leq s$, next multiply estimate \eqref{pnt-ee-Lev} for $\hU$ by $\vert \xi \vert^{2\ell}$ and integrate in $\xi \in \R^{3}$ to get
\begin{equation}\label{est-I1I2}
\begin{aligned}
\int_{\R^{3}} \vert \xi \vert^{2\ell} \vert \hU(\xi,t)\vert^{2}\, d\xi &\leq C \int_{\vert \xi \vert \leq 1}\exp{\left( -2k \frac{\vert \xi \vert^{2}}{(1+\vert \xi \vert^{2})^{2}}t \right)}\vert \hU(\xi, 0) \vert^{2}\, d\xi +  \\ &+ C \int_{\vert \xi \vert \geq 1}\exp{\left( -2k \frac{\vert \xi \vert^{2}}{(1+\vert \xi \vert^{2})^{2}}t \right)}\vert \hU(\xi, 0) \vert^{2}\, d\xi \\&=: I_{1}(t) + I_{2}(t).
\end{aligned}
\end{equation}

To estimate $I_{1}(t)$, let us observe that for $\vert \xi \vert \leq 1$ we have $\vert \xi \vert^{2}/(1+ \vert \xi \vert^{2})^{2} \geq c \vert \xi \vert^{2} $ for some positive uniform constant $c$. Thus, we obtain
\[
I_{1}(t) \leq C \int_{\vert \xi \vert \leq 1}\exp{ (-2c_{0} \vert \xi \vert^{2}t )}\vert \hU(\xi, 0) \vert^{2}\, d\xi,
\]
so that we can proceed in the same fashion as we did to estimate $J_{1}(t)$ in the proof of Lemma \ref{lin-dec-U}, which leads to
\begin{equation}\label{est-I1}
I_{1}(t) \leq C(1+ t )^{-(3/2+ \ell)}\Vert U_{0} \Vert_{L^{1}}^{2}.
\end{equation}

For the estimation of $I_{2}(t)$, use that for $\vert \xi \vert \geq 1$ there holds $\vert \xi \vert^{2}/(1+\vert \xi \vert^{2})^{2} \geq c/\vert \xi \vert^{2}$ for some positive (uniform) constant $c$. Thus, we get
\begin{equation}\label{est-I2}
\begin{aligned}
I_{2}(t) \leq C \sup_{t\geq 0,\, \vert \xi \vert \geq 0} \frac{e^{-c_{0}t/\vert \xi \vert^{2}}}{\vert \xi \vert^{2k}} \int_{\vert \xi \vert\geq 1} \vert \xi \vert^{2(\ell + k)}\vert \hU(\xi, 0) \vert^{2}\, d\xi \leq C(1+t)^{-k}\Vert \partial_{x}^{\ell + k} U_{0} \Vert_{0}^{2}. 
\end{aligned}
\end{equation}
Thus, the proof is completed by combining estimates \eqref{est-I1I2} through \eqref{est-I2}. 
\end{proof}

\subsection*{Asymptotic behaviour of the eigenvalues for the one-dimensional DNSF system}

According to Corollary \ref{corregloss} the linearized system \eqref{Lev-mod} is strictly dissipative of type $(1,2)$ and it exhibits a regularity-loss type structure. Next, following \cite{KSX22}, we are going to study the asymptotic expansion of the eigenvalues $\lambda$ of
\begin{equation}\label{eig-prob-Lev-i}
\big( \lambda + i \vert \xi \vert A(\xi) + B(\xi) \big) \phi=0,
\end{equation}
as $\vert  \xi \vert \to \infty$, and verify that this characterization of its dissipative structure is actually optimal. For that purpose we apply perturbation theory of one-parameter family of matrices \cite{Kat82}. For simplicity, we restrict ourselves to the one dimensional version of \eqref{eig-prob-Lev-i}, namely
\[
\big( \lambda + i \xi A^{1} - (i \xi )^{2}B^{1} - (i\xi)^{3}C^{1} \big)\phi=0,
\]
where $A^{1}$, $B^{1}$, and $C^{1}$ are the one-dimensional versions of the matrix coefficients appearing in \eqref{Lev-mod}. This eigenvalue problem is equivalent to the one we obtain after we multiply on the left by the (Friedrichs) symmetrizer $S(\bU)$ given in \eqref{Fr-symm},
\begin{equation}
\label{1d-eig-prob-LevMod}
\left( \lambda(i\xi)A^{0} + (i\xi) A -(i \xi )^{2} B- (i\xi)^{3}C \right)\phi=0,
\end{equation}
where
\[
A^{0}= S(\bU) =\begin{pmatrix}
\frac{2}{3}\frac{\bthe^2}{\brho^{2}} & 0 & 0 \\ 0 & \frac{2}{3}\bthe & 0 \\ 0 & 0 & 1
\end{pmatrix}, \quad A= A^{0}A^{1}= \begin{pmatrix}
0 & \frac{2}{3}\frac{\bthe^{2}}{\brho} & 0 \\ \frac{2}{3}\frac{\bthe^{2}}{\brho} & 0 & \frac{2}{3}\bthe \\ 0 & \frac{2}{3}\bthe & 0 
\end{pmatrix},
\]
\[
B= A^{0}B^{1}= \begin{pmatrix}
0 & 0 & 0 \\ 0 & \frac{8}{9}\frac{\bmu}{\brho}\bthe & 0 \\ 0 & 0 & \frac{2}{3}\frac{\balp}{\brho}
\end{pmatrix}, \quad C = A^{0}C^{1}= \begin{pmatrix}
0 & 0 & 0 \\ 0 & 0 & \frac{8}{9 \brho}\btau \\ 0 & \frac{8}{9 \brho}\btau & 0  
\end{pmatrix}.
\]
Here we have assumed, for simplicity, that $\bu=\bu_{1}=0$. 

By perturbation theory of one-parameter family of matrices \cite{Kat82} (see also \cite{LiuZ97}, Section 6),  we can write for the eigenvalue $\lambda=\lambda(i \xi)$ and the corresponding eigenvectors $\phi=\phi(i \xi)$ of \eqref{1d-eig-prob-LevMod} the asymptotic expansion as $\vert \xi \vert \to \infty$,
\begin{equation}\label{asym-exp-eigs}
\lambda(i \xi) = \sum_{n=1}^{3}(i\xi)^{n}\lambda^{(n)} + \sum_{n=0}^{\infty}(i \xi)^{-n}\lambda^{(-n)}, \quad \phi(i \xi) = \sum_{n=0}^{\infty}(i \xi )^{-n}\phi^{(-n)}.
\end{equation}
We substitute the expressions in \eqref{asym-exp-eigs} into \eqref{1d-eig-prob-LevMod}, and collect the terms having the same power in $(i\xi )^{n}$. By considering $n=3, 2, 1, 0, -1, -2$ we obtain the relations
\begin{equation}\label{eig-rel-i}
\big( \lambda^{(3)}A^{0}-C  \big)\phi^{(0)} =0, 
\end{equation} 
\begin{equation}\label{eig-rel-ii}
\big( \lambda^{(3)}A^{0}-C \big) \phi^{(-1)} + \big( \lambda^{(2)}A^{0}-B \big)\phi^{(0)}=0,
\end{equation}
\begin{equation}\label{eig-rel-iii}
\big( \lambda^{(3)}A^{0}-C \big) \phi^{(-2)} + \big( \lambda^{(2)}A^{0}-B \big)\phi^{(-1)}+ \big( \lambda^{(1)}A^{0}+A \big)\phi^{(0)}=0,
\end{equation}
\begin{equation}
\label{eig-rel-iv}
\big( \lambda^{(3)}A^{0}-C \big) \phi^{(-3)} + \big( \lambda^{(2)}A^{0}-B \big)\phi^{(-2)}+ \big( \lambda^{(1)}A^{0}+A \big) \phi^{(-1)} + \lambda^{(0)}A^{0}\phi^{(0)}=0,
\end{equation}
\begin{equation}
\label{eig-rel-v}
\big( \lambda^{(3)}A^{0}-C \big) \phi^{(-4)} + \big( \lambda^{(2)}A^{0}-B \big)  \phi^{(-3)}+  \big( \lambda^{(1)}A^{0}+A \big)\phi^{(-2)} + \lambda^{(0)}A^{0}\phi^{(-1)}+\lambda^{(-1)}A^{0}\phi^{(0)}=0,
\end{equation}
\begin{equation}
\label{eig-rel-vi}
\begin{aligned}
\big( \lambda^{(3)}A^{0}-C \big) \phi^{(-5)} + \big( \lambda^{(2)}A^{0}-B \big)\phi^{(-4)} &+ \big( \lambda^{(1)}A^{0}+A \big)\phi^{(-3)} + \lambda^{(0)}A^{0}\phi^{(-2)}+\\ &+\lambda^{(-1)}A^{0}\phi^{(-1)}+\lambda^{(-2)}A^{0}\phi^{(0)}=0
\end{aligned}
\end{equation}
Let us start by computing $\lambda^{(3)}$ and $\phi^{(0)}$ from \eqref{eig-rel-i}. Observe that \eqref{eig-rel-i} means that $\lambda^{(3)}$ is an eigenvalue of $(A^{0})^{-1}C$ with eigenvector $\phi^{(0)}$. Thus we compute $\lambda^{(3)}$ by computing the roots of 
\[
\begin{aligned}
\text{det}\big( \lambda^{(3)}A^{0}- C \big) = \text{det} \begin{pmatrix}
\frac{2}{3}\frac{\bthe^{2}}{\brho^{2}}\lambda^{(3)} & 0 & 0 \\ 0 & \frac{2}{3}\bthe \lambda^{(3)} & -\frac{8}{9\brho}\btau_{4} \\ 0 & -\frac{8}{9\brho}\btau_{4} & \lambda^{(3)} 
\end{pmatrix} =\frac{2}{3}\frac{\bthe^{2}}{\brho^{2}}\lambda^{(3)} \left( \frac{2}{3}\bthe \big( \lambda^{(3)} \big)^{2} -  \left( \frac{8}{9 \brho}\btau_{4} \right)^{2} \right).
\end{aligned}
\]
Thus
\[
\lambda^{(3)}_{1}=0,\quad \lambda^{(3)}_{2,3}= \pm \frac{8}{9 \brho}\btau_{4}\sqrt{\frac{3}{2\bthe}},
\]
with respective eigenvectors
\[
\phi^{(0)}_{1}=(1,0,0)^{\top},\quad \phi^{(0)}_{2,3}= \left( 0, \pm \sqrt{\frac{3}{2\bthe}},1 \right)^{\top}.
\]
Let us denote $\phi^{(-1)}_{i} := (\rho_{1}, u_{1}, \theta_{1})^\top$, $i=1,2,3$. Then, using $\lambda^{(3)}_{1}$ and $\phi^{(0)}_{1}$ in \eqref{eig-rel-ii}, we are led to
\[
-C\phi^{(-1)}_{1}+ \big( \lambda^{(2)}_{1}A^{0}-B \big) \phi^{(0)}_{1}=0
\]
or 
\[
\big( \lambda^{(2)}_{1}-B \big)(1,0,0)^{\top} = C(\rho_{1}, u_{1}, \theta_{1})^{\top}.
\]
Component by component, this equation reads
\[
\frac{2}{3}\frac{\bthe^{2}}{\brho^{2}}\lambda^{(2)}_{1}=0,\quad 0=\frac{8}{9\brho}\btau_{4}\theta_{1}, \quad 0 = \frac{8}{9\brho}\btau u_{1},
\]
from where we obtain
\begin{equation}\label{lam-2-1}
\lambda^{(2)}_{1}=0, \quad \phi^{(-1)}_{1}=(\tilde{\rho},0, 0)^{\top},\,\,  \tilde{\rho} \,\, \text{being arbitrary}.
\end{equation}
Let us compute $\lambda^{(2)}_{2}$ and $\phi^{(-1)}_{2}$. We let $\phi^{(-1)}_{2}=(\rho_{1},u_{1},\theta_{1})$, then \eqref{eig-rel-ii} now reads
\[
\big( \lambda^{(3)}_{2} A^{0}- C \big)(\rho_{1}, u_{1}, \theta_{1})^{\top} + \big( \lambda^{(2)}_{2} A^{0}-B \big) \left( 0, \sqrt{\frac{3}{2\bthe}},1 \right)^{\top}=0,
\]
or in coordinates
\[
\begin{aligned}
\frac{2}{3}\frac{\bthe^{2}}{\brho^{2}}\lambda^{(3)}_{2} \rho_{1} &=0, \\
\frac{2}{3}\bthe \lambda^{(3)}_{2} u_{1}- \frac{8}{9\brho}\btau_{4} \theta_{1} + \left( \lambda^{(2)}_{2} - \frac{4}{3}\frac{\bmu}{\brho} \right) \frac{2}{3}\bthe \sqrt{\frac{3}{2\bthe}} &= 0, \\
 -\frac{8}{9\brho}\btau_{4}u_{1} + \lambda^{(3)}_{2} \theta_{1} + \left( \lambda^{(2)}_{2} - \frac{2}{3}\frac{\balp}{\brho} \right) &=0.
\end{aligned}
\]
Given that $\lambda^{(3)}_{2}=\frac{8}{9 \brho}\btau_{4}\sqrt{\frac{3}{2\bthe}}$, if we multiply the second equation by $\sqrt{\frac{3}{2\bthe}}$ and add it to the third one, we obtain
\[
2 \lambda^{(2)}_{2} - \frac{4}{3}\frac{\bmu}{\brho}- \frac{2}{3}\frac{\balp}{\brho}=0,
\]
so that
\[
\lambda^{(2)}_{2} = \frac{2}{3}\frac{\bmu}{\brho} + \frac{1}{3}\frac{\balp}{\brho}>0.
\]
The same procedure works for $\lambda^{(2)}_{3}$ to get
\begin{equation}\label{lam-2-2,3}
\lambda^{(2)}_{3}= \lambda^{(2)}_{2}= \frac{2}{3}\frac{\bmu}{\brho} + \frac{1}{3}\frac{\balp}{\brho}. 
\end{equation}
So far, combining \eqref{lam-2-1} and \eqref{lam-2-2,3}, we have
\[
\text{Re}\, \lambda^{(2)}_{1} =0, \quad \text{Re}\, \lambda^{(2)}_{2,3} >0.
\]
Thus we need to compute the next coefficient in the expansion \eqref{asym-exp-eigs} for $\lambda_{1}$, namely $\lambda^{(1)}_{1}$, $\lambda^{(0)}_{1}$, $\lambda^{(-1)}_{1}$, and so on. We know that $\lambda^{(3)}_{1} = \lambda^{(2)}_{1}=0$,  $\phi^{(0)}_{1}=(1,0,0)^{\top}$, and $\phi^{(-1)}_{1}=(\tilde{\rho},0,0)$. Let us define $\phi^{(-2)}_{1}=(\rho_{2}, u_{2}, \theta_{2})$, then plugging $\lambda^{(3)}_{1}$, $\lambda^{(2)}_{1}$, $\phi{(0)}_{1}$, and $\phi^{(-1)}_{1}$ into \eqref{eig-rel-iii} we obtain
\[
- B\phi^{(-1)}_{1} +  \big( \lambda^{(1)}_{1}A^{0} + A \big)\phi^{(0)}_{1}  = C \phi^{(-2)}_{1},
\]
or 
\[
-B (\tilde{\rho}, 0, 0 )^{\top} + \big( \lambda^{(1)}_{1}A^{0} + A \big)(1,0,0)^{\top} = C (\rho_{2}, u_{2}, \theta_{2} )^{\top}.
\]
This equation in coordinates reads as 
\[
\frac{2}{3}\frac{\bthe^{2}}{\brho^{2}}\lambda^{(1)}_{1}=0, \quad \frac{2}{3}\frac{\bthe^{2}}{\brho}= \frac{8}{9\brho}\btau_{4}\theta_{2},\quad 0 = \frac{8}{9\brho}\btau_{4}u_{2},
\]
so that we obtain
\begin{equation}\label{lam-1-1}
\lambda^{(1)}_{1}=0, \quad \phi^{(-2)}_{1}=(\rho_{2}, 0, \theta_{2}), \quad \theta_{2}= \frac{3}{4}\frac{\bthe^{2}}{\btau_{4}}>0,
\end{equation}
where $\rho_{2}$ can be taken arbitrarily. 

Next we let $\phi^{(-3)}_{1}=(\rho_{3}, u_{3}, \theta_{3})$. We make use of $\lambda^{(3)}_{1}$, $\lambda^{(2)}_{1}$, $\phi^{(0)}_{1}$, $\phi^{(-1)}_{1}$, and the expressions for $\lambda^{(1)}_{1}$ and $\phi^{(-2)}_{1}$ in \eqref{eig-rel-iv} to obtain
\[
-B \phi^{(-2)}_{1} + A\phi^{(-1)}_{1} + \lambda^{(0)}_{1}A^{0}\phi^{(0)}_{1} = C\phi^{(-3)}_{1},
\]
or equivalently
\[
-B (\rho_{2}, 0, \theta_{2})^{\top} + A (\tilde{\rho}, 0, 0)^{\top} + \lambda^{(0)}_{1}A^{0}(1, 0, 0)^{\top} = C(\rho_{3}, u_{3}, \theta_{3}).
\]
Expressed in coordinates last formula yields
\[
\frac{2}{3}\frac{\bthe^{2}}{\brho^{2}}\lambda^{(0)}_{1} = 0, \quad \frac{2}{3}\frac{\bthe^{2}}{\brho}\tilde{\rho} = \frac{8}{9\brho}\btau_{4}\theta_{3}, \quad -\frac{2}{3}\frac{\balp}{\brho}\theta_{2} = \frac{8}{9\brho}\btau_{4}u_{3}.
\]
Thus we obtain
\begin{equation}\label{lam-0-1}
\lambda^{(0)}_{1}=0, \quad \phi^{(-3)}_{1}=(\rho_{3}, u_{3}, \theta_{3})^{\top},\quad u_{3} = -\frac{3}{4}\frac{\balp}{\btau_{4}}\theta_{2}<0, \quad  \theta_{3}= \frac{3}{4\btau_{4}}\bthe^{2}\tilde{\rho}, 
\end{equation}
with $\rho_{3}$ being arbitrary. Now, we use $\lambda^{(0)}_{1}$ and $\phi^{(-3)}_{1}$ just obtained in \eqref{eig-rel-v} to obtain 
\[
-B\phi^{(-3)}_{1} + A\phi^{(-2)}_{1}+ \lambda^{(-1)}_{1}A^{0}\phi^{(0)}_{1} = C\phi^{(-4)}_{1},
\]
or 
\[
-B(\rho_{3}, u_{3}, \theta_{3})^{\top} + A(\rho_{2}, 0, \theta_{2})^{\top} + \lambda^{(-1)}_{1}A^{0}(1,0,0)^{\top}= C( \rho_{4}, u_{4}, \theta_{4})^{\top}, 
\]
where we have set $\phi^{(-4)}_1= (\rho_{4}, u_{4}, \theta_{4})^{\top}$. By coordinates this equation is expressed as follows,
\[
\begin{aligned}
\frac{2}{3}\frac{\bthe^{2}}{\brho^{2}}\lambda^{(-1)}_{1}&=0, \\ 
-\frac{8}{9}\frac{\bmu}{\brho}\bthe u_{3} + \frac{2}{3}\frac{\bthe^2}{\brho}\rho_{2} + \frac{2}{3}\bthe \theta_{2} &= \frac{8}{9\brho}\btau_{4}\theta_{4}, \\
-\frac{2}{3}\frac{\balp}{\brho}\theta_{3} &= \frac{8}{9\brho}\btau_{4}u_{4}.
\end{aligned}
\]
Thus we obtain
\begin{equation}\label{lam+1-1}
\lambda^{(-1)}_{1} = 0,
\end{equation}
and $u_{4}$ is given in terms of $\theta_{3}$, while $\theta_{4}$ is given as a function of $u_{3}$, $\theta_{2}$, and $\rho_{2}$. Again, $\rho_{4}$ can be choosen arbitrarily. 

Now, we proceed to compute $\lambda^{(-2)}_{1}$. Using that $\lambda^{(-1)}_{1}=0$ and setting $\phi^{(-5)}_{1}=(\rho_{5}, u_{5}, \theta_{5})^{\top}$ in \eqref{eig-rel-vi} we obtain
\[
- B \phi^{(-4)}_{1} + A \phi^{(-3)}_{1} + \lambda^{(-2)}_{1}A^{0}\phi^{(0)}_{1}= C\phi^{(-5)}_{1}, 
\]
or
\[
-B(\rho_{4}, u_{4}, \theta_{4})^{\top} + A (\rho_{3}, u_{3}, \theta_{3})^{\top}+ \lambda^{(-2)}_{1}A^{0}(1, 0, 0)^{\top} = C (\rho_{5}, u_{5}, \theta_{5})^{\top}.
\]
The first of this system of equations reads as
\[
\frac{2}{3}\frac{\bthe^{2}}{\brho}u_{3}+ \lambda^{(-2)}_{1} \frac{2}{3}\frac{\bthe^{2}}{\brho^{2}}=0,
\]
so that
\begin{equation}\label{lam+2-1}
\lambda^{(-2)}_{1}= -\brho u_{3} = \frac{3}{4}\frac{\brho\, \balp}{\btau_{4}}\theta_{2} =\frac{9}{8}\frac{\brho\, \balp}{\btau_{4}}\bthe^{2} >0,
\end{equation}
where we have used \eqref{lam-1-1} and \eqref{lam-0-1} for $u_{3}$ and $\theta_{2}$, respectively.

We summarize the previous computations in the following lemma. 

\begin{lemma}
\label{lemasymp}
Let us consider the eigenvalue problem for the one-dimensional version of system \eqref{Lev-mod}, that is the one given by \eqref{1d-eig-prob-LevMod}. Then its solutions $\lambda_{1}=\lambda_{1}(i \xi)$, $\lambda_{2}=\lambda_{2}(i \xi)$ and $\lambda_{3}=\lambda_{3}(i \xi)$ satisfy the asymptotic expansion \eqref{asym-exp-eigs} as $\vert \xi \vert \to \infty$. In the case of $\lambda_{2}$ and $\lambda_{3}$ they are such that
\[
\Im \lambda^{(3)}_{2}=\Im \lambda^{(3)}_{3}=0,\quad \mbox{and}\quad \Re \lambda^{(2)}_{2}=\Re \lambda^{(2)}_{3}= \frac{2}{3}\frac{\bmu}{\brho} + \frac{1}{3}\frac{\balp}{\brho} >0.
\]
For $\lambda_{1}$, we have
\[
\lambda^{(3)}_{1}=\lambda^{(2)}_{1}= \cdots = \lambda^{(-1)}_{1}=0,\quad \mbox{and} \quad \Re \lambda^{(-2)}_{1}= \frac{9}{8}\frac{\brho \, \balp}{\btau_{4}}\bthe^2 >0.
\]
\end{lemma}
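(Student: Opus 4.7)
The plan is to apply the standard perturbation theory of one-parameter families of matrices (cf. Kato \cite{Kat82}) to the eigenvalue problem \eqref{1d-eig-prob-LevMod}, using $(i\xi)^{-1}$ as the small parameter in the high-frequency regime $|\xi|\to\infty$. Because the leading term in \eqref{1d-eig-prob-LevMod} is the third-order operator $(i\xi)^3 C$, I first postulate the asymptotic ansatz \eqref{asym-exp-eigs} for the three branches $\lambda_j(i\xi)$, $\phi_j(i\xi)$, $j=1,2,3$, substitute it into \eqref{1d-eig-prob-LevMod}, and collect coefficients of like powers of $(i\xi)$. This yields the hierarchy \eqref{eig-rel-i}--\eqref{eig-rel-vi} of recursive linear systems which I propose to solve in cascade.

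The first step, \eqref{eig-rel-i}, identifies $\lambda^{(3)}$ as an eigenvalue of the pencil $(C,A^0)$. A direct determinant computation using the block structure of $A^0$ and $C$ (only the lower $2{\times}2$ block of $C$ is nonzero, coupling $u$ and $\theta$) produces one zero root $\lambda^{(3)}_1=0$ with eigenvector $\phi^{(0)}_1=(1,0,0)^\top$, and two symmetric roots $\lambda^{(3)}_{2,3}=\pm\tfrac{8}{9\bar\rho}\bar\tau_4\sqrt{3/(2\bar\theta)}$ with eigenvectors $\phi^{(0)}_{2,3}=(0,\pm\sqrt{3/(2\bar\theta)},1)^\top$. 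All are real, giving $\Im\lambda^{(3)}_{2,3}=0$. For the $j=2,3$ branches I then plug the data into \eqref{eig-rel-ii}: this yields three scalar equations in the components $(\rho_1,u_1,\theta_1)^\top$ of $\phi^{(-1)}_{2,3}$. The $\rho$-equation is trivial, and the $u$- and $\theta$-equations, after using $\lambda^{(3)}_{2,3}=\pm\tfrac{8}{9\bar\rho}\bar\tau_4\sqrt{3/(2\bar\theta)}$, form an underdetermined $2{\times}2$ system whose solvability condition (obtained by projecting onto $\phi^{(0)}_{2,3}$, i.e.\ multiplying by $\sqrt{3/(2\bar\theta)}$ and adding) gives the scalar equation $2\lambda^{(2)}_{2,3}=\tfrac{4}{3}\bar\mu/\bar\rho+\tfrac{2}{3}\bar\alpha/\bar\rho$, hence $\Re\lambda^{(2)}_{2,3}=\tfrac{2}{3}\bar\mu/\bar\rho+\tfrac{1}{3}\bar\alpha/\bar\rho>0$, which is the desired assertion for branches $2,3$.

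The delicate part is branch $j=1$, where $\phi^{(0)}_1=(1,0,0)^\top$ lies in $\ker C\cap\ker B$ (the density component is undamped by both dissipative and dispersive terms), so the expansion degenerates: all terms through $(i\xi)^{-1}$ must vanish before a nontrivial real part appears at order $(i\xi)^{-2}$. I plan to march through \eqref{eig-rel-ii}--\eqref{eig-rel-vi} in sequence. At each step the relation reduces to $C\phi^{(-k-1)}_1=\text{(known data)}$; writing $\phi^{(-k-1)}_1=(\rho_{k+1},u_{k+1},\theta_{k+1})^\top$, the first scalar equation always reads $\tfrac{2}{3}\bar\theta^2/\bar\rho^2\,\lambda^{(n)}_1=\text{(forcing from previous orders)}$, which determines $\lambda^{(n)}_1$; the remaining two equations determine $u_{k+1},\theta_{k+1}$ (while $\rho_{k+1}$ remains free, to be pinned at the next order). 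Executing this algorithm through $\eqref{eig-rel-vi}$ I expect to find successively $\lambda^{(2)}_1=\lambda^{(1)}_1=\lambda^{(0)}_1=\lambda^{(-1)}_1=0$, with explicit formulas $\theta_2=\tfrac{3\bar\theta^2}{4\bar\tau_4}$, $u_3=-\tfrac{3\bar\alpha}{4\bar\tau_4}\theta_2$, and finally $\lambda^{(-2)}_1=-\bar\rho\,u_3=\tfrac{9}{8}\tfrac{\bar\rho\bar\alpha}{\bar\tau_4}\bar\theta^2>0$.

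The main obstacle is the bookkeeping for the $\lambda_1$ branch: one has to carry the free parameters $\tilde\rho,\rho_2,\rho_3,\ldots$ through five successive layers without collapsing the system, and verify that the solvability (Fredholm) condition at each order is automatically satisfied (which is guaranteed by the fact that the first scalar equation decouples the $\lambda^{(n)}_1$ determination from the lower components). Apart from this, the proof is a routine if lengthy unwinding of the recursions, and the computations presented immediately above the statement can be organized into a single inductive step; the results $\eqref{lam-2-1}$, $\eqref{lam-2-2,3}$, $\eqref{lam-1-1}$, $\eqref{lam-0-1}$, $\eqref{lam+1-1}$, $\eqref{lam+2-1}$ then assemble to give exactly the claim.
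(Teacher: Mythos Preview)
Your proposal is correct and follows essentially the same approach as the paper: the lemma is merely a summary of the cascade computations carried out immediately before its statement, and your plan reproduces those computations step by step, including the same solvability projections and the same intermediate values $\theta_2$, $u_3$ leading to $\lambda^{(-2)}_1$.
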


\begin{remark}
In view of Lemma \ref{lemasymp}, we obtain $\Re \lambda_1(\xi) \sim - \theta_0/\xi^2 + \ldots$, suggesting in this fashion that the dissipative structure of regularity loss type expressed in \eqref{pnt-ee-Lev} for the dispersive Navier-Stokes-Fourier system is actually optimal, at least in the one dimensional case.
\end{remark}

\subsection{Isentropic quantum hydrodynamics}
\label{isoQHD}

Quantum hydrodynamics (QHD) models appear in many areas in physics, including the modeling of quantum semiconductor devices \cite{GarC94}, the dynamics of Bose-Einstein condensates \cite{DGPS99} or the description of superfluidity \cite{Land41}. Here we shall consider the isentropic version of the model derived by Gardner \cite{GarC94} to describe the flow of electrons in quantum semiconductor devices. The system of equations under consideration consists of the isentropic Euler equations for the particle density and current density including the quantum Bohm potential and a momentum relaxation term. The momentum equation is highly nonlinear and contains a dispersive term with third-order derivatives. The model reads,
%
%
\begin{align}
\partial_{t}n+\nabla\cdot(n\bfv)&=0,\label{eq:qhm1}\\
\partial_{t}(n \bfv)+\nabla\cdot\left(n \bfv\otimes \bfv\right)+\nabla p(n)-\frac{\epsilon^{2}}{6}n\nabla\left(\frac{\Delta\sqrt{n}}{\sqrt{n}}\right)&=-\frac{n\bfv}{\tau}-\mu\nabla n,\label{eq:qhm2}
\end{align}
where $n \in \R$ denotes the particle density and $\boldsymbol{J} = n \bfv \in \R^3$ is the momentum. The parameter $\tau>0$ represents the relaxation time, $\epsilon>0$ is proportional to Planck's constant $\hbar$ and $\mu>0$ is the interaction constant (see, \cite{JuMi07,LiMar04,PlZ24,SLH23}, for example). Like in classical fluid dynamics, we assume that the pressure satisfies $p^{\prime}(n)>0$. The quantum Bohm potential
\begin{align*}
V(n)=\frac{\epsilon^{2}}{2}\frac{\Delta\sqrt{n}}{\sqrt{n}},
\end{align*}
produces a non-linear dispersive term in the balance of momentum that can be written as 
\begin{align}
n\nabla\left(\frac{\Delta\sqrt{n}}{\sqrt{n}}\right)=\frac{1}{2}\nabla\cdot\left(n\nabla\otimes\nabla\log(n)\right)=\frac{1}{2}\Delta(\nabla n)-2\nabla\cdot\left(\nabla\sqrt{n}\otimes\nabla\sqrt{n}\right).\label{eq:qbp}
\end{align}
It is important to remark that this model is \emph{inviscid}, that is, we are not considering physical nor numerical viscosity terms (see, for example, \cite{JuMi07}), which may appear in some models (cf. \cite{PlZ24,HQS24,JuMi07,BrMe10,LMZ20a}). For these physical considerations and for the mathematical analysis of isentropic QHD models, the reader is referred to \cite{AnMa09,AnMa12,AnMaZh21,LiMar04,Gas01} and the many references therein.

The quasilinear form of system \eqref{eq:qhm1}-\eqref{eq:qhm2} in three space dimensions is 
\begin{align*}
\partial_{t}U + \sum_{i=1}^3 A^{i}(U)\partial_{i}U+Q(U)= \sum_{i,j,k=1}^3 C^{ijk}(U)\partial_{i}\partial_{j}\partial_{k} U +\mathcal{N}(U;\nabla U), 
\end{align*}
where 
\begin{align*}
\sum_{i=1}^3 \xi_{i}A^{i}(U)&= \sum_{i=1}^3 |\xi|\omega_{i}A^{i}(U)=|\xi|\left(\begin{array}{cccc}
\bfv \cdot \omega&n\omega_{1}&n\omega_{2}&n\omega_{3}\\
\left(\tfrac{p^{\prime}(n)+\mu}{n}\right)\omega_{1}&\bfv \cdot \omega&0&0\\
\left(\tfrac{p^{\prime}(n)+\mu}{n}\right)\omega_{2}&0&\bfv \cdot \omega&0\\
\left(\tfrac{p^{\prime}(n)+\mu}{n}\right)\omega_{3}&0&0&\bfv \cdot \omega
\end{array}\right),\\
\sum_{i,j,k=1}^3 \xi_{i}\xi_{j}\xi_{k}C^{ijk}(U)&= \sum_{i,j,k=1}^3 |\xi|^{3}\omega_{i}\omega_{j}\omega_{k}C^{ijk}(U)=|\xi|^{3}\left(\begin{array}{cccc}
0&&0_{1\times 4}&\\
\omega_{1}\tfrac{\epsilon^{2}}{12n}&&&\\
\omega_{2}\tfrac{\epsilon^{2}}{12n}&&0_{3\times 3}&\\
\omega_{3}\tfrac{\epsilon^{2}}{12n}&&&
\end{array}\right),\\
&Q(U)=\left(\begin{array}{c}
0\\
\tfrac{1}{\tau}v_{1}\\
\tfrac{1}{\tau}v_{2}\\
\tfrac{1}{\tau}v_{3}
\end{array}\right),
\end{align*}
and the non-linear terms are 
\begin{align*}
\mathcal{N}(U,\nabla U)=\left(\begin{array}{c}
0\\
-\tfrac{\epsilon^{2}}{3n}\nabla\cdot\left(\nabla\sqrt{n}\otimes\nabla\sqrt{n}\right)
\end{array}\right).
\end{align*}
In this case, the set of admissible states is $\mathcal{O}=\{(n,\bfv)^{\top}\in\mathbb{R}^{4}~:~n>0\}$, and the set of constant equilibrium states is characterized as
\begin{align*}
\mathcal{M}_{\mathrm{eq}}:=\left\lbrace \overline{V}=(\bar{n},\overline{\bfv})^{\top}\in\mathcal{O}~:~\overline{V}~\mbox{is constant and }~\overline{\bfv}=0\right\rbrace.
\end{align*}
Linearizing around a constant state $\overline{V}=(\bar{n},\overline{\bfv})^{\top}\in\mathcal{M}_{\mathrm{eq}}$ yields a system of the form
\begin{equation}
\label{linQHD}
\partial_{t}U + \sum_{i=1}^3 \overline{A}^i \partial_{i}U + \overline{L} U = \sum_{i,j,k=1}^3 \overline{C}^{ijk} \partial_{i}\partial_{j}\partial_{k} U,
\end{equation}
where $\overline{A}^i := A^i(\overline{V})$, $\overline{C}^{ijk} := C^{ijk}(\overline{V})$ and $\overline{L} := DQ(\overline{V})$, being $DQ$ the Jacobian of $Q$ with respect to the state variables $U = (n, \bfv)^\top$. The Fourier transform of the linearized system is
\begin{equation}
\label{FlinQHD}
\widehat{U}_{t}+i|\xi|\Big( \sum_{i=1}^3 \omega_{i}\overline{A}^i + |\xi|^{2} \sum_{i,j,k=1}^3 \omega_{i}\omega_{j}\omega_{k}\overline{C}^{ijk} \Big) \widehat{U}+ \overline{L} \widehat{U}=0.
\end{equation}
Proceeding as in \eqref{Fourieroe} (grouping even and odd order terms) we obtain
\begin{align}
A(\xi)&:= \sum_{i=1}^3 \omega_{i}\overline{A}^i+|\xi|^{2} \sum_{i,j,k=1}^3\omega_{i}\omega_{j}\omega_{k}\overline{C}^{ijk}\nonumber\\
&=\left(\begin{array}{cccc}
0&\bar{n}\omega_{1}&\bar{n}\omega_{2}&\bar{n}\omega_{3}\\
\left(p^{\prime}(\bar{n})+\mu+\tfrac{\epsilon^{2}}{12}|\xi|^{2}\right)\frac{\omega_{1}}{\bar{n}}&0&0&0\\
\left(p^{\prime}(\bar{n})+\mu+\tfrac{\epsilon^{2}}{12}|\xi|^{2}\right)\frac{\omega_{2}}{\bar{n}}&0&0&0\\
\left(p^{\prime}(\bar{n})+\mu+\tfrac{\epsilon^{2}}{12}|\xi|^{2}\right)\frac{\omega_{3}}{\bar{n}}&0&0&0
\end{array}\right),\label{eq:Asymbolqhm}\\
B(\xi)&:=\overline{L} =\left(\begin{array}{cccc}
0&0&0&0\\
0&1/\tau&0&0\\
0&0&1/\tau&0\\
0&0&0&1/\tau
\end{array}\right).\label{eq:Bsymbolqhm}
\end{align}
Observe that, for any $\overline{V}\in\mathcal{M}_{\mathrm{eq}}$, we can define a symbolic symmetrizer for the triplet $(I_4,A(\xi),B(\xi))$ as 
\begin{align*}
S(\xi)=\left(\begin{array}{cccc}
\Theta(\xi)\frac{1}{\bar{n}^{2}}&0&0&0\\
0&1&0&0\\
0&0&1&0\\
0&0&0&1
\end{array}\right),\quad\mbox{where}\quad \Theta(\xi):=p^{\prime}(\bar{n})+\mu+\frac{\epsilon^{2}}{12}|\xi|^{2}.
\end{align*}
Since the characteristic polynomial of $A(\xi)$ is 
\begin{align*}
P(\lambda,\xi)\rvert_{\overline{\bfv}=0}=\left[\left(\overline{\bfv}\cdot\omega-\lambda\right)^{4}-\Theta(\xi)(\overline{\bfv}\cdot\omega-\lambda)^{2}\right]\Bigr\rvert_{\overline{\bfv}=0},
\end{align*}
we deduce that it has three eigenvalues,
\begin{align}
\lambda_{1}(\xi)\Bigr\rvert_{\overline{\bfv}=0}=\overline{\bfv}\cdot\omega\Bigr\rvert_{\overline{\bfv}=0}=0,\label{eq:qhdeigen1}
\end{align}
with algebraic multiplicity equal to two, and
\begin{align}
\lambda_{2,3}(\xi)\Bigr\rvert_{\overline{\bfv}=0}=\left(\overline{\bfv}\cdot\omega\pm\sqrt{\Theta(\xi)}\right)\Bigr\rvert_{\overline{\bfv}=0}=\pm\sqrt{\Theta(\xi)}.\label{eq:qhdeigen2}
\end{align}
Now, we proceed to verify the genuinely coupling condition for \eqref{eq:Asymbolqhm} and \eqref{eq:Bsymbolqhm}. First notice that 
\begin{align*}
\ker B(\xi)=\operatorname{span}\{(1,0,0,0)^{\top}\}.
\end{align*}
Let $W=\left(W_{1},0,0,0\right)^{\top}\in\ker B(\xi)$. If $\lambda(\xi)\in\mathbb{R}$ denotes an arbitrary eigenvalue of $A(\xi)$, the condition $A(\xi)W=\lambda W$ is 
\begin{align*}
\left(\begin{array}{c}
(\overline{\bfv}\cdot\omega) W_{1}\\
\Theta(\xi)\frac{\omega_{1}}{\bar{n}}W_{1}\\
\Theta(\xi)\frac{\omega_{2}}{\bar{n}}W_{1}\\
\Theta(\xi)\frac{\omega_{3}}{\bar{n}}W_{1}
\end{array}\right)=\lambda\left(\begin{array}{c}
W_{1}\\
0\\
0\\
0
\end{array}\right),
\end{align*}
which implies $W_{1}=0$ and thus, the genuinely coupling condition is satisfied. By the equivalence Theorem \ref{equivalencetheo}, we conclude that there exists a compensating matrix symbol for the triplet $(S(\xi),S(\xi)A(\xi),S(\xi)B(\xi))$.

\begin{remark}
\label{remhairyball}
Let us consider the symbol matrix given in \eqref{eq:Asymbolqhm}, for which its two eigenvalues $\lambda_{1}(\xi)$ and $\lambda_{2}(\xi)$ are smooth in $\xi\in \R^{d}\setminus \{ 0 \} $ (see \eqref{eq:qhdeigen1} and \eqref{eq:qhdeigen2}). Thus we only need to find a smooth representation for their respective projections: one way of doing so is by trying to compute the eigenvectors to obtain the matrix representation of the projections. For $\lambda_{1}=0$, an eigenvector $W=(W_{1}(\xi),W_{2}(\xi),W_{3}(\xi),W_{4}(\xi))$ satisfies $W_{1}\equiv 0$ in $\mathbb{R}^{3}\setminus\{0\}$, while $W_{2}$, $W_{3}$ and $W_{4}$  obey the relation, 
\begin{align}
	(\xi_{1},\xi_{2},\xi_{3})\cdot(W_{2}(\xi),W_{3}(\xi),W_{4}(\xi))=0\quad\mbox{for all}\quad\xi\in\mathbb{R}^{3}\setminus\{0\}.\label{eq:retraction}
\end{align}
Moreover, in order for $W$ to properly define an eigenvector, we have to assure that $W^{\prime}(\xi)=(W_{2},W_{3},W_{4})\neq 0$ on $\mathbb{R}^{3}\setminus\{0\}$. Although locally we can construct smoothly such a vector field (see \cite[Theorem 5.1]{AngF25}), a global smooth field in $\R^{d}\setminus \{ 0 \}$ satisfying \eqref{eq:retraction} may not exist. Indeed, this is the case, because otherwise it contradicts the Hairy Ball Theorem \cite[Theorem 8.5.13]{AMR88}: there is not a non-zero continuous vector field $g(\omega)$ defined on $\bbS^{2}$ such that $g(\omega)$ is tangent to $\omega$. Thus, \emph{an explicit smooth formula for the eigenprojection of this eigenvalue does not seem to exist}. Hence in this case, as well as in the example of Section \ref{sec-NSFK}, we compute the compensating matrix by inspection. 
\end{remark}

We begin by noticing that 
\begin{align*}
A_{S}(\xi)&=S^{1/2}(\xi)A(\xi)S^{-1/2}(\xi)=\left(\begin{array}{cccc}
0&\Theta^{1/2}(\xi)\omega_{1}&\Theta^{1/2}(\xi)\omega_{2}&\Theta^{1/2}(\xi)\omega_{3}\\
\Theta^{1/2}(\xi)\omega_{1}&0&0&0\\
\Theta^{1/2}(\xi)\omega_{2}&0&0&0\\
\Theta^{1/2}(\xi)\omega_{3}&0&0&0
\end{array}\right),\\ B_{S}(\xi)&=S^{1/2}(\xi)B(\xi)S^{-1/2}(\xi)=\left(\begin{array}{cccc}
0&0&0&0\\
0&1/\tau&0&0\\
0&0&1/\tau&0\\
0&0&0&1/\tau
\end{array}\right).
\end{align*}
Let us show that the skew symmetric matrix symbol
\begin{align*}
K_{S}(\xi)=\frac{1}{6\Theta^{1/2}(\xi)\tau}\left(\begin{array}{cccc}
0&\omega_{1}&\omega_{2}&\omega_{3}\\
-\omega_{1}&0&0&0\\
-\omega_{2}&0&0&0\\
-\omega_{3}&0&0&0
\end{array}\right),
\end{align*}
satisfies Definition \ref{Matrixsymbol} for the triplet $\left(I_4,A_{S}(\xi),B_{S}(\xi)\right)$. First, observe that 
\begin{align*}
\left[K_{S}(\xi)A_{S}(\xi)\right]^{s}=\frac{1}{6\tau}\left(\begin{array}{cccc}
1&0&0&0\\
0&-\omega_{1}^{2}&-\omega_{1}\omega_{2}&-\omega_{1}\omega_{3}\\
0&-\omega_{1}\omega_{2}&-\omega_{2}^{2}&-\omega_{2}\omega_{3}\\
0&-\omega_{1}\omega_{3}&-\omega_{2}\omega_{3}&-\omega_{3}^{2}
\end{array}\right).
\end{align*}
Let $W=(W_{1},W_{2},W_{3},W_{4})^{\top}$ be an arbitrary vector in $\mathbb{R}^{4}$ and denote by $(\cdot,\cdot)_{\mathbb{R}^{4}}$ the corresponding inner product. Then, 
\begin{align*}
\Big(\big(\big[K_{S}(\xi)A_{S}(\xi)\big]^{s} +B_{S}(\xi)\big) W,W\Big)_{\mathbb{R}^{4}} &=\frac{1}{6\tau}\left( W_{1}^{2}-\left[\omega_{1}W_{2}^{2}+\omega_{2}W_{3}^{2}+\omega_{3}W_{4}^{4}\right] \right) \\
&\quad - \frac{1}{3\tau} \left(\omega_{1}\omega_{2}W_{2}W_{3}+\omega_{1}\omega_{3}W_{2}W_{4}+\omega_{2}\omega_{3}W_{3}W_{4}\right) +\frac{1}{\tau}\left[W_{2}^{2}+W_{3}^{2}+W_{4}^{2}\right]\\
&\geq\frac{1}{6\tau}W_{1}^{2}+\frac{1}{\tau}\left[W_{2}^{2}+W_{3}^{2}+W_{4}^{2}\right]-\frac{1}{2\tau}\left[W_{2}^{2}+W_{3}^{2}+W_{4}^{2}\right]\\
&=\frac{1}{6\tau}W_{1}^{2}+\frac{1}{2\tau}\left[W_{2}^{2}+W_{3}^{2}+W_{4}^{2}\right]\\
&\geq\frac{1}{6\tau}|W|^{2}.
\end{align*}
Therefore, 
\begin{align*}
\left[K_{S}(\xi)A_{S}(\xi)\right]^{s}+B_{S}(\xi)\geq\frac{1}{6\tau}I_4.
\end{align*}
Now, according with formula \eqref{compmatrix} in Theorem \ref{hypequiv}, 
\begin{equation}
K(\xi):=S^{1/2}(\xi)K_{S}(\xi)S^{-1/2}(\xi) =\frac{1}{6\tau}\left(\begin{array}{cccc}
0&\tfrac{\omega_{1}}{\bar{n}}&\tfrac{\omega_{2}}{\bar{n}}&\tfrac{\omega_{3}}{\bar{n}}\\
\tfrac{-\bar{n}\omega_{1}}{\Theta(\xi)}&0&0&0\\
\tfrac{-\bar{n}\omega_{2}}{\Theta(\xi)}&0&0&0\\
\tfrac{-\bar{n}\omega_{3}}{\Theta(\xi)}&0&0&0
\end{array}\right),
\end{equation}
is a compensating matrix symbol for the triplet $\left(S(\xi),A(\xi),B(\xi)\right)$. In particular, notice that
\begin{align*}
K(\xi)S(\xi)=\frac{1}{6\tau}\left(\begin{array}{cccc}
0&\tfrac{\omega_{1}}{\bar{n}}&\tfrac{\omega_{2}}{\bar{n}}&\tfrac{\omega_{3}}{\bar{n}}\\
-\tfrac{\omega_{1}}{\bar{n}}&0&0&0\\
-\tfrac{\omega_{2}}{\bar{n}}&0&0&0\\
-\tfrac{\omega_{3}}{\bar{n}}&0&0&0
\end{array}\right),
\end{align*}
is skew-symmetric for any $\xi\in\mathbb{R}^{3}$ and any $\overline{V} \in\mathcal{M}_{\mathrm{eq}}$.
\begin{remark}
Observe that, since 
\begin{align*}
\frac{1}{\Theta^{1/2}(\xi)}\leq\frac{1}{(p^{\prime}(\bar{n})+\mu)^{1/2}},\quad\mbox{and}\quad\frac{|\xi|}{\Theta^{1/2}(\xi)}\leq\frac{2\sqrt{3}}{\epsilon},
\end{align*}
there are uniform positive constants $C_{1}=C_{1}(\tau,\bar{n},\mu)$ and $C_{2}=C_{2}(\epsilon)$ (only depending on the physical parameters and on the base state under consideration) such that,
\begin{align*}
|K(\xi)|&\leq C_{1},\quad\mbox{for all}\quad\xi\in\mathbb{R}^{3},\\
|\xi K(\xi)|&\leq C_{2},\quad\mbox{for all}\quad\xi\in\mathbb{R}^{3}\setminus\{0\}.
\end{align*}
Moreover, since $B_{S}(\xi)$ is independent of $\xi\in\mathbb{R}^{3}$  we can assure the existence of a positive constant $C_{3}=C_{3}(\tau)$ such that 
\begin{align*}
|B_{S}(\xi)|\leq C_{3},\quad\mbox{for all}\quad\xi\in\mathbb{R}^{3}.
\end{align*}
Therefore, by taking $\theta(\xi) \equiv \tfrac{1}{6\tau}$, we conclude that the assumptions of the equivalence Theorem \ref{equivalencetheo} and Lemma \ref{pnt-ee-full-lem} hold true. 
\end{remark}

We summarize the previous observations/calculations into the following result.
\begin{proposition}
\label{propisoQHD}
The linearized isentropic QHD system \eqref{linQHD} is genuinely coupled, symbol symmetrizable and, therefore, it satisfies the hypotheses of the equivalence Theorem \ref{equivalencetheo}. Moreover, system \eqref{eq:qhm1} - \eqref{eq:qhm2} is strictly dissipative of the standard type.
\end{proposition}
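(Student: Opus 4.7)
The proof would consolidate the explicit computations carried out in the paragraphs preceding the statement. First, I would observe that the matrix
\[
S(\xi) = \mathrm{diag}\!\big(\Theta(\xi)/\bar{n}^2,\, 1,\, 1,\, 1\big),\qquad \Theta(\xi) = p'(\bar{n})+\mu+\tfrac{\epsilon^2}{12}|\xi|^2,
\]
exhibited above is smooth on $\R^3\setminus\{0\}$, symmetric, and positive definite (since $p'(\bar{n})+\mu>0$); a direct multiplication then shows that $S(\xi)A(\xi)$ and $S(\xi)B(\xi)$ are symmetric and the latter is positive semi-definite, so Definition \ref{defsymbolsymm} is satisfied.

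Next I would verify the genuine coupling condition. From the explicit form of $B(\xi)$ one reads $\ker B(\xi) = \mathrm{span}\{e_1\}$. Given $\psi = W_1 e_1$ with $W_1\ne 0$, computing $A(\xi)\psi$ yields $(0,\Theta(\xi)\omega_1 W_1/\bar{n},\Theta(\xi)\omega_2 W_1/\bar{n},\Theta(\xi)\omega_3 W_1/\bar{n})^\top$, which is never a scalar multiple of $e_1$ because $\Theta(\xi)>0$ and $|\omega|=1$. Therefore Definition \ref{gencoupling} holds. The hypotheses of Theorem \ref{equivalencetheo} are thus met, and the explicit skew-symmetric matrix $K(\xi)$ constructed above gives the compensating matrix symbol for the triplet $(S,A,B)$.

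Finally, to identify the dissipativity type, I would apply Lemma \ref{pnt-ee-full-lem} to the symmetrized system in the variable $\hV=S(\xi)^{1/2}\hU$. The computation preceding the statement shows
\[
[K_S(\xi)A_S(\xi)]^s + B_S(\xi) \geq \tfrac{1}{6\tau} I_4,
\]
so condition \eqref{pos-1} holds with the \emph{constant} lower bound $f(\xi)\equiv 1/(6\tau)$. The remark following the construction of $K(\xi)$ then supplies uniform bounds $|K_S(\xi)|\leq C_1$ and $|\xi K_S(\xi)|\leq C_2$ using $\Theta(\xi)^{1/2}\geq (p'(\bar{n})+\mu)^{1/2}$ and $|\xi|/\Theta(\xi)^{1/2}\leq 2\sqrt{3}/\epsilon$; together with the constancy of $B_S(\xi)$ (hence of $|B_S(\xi)^{1/2}|$) and the positivity of the constant $f(\xi)$, the hypotheses \eqref{hyp-enr-est} are verified at once. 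Lemma \ref{pnt-ee-full-lem} then yields
\[
|\hV(\xi,t)| \leq C \exp\!\Big(-k\,\tfrac{|\xi|^2}{1+|\xi|^2}\,t\Big)|\hV(\xi,0)|,
\]
which, together with Lemma \ref{hypequiv} (so that eigenvalues of the original and symmetrized spectral problems coincide), gives $\Re\,\lambda(\xi) \leq -k|\xi|^2/(1+|\xi|^2)$. By Remark \ref{Rmr-reg-typ} with $p=q=1$, this is precisely dissipativity of the standard type.

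The main conceptual obstacle in this particular example, already addressed in the body, is the non-existence of a globally smooth eigenprojection for the double eigenvalue $\lambda_1\equiv 0$ of $A(\xi)$ (the Hairy Ball obstruction noted in Remark \ref{remhairyball}), which rules out a direct use of Humpherys' formula \eqref{eq:humpcompensation}. The proof bypasses this by exhibiting $K(\xi)$ by inspection and checking the positivity \eqref{pos-1} directly, a route made available precisely because Theorem \ref{equivalencetheo} does not require constant multiplicity of $A_S(\xi)$.
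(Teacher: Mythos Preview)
Your proposal is correct and follows essentially the same route as the paper: the paper's proof is a one-line reference back to the computations preceding the statement (the explicit symmetrizer $S(\xi)$, the verification that $\ker B(\xi)=\mathrm{span}\{e_1\}$ contains no eigenvector of $A(\xi)$, the compensating symbol $K_S(\xi)$ with the lower bound $\tfrac{1}{6\tau}I_4$, and the uniform bounds on $|K_S|$, $|\xi K_S|$, $|B_S|$), and then invokes Lemma~\ref{pnt-ee-full-lem} with constant $f(\xi)$ to obtain the standard-type decay. You have simply written these steps out in full.
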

\begin{proof}
Follows directly from the previous observations; in particular the bounds on $|B_{S}(\xi)|$, $|K(\xi)|$ and $|\xi K(\xi)|$ immediately imply the conclusion of Lemma \ref{pnt-ee-full-lem}, upon taking $f(\xi) = \theta(\xi) \equiv 1/\tau$.
\end{proof}

We finish the section by stating the result describing the decay structure of the solutions to the linearized isentropic QHD system \eqref{linQHD}. Its proof is very similar to that of Lemma \ref{lemdecayEFK1d} and we omit it.

\begin{lemma}
\label{lemdecayQHD}
Let us assume that $U= (n, \bfv)^\top$ is a solution of the linear QHD system \eqref{linQHD} with initial condition $(n,\bfv)(x,0)\in \big( H^{s+1}(\R^3)\times \big( H^{s}(\R^3) \big)^{3} \big)\cap \big( L^{1}(\R^3) \big)^{4}$ for $s\geq 0$. Then for each $0\leq \ell \leq s $ the estimate
\begin{equation}
\label{ln-dec-QHD}
\begin{aligned}
\big( \Vert \partial_{x}^{\ell} n(t) \Vert_{1}^{2} + \Vert \partial_{x}^{\ell} \bfv(t) \Vert_{0}^{2} \big)^{1/2} &\leq  Ce^{-c_{1}t} \big( \Vert \partial_{x}^{\ell}n(0) \Vert_{1}^{2}+ \Vert \partial_{x}^{\ell}\bfv(0) \Vert_{0}^{2}  \big)^{1/2}+ \\ & \quad + C(1+t )^{-(1/4+\ell)} \Vert (n,\bfv)(0)\Vert_{L^{1}},
\end{aligned}
\end{equation}
holds for all $t\geq 0$ and some uniform positive constants $C$, $c_{1}$.
\end{lemma}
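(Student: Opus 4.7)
The plan is to follow the strategy of Lemma \ref{lemdecayEFK1d} (cf. also Lemma \ref{lin-dec-U}), transferring a pointwise Fourier-space decay into an energy decay estimate in physical space. Since the linearized QHD system \eqref{linQHD} carries a genuine relaxation term $\overline{L}$ coming from the momentum damping, Proposition \ref{propisoQHD} guarantees that the hypotheses of Lemma \ref{pnt-ee-full-lem} hold with $f(\xi) \equiv 1/(6\tau)$ constant. Working with the symmetrized variable $\hat{V}(\xi,t) := S^{1/2}(\xi)\hat{U}(\xi,t)$, where $S(\xi) = \mathrm{diag}(\Theta(\xi)/\bar{n}^2, 1, 1, 1)$ is the symbolic symmetrizer from Section \ref{isoQHD}, Lemma \ref{pnt-ee-full-lem} delivers the pointwise estimate
\[
|\hat{V}(\xi,t)| \leq C \exp\Big( -\tfrac{k|\xi|^2}{1+|\xi|^2}t \Big) |\hat{V}(\xi,0)|,\qquad \xi \in \R^3\setminus\{0\},\ t \geq 0.
\]
Because $\Theta(\xi) = p^{\prime}(\bar{n}) + \mu + \epsilon^2|\xi|^2/12$ satisfies $c_0(1+|\xi|^2) \leq \Theta(\xi) \leq C_0(1+|\xi|^2)$ uniformly in $\xi$, the diagonal change of variables transcribes this into an analogous bound for $((1+|\xi|^2)|\hat{n}(\xi,t)|^2 + |\hat{\bfv}(\xi,t)|^2)^{1/2}$, with the corresponding weighted quantity of the initial data on the right-hand side.

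Fixing $0 \leq \ell \leq s$, the next step is to multiply that transcribed estimate by $|\xi|^{2\ell}$, integrate over $\R^3$, and split the resulting integral into a low-frequency region ($|\xi|\leq 1$) and a high-frequency region ($|\xi|>1$). In the low-frequency regime the quotient satisfies $|\xi|^2/(1+|\xi|^2) \geq |\xi|^2/2$ and the weight obeys $(1+|\xi|^2) \leq 2$; combining the Hausdorff--Young bound $\|\hat{U}(0)\|_{L^\infty} \leq \|U(0)\|_{L^1}$ with the standard Gaussian moment estimate $\int_{|\xi|\leq 1} |\xi|^{2\ell} e^{-c|\xi|^2 t}\,d\xi \leq C(1+t)^{-(3/2+\ell)}$ controls this piece by $\|U(0)\|_{L^1}^2$ times an algebraic factor. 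In the high-frequency regime $|\xi|^2/(1+|\xi|^2) \geq 1/2$ produces a uniform factor $e^{-kt/2}$ that can be pulled out of the integral, after which Plancherel converts the remaining weighted norm of $\hat{U}(\xi,0)$ into $\|\partial_x^\ell n(0)\|_1^2 + \|\partial_x^\ell \bfv(0)\|_0^2$. Taking square roots and using $\sqrt{a+b} \leq \sqrt{a}+\sqrt{b}$ yields \eqref{ln-dec-QHD}.

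No genuine analytic obstacle is expected: every ingredient (pointwise symbol bound, Hausdorff--Young, Gaussian moment, Plancherel) is identical to the one appearing in Lemmas \ref{lin-dec-U} and \ref{lemdecayEFK1d}, and the diagonal shape of $S(\xi)$ trivializes the change of variables. The only point of bookkeeping care is the propagation of the $(1+|\xi|^2)$ weight attached to $\hat{n}$: in the high-frequency split one uses $(1+|\xi|^2)|\xi|^{2\ell} \leq 2|\xi|^{2(\ell+1)}$ on $\{|\xi|>1\}$ to trade the weight for one extra derivative, so that the relevant term collapses onto $\|\partial_x^{\ell}n(0)\|_1^2$, exactly as in the proof of Lemma \ref{lemprelimdecay}; in the low-frequency split the weight is merely bounded by $2$ and contributes nothing.
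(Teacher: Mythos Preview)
Your approach is correct and is precisely the one the paper indicates: the paper omits the proof entirely, referring back to Lemma \ref{lemdecayEFK1d} (itself referred back to Lemmata \ref{lemprelimdecay} and \ref{lin-dec-U}), and your outline reproduces that template step for step---pointwise Fourier decay from Lemma \ref{pnt-ee-full-lem} via Proposition \ref{propisoQHD}, conversion to the weighted $\hat U$ estimate through the diagonal symmetrizer $S(\xi)$, then the low/high-frequency split with Hausdorff--Young plus the Gaussian moment bound on $\{|\xi|\le 1\}$ and Plancherel on $\{|\xi|>1\}$.

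One bookkeeping remark: your low-frequency computation correctly gives the three-dimensional rate $\int_{|\xi|\le 1}|\xi|^{2\ell}e^{-c|\xi|^{2}t}\,d\xi\le C(1+t)^{-(3/2+\ell)}$, which after the square root yields $(1+t)^{-(3/4+\ell/2)}$, not the exponent $(1/4+\ell)$ printed in \eqref{ln-dec-QHD}. This is almost certainly a typographical slip in the statement (the same pattern appears in Lemmata \ref{lin-dec-NSFK-lm} and \ref{lemdecayEFK1d}); your argument is the intended one and produces the dimensionally correct rate.
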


\section{Physical models violating the assumptions of the equivalence theorem}
\label{secnoway}

In this last Section we examine a couple of models of physical origin which \emph{do not} satisfy the assumptions of the equivalence Theorem \ref{equivalencetheo}. In the first example, the system is not genuinely coupled. In the second, symmetrizability fails, even at the symbol level.
%
%

\subsection{Inviscid heat conducting viscous-capillar fluids in several space dimensions}
\label{secEFKmultid}

In Section \ref{secEFK1d} we verified that the Euler-Fourier-Korteweg system \eqref{EFK1d} in one space dimension, describing \emph{inviscid} fluids (with $\nu = \lambda = 0$), which conduct heat (with thermal conductivity $ \alpha> 0$) and exhibiting capillarity effects ($k > 0$) is genuinely coupled and consequently, by the equivalence Theorem \ref{equivalencetheo}, strictly dissipative. It is remarkable, however, that the same physical model is not genuinely coupled in several space dimensions (that is, when $d \geq 2$).
%

\begin{proposition}
\label{propEFKmdnorgc}
In several space dimensions, $d \geq 2$, the Euler-Fourier-Korteweg system \eqref{EFK-system} is not genuinely coupled.
\end{proposition}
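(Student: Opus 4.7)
The plan is to explicitly compute the generalized transport and viscosity symbols for the linearization of \eqref{EFK-system} around a constant equilibrium, identify the kernel of the viscosity symbol, and then exhibit a nonzero transverse velocity vector that lies in this kernel and is simultaneously an eigenvector of the transport symbol. The whole argument rests on the fact that in the inviscid heat-conducting case only the temperature component is dissipated at second order, while the capillarity contributes only to the longitudinal (compressive) coupling.

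First, I would linearize \eqref{EFK-system} around a constant state $\bU=(\brho,\overline{\bfu},\bthe)^\top$, proceeding exactly as in Section \ref{sec-NSFK} but setting the viscosity coefficients $\nu=\lambda\equiv 0$. This produces a constant-coefficients system of the same form as \eqref{lin-NSFK} in which the $d\times d$ velocity block of the second-order coefficient is absent. Applying Fourier transform and splitting by parity, as in \eqref{gen-flux-visc}, one obtains, with $\beta(\xi):=\bp_\rho+\bk\,\brho\,|\xi|^2$,
\[
A(\xi)=\begin{pmatrix}
\overline{\bfu}\cdot\omega & \brho\,\omega^\top & 0 \\
\beta(\xi)\,\omega & \brho\,(\overline{\bfu}\cdot\omega)\,I_d & \bp_\theta\,\omega \\
0 & \bthe\,\bp_\theta\,\omega^\top & \brho\,\be_\theta\,(\overline{\bfu}\cdot\omega)
\end{pmatrix},
\qquad
B(\xi)=|\xi|^2\begin{pmatrix}
0 & 0_{1\times d} & 0 \\
0_{d\times 1} & 0_{d\times d} & 0_{d\times 1} \\
0 & 0_{1\times d} & \balp
\end{pmatrix}.
\]

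Next, I would read off the kernel of $B(\xi)$: for every $\xi\neq 0$,
\[
\ker B(\xi)=\bigl\{(0,\bfv,0)^\top : \bfv\in\R^d\bigr\},
\]
so the kernel contains the entire $d$-dimensional velocity subspace. Now I would test the genuine coupling condition on vectors of this form. A direct computation gives
\[
A(\xi)\begin{pmatrix}0\\ \bfv\\ 0\end{pmatrix}
=\begin{pmatrix}\brho\,(\omega\cdot\bfv)\\ \brho\,(\overline{\bfu}\cdot\omega)\,\bfv\\ \bthe\,\bp_\theta\,(\omega\cdot\bfv)\end{pmatrix},
\]
so $(0,\bfv,0)^\top$ is an eigenvector of $A(\xi)$ (with eigenvalue $-\mu=\brho\,(\overline{\bfu}\cdot\omega)$) if and only if $\omega\cdot\bfv=0$.

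Here the dimension enters decisively. In one space dimension, $\omega=\pm 1$ spans $\R$, so $\omega\cdot\bfv=0$ forces $\bfv=0$, which recovers the genuine coupling verified in Section \ref{secEFK1d}. However, for $d\geq 2$ and for any fixed $\xi\in\R^d\setminus\{0\}$, the orthogonal complement $\{\omega\}^\perp\subset\R^d$ has dimension $d-1\geq 1$, so we can choose some nonzero $\bfv_0\in\R^d$ with $\omega\cdot\bfv_0=0$. Setting $\psi:=(0,\bfv_0,0)^\top\neq 0$, we then have simultaneously $\psi\in\ker B(\xi)$ and $\bigl(\mu I_{d+2}+A(\xi)\bigr)\psi=0$ with $\mu=-\brho\,(\overline{\bfu}\cdot\omega)\in\R$. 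This directly violates the genuine coupling condition in Definition \ref{gencoupling}, concluding the proof.

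There is no genuine obstacle in this argument; the proof is essentially one paragraph of linear algebra once the symbols are written down. The conceptual content is what makes it interesting: the transverse (shear) velocity modes are not damped by any second-order mechanism (there is no viscous tensor) and they are not rotated into the longitudinal/thermal modes by the transport symbol, because the capillary pressure correction $\beta(\xi)$ and the thermal coupling $\bp_\theta$ both appear only along the direction $\omega$. By the equivalence Theorem \ref{equivalencetheo}, the multi-dimensional EFK system cannot be strictly dissipative in the sense of this paper, which explains the absence of such a statement in \cite{KSX22}.
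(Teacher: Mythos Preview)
Your argument is correct and follows essentially the same route as the paper: exhibit a transverse velocity mode $(0,\bfv_0,0)^\top$ with $\omega\cdot\bfv_0=0$, note it lies in $\ker B(\xi)$, and check it is an eigenvector of the transport symbol. The paper carries this out after first passing to the symmetrized variables $\hV=(S(\xi)A^0)^{1/2}\hU$ and working with $\tiA(\xi),\tiB(\xi)$, whereas you work directly with $A(\xi),B(\xi)$; by Lemma~\ref{hypequiv} the two formulations of genuine coupling are equivalent, so your shortcut is legitimate. One small slip: your displayed equality $\ker B(\xi)=\{(0,\bfv,0)^\top\}$ should be an inclusion $\supseteq$, since the density component is also undamped and $\ker B(\xi)$ is in fact $(d{+}1)$-dimensional; this does not affect the argument.
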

\begin{proof}
For the system in several space dimension, the system in the Fourier spaces reads
\begin{equation}\label{EFK-D-Four}
A^{0}\hU_{t}+\big( i \vert \xi \vert A(\xi) + B(\xi) \big) \hU=0,
\end{equation}
where the generalized flux and the generalized flux matrices have the form: 
\[
A(\xi)= \begin{pmatrix}
\bbu\cdot \omega & \brho \omega^{\top} & 0 \\ \beta(\xi)\omega & \brho\:(\bbu\cdot \omega) I_{d} & \bp_{\theta} \omega \\ 0 & \bthe\:\bp_{\theta}\omega^{\top} & \brho\:\be_{\theta}\:(\bbu\cdot\omega)
\end{pmatrix},\quad \beta(\xi):= \bp_{\rho}+\bk\:\brho\:\vert \xi \vert^{2};
\]
\[
B(\xi)=\vert \xi \vert^{2}\begin{pmatrix}
0 & 0_{1 \times d} & 0 \\ 0_{d \times 1} & 0_{d} & 0_{d \times 1} \\ 0 & 0_{1 \times d} & \balp \vert \omega \vert^{2}
\end{pmatrix}.
\]
As in the one-dimensional case, straightforward computations show that
\[
S(\xi)= \begin{pmatrix}
\beta(\xi)/\brho & 0_{1 \times d}& 0 \\ 0_{d \times 1} & I_{d} & 0_{d \times 1} \\ 0 &0_{1 \times d}& 1/\bthe
\end{pmatrix}
\]
is a symbol symmetrizer for \eqref{EFK-D-Four}. Then for the variable $\hV:=\big(SA^{0}(\xi)\big)^{1/2}\hU$ satisfying 
\[
\hV_{t}+\big( i \vert \xi \vert \tiA(\xi) + \tiB(\xi) \big) \hV=0,
\]	
the matrices $\tiA(\xi)$ and $\tiB(\xi)$ are given by
\begin{equation}\label{tiA-EFK-D}
\tiA(\xi)=\begin{pmatrix}
\bbu\cdot\omega & \beta^{1/2}\omega^\top & 0 \\ \beta^{1/2}\omega & (\bbu\cdot\omega) \:I_{d} & \gamma\:\omega \\ 0 & \gamma\:\omega^{\top} & \bbu\cdot\omega, 
\end{pmatrix}
\end{equation}	
and 
\begin{equation}\label{tiB-EFK-D}
\tiB(\xi) = \vert \xi \vert^{2} \begin{pmatrix}
0 & 0_{1 \times d} & 0 \\ 0_{d \times 1} & 0_{d} & 0_{d \times 1} \\ 0 & 0_{1 \times d} & \balp/\brho\:\be_{\theta},
\end{pmatrix}
\end{equation}
with 
\[
\gamma=\frac{\bthe^{1/2}\bp_{\theta}}{\brho \: \be_{\theta}^{1/2}},
\] 
as before. Observe that $\tiA(\xi)$ have the same structure as the symbol for the Euler equations (see, for instance, Benzoni-Gavage and Serre \cite{BS07}). If $\big( \tirho, \tibu, \tithe \big)^\top \in\R^{d+2}$ is an eigenvector of $\tiA(\xi)$ with associated eigenvalue $\lambda(\xi)$, it must happen that
\[
\begin{aligned}
\bbu\cdot\omega -\lambda(\xi)+\beta(\xi)^{1/2}(\omega\cdot\tibu)&=0,\\
\big( \beta(\xi)^{1/2}\tirho + \tithe\:\gamma\big)\omega + \big(\bbu\cdot\omega-\lambda(\xi) \big)\tibu &=0,\\
\gamma\:(\omega\cdot\tibu) + \big(\bbu\cdot\omega-\lambda(\xi) \big)&=0.
\end{aligned}
\]
%
%
Therefore $\lambda(\xi)=\bbu\cdot\omega$ is an eigenvalue, whose $d-$dimensional eigenspace is spanned by 
\[
\{ \big( \tirho,\tibu,\tithe \big)^\top \in \R^{d+2}:\: \tibu\cdot\omega=0,\: \beta^{1/2}\tirho+\gamma\tithe=0 \},
\]
which we write as 
\[
\text{span}\big\{ \big(-\gamma,0,\beta^{1/2}\big)^\top \big\} \cup \{ \big(0, \tibu,0 \big)^\top \in \R^{d+2}:\: \tibu\cdot\omega=0 \}.
\]
	
By the form of $\tiB(\xi)$ given by \eqref{tiB-EFK-D}, it is easy to see that for each eigenvalue of $\tiA(\xi)$ of the form $\big( 0, \tibu,0 \big)$, with $\tibu\cdot\omega=0$, satisfies
\[
\tiB(\xi)\big(0,\tibu,0\big)^{\top}=0.
\]	
Therefore the triplet $\big(I, \tiA(\xi),\tiB(\xi) \big)$ fails to be genuinely coupled. 	
\end{proof}
	
\subsection{Quantum hydrodynamics equations for inviscid fluids}

Let us consider the full QHD system derived by Gardner \cite{GarC94}, which includes energy exhanges. The description of charged quantum fluids appears in semiconductor physics to describe quantum effects which are manifest at a macroscopic scale (cf. \cite{Jue01,GarC94}). The behavior of a quantum inviscid fluid near thermal equilibrium and in the high temperature limit can be approximated by $O(\hbar^2)$ corrections to the classical formulas of the stress tensor and the energy for a compressible inviscid fluid in space (see Gardner \cite{GarC94}); here $\hbar$ is, as before, the Planck constant. These models are primarily used to study the flow of electrons in quantum semiconductor devices (see, e.g., J\"ungel \cite{Jue01} and the references therein). The quantum hydrodynamic equations (QHD) can be expressed as conservation laws for the electron density $n$, the momentum $n\bfv$ and the total energy $W$ as follows
\begin{align}
	\partial_{t}n+\nabla\cdot(n \bfv)&=0,\label{eq:qhd1}\\
	\partial_{t}(n\bfv)+\nabla\cdot\left(n\bfv\otimes \bfv - \mathbf{P}\right)&=-\frac{n\bfv}{\tau_{p}},\label{eq:qhd2}\\
	\partial_{t}W+\nabla\cdot\left(W\bfv-\mathbf{P}\bfv + \bfq\right)&=-\frac{\left(W-\tfrac{3}{2}n\theta_{0}\right)}{\tau_{w}},\label{eq:qhd3}
\end{align}
for $x \in \R^3$, $t > 0$, where $\bfv \in \R^3$ is the velocity field, $\bfq \in \R^3$ is the heat flux, $\theta_{0}$ is the temperature of the semiconductor lattice and 
\begin{align}
	\mathbf{P}&:=-n\theta I_3+\frac{\hbar^{2}n}{12}\nabla\otimes\nabla\log(n),\label{eq:quantumstress}\\
	W&:=\tfrac{3}{2}n\theta+\frac{1}{2}n|\bfv|^{2}-\frac{\hbar^{2}n}{24}\Delta\log(n).\label{quantumenergy}
\end{align}

Notice that the classical terms in the stress tensor and the total energy correspond to the model of a fluid with an ideal monoatomic gas pressure and internal energy. The right hand sides of \eqref{eq:qhd2} and \eqref{eq:qhd3} represent electron scattering, which is modelled by the standard relaxation time approximation with momentum and energy relaxation times $\tau_{p}$ and $\tau_{w}$, respectively (for further details, see Gardner \cite{GarC94}). It is also to be observed that the relaxation term appearing in \eqref{eq:qhd3} is precisely the relaxation term used in classical hydrodynamical models for semiconductors (cf. \cite{MaRiSc90}). Finally, it is assumed that the heat flux is specified by Fourier's law, which reads,  $\bfq=-\kappa\nabla\theta$.

We now write the three dimensional version of equations \eqref{eq:qhd1}-\eqref{eq:qhd3} in quasilinear form as
\begin{equation}
\label{eq:qquasilinear}
\begin{aligned}
	A^{0}(U)\partial_{t}U + \sum_{i=1}^3 A^{i}(U)\partial_{i}U+Q(U) &= \sum_{i,j=1}^3 B^{ij}(U)\partial_{i}\partial_{j}U + \sum_{i,j,k=1}^3C^{ijk}(U)\partial_{i}\partial_{j}\partial_{k}U\\ &\quad  + \sum_{i,j,k,\ell=1}^3 F^{ijk\ell}(U)\partial_{i}\partial_{j}\partial_{k}\partial_{\ell}U +\mathcal{N}(U;\nabla U;\nabla\otimes\nabla U),
\end{aligned}
\end{equation}
for the state variable $U=(n,\bfv,g)^{\top}\in\mathbb{R}^{5}$ where $	g:=\frac{3}{2}\theta-\frac{\hbar^{2}}{24}\Delta\log(n)$. Notice that every matrix coefficient is of order $5\times 5$ and $Q(U)$ is a vector field in $\mathbb{R}^{5}$.
In order to obtain the evolution equation for $g$ we first take the inner product of equation \eqref{eq:qhd2} with $\bfv$, yielding
\begin{align*}
	\partial_{t}\left(\frac{n|\bfv|^{2}}{2}\right)+\nabla\cdot\left(\frac{1}{2}n|\bfv|^{2}\bfv\right)=(\nabla\cdot\mathbf{P})\cdot \bfv-\frac{n|\bfv|^{2}}{\tau_{p}}.
\end{align*}
By subtracting this last equation from \eqref{eq:qhd3} we get
\begin{align}
	\partial_{t}(ng)+\nabla\cdot(ng\bfv)+\nabla\cdot \bfq=-\frac{1}{\tau_{w}}ng+\frac{1}{\tau_{w}}\frac{3}{2}\theta_{0}+n|\bfv|^{2}\left[\frac{1}{\tau_{p}}-\frac{1}{2\tau_{w}}\right]+\mathbf{P}:\nabla \bfv.\label{eq:qhd4}
\end{align}
The equation for $g$ is obtained once we expand the partial time derivative in \eqref{eq:qhd4} and use the continuity equation \eqref{eq:qhd1}. We apply this last step in equation \eqref{eq:qhd2} in order to obtain a single equation for $\bfv$. Then, the quasilinear form of equations \eqref{eq:qhd1}, \eqref{eq:qhd2} and \eqref{eq:qhd4} is determined by the matrix $A^{0}(U)$ and the symbols of the higher order operators in the equations, given by
\[
\begin{aligned}
	A^{0}&=\left(\begin{array}{ccc}
		1&\\
		&n I_4
	\end{array}\right),\\
	\sum_{i=1}^3 \xi_{i}A^{i}(U)&= \sum_{i=1}^3|\xi|\omega_{i}A^{i}(U)=|\xi|\left(\begin{array}{ccccc}
		\bfv\cdot\omega&n\omega_{1}&n\omega_{2}&n\omega_{3}&0\\
		\tfrac{2}{3}g\omega_{1}&n\bfv\cdot\omega&0&0&\tfrac{2}{3}n\omega_{1}\\
		\tfrac{2}{3}g\omega_{2}&0&n\bfv\cdot\omega&0&\tfrac{2}{3}n\omega_{2}\\
		\tfrac{2}{3}g\omega_{3}&0&0&n\bfv\cdot\omega&\tfrac{2}{3}n\omega_{3}\\
		0&\tfrac{2}{3}ng\omega_{1}&\tfrac{2}{3}ng\omega_{2}&\tfrac{2}{3}ng\omega_{3}&n\bfv\cdot\omega
	\end{array}\right),\\
	\sum_{i,j=1}^3\xi_{i}\xi_{j}B^{ij}(U)&= \sum_{i,j=1}^3 |\xi|^{2}\omega_{i}\omega_{j}B^{ij}(U)=\left(\begin{array}{cc}
		0_{4\times 4}&\\
		&\tfrac{2}{3}\kappa
	\end{array}\right),\\
	\sum_{i,j,k=1}^3\xi_{i}\xi_{j}\xi_{k}C^{ijk}(U)&= \sum_{i,j,k=1}^3 |\xi|^{3}\omega_{i}\omega_{j}\omega_{k}C^{ijk}(U)=|\xi|^{3}\left(\begin{array}{cccc}
		0 & 0_{1\times 4}&\\
		\omega_{1}\tfrac{\hbar^{2}}{18}&\\
		\omega_{2}\tfrac{\hbar^{2}}{18}&0_{3\times 4}&\\
		\omega_{3}\tfrac{\hbar^{2}}{18}&\\
		0 &0_{1\times 4}&
	\end{array}\right),\\
	\sum_{i,j,k,\ell=1}^3\xi_{i}\xi_{j}\xi_{k}\xi_{\ell}F^{ijk\ell}(U)&= \sum_{i,j,k,\ell=1}^3|\xi|^{4}\omega_{i}\omega_{j}\omega_{k}\omega_{\ell}F^{ijk\ell}(U)=|\xi|^{4}\left(\begin{array}{ccc}
		0 &0_{1\times 4}\\
		\tfrac{\kappa\hbar^{2}}{36}\tfrac{1}{n}&0_{1\times 4}&
	\end{array}\right),
\end{aligned}
\]
the vector field $Q$ is
\begin{align*}
	Q(U)=\left(\begin{array}{c}
		0\\
		\tfrac{1}{\tau_{p}}n\bfv\\
		\tfrac{1}{\tau_{w}}ng-\tfrac{1}{\tau_{w}}\tfrac{3}{2}n\theta_{0}+n|\bfv|^{2}\left[\tfrac{1}{2\tau_{w}}-\tfrac{1}{\tau_{p}}\right]
	\end{array}\right),
\end{align*}
and where $\mathcal{N}(U;\nabla U;\nabla\otimes\nabla U)$ comprises the nonlinear terms (which we do not write here for the sake of brevity). 

Let $\mathcal{O}$ be the open convex set in $\mathbb{R}^{5}$ defined as $\mathcal{O}:=\{(n,\bfv,g)^{\top}\in\mathbb{R}^{5}~:~n,\theta>0\}$ and consider the set $\mathcal{M}_{\mathrm{eq}}$ of constant equilibrium states $\overline{V}$, that is, 
\begin{align*}
	\mathcal{M}_{\mathrm{eq}}:=\left\lbrace \overline{V}\in\mathcal{O}~:~\overline{V}~\mbox{is constant and}~Q(\overline{V})=0\right\rbrace.
\end{align*}
This set is characterized as all the constant states $(\bar{n},\bar{\bfv},\bar{g})^{\top}$ for which $\bar{\bfv}=0$ and $\bar{g}=\tfrac{3}{2}\theta_{0}$. By the linearization of \eqref{eq:qquasilinear} around $\overline{V}\in\mathcal{M}_{\mathrm{eq}}$ we mean the system 
\[
	\overline{A}^{0}\partial_{t}U+ \sum_{i=1}^3 \overline{A}^i\partial_{i}U+\overline{L}U= \sum_{i,j=1}^3 \overline{B}^{ij}\partial_{i}\partial_{j}U+ \sum_{i,j,k=1}^3 \overline{C}^{ijk}\partial_{i}\partial_{j}\partial_{k}U + \sum_{i,j,k,\ell=1}^3\overline{F}^{ijk\ell}\partial_{i}\partial_{j}\partial_{k}\partial_{\ell}U,
\]
where $\overline{A}^i := A^i(\overline{V})$, $\overline{C}^{ijk} := C^{ijk}(\overline{V})$, etc. and $\overline{L} := DQ(\overline{V})$, being $DQ$ the Jacobian of $Q$ with respect to the state variables $U$. Upon application of the Fourier transform to the linearized system we find 
\[
\overline{A}^0\widehat{U}_{t}+i|\xi|A(\xi)\widehat{U}
	+B(\xi) \widehat{U}=0,
\]
where, as in \eqref{Fourieroe}, we have set
\begin{align*}
	A(\xi)&:=\sum_{i=1}^3 \omega_{i}\overline{A}^i+|\xi|^{2} \sum_{i,j,k=1}^3\omega_{i}\omega_{j}\omega_{k}\overline{C}^{ijk}
,\\
	B(\xi)&=\overline{L}+|\xi|^{2} \sum_{i,j=1}^3\omega_{i}\omega_{j}\overline{B}^{ij}-|\xi|^{4}\sum_{i,j,k,\ell=1}^3\omega_{i}\omega_{j}\omega_{k}\omega_{\ell}\overline{F}^{ijk\ell}.
\end{align*}
%
Observe that, for any $\overline{V}=(\bar{n},\bar{\bfv},\bar{g})^{\top}\in\mathcal{M}_{\mathrm{eq}}$, the pair $(\overline{A}^0,A(\xi))$ accepts only a symbolic symmetrizer, which can be given as the diagonal matrix
\begin{align*}
	S_{1}(\xi):=\left(\begin{array}{ccc}
		\tfrac{1}{\bar{n}}\left(\tfrac{2}{3}\bar{g}+|\xi|^{2}\tfrac{\hbar^{2}}{18}\right)&0_{1 \times 3}&0\\
		0_{3 \times 1}&I_3&0_{3 \times 1}\\
		0&0_{1 \times 3}&1/{\bar{g}}
	\end{array}\right).
\end{align*}
Similarly, a symbolic symmetrizer for the pair $\big(I_5,(\overline{A}^{0})^{-1}A(\xi)\big)$ can be given as
\begin{align*}
	S_{2}(\xi):=\left(\begin{array}{ccc}
		\tfrac{1}{\bar{n}^{2}}\left(\tfrac{2}{3}\bar{g}+|\xi|^{2}\tfrac{\hbar^{2}}{18}\right)& 0_{1 \times 3}&0\\
		0_{3 \times 1}&I_3&0_{3 \times 1}\\
		0&0_{1 \times 3}&1/{\bar{g}}
	\end{array}\right),\quad\mbox{for any}\quad \overline{V}\in\mathcal{O}.
\end{align*}
However, the symbols $A(\xi)$ and $B(\xi)$ cannot be simultaneously symmetrized, even with a symbolic symmetrizer. Let us prove this assertion. 

\begin{proposition}
The QHD system \eqref{eq:qhd1} - \eqref{eq:qhd3}, when linearized around a constant equilibrium state $\overline{V} \in \mathcal{M}_{\mathrm{eq}}$, is not symbol symmetrizable (and, therefore, it does not satisfy one of the assumptions of the equivalence Theorem \ref{equivalencetheo}).
\end{proposition}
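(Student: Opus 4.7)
The plan is to argue by contradiction: I will assume that there exists a smooth, symmetric, positive-definite symbol $S(\xi)$ that simultaneously symmetrizes $A(\xi)$ and $B(\xi)$, and then derive an impossibility by specializing to the one-parameter family $\xi = r\,e_{1}$ with $r>0$. With this choice, the sparsity of the two symbols is transparent. The matrix $B(re_{1})$ has as its only nonzero entries the diagonal terms $1/\tau_{p}$ at the positions $(2,2),(3,3),(4,4)$, the diagonal entry $G(r):=1/\tau_{w}+\tfrac{2\kappa r^{2}}{3\bar{n}}$ at $(5,5)$, and the anomalous entry $-F_{0}\,r^{4}$ at $(5,1)$, with $F_{0}>0$ proportional to $\kappa\hbar^{2}/\bar{n}$, produced by the fourth-order interaction of the Fourier heat flux with the Bohm potential. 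Correspondingly, $A(re_{1})$ has as its only nonzero entries $\bar{n}$ at $(1,2)$, the dispersively-corrected $\Theta(r):=\tfrac{2}{3}\bar{g}+\hbar^{2}r^{2}/18$ at $(2,1)$, the entry $\tfrac{2}{3}\bar{n}$ at $(2,5)$, and $\tfrac{2}{3}\bar{n}\bar{g}$ at $(5,2)$.

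Writing $S=S(re_{1})=(s_{ij})$ with $s_{ij}=s_{ji}$, I will then systematically impose the symmetry of $SB$ entry by entry. The identity $(SB)_{25}=(SB)_{52}$ reads $s_{25}\bigl(G(r)-1/\tau_{p}\bigr)=0$; since $G(r)-1/\tau_{p}$ is a nonconstant polynomial in $r^{2}$, smoothness of $S$ forces $s_{25}\equiv 0$, and analogous computations give $s_{35}=s_{45}=0$. The relations $(SB)_{1j}=(SB)_{j1}$ for $j=2,3,4$ next yield $s_{12}=s_{13}=s_{14}=0$, while the crucial identity $(SB)_{15}=(SB)_{51}$ produces the coupling
\begin{equation*}
s_{15}\,G(r) \;=\; -\,s_{55}\,F_{0}\,r^{4}.
\end{equation*}
At this point I turn to $SA$ and use a single equation, $(SA)_{25}=(SA)_{52}$, which after substitution of the nonzero entries of $A(re_{1})$ reduces to $\tfrac{2}{3}\bar{n}\,s_{22}=\bar{n}\,s_{15}+\tfrac{2}{3}\bar{n}\bar{g}\,s_{55}$. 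Combined with the previous coupling it delivers the closed-form expression
\begin{equation*}
s_{22} \;=\; s_{55}\left(\bar{g} \;-\; \frac{3\,F_{0}\,r^{4}}{2\,G(r)}\right).
\end{equation*}

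Since $G(r)=O(r^{2})$ as $r\to\infty$ while the numerator grows like $r^{4}$, the parenthesized factor tends to $-\infty$; but positive-definiteness of $S$ demands simultaneously $s_{22}>0$ and $s_{55}>0$ for every $r>0$, which breaks down for $r$ sufficiently large. This contradiction will conclude the proof. The main technical point I anticipate is making sure that no exotic off-diagonal freedom in $S$ can be used to absorb the $r^{4}$ mismatch between $(SB)_{51}$ and $(SB)_{15}$; the plan above sidesteps this by exploiting the cascade of symmetry constraints from $SB$ alone to pin down every potentially compensating off-diagonal entry of $S$ (either to zero or to an explicit multiple of $s_{55}$) \emph{before} invoking a single further relation from $SA$, so that no remaining degree of freedom is available to rescue positive-definiteness.
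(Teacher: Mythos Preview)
Your argument is correct and follows essentially the same strategy as the paper: first exhaust the symmetry constraints coming from $SB$ to force $s_{25}=s_{35}=s_{45}=s_{12}=s_{13}=s_{14}=0$ and to tie $s_{15}$ to $s_{55}$ via the anomalous $(5,1)$ entry of $B$, then invoke a single further relation from the symmetry of $SA$ to contradict positive-definiteness for large $|\xi|$. The only difference is which $SA$-relation you close with: you use $(SA)_{25}=(SA)_{52}$ to obtain $s_{22}=s_{55}\bigl(\bar g-\tfrac{3F_{0}r^{4}}{2G(r)}\bigr)$, whereas the paper combines the $(1,j)$-$(j,1)$ and $(5,j)$-$(j,5)$ relations of $SA$ for $j=2,3,4$ to arrive at $R(\xi)s_{11}=P(|\xi|)\,s_{55}$ with a sixth-degree polynomial $P$ of negative leading coefficient; your route is a little shorter. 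A minor remark: your stated entries of $B(re_{1})$ (with $1/\tau_{p}$, $1/\tau_{w}$) differ from the paper's by the $A^{0}$ factor $\bar n$, but this normalization choice does not affect the argument.
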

\begin{proof}
For any $\overline{V}\in\mathcal{M}_{\mathrm{eq}}$, it holds that 
\begin{align*}
	B(\xi)=\left(\begin{array}{ccccc}
		0&0&0&0&0\\
		0&\tfrac{\bar{n}}{\tau_{p}}&0&0&0\\
		0&0&\tfrac{\bar{n}}{\tau_{p}}&0&0\\
		0&0&0&\tfrac{\bar{n}}{\tau_{p}}&0\\
		-\tfrac{|\xi|^{4}}{\bar{n}}\bar{\kappa}\tfrac{\hbar^{2}}{36}&0&0&0&\tfrac{n}{\tau_{w}}+|\xi|^{2}\tfrac{2}{3}\bar{\kappa}\end{array}\right).
\end{align*}
Now let us assume the existence of a symbolic symmetrizer $S(\xi)$ for the pair $(A(\xi),B(\xi))$. Then, in particular, for any $\overline{V}\in\mathcal{M}_{\mathrm{eq}}$, it holds that $S(\xi)B(\xi)$ is symmetric for all $\xi\in\mathbb{R}^{3}\setminus\{0\}$. Let $s_{ij}(\xi)$ denote the components of $S(\xi)$ and set 
\begin{align*}
	b :=\frac{\bar{n}}{\tau_{p}},\quad G(\xi):=-\frac{|\xi|^{4}}{\bar{n}}\bar{\kappa}\frac{\hbar^{2}}{36}\quad\mbox{and}\quad R(\xi):=\frac{\bar{n}}{\tau_{w}}+|\xi|^{2}\frac{2}{3}\bar{\kappa}.
\end{align*}
The condition of symmetrizability implies that 
\begin{equation}
	\label{eq:symB}
	\begin{aligned}
		s_{12}(\xi)b&=s_{25}(\xi)G(\xi), &s_{13}(\xi)b&=s_{35}(\xi)G(\xi),\\
		s_{14}(\xi)b&=s_{45}(\xi)G(\xi), &s_{15}(\xi)R(\xi)&=s_{55}(\xi)G(\xi),\\
		s_{23}(\xi)b&=s_{23}(\xi)b, &s_{24}(\xi)b&=s_{24}(\xi)b,\\
		s_{25}(\xi)R(\xi)&=s_{25}(\xi)b, &s_{34}(\xi)b&=s_{34}(\xi)b,\\
		s_{35}(\xi)R(\xi)&=s_{35}(\xi)b, &s_{45}(\xi)R(\xi)&=s_{45}(\xi)b.
	\end{aligned}
\end{equation}
From the last equations we have that
\begin{align*}
	&s_{25}(\xi)\left(R(\xi)-b\right)=0,\quad s_{35}(\xi)\left(R(\xi)-b\right)=0,\\
	&s_{45}(\xi)\left(R(\xi)-b\right)=0\quad\forall~\xi\in\mathbb{R}^{3}\setminus\{0\}.
\end{align*}
But since $R(\xi)-b\neq 0$ for any $\xi\in\mathbb{R}^{3}\setminus\{0\}$, it follows that 
\begin{align*}
	s_{25}(\xi)=s_{35}(\xi)=s_{45}(\xi)=0\quad\forall~\xi\in\mathbb{R}^{3}\setminus\{0\}.
\end{align*}
Since $b\neq 0$ we also have that 
\begin{align*}
	s_{12}(\xi)=s_{13}(\xi)=s_{14}(\xi)=0\quad\forall~\xi\in\mathbb{R}^{3}\setminus\{0\}.
\end{align*}
Now, set $H(\xi):=\tfrac{2}{3}\bar{g}+|\xi|^{2}\tfrac{\hbar^{2}}{18}$. The non-trivial symmetrizability conditions for $S(\xi)A(\xi)$ read as follows
\begin{align*}
	&\left(s_{11}(\xi)\bar{n}+s_{15}(\xi)\tfrac{2}{3}\bar{n}\bar{g}\right)\omega_{1}=H(\xi)\left(\omega_{1},\omega_{2},\omega_{3}\right)\cdot\left(s_{22}(\xi),s_{23}(\xi),s_{24}(\xi)\right),\\
	&\left(s_{11}(\xi)\bar{n}+s_{15}(\xi)\tfrac{2}{3}\bar{n}\bar{g}\right)\omega_{2}=H(\xi)\left(\omega_{1},\omega_{2},\omega_{3}\right)\cdot\left(s_{23}(\xi),s_{33}(\xi),s_{34}(\xi)\right),\\
	&\left(s_{11}(\xi)\bar{n}+s_{15}(\xi)\tfrac{2}{3}\bar{n}\bar{g}\right)\omega_{3}=H(\xi)\left(\omega_{1},\omega_{2},\omega_{3}\right)\cdot\left(s_{24}(\xi),s_{34}(\xi),s_{44}(\xi)\right),\\
	&\left(s_{15}(\xi)+s_{55}(\xi)\tfrac{2}{3}\bar{g}\right)\omega_{1}=\tfrac{2}{3}\left(\omega_{1},\omega_{2},\omega_{3}\right)\cdot\left(s_{22}(\xi),s_{23}(\xi),s_{24}(\xi)\right),\\
	&\left(s_{15}(\xi)+s_{55}(\xi)\tfrac{2}{3}\bar{g}\right)\omega_{2}=\tfrac{2}{3}\left(\omega_{1},\omega_{2},\omega_{3}\right)\cdot\left(s_{23}(\xi),s_{33}(\xi),s_{34}(\xi)\right),\\
	&\left(s_{15}(\xi)+s_{55}(\xi)\tfrac{2}{3}\bar{g}\right)\omega_{3}=\tfrac{2}{3}\left(\omega_{1},\omega_{2},\omega_{3}\right)\cdot\left(s_{24}(\xi),s_{34}(\xi),s_{44}(\xi)\right).
\end{align*}
Since these equations are valid for any $\omega\in\mathbb{S}^{2}$, we conclude that 
\begin{align*}
	s_{11}(\xi)+s_{15}(\xi)\tfrac{2}{3}\bar{g}=\tfrac{3H(\xi)}{2\bar{n}}\left(s_{15}(\xi)+s_{55}(\xi)\tfrac{2}{3}\bar{g}\right)
\end{align*}
holds for any $\xi\in\mathbb{R}^{3}\setminus\{0\}$. By equations \eqref{eq:symB}, 
\begin{align*}
	s_{15}(\xi)=s_{55}(\xi)\frac{G(\xi)}{R(\xi)},
\end{align*}
and therefore, 
\begin{align*}
	R(\xi)s_{11}(\xi)=P(|\xi|)s_{55}(\xi),
\end{align*}
where $P(|\xi|)$ is a sixth order polynomial in $|\xi|$ given as
\begin{align*}
	P(|\xi|)=\frac{3}{2}\frac{H(\xi)}{\bar{n}}G(\xi)+H(\xi)R(\xi)g-\frac{2}{3}\bar{g}G(\xi).
\end{align*}
Observe that $R(\xi)>0$ and, by the positive definiteness of $S(\xi)$, $s_{11}(\xi)>0$ and $s_{55}(\xi)>0$. Therefore, 
\begin{align*}
	P(|\xi|)s_{55}(\xi)>0\quad\forall~\xi\in\mathbb{R}^{3}\setminus\{0\},
\end{align*}
but since the dominant term of $P(|\xi|)$ has a negative coefficient, the contradiction follows.
\end{proof}
%
%
	
\section*{Acknowledgements}
	
This research was motivated by the works of S. Kawashima and J. Humpherys. R. G. Plaza is grateful to S. Kawashima for an insightful conversation during the HYP2012 meeting in Padova, Italy, and to J. Humpherys for many useful discussions along the years. The work of F. Angeles and R. G. Plaza was supported by CONAHCyT, M\'exico, grant CF-2023-G-122. F. Angeles was also partially supported by the C\'{a}tedra Extraordinaria IIMAS. The work of J. M. Valdovinos was partially supported by CONAHCyT, through a scholarship for graduate studies, grant no. 712874. 

	
\def\cprime{$'$}


	
	
	


\end{document}